\newtheorem{theorem}{Theorem}[section]
\newtheorem{lemma}[theorem]{Lemma}
\newtheorem{definition}[theorem]{Definition}
\newtheorem{corollary}[theorem]{Corollary}
\newtheorem{proposition}[theorem]{Proposition}
\newtheorem{remark}[theorem]{Remark}
\newcommand{\Ob}{{\mathrm{Ob}}}
\newcommand{\Hom}{{\mathrm{Hom}}}
\newcommand{\Ext}{{\mathrm{Ext}}}
\newcommand{\add}{{\mathrm{add}}}
\newcommand{\eins}{\leavevmode\hbox{\small1\kern-3.8pt\normalsize1}}
\newcommand{\res}{{\rm Res} }
\newcommand{\ind}{{\rm Ind} }
\newcommand{\tto}{\twoheadrightarrow}
\newcommand{\mC}{\mathbb{C}}
\newcommand{\mN}{\mathbb{N}}
\newcommand{\mZ}{\mathbb{Z}}
\newcommand{\mV}{\mathbb{V}}
\newcommand{\dd}{{\mathbf{d}}}
\newcommand{\fa}{{\mathfrak a}}
\newcommand{\fb}{{\mathfrak b}}
\newcommand{\fg}{{\mathfrak g}}
\newcommand{\fh}{{\mathfrak h}}
\newcommand{\fl}{{\mathfrak l}}
\newcommand{\fn}{{\mathfrak n}}
\newcommand{\fq}{{\mathfrak q}}
\newcommand{\fu}{{\mathfrak u}}
\newcommand{\fw}{{\mathfrak w}}
\newcommand{\cD}{\mathcal{D}}
\newcommand{\cM}{\mathcal{M}}
\newcommand{\cP}{\mathcal{P}}
\newcommand{\cQ}{\mathcal{Q}}
\newcommand{\cR}{\mathcal{R}}
\newcommand{\cK}{\mathcal{K}}
\newcommand{\cS}{\mathcal{S}}
\newcommand{\cO}{\mathcal{O}}
\newcommand{\cA}{\mathcal{A}}
\newcommand{\cL}{\mathcal{L}}
\newcommand{\cZ}{\mathcal{Z}}
\newcommand{\cY}{\mathcal{Y}}
\newcommand{\cX}{\mathcal{X}}
\newcommand{\cT}{\mathcal{T}}
\newcommand{\intdom}{\Lambda^+_{\mathrm{int}}}
\newcommand{\End}{{\rm End}}
\newcommand{\Id}{{\rm Id}}
\newenvironment{customthm}[1]
  {\innercustomthm}
  {\endinnercustomthm}
\begin{document}

\title[Dualities and derived equivalences for category~$\cO$]
{Dualities and derived equivalences\\ for category~$\cO$}

\author{Kevin Coulembier and Volodymyr Mazorchuk}
\date{}

\begin{abstract}
We determine the Ringel duals for all blocks in the parabolic versions 
of the BGG category~$\cO$ associated to a reductive finite dimensional 
Lie algebra. In particular we find that, contrary to the original 
category~$\cO$ and the specific previously known cases in the parabolic 
setting, the blocks are not necessarily Ringel self-dual. However, 
the parabolic category~$\cO$ as a whole is still Ringel self-dual. 
Furthermore, we use generalisations of the Ringel duality functor to 
obtain large classes of derived equivalences between blocks in parabolic and original 
category~$\cO$.
We subsequently classify all derived equivalence classes of blocks 
of category~$\cO$ in type~$A$ which preserve the Koszul  grading. 
\end{abstract}

\maketitle

\noindent
\textbf{MSC 2010 : }  16E35, 16E30, 17B10, 16S37

\noindent
\textbf{Keywords :} Ringel duality, Koszul duality, equivalences of derived categories, BGG category~$\cO$

\section{Introduction and main results}
For a reductive finite dimensional complex Lie algebra $\fg$ with 
a fixed triangular decomposition, consider the Bernstein-Gelfand-Gelfand
category~$\cO$ from \cite{BGG} (see also \cite{Humphreys}) and its parabolic generalisation 
by Rocha-Caridi in~\cite{RC}. It is well-known that blocks in parabolic 
category~$\cO$ are described  by quasi-hereditary algebras which possess a Koszul grading, 
see~\cite{Back, BGS, SoergelD, MR2563180}. Moreover, graded lifts of blocks 
of category~$\cO$ are Koszul dual to graded lifts of blocks in the parabolic 
versions of~$\cO$. It was proved in~\cite{SoergelT} that parabolic category $\cO$, 
as a whole, is Ringel self-dual. It is well-known that for full category~$\cO$, 
even every individual block is Ringel self-dual, as follows from \cite[Struktursatz~9]{SoergelD}.
The same holds for the {\em principal}
block in parabolic category~$\cO$, see~\cite{prinjective}. One of our observations which motivated the present paper is that arbitrary
blocks in parabolic category~$\cO$ are, surprisingly, {\em not} Ringel self-dual in 
general. This is our first main result.

\begin{customthm}{A}
Blocks in parabolic category~$\cO$ are, in general, not Ringel self-dual. 
However, every block is Ringel dual to another block 
in the same parabolic category. The Ringel dual of a block is also equivalent to a block with the same 
central character in a different parabolic version of~$\cO$.
\end{customthm}

Ringel duality between two blocks always implies an equivalence, as 
triangulated categories, of the bounded derived categories of these 
blocks given by the derived {\em Ringel duality functor}, 
see~\cite{Ringel, Happel, Rickard}. From our proof of Theorem~A, it follows 
easily that, for parabolic category~$\cO$, this equivalence lifts to the 
graded setting for the Koszul grading. On the other hand, the {\em Koszul 
duality functor}, see~\cite{BGS, MOS}, also induces an equivalence between 
the derived categories of a graded lift of a block and its Koszul dual. 
However, this equivalence does not correspond to a derived equivalence in 
the ungraded sense. Moreover, we show explicitly that two Koszul dual algebras need 
not be derived equivalent in the sense of \cite{Rickard}, by considering 
an example for category~$\cO$.

We are interested in derived equivalences in the sense of \cite{Rickard} 
which lift to the graded setting, see explicit Definition \ref{defgradeq}. 
We refer to this as a {\em gradable derived equivalence} or being 
{\em gradable derived equivalent}. Hence, for parabolic category~$\cO$, the Ringel 
duality functor induces a gradable derived equivalence, whereas the Koszul 
duality functor does not. 

In the current paper we start the systematic
study of derived equivalences for (parabolic)
category $\cO$.
Some interesting derived equivalences for $\fg=\mathfrak{sl}(n)$ have been obtained by Chuang and 
Rouquier in \cite{sl2} and by Khovanov in \cite{Khovanov}. By construction, 
the ones in \cite{sl2} are gradable. We take an approach independent from the previous
results but recover the derived equivalences in \cite{sl2, Khovanov}.
Our results are rather conclusive
for category $\cO$ for type $A$, whereas there remain open questions for the parabolic 
versions of $\cO$ over other reductive Lie algebras.

Consider a reductive finite dimensional complex 
Lie algebra $\fg$ with a fixed Cartan subalgebra $\fh$
and a fixed Borel subalgebra $\fb$ containing $\fh$. Consider $\cO=\cO(\fg,\fb)$
and let $W=W(\fg:\fh)$ be the
Weyl group and $\Lambda_{\mathrm{int}}$ the set of integral weights. 
For a coset $\Lambda\in\fh^\ast/\Lambda_{\mathrm{int}}$, we denote the corresponding integral 
Weyl group by $W_{\Lambda}$. For a dominant $\lambda\in\Lambda$,
the stabiliser of $\lambda$ in $W_\Lambda$ under the dot action is denoted 
by $W_{\Lambda,\lambda}$ and the block in the category~$\cO(\fg,\fb)$ containing 
the simple highest weight module with highest weight $\lambda$ 
by $\cO_\lambda(\fg,\fb)$. Our second 
main result is an analogue of \cite[Theorem~11]{SoergelD} for 
derived categories, restricted to type~$A$.

\begin{customthm}{B}
Consider two Lie algebras $\fg$ and $\fg'$ of type~$A$, with respective Borel 
subalgebras $\fb$ and $\fb'$. Then there is a gradable derived equivalence
\begin{displaymath}
\cD^b(\cO_\lambda(\fg,\fb))\;\cong\;\cD^b(\cO_{\lambda'}(\fg',\fb')) 
\end{displaymath}
for dominant $\lambda\in\Lambda$ and $\lambda'\in\Lambda'$, if and only if, 
for some decompositions 
\begin{displaymath}
W_{\Lambda}\cong X_1\times X_2\times \dots\times X_k\quad\text{ and }\quad
W_{\Lambda'}\cong X'_1\times X'_2\times \dots\times X'_m
\end{displaymath}
into products of irreducible Weyl groups, we have $k=m$ and there is a permutation
$\varphi$ on $\{1,2,\dots,k\}$ such that $W_{\Lambda,\lambda}\cap X_i\cong
W_{\Lambda',\lambda'}\cap X'_{\varphi(i)}$ and $X_i\cong X'_{\varphi(i)}$, for all $i=1,2,\dots,k$.
\end{customthm}

Note that, according to \cite[Theorem~11]{SoergelD} (restricted to type $A$), there is an equivalence 
$\cO_\lambda(\fg,\fb)\cong \cO_{\lambda'}(\fg',\fb')$ if the 
following stronger condition is satisfied: There is a Coxeter group isomorphism $W_{\Lambda}\to W_{\Lambda'}$ which swaps $W_{\Lambda,\lambda}$ and $W_{\Lambda',\lambda'}$.

To present the remainder of the main results, we need more notation. 
We consider again an arbitrary complex reductive Lie algebras $\fg$ and henceforth 
only consider integral weights, which is justified by the results in \cite{SoergelD}. Hence $\Lambda=\Lambda_{\mathrm{int}}$ and we leave out the reference to $\Lambda$. For every integral dominant 
weight $\lambda$, the block $\cO_\lambda$ now consists of all modules in $\cO$ 
with the same generalised central character as the simple module with highest 
weight $\lambda$. To any 
integral dominant weight $\mu$, we associate a parabolic subalgebra $\fq_{\mu}$ 
of~$\fg$, uniquely defined by the fact that the Weyl group of its Levi factor is $W_\mu$. The full 
subcategory of $\cO$ consisting of all modules which are locally $\fq_{\mu}$-finite 
is denoted by $\cO^\mu$. The integral part of this category decomposes 
naturally into subcategories $\cO_\lambda^\mu$. Graded lifts of the latter categories, with respect 
to the Koszul grading, are denoted by~${}^{\mZ}\cO_\lambda^\mu$.

\begin{customthm}{C}
Consider the algebra $\fg=\mathfrak{sl}(n)$ and four integral dominant weights 
$\lambda,\lambda',\mu,\mu'$. If we have isomorphisms of groups 
\begin{displaymath}
W_\lambda\cong W_{\lambda'}\qquad\mbox{and}\qquad W_\mu\cong W_{\mu'}, 
\end{displaymath} 
then there is a gradable derived equivalence between $\cO_\lambda^\mu$ and 
$\cO_{\lambda'}^{\mu'}$. In particular, we have equivalences of triangulated 
categories 
\begin{displaymath}
\cD^b(\cO^\mu_\lambda)\;\cong\; \cD^b(\cO^{\mu'}_{\lambda'})
\qquad\mbox{and}\qquad \cD^b({}^{\mZ}\cO^\mu_\lambda)\;\cong\; \cD^b({}^{\mZ}\cO^{\mu'}_{\lambda'}). 
\end{displaymath}
\end{customthm}

A more complicated formulation of this result for arbitrary $\fg$
can be found in Theorem~\ref{maindereq}. Theorem~C generalises 
\cite[Proposition~7]{Khovanov}, which corresponds precisely to the case $\lambda=\lambda'=0$ 
in the ungraded setting. Furthermore, our approach gives an explicit form of the 
functor and the tilting complex, which describe the equivalence. It also provides a purely 
algebraic proof, whereas the proof in~\cite{Khovanov} depends on the geometric 
description of  $\cO$. Theorem~C, for $\mu=\mu'$, gives the derived equivalences
which can be constructed from \cite[Theorem~6.4 and Section~7.4]{sl2}. It seems that
in this case
even the functors describing the derived equivalence are isomorphic, as can be checked by hand
for small cases. In \cite{sl2}
these functors have the elegant property that they are defined directly to act between
the two relevant categories. On the other hand, they are defined in terms of total
complexes for a complex of functors which becomes arbitrarily big. The functors
in the current paper have the drawback that they are defined implicitly by using
an auxiliary regular block in category $\cO$, see Theorem D. The advantage is that the functor
on the latter category inducing the equivalence is a well-understood functor with
elegant properties.

Our results are formulated in terms of four types of functors on category~$\cO$, 
{\it viz.} projective, Zuckerman, twisting and shuffling functors, see~\cite{simple} 
or the preliminaries for an overview. Twisting functors commute with 
projective functors, see \cite{AS}, and shuffling  functors commute with 
Zuckerman functors, see \cite{simple}. The non-trivial commutation relations 
between twisting and Zuckerman functors, and between shuffling and projective functors, 
lie at the origin of the failure of blocks to be Ringel self-dual. It is also 
these commutation relation that we exploit to obtain the derived equivalences. 
As an extra result, we extend the Koszul duality 
between projective and Zuckerman functors of 
\cite{RH} to the full parabolic and singular setting. 

For any  $x\in W$, the derived twisting functor $\cL T_x$ and shuffling functor $\cL C_x$ 
are auto-equivalences of the category $\cD^b(\cO_0)$. For an integral dominant $\nu$, we introduce 
the notation $W_\nu^\dagger$ for the subgroup of~$W$ generated by all simple 
reflections which are orthogonal to all reflections in $W_\nu$.
Let $w_0^\nu$ stand for the longest element of~$W_\nu$. 
Theorem~C can be obtained by an 
iterative application of the following theorem.

\begin{customthm}{D}
Consider $\fg$ a finite dimensional complex reductive Lie algebra and an integral 
dominant~$\nu,\lambda,\mu\in\fh^\ast$ such that $W_\nu$ is of type~$A$.
\begin{enumerate}[$($i$)$]
\item Assume that 
$W_\mu\subset W_\nu\times W_\nu^\dagger$. Then there is a dominant integral 
$\mu'$ with $W_{\mu'}=w_0^\nu W_\mu w_0^\nu$. The auto-equivalence 
$\cL T_{w_0^\nu}$ of $\cD^b(\cO_0)$ restricts to an equivalence of triangulated 
categories between two subcategories, equivalent to, respectively, $\cD^b(\cO^\mu_\lambda)$ 
and $ \cD^b(\cO^{\mu'}_\lambda)$, yielding a gradable derived equivalence 
$\cD^b(\cO^\mu_\lambda)\;\tilde\to\; \cD^b(\cO^{\mu'}_\lambda)$.
\item\label{may26-1} Assume that 
$W_\lambda\subset W_\nu\times W_\nu^\dagger$. Then there is a dominant integral 
$\lambda'$ with $W_{\lambda'}=w_0^\nu W_\lambda w_0^\nu$. The 
auto-equivalence $\cL C_{w_0^\nu}$ of $\cD^b(\cO_0)$ restricts 
to an equivalence of triangulated categories between two subcategories, 
equivalent to, respectively, $\cD^b(\cO^\mu_\lambda)$ and $ \cD^b(\cO^{\mu}_{\lambda'})$, 
yielding a gradable derived equivalence $\cD^b(\cO^\mu_\lambda)\;\tilde\to\; \cD^b(\cO^{\mu}_{\lambda'})$.\end{enumerate}
\end{customthm}

Note that the two parts can be interpreted as Koszul duals of one another 
using \cite[Section~6.5]{MOS}. When $w_0^\nu=w_0$ and $\mu=0$ in 
Theorem~D\eqref{may26-1}, the categories $\cO_\lambda$ and $\cO_{\lambda'}$ are equivalent 
by \cite[Theorem~11]{SoergelD}. However, our equivalence of the derived categories 
is not induced by that equivalence, but is rather
given by the derived Ringel duality functor. 

The paper is organised as follows. In Section~\ref{secprel} we recall some 
results on category~$\cO$ and Koszul and Ringel duality. In Section~\ref{secgranongra} 
we give an explicit example of Koszul dual algebras which are not derived 
equivalent and introduce the notion of gradable derived equivalence. We use this to study shuffling in the parabolic setting. In 
Section~\ref{secGradTrans} we obtain several results on the graded lifts of 
translation functors. In particular, we extend 
the Koszul duality of \cite{RH} between translation functors and parabolic 
Zuckerman functors to the generality we will need it. In 
Section~\ref{secProjShuff} we study the commutation relations 
between shuffling and projective functors. In 
Section~\ref{secConstr} we construct the derived equivalences between 
blocks in parabolic category~$\cO$, proving Theorems~C and~D. This is 
used in Section~\ref{secClass} to classify the blocks in category~$\cO$ 
for Lie algebras of type~$A$ up to gradable derived equivalence, proving 
Theorem~B. In Section~\ref{secRingel} we determine the Ringel duals of all 
blocks in parabolic category~$\cO$, proving Theorem~A, and study the Koszul-Ringel duality functor.


\section{Preliminaries}\label{secprel}

We work over $\mathbb{C}$. Unless explicitly stated otherwise,
commuting diagrams of functors commute only up to a natural isomorphism. 
{\em Graded} always refers to {\em $\mathbb{Z}$-graded}.

\subsection{Category~$\cO$ and its parabolic generalisations}\label{prel1}
We consider the BGG category~$\cO$, associated to a triangular decomposition of a 
finite dimensional complex reductive Lie algebra
$\fg=\fn^-\oplus\fh\oplus\fn^+$, see~\cite{BGG, Humphreys}. For any weight 
$\nu\in\fh^\ast$, we denote the corresponding simple highest weight module by $L(\nu)$. We also introduce an involution on~$\fh^\ast$ by setting 
$\widehat\nu=-w_0(\nu)$, with $w_0$ the longest element of the Weyl group 
$W=W(\fg:\fh)$. We denote by $\langle\cdot,\cdot\rangle$ a $W$-invariant inner product 
on $\fh^\ast$ and the set of integral, not necessarily regular, dominant weights by~$\intdom$.
For any $\lambda\in\intdom$, the indecomposable block in category~$\cO$ containing $L(\lambda)$ is 
denoted by~$\cO_\lambda$. 

For $B$ the set of simple positive roots and $\mu\in\intdom$, 
set $B_\mu=\{\alpha\in B\,|\, \langle\mu+\rho,\alpha\rangle=0\}$. Let 
$\fu^{-}_\mu$ be the subalgebra of~$\fg$ generated by the root spaces corresponding 
to the roots in~$-B_\mu$. Then we have the parabolic subalgebra $\fq_\mu$ of~$\fg$, 
given by
\begin{displaymath}
\fq_\mu:=\fu_{\mu}^-\oplus\fh\oplus\fn^+.
\end{displaymath}

The full subcategory of~$\cO_\lambda$ with objects given by the modules 
in~$\cO_\lambda$ which are $U(\fq_\mu)$-locally finite is denoted 
by~$\cO_\lambda^\mu$. These are subcategories of parabolic category~$\cO$ as 
introduced in~\cite{RC}. By construction, $\cO^\mu_\lambda$ is a Serre 
subcategory of~$\cO_\lambda$. We denote the corresponding exact full 
embedding of categories by $\mathbf{\imath}^\mu:\cO^\mu\hookrightarrow \cO$. 
The left adjoint of $\mathbf{\imath}^\mu$ is the corresponding {\em Zuckerman functor},
denoted by $Z^{\mu}$, it is given by taking the largest quotient inside $\cO^\mu$.
The categories $\cO_0^\mu$ and $\cO_\lambda$ are indecomposable. The category 
$\cO_\lambda^\mu$ may decompose, e.g. in the case  $\mathfrak{g}=B(2)=\mathfrak{so}(5)$, 
for $w_0^\lambda=s$ and $w_0^\mu=t$, where $s$ and $t$ are the two different simple reflections. 

We define the set $X_\lambda$ as the set of {\em longest} representatives in~$W$ of
cosets in $W/W_\lambda$. The non-isomorphic simple objects in the 
category~$\cO_\lambda$ are indexed by $X_\lambda$: 
\begin{displaymath}
\{L(w\cdot\lambda)\,|\,w\in X_\lambda\}.
\end{displaymath}
Now, for~$x\in X_\lambda$, the module $L(x\cdot\lambda)$ is an object of~$\cO^\mu_\lambda$ 
if and only if $x$ is a {\em shortest} representative in~$W$ of a coset in $W_\mu\backslash W$. 
The set of such shortest representatives $x\in X_\lambda$ is denoted by~$X_\lambda^\mu$. 

We denote by $\mathbf{d}$ the usual duality on $\cO$, which restricts to $\cO^\mu$ and to each block
in these categories, see \cite[Section~3.2]{Humphreys}.
For $x\in X_\lambda^\mu$, consider the following structural modules in $\cO^\mu_\lambda$: the standard module (or generalised Verma module) $\Delta^\mu(x\cdot\lambda)$ with simple top $L(x\cdot\lambda)$, the costandard module $\nabla^\mu(x\cdot\lambda):=\dd \Delta^\mu(x\cdot\lambda)$, the injective envelope $I^\mu(x\cdot\lambda)$ and projective cover $P^\mu(x\cdot\lambda)$ of~$L(x\cdot\lambda)$, the indecomposable quasi-hereditary tilting module $T^\mu(x\cdot\lambda)$
with highest weight $x\cdot\lambda$.

Consider a minimal projective generator of~$\cO^\mu_\lambda$ given by 
\begin{equation}\label{progen} P^\mu_\lambda\;:=\;\bigoplus_{x\in X_\lambda^\mu}P^{\mu}(x\cdot\lambda),
\end{equation}
where $P^{\mu}(x\cdot\lambda)$ is the indecomposable projective cover of $L(x\cdot\lambda)$ in $\cO^\mu_\lambda$,
and the algebra $A_\lambda^\mu:=\End_{\fg}(P_\lambda^\mu)$. Then we have the usual 
equivalence  of categories
\begin{displaymath}
\cO^\mu_\lambda\,\;\tilde\to\;\,\mbox{mod-}A_\lambda^\mu;\qquad M\mapsto \Hom_{\fg}(P^\mu_\lambda,M). 
\end{displaymath}

We consider the Bruhat order $\le$ on~$W$, with the convention that $e$ is the smallest element. It restricts to the Bruhat 
order on~$X_\lambda^\mu$.
The order on the weights is defined by $x\cdot\lambda\le y\cdot\lambda$ if and only if $y\le x$. 
From the BGG Theorem on the structure of Verma modules, see e.g. \cite[Section~5.1]{Humphreys}, 
it follows that the algebras 
$A^\mu_\lambda$ are quasi-hereditary with respect to the poset of 
weights~$X_\lambda^\mu\cdot\lambda$. The standard modules coincide with the ones above.

\begin{remark}{\rm
We will use the term {\em (generalised) tilting module} 
for a module of a finite dimensional algebra satisfying properties (i)-(iii) in 
\cite[Section~III.3]{Happel}. When we refer to the 
modules in (parabolic) category~$\cO$ that simultaneously admit a standard and 
a costandard filtration, see e.g. \cite[Chapter~11]{Humphreys}, we will use 
the term {\em quasi-hereditary tilting module}, or q.h. tilting module. 
Then a q.h. tilting module is a (special case 
of) a {\em partial} generalised tilting module, whereas the {\em characteristic} 
q.h. tilting module is a generalised tilting module, see e.g. \cite[Theorem~5]{Ringel}.
} 
\end{remark}

When $\mu$ is regular, meaning that the corresponding parabolic category~$\cO^\mu$ is the usual category 
$\cO$, we  leave out the reference to $\mu$. Similarly, we will leave out $\lambda$ (as in $L(x):=L(x.\lambda)$), or replace it by $0$, 
whenever it is regular. By application of~\cite[ Theorem~11]{SoergelD}, all 
categories $\cO^\mu_\lambda$, with $\lambda$ arbitrary integral regular dominant 
and $\mu$ fixed, are equivalent, justifying this convention.

Consider the translation functor $\theta^{on}_\lambda:\cO_0\to\cO_\lambda$ to the $\lambda$-wall
and also its adjoint $\theta^{out}_\lambda:\cO_\lambda\to \cO_0$, which is the translation
out of the $\lambda$-wall, see \cite[Chapter~7]{Humphreys}. For $x\in W$, denote by $\theta_x$ 
the unique projective functor on~$\cO_0$ which maps $P(e)$ to $P(x)$, see~\cite{BG}. Note that, in particular,  $\theta^{out}_\lambda\circ\theta^{on}_\lambda=\theta_{w_0^\lambda}$. 
By \cite[Theorem~7.9]{Humphreys} or \cite{Jantzen}, for any $x\in W$, we have
\begin{equation}\label{thetaon}
\theta_\lambda^{on}L(x)=
\begin{cases}
L(x\cdot\lambda), & x\in X_\lambda;\\
0, &\text{otherwise}.
\end{cases}
\end{equation}

\subsection{Koszul duality for category~$\cO$}
Consider a finite dimensional algebra $B$. If $B$ is a quadratic positively 
graded algebra, we denote its {\em quadratic dual} by~$B^{!}$, as 
in \cite[Definition~2.8.1]{BGS}. If $B$ is, moreover, Koszul, we denote 
its {\em Koszul dual} by $E(B)=\Ext^{\bullet}_B(B_0,B_0)$. By Theorem~2.10.1 
in~\cite{BGS}, we have $E(B)=(B^{!})^{{\rm opp}}$ for any Koszul algebra~$B$. 
For a positively graded algebra $B$, we denote by $B$-gmod its category of 
finitely generated graded modules. 

For a complex $\cM^\bullet$ of graded modules, that is
$\displaystyle \cM^j=\bigoplus_{i\in \mathbb{Z}}\cM^j_i$, where $j\in\mathbb{Z}$,
we use the following convention
\begin{displaymath}
\left(\cM^\bullet [a]\langle b\rangle\right)^i_j=\cM^{i+a}_{j-b},  
\end{displaymath}
for shift in position in the complex and degree in the modules. This corresponds 
to the conventions in~\cite{BGS}, but differs slightly from~\cite{MOS}. 
A module $M$, regarded as an object in the derived category put in position zero, 
is denoted by~$M^\bullet$.

For any Koszul algebra $B$, \cite[Theorem~2.12.6]{BGS} introduces the 
{\em Koszul duality functor} $\cK_B$, a covariant equivalence of 
triangulated categories
\begin{equation}\label{Koszuleq}
\cK_B\,:\,\, \cD^b(B\mbox{-gmod})\,\;\tilde\to\,\; \cD^b(B^{!}\mbox{-gmod}).
\end{equation}

As in \cite[Section~3]{MOS} or \cite[Section~2]{Martinez}, we introduce the 
full subcategory of~$\cD^b(B\mbox{-gmod})$ of linear complexes of projective 
modules in~$B$-gmod, which we denote by $\mathfrak{LP}_B$. Then 
\cite[Theorem~2.4]{Martinez}  (or, more generally, \cite[Theorem~12]{MOS})
establishes, for any quadratic algebra $B$, an equivalence 
\begin{displaymath}
\epsilon_B:\mathfrak{LP}_B\;\,\tilde\to\;\, B^{!}\mbox{-gmod}.
\end{displaymath}
From  \cite[Chapter~5]{MOS} it follows that $\epsilon_B$ is isomorphic 
to the restriction of~$\cK_{B}$ in case $B$ is Koszul. 

As proved in~\cite{Back}, $A^\mu_\lambda$ has a Koszul grading, where the Koszul 
dual algebra is $E(A_\lambda^\mu)\cong A_{\widehat\mu}^\lambda$, see also 
\cite{BGS, SoergelD, MR2563180}. We also have 
$(A_\lambda^\mu)^{{\rm opp}}\cong A_\lambda^\mu$ (as graded algebras) as an 
immediate consequence of the duality functor~$\dd$. The algebras 
$A^\mu_\lambda$ are even standard Koszul in the sense of \cite{ADL}, see \cite{MR2563180}.

The graded module categories are denoted by ${}^{\mZ}\cO_\lambda^\mu:=A_\lambda^\mu$-gmod. 
We will sometimes replace the notation~$\Hom_{{}^\mZ\cO}$ by $\hom_{\cO}$.

It is more convenient to work with the composition of the usual Koszul duality functor 
with the duality $\dd$ to obtain a contravariant functor 
\begin{equation}\label{KoszuleqO}
\cK_\lambda^\mu:=\mathbf{d}\cK_{A^\mu_\lambda}\,:\,\cD^b({}^{\mZ}\cO^\mu_\lambda)\;\,\tilde\to\;\, \cD^b({}^{\mZ}\cO^\lambda_{\widehat\mu}),
\end{equation}
where we also identify the graded module categories corresponding to the isomorphism 
$E(A^\mu_\lambda)\cong A^{\lambda}_{\widehat\mu}$. This functor satisfies
\begin{equation}\label{KdFshift}
\cK^{\mu}_\lambda(\cM^\bullet[i]\langle j\rangle)=K^{\mu}_\lambda(\cM^\bullet)[j-i]\langle j\rangle, 
\end{equation}
see \cite[Theorem~3.11.1]{BGS}. Similarly, we define 
$\epsilon^\mu_\lambda=\mathbf{d}\epsilon_{_{A^\mu_\lambda}}$, as a contravariant 
equivalence of categories
\begin{equation}\label{eqeps}
\epsilon^\mu_\lambda:\mathfrak{LP}^\mu_\lambda\;\,\tilde\to\;\, {}^{\mZ}\cO^\lambda_{\widehat\mu}.
\end{equation}

We conclude this subsection with the introduction of the graded lifts to ${}^{\mZ}\cO$ of the 
translation functors on~$\cO$, as studied in~\cite{Stroppel}. 
We denote them by the same symbols as on~$\cO$ and use the grading convention of~\cite{Stroppel}. 
This means that the graded version of equation~\eqref{thetaon} is
\begin{equation}\label{normon}
\theta_\lambda^{on} L(x)\langle 0\rangle=L(x\cdot\lambda)\langle -l(w_0^\lambda)\rangle,
\end{equation}
for all $x\in X_\lambda$, and that
\begin{equation}\label{adjtransg}\hom_{\cO_0}(\theta_\lambda^{out}M,N)\cong \hom_{\cO_\lambda}(M,\theta_\lambda^{on}N\langle l(w_0^\lambda)\rangle),\end{equation}
see also \cite[Lemma~38]{MOS}. This implies that
\begin{equation}\label{projout}
\theta_\lambda^{out} P(x\cdot\lambda)\langle 0\rangle=P(x)\langle 0\rangle.
\end{equation}

\subsection{Twisting and shuffling functors}

We will use the twisting functor~$T_s$, which is an endofunctor on each integral block 
$\cO_\lambda$ corresponding to a simple reflection $s$, see~\cite{AS, simple}. 
For any $w\in W$ with reduced expression $w=s_{1}s_2\cdots s_m$, we can 
define the functor
\begin{displaymath}
T_{w}=T_{s_1}T_{s_2}\cdots T_{s_m}, 
\end{displaymath}
where the resulting functor does not depend on the choice of a reduced expression, 
see \cite[Corollary~11]{Khomenko}. The functor $T_w$ is right exact and its 
derived functor $\cL T_w$ is an auto-equivalence of~$\cD^b(\cO_0)$, see 
\cite[Corollary~4.2]{AS}. This property extends to a singular block of 
category~$\cO$, see e.g. \cite[Proposition~5.11]{CM1}, so we have an auto-equivalence
\begin{equation}\label{dertwist}
\cL T_{w}:\, \, \cD^b(\cO_\lambda)\;\tilde\to \;\cD^b(\cO_\lambda).
\end{equation}
Twisting functors admit graded lifts, see \cite[Appendix]{pairing} or \cite[Theorem~1.1]{Khomenko}.
By \cite[Theorem~3.2]{AS}, the following diagram commutes:
\begin{equation}\label{commtwtr}
\xymatrix{
\cD^b(\cO_0)\ar[rr]^{\cL T_w}&&\cD^b(\cO_0)\\
\cD^b(\cO_{\lambda})\ar[rr]^{\cL T_w}\ar[u]^{\theta_\lambda^{out}}&&
\cD^b(\cO_{\lambda})\ar[u]^{\theta_\lambda^{out}}. \\
}
\end{equation}

The shuffling functor $C_s$ corresponding to a simple reflection $s$, 
see~\cite{Carlin, simple}, is the endofunctor of $\cO_0$ defined as the cokernel of the adjunction morphism 
from the identity functor to the projective functor $\theta_s$. 
For any $w\in W$ with reduced expression $w=s_{1}s_2\cdots s_m$, we can 
define the functor
\begin{displaymath}
C_{w}=C_{s_m}C_{s_{m-1}}\cdots C_{s_1}, 
\end{displaymath}
where the resulting functor does not depend on the choice of a reduced expression, 
see \cite[Lemma~5.10]{shuffling} or \cite[Theorem~2]{Khomenko} and 
\cite[Section~6.5]{MOS}. The functor $C_w$ is right exact and $\cL C_w$ is an auto-equivalence, with inverse $\dd\cL C_{w}\dd$, of~$\cD^b(\cO_0)$, see 
\cite[Theorem~5.7]{shuffling}, so
\begin{equation}\label{dershuff0}
\cL C_{w}:\, \, \cD^b(\cO_0)\;\tilde\to \;\cD^b(\cO_0).
\end{equation}

The two basic types of derived auto-equivalences in this sections will be exploited to obtain more complicated derived equivalences in the remainder of the paper. We note that they also appear in more abstract generality in \cite[Theorem~6.15 and Proposition~9.18]{Webster}.

\subsection{Ringel duality for category~$\cO$}\label{prelsecRd}
  
For a quasi-hereditary algebra~$B$, we denote its {\em Ringel dual} as in \cite{Ringel}, by 
\begin{displaymath}
R(B):=\End_{B}(T)^{{\rm opp}},
\end{displaymath} with $T$ the characteristic q.h. tilting module in $B$-mod. 
The Ringel dual is again  
quasi-hereditary. If $B$ is basic, we have $R(R(B))\cong B$, 
see \cite[Theorem~7]{Ringel}.

We also consider the following covariant right exact functor
\begin{displaymath}
\cR_B=\Hom_{B}(\cdot,T)^\ast:\;\,\,B\mbox{-mod}\to R(B)\mbox{-mod},
\end{displaymath}
with $\ast$ being the canonical duality functor from mod-$R(B)$ to $R(B)$-mod. 
We call $\cR_B$ the Ringel duality functor. From \cite[Theorem~6]{Ringel} 
and \cite[Proposition~2.2]{prinjective}, it follows that $\cR_B$ 
restricts to an equivalence between the additive subcategories of 
projective modules of~$B$ and tilting modules of~$R(B)$ and between 
the additive subcategories of tilting modules of~$B$ and injective 
modules of~$R(B)$. It also restricts to an equivalence between the 
category of~$B$-modules with standard flag and the category of~$R(B)$-modules 
with costandard flag. Finally, its left derived functor induces an equivalence 
\begin{displaymath}
\cL \cR_B\,:\; \cD^b(B\mbox{-mod})\;\,\tilde\to\;\,\cD^b(R(B)\mbox{-mod}).
\end{displaymath}

As $\cO_\lambda$ and $\cO^\mu_0$ are Ringel self-dual, it is natural to compose
the Ringel duality functor with a fixed equivalence which realises the self-duality. 
In case $\lambda=0$, we can choose the Ringel duality functor as 
\begin{equation}\label{eqInRi1}
\overline\cR^\mu:= \cL_{l(w_0^\mu)} C_{w_0}: \;\cO_0^\mu \,\to\, \cO_0^\mu,
\end{equation}
see \cite[Proposition~4.4]{prinjective}. In particular, the restriction 
of~$\cL_{l(w_0^\mu)} C_{w_0}$ to the category~$\cO_0^\mu$ is a right exact functor.

In case $\mu=0$, the Ringel duality functor can be interpreted as
\begin{equation}\label{eqInRi2}
\underline\cR_\lambda:= T_{w_0}:\cO_\lambda\to\cO_\lambda,
\end{equation}
see \cite[Section~4.1]{prinjective}.

\subsection{The centre and coinvariants}\label{sscentre}

For an integral dominant $\lambda$, consider $B_\lambda$ and the 
corresponding semisimple Lie algebra $\fg_\lambda$, generated by the 
root spaces of~$\fg$ corresponding to elements in $\pm B_\lambda$. The 
Weyl group of this algebra is isomorphic to $W_\lambda$. Then 
$\fh_\lambda:=\fg_\lambda\cap \fh$ is a Cartan subalgebra in~$\fg_\lambda$. 
Now we can  consider the algebra of coinvariants for~$W_\lambda$ given by
\begin{displaymath}
\mathtt{C}(W_\lambda):= S(\fh_\lambda)/\langle S(\fh_\lambda)_+^{W_\lambda}\rangle. 
\end{displaymath}
The algebra $\mathtt{C}(W_\lambda)$ inherits a positive grading from 
$S(\fh_\lambda)$ which is defined by giving constants degree $0$ and 
elements of~$\fh$ degree $2$.

For the particular case $W_\lambda=W$, we set $\mathtt{C}:=\mathtt{C}(W)$. 
For $w\in W$, let ${}^{w}\mathtt{C}$ denote
the $\mathtt{C}\text{-}\mathtt{C}$ bimodule obtained from
${}_\mathtt{C}\mathtt{C}_\mathtt{C}$ by twisting 
the left action of~$\mathtt{C}$ by $w$ (note that $W$ acts on $\mathtt{C}$ 
by automorphisms). For $X$ a subgroup of $W$,
let $\mathtt{C}^{X}$ denote the algebra of~$X$-invariants in~$\mathtt{C}$. 
Similarly, for a simple reflection $s$, we denote by $\mathtt{C}^{s}$ 
the algebra of $s$-invariants in~$\mathtt{C}$. 

Consider again an integral dominant $\lambda$ and set 
$\mathtt{C}_\lambda:=\mathtt{C}^{W_\lambda}$. By 
\cite[Endomorphismensatz~7]{SoergelD}, we have
$\End_{\cO_\lambda}(P(w_0\cdot\lambda))\cong\mathtt{C}_\lambda$.
We also recall Soergel's combinatorial functor 
\begin{displaymath}
\mathbb{V}_\lambda=\Hom_{\cO}(P(w_0\cdot\lambda),-):\,\cO_0\to 
\mathtt{C}_\lambda\text{-}\mathrm{mod} 
\end{displaymath}
from \cite[Section~2.3]{SoergelD}. By \cite[Theorem~10]{SoergelD}, we have 
\begin{equation}\label{Soergeltrans}
\mathbb{V} \theta_\lambda^{out}\cong \mathtt{C}
\otimes_{\mathtt{C}_\lambda}\mathbb{V}_\lambda\quad\mbox{and}
\quad\mathbb{V}_\lambda\theta_\lambda^{on}\cong 
\res^{\mathtt{C}}_{\mathtt{C}_\lambda}\mV,
\end{equation}
for any integral dominant $\lambda$. All these statement admit canonical graded lifts, 
as the grading on $\cO$ can be introduced via $\mV_\lambda$, see e.g. \cite{Stroppel}.

From $\mathtt{C}_\lambda\cong \End_{\cO_\lambda}(P(w_0\cdot\lambda))$ and \cite[Lemma~6.2]{Brundan} or 
\cite[Theorem~5.2(2)]{prinjective}, it follows that $\mathtt{C}_\lambda$ is isomorphic to the centre of $A_\lambda$.
More generally, we denote the centre of $A^\mu_\lambda$ by $\mathtt{C}^\mu_\lambda$.

In the remainder of this subsection we consider $\mathfrak{g}=\mathfrak{sl}(n)$. 
Then $W\cong S_n$ and $\mathtt{C}^\mu_\lambda$ has been calculated
in~\cite{Brundan, StroppelCentre}. For an integral dominant~$\lambda$, it follows 
from \cite[Theorem~11]{SoergelD} that $\cO_\lambda$ is uniquely determined, 
up to equivalence, by a composition $p(\lambda)=(p_1,\cdots, p_k)$ 
of $n$. This composition is defined by demanding that $W_\lambda$, as a subgroup 
of $S_n$, is naturally given by $S_{p_1}\times S_{p_2}\times \cdots\times S_{p_k}$. It is well-known, by a result of Borel, that $\mathtt{C}_\lambda$ is, as a graded 
algebra, isomorphic to the cohomology ring of a partial flag variety. 
The Hilbert-Poincar\'e polynomial for $\mathtt{C}_\lambda$, with $p(\lambda)$ as above, 
is hence well-known, see e.g. \cite{Chen}, and is given by
\begin{equation}
\label{HilPoin}
\sum_{i=0}^\infty \dim \left(\mathtt{C}_\lambda\right)_{2i} z^i\,=\, 
\left(\prod_{i=1}^n(1-z^i)\right)/\left( \prod_{j=1}^{k}\prod_{l=1}^{p_j}(1-z^l)\right).
\end{equation}




\subsection{Extension quivers in parabolic category~$\cO$}\label{secext1}

We demonstrate the basic property that the $\Ext^1$-quiver of a parabolic singular block can be read off immediately from the $\Ext^{1}$-quiver of the principal block $\cO_0$.

\begin{proposition}
\label{propquiv}
For~$x,y\in X_\lambda^\mu$, we have an isomorphism
\begin{displaymath}
\Ext^1_{\cO_\lambda^\mu}(L(x\cdot\lambda),L(y\cdot\lambda))\cong\Ext^1_{\cO_0}(L(x),L(y)). 
\end{displaymath}
\end{proposition}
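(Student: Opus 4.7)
The plan is to reduce the identification in two steps: first remove the parabolic restriction, then reduce to the principal block via the Koszul-graded translation functor.

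For the first step, $\cO^\mu$ is a Serre subcategory of $\cO$ (the defining condition of local $U(\fq_\mu)$-finiteness is closed under subobjects, quotients, and hence extensions), so $\cO^\mu_\lambda$ is Serre in the block $\cO_\lambda$. Any extension of $L(x\cdot\lambda)$ by $L(y\cdot\lambda)$ in $\cO_\lambda$ therefore lies in $\cO^\mu_\lambda$, giving
\[\Ext^1_{\cO^\mu_\lambda}(L(x\cdot\lambda), L(y\cdot\lambda)) \cong \Ext^1_{\cO_\lambda}(L(x\cdot\lambda), L(y\cdot\lambda)).\]

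For the second step, I would pass to the Koszul-graded setting. Since both $A_0$ and $A_\lambda$ are Koszul (Section~\ref{prel1}), $\dim\Ext^1(L, L')$ equals the graded multiplicity of $L'\langle 1\rangle$ in the graded projective cover of $L$. The graded identity $\theta_\lambda^{out} P(x\cdot\lambda) = P(x)$ from~\eqref{projout}, together with the exactness of $\theta_\lambda^{out}$, reduces the comparison to an analysis of the composition factors of $\theta_\lambda^{out} L(z\cdot\lambda)$ in low degrees. Only the graded degrees $k=0$ and $k=1$ of $P(x\cdot\lambda)$ can contribute to the degree-$1$ piece of $P(x)$, yielding
\[[P(x):L(y)\langle 1\rangle]\;=\;[\theta_\lambda^{out}L(x\cdot\lambda):L(y)\langle 1\rangle]\;+\;[P(x\cdot\lambda):L(y\cdot\lambda)\langle 1\rangle]\]
for $y\in X_\lambda$. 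The key observation is that the first summand vanishes: the degree-$1$ layer of $\theta_\lambda^{out} L(z\cdot\lambda)$ is supported on simples $L(zu)$ for simple reflections $u\in W_\lambda$, and since $z$ is the longest representative in its $W_\lambda$-coset, every such $zu$ satisfies $zu<z$ and hence leaves $X_\lambda$. This leaves only the ``diagonal'' contribution, yielding $[P(x) : L(y)\langle 1\rangle] = [P(x\cdot\lambda) : L(y\cdot\lambda)\langle 1\rangle]$ for $x, y \in X_\lambda$, which is the desired $\Ext^1$-identification.

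The main technical point is the verification that degree $1$ of $\theta_\lambda^{out} L(z\cdot\lambda)$ avoids $L(y)$ for $y \in X_\lambda$; this is readable either from the Soergel-module description of $\theta_\lambda^{out}$ via~\eqref{Soergeltrans} or from the standard-filtration pattern of $\theta_\lambda^{out}\Delta(z\cdot\lambda)=\bigoplus_{u\in W_\lambda}\Delta(zu)$, together with the structure of the leading composition factors of each $\Delta(zu)$.
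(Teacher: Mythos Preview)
Your first step (the Serre subcategory argument) matches the paper exactly. For the second step the paper takes a completely different route: it applies Koszul duality between $\cO_\lambda$ and $\cO_0^\lambda$, then repeats the Serre-plus-Koszul-duality procedure once more to land in $\cO_0$. No analysis of $\theta_\lambda^{out}$ on simple modules is needed.

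Your direct approach via $\theta_\lambda^{out}$ can be made to work, but there is a genuine gap at the ``key observation''. The assertion that the degree-$1$ layer of $\theta_\lambda^{out}L(z\cdot\lambda)$ is supported only on $\{L(zu):u\in W_\lambda\text{ simple}\}$ is false. Take $\mathfrak{sl}(3)$ with simple reflections $s,t$, let $\lambda$ be $t$-singular, and set $z=t\in X_\lambda$. From the filtration $\Delta(t)\langle0\rangle,\Delta(e)\langle1\rangle$ of $\theta_\lambda^{out}\Delta(t\cdot\lambda)$ and the kernel computation one finds that the degree-$1$ layer of $\theta_\lambda^{out}L(t\cdot\lambda)$ is $L(e)\oplus L(ts)$; the factor $L(ts)$ does not have the form $L(zu)$ for $u\in W_\lambda$. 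What \emph{is} true, and what you actually need, is the weaker statement that no $L(y)$ with $y\in X_\lambda$ occurs in degree~$1$. This does not follow from the standard filtration alone: the piece $\Delta(z)\langle0\rangle$ contributes all $L(w)$ with $[\Delta(z):L(w)\langle1\rangle]\neq0$, and such $w$ can lie in $X_\lambda$.

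The missing ingredient is the graded Verma identity
\[
[\Delta(z\cdot\lambda):L(w\cdot\lambda)\langle j\rangle]\;=\;[\Delta(z):L(w)\langle j\rangle]\qquad(z,w\in X_\lambda),
\]
which is immediate from the exactness of $\theta_\lambda^{on}$ together with~\eqref{normon} and Theorem~\ref{thmDeltaon}. With this in hand, the $X_\lambda$-indexed contributions to degree~$1$ of $\theta_\lambda^{out}\Delta(z\cdot\lambda)$ coming from $\Delta(z)\langle0\rangle$ are exactly cancelled by the degree-$1$ part of the kernel $\theta_\lambda^{out}\bigl(\mathrm{rad}\,\Delta(z\cdot\lambda)\bigr)$, and only factors with index outside $X_\lambda$ survive. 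Once you insert this step, your argument goes through; as written, the justification ``readable from the standard-filtration pattern \ldots\ together with the leading composition factors of each $\Delta(zu)$'' does not account for the degree-$1$ layer of $\Delta(z)$ itself, and the intermediate claim you rely on is incorrect.
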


\begin{proof}
As $\cO^\mu_\lambda$ is a Serre subcategory of~$\cO_\lambda$, we immediately have
\begin{equation*}
\Ext^1_{\cO_\lambda^\mu}(L(x\cdot\lambda),L(y\cdot\lambda))
\cong\Ext^1_{\cO_\lambda}(L(x\cdot\lambda),L(y\cdot\lambda)).
\end{equation*}
Now, the Koszul duality of~\cite{BGS} implies an isomorphism
\begin{displaymath}
\Ext^1_{\cO_\lambda}(L(x\cdot\lambda),L(y\cdot\lambda))
\cong\Ext^1_{\cO^\lambda_0}(L(x^{-1}w_0),L(y^{-1}w_0)).
\end{displaymath}
Applying the same procedure again, but now to the right-hand side above, concludes the proof.
\end{proof}

The elements of~$\Ext^1_{\cO^\mu_\lambda}(L(x\cdot\lambda),L(y\cdot\lambda))$ correspond 
to morphisms from $P^\mu(y\cdot\lambda)$ to $P^\mu(x\cdot\lambda)$ such that the top 
of~$P^\mu(y\cdot\lambda)$ maps to the top of the radical of~$P^\mu(x\cdot\lambda)$. 
As the (Koszul) grading on $A^\mu_\lambda$ is such that the degree $0$ part is 
semisimple and the algebra is generated by the degree~$0$ and~$1$ parts, this can 
also be expressed as
\begin{displaymath}
\Ext^1_{\cO^\mu_\lambda}(L(x\cdot\lambda),L(y\cdot\lambda))
\cong\hom_{\cO^\mu_\lambda}(P^\mu(y\cdot\lambda)\langle1\rangle,P^\mu(x\cdot\lambda)).
\end{displaymath}
Proposition~\ref{propquiv} and equation~\eqref{projout} then imply the following corollary.

\begin{corollary}\label{deg1mor}
For~$x,y\in X_\lambda^\mu$ with $P:=P^\mu(x\cdot\lambda)$ and 
$Q:=P^\mu(y\cdot\lambda)$, the graded translation functor $\theta_\lambda^{out}$ 
induces an isomorphism
\begin{displaymath}
\hom_{\cO^\mu_\lambda}(P\langle 1\rangle,Q  )\cong \hom_{\cO^\mu_0}
(\theta_\lambda^{out}P\langle 1\rangle,\theta_\lambda^{out}Q). 
\end{displaymath}
\end{corollary}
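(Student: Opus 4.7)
The plan is to realise both hom-spaces as $\Ext^1$-groups between simples, conclude that their dimensions agree via two applications of Proposition~\ref{propquiv}, and then verify that the map $\Phi$ induced by $\theta_\lambda^{out}$ is an isomorphism by reducing, via the graded adjunction \eqref{adjtransg}, to showing that a certain unit of adjunction is an isomorphism.

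First, using the degree-$0$/degree-$1$ generation of the Koszul algebra $A^\mu_\lambda$ recalled just before the corollary, I would identify the left-hand side with $\Ext^1_{\cO^\mu_\lambda}(L(y\cdot\lambda),L(x\cdot\lambda))$. For the right-hand side I first derive the parabolic analogue of \eqref{projout}, namely
\[
\theta_\lambda^{out}P^\mu(x\cdot\lambda)\langle 0\rangle = P^\mu(x)\langle 0\rangle,
\]
from \eqref{projout} combined with the commutation of $\theta_\lambda^{out}$ with the Zuckerman quotient $Z^\mu$ applied to $P^\mu(x\cdot\lambda)=Z^\mu P(x\cdot\lambda)$ (valid because $x\in X_\lambda^\mu$). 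The same Koszul argument on $A^\mu_0$ then identifies the right-hand side with $\Ext^1_{\cO^\mu_0}(L(y),L(x))$, and two applications of Proposition~\ref{propquiv}, to $\cO^\mu_\lambda$ and to $\cO^\mu_0$, equate these groups as $\dim\Ext^1_{\cO_0}(L(y),L(x))$-dimensional spaces.

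Second, by \eqref{adjtransg} the right-hand side is naturally isomorphic to
\[
\hom_{\cO^\mu_\lambda}\bigl(P\langle 1\rangle,\; \theta_\lambda^{on}\theta_\lambda^{out}Q\langle l(w_0^\lambda)\rangle\bigr),
\]
and under this identification $\Phi$ becomes post-composition with the unit of adjunction $\eta_Q:Q\to \theta_\lambda^{on}\theta_\lambda^{out}Q\langle l(w_0^\lambda)\rangle$. For $y\in X_\lambda^\mu\subseteq X_\lambda$, the graded normalisation \eqref{normon} forces $\theta_\lambda^{on}P(y)\langle l(w_0^\lambda)\rangle = P(y\cdot\lambda)$ (both being indecomposable graded projective covers of $L(y\cdot\lambda)\langle 0\rangle$), and the commutation of $\theta_\lambda^{on}$ with $Z^\mu$ then gives $\theta_\lambda^{on}\theta_\lambda^{out}Q\langle l(w_0^\lambda)\rangle = P^\mu(y\cdot\lambda)=Q$. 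Since $Q$ is indecomposable and the triangle identity makes $\theta_\lambda^{out}\eta_Q$ an isomorphism, $\eta_Q$ itself must be an isomorphism, which finishes the argument.

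The main obstacle, I expect, lies in pinning down both the parabolic analogue of \eqref{projout} and its dual identity $\theta_\lambda^{on} P^\mu(y)\langle l(w_0^\lambda)\rangle\cong P^\mu(y\cdot\lambda)$ as \emph{graded} isomorphisms with the correct shift. Both boil down to the compatibility of graded translation functors with Zuckerman quotients and to the normalisation of the grading fixed by \eqref{adjtransg} and \eqref{normon}; once the shifts are matched, the argument above goes through without further input.
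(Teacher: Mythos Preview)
Your first step, identifying both hom-spaces with $\Ext^1$-groups between simples and equating their dimensions via Proposition~\ref{propquiv}, is correct and is precisely the paper's argument. The paper then implicitly uses that $\theta_\lambda^{out}$ is faithful (equivalently, that the unit of adjunction is a split monomorphism) to conclude that the induced map is injective, whence an isomorphism by the dimension count.

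Your second step, however, contains a genuine error: the claim that $\theta_\lambda^{on}P(y)\langle l(w_0^\lambda)\rangle\cong P(y\cdot\lambda)$ is false in general, because $\theta_\lambda^{on}P(y)$ is not indecomposable. Already for $\fg=\mathfrak{sl}(2)$ with $\lambda=-\rho$, one has $X_\lambda=\{s\}$ and $\theta_\lambda^{on}P(s)\cong L(-\rho)\langle-1\rangle\oplus L(-\rho)$, while $P(s\cdot\lambda)=L(-\rho)$. More conceptually, Proposition~\ref{thetatheta} gives $\theta_\lambda^{on}\theta_\lambda^{out}Q\langle l(w_0^\lambda)\rangle\cong\bigoplus_{j}Q^{\oplus c_j}\langle j\rangle$, so the unit $\eta_Q$ lands in a module strictly larger than $Q$ whenever $W_\lambda\neq\{e\}$, and cannot be an isomorphism.

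The fix is immediate: the same computation shows that $\eta_Q$ is a split monomorphism (the summand $j=0$ gives a copy of $Q$), so post-composition with $\eta_Q$ is injective, i.e.\ $\theta_\lambda^{out}$ is faithful on these hom-spaces. Combined with your dimension equality from step one, this yields the isomorphism. In other words, your step one already suffices once you replace the incorrect ``$\eta_Q$ is an isomorphism'' by the weaker (and true) ``$\eta_Q$ is a split monomorphism''.
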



\section{Graded versus non-graded derived equivalences}\label{secgranongra}

\subsection{The example of the Koszul duality functor}

The Koszul duality functor in equation~\eqref{Koszuleq} gives an equivalence between 
bounded derived categories of graded modules. We demonstrate, however, that Koszul dual 
algebras are, in general, not derived equivalent as ungraded algebras. 

\begin{proposition}\label{Kdnongrad}
Despite the existence of an equivalence of triangulated categories 
$\cD^b({}^{\mZ}\cO_\lambda^\mu)\cong \cD^b({}^{\mZ}\cO_{\widehat\mu}^\lambda)$ 
as given in  \eqref{KoszuleqO}, in general, the categories $\cD^b(\cO_\lambda^\mu)$ 
and $\cD^b(\cO_{\widehat\mu}^\lambda)$ are not equivalent as triangulated categories.
\end{proposition}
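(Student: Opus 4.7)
The plan is to exhibit a specific small example and distinguish the two triangulated categories by an ungraded derived invariant. Since Hochschild cohomology of a basic finite-dimensional algebra is preserved under derived equivalence (Rickard), any equivalence $\cD^b(\cO^\mu_\lambda)\cong\cD^b(\cO^\lambda_{\widehat\mu})$ would in particular force an isomorphism of ungraded centres $\mathtt{C}^\mu_\lambda=Z(A^\mu_\lambda)\cong Z(A^\lambda_{\widehat\mu})=\mathtt{C}^\lambda_{\widehat\mu}$ as commutative rings. It therefore suffices to find a single $\fg$, $\lambda$, $\mu$ for which these two centres are non-isomorphic rings.

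The minimal candidate I would try is $\fg=\mathfrak{sl}(3)$, $\mu=0$, and $\lambda$ subregular with $p(\lambda)=(2,1)$, so that $W_\lambda=\langle s_1\rangle$. Since $\widehat{0}=0$, this amounts to comparing the singular principal block $\cO_\lambda$ with its Koszul dual, the regular parabolic block $\cO_0^\lambda$. The two centres both have ungraded dimension $|W|/|W_\lambda|=3$, so the distinction must come from the ring structure.

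On the one hand, by Borel's theorem recalled just before \eqref{HilPoin}, $\mathtt{C}_\lambda$ identifies with the cohomology ring of the Grassmannian $G/P_\lambda\cong\mathbb{P}^2$, hence $\mathtt{C}_\lambda\cong\mathbb{C}[x]/(x^3)$, which contains an element with non-zero square. On the other hand, by the work of Brundan and Stroppel cited in Subsection~\ref{sscentre}, $Z(A_0^\lambda)$ identifies with the cohomology ring of the Springer fibre associated to a nilpotent of Jordan type $(2,1)$ in $\mathfrak{sl}(3)$. A direct calculation (choosing the regular nilpotent, computing the $N$-stable flags, and applying Mayer--Vietoris) shows that this Springer fibre is the union of two copies of $\mathbb{P}^1$ meeting transversally in one point; its cohomology is generated in degree $2$ by two classes $\alpha,\beta$ satisfying $\alpha^2=\beta^2=\alpha\beta=0$. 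Every element of the augmentation ideal therefore squares to zero, so $Z(A_0^\lambda)\not\cong\mathbb{C}[x]/(x^3)$ as rings, and Rickard's theorem yields the desired non-equivalence of triangulated categories.

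The main obstacle is invoking the Brundan--Stroppel identification of $Z(A_0^\lambda)$ with Springer-fibre cohomology and pinning down the geometry of the fibre; both steps are by now standard in type~$A$, so this requires citation rather than new argument. The example is consistent with the graded Koszul equivalence \eqref{KoszuleqO}: the shift formula \eqref{KdFshift} shows that Koszul duality intertwines the internal grading with the cohomological one, so it matches the \emph{graded} centres only after a regrading which mixes $\langle j\rangle$ with $[j]$, and there is no reason such a correspondence should descend to the ungraded rings — as the computation above demonstrates, in the present example it cannot.
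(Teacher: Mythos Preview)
Your argument is correct and uses the same high-level strategy as the paper: invoke Rickard's theorem that derived equivalence preserves the centre, then exhibit a pair $(\lambda,\mu)$ for which the centres are non-isomorphic as ungraded rings. Both you and the paper take $\mu$ regular, so the comparison is between the singular block $\cO_\lambda$ and its Koszul dual parabolic block $\cO_0^\lambda$.

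The execution differs. You work out a single concrete example ($\mathfrak{sl}(3)$, $W_\lambda=\langle s_1\rangle$), identify $\mathtt{C}_\lambda\cong H^*(\mathbb{P}^2)\cong\mathbb{C}[x]/(x^3)$ and $Z(A_0^\lambda)$ with the cohomology of the $(2,1)$ Springer fibre (two $\mathbb{P}^1$'s meeting in a point), and distinguish them by the existence of a non-nilpotent-of-order-two element in the augmentation ideal. The paper instead argues in families: the socle of $\mathtt{C}_\lambda$ is always simple (Poincar\'e duality for $G/P_\lambda$), whereas Brundan's description of $Z(A_0^\lambda)$ forces its socle to have dimension at least the number of indecomposable projective-injectives in $\cO_0^\lambda$, which equals the size of the right Kazhdan--Lusztig cell of $w_0^\lambda w_0$ and is $>1$ whenever $\lambda$ is singular but not maximally so. Specialised to your example this gives socle dimensions $1$ versus $2$, which is exactly what your explicit rings show. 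Your approach is more elementary and self-contained for the statement as written (``in general, not equivalent''), while the paper's formulation immediately indicates how broadly the phenomenon occurs.

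One minor wording issue: you write ``choosing the regular nilpotent'' when you mean a nilpotent of Jordan type $(2,1)$, i.e.\ the \emph{subregular} orbit in $\mathfrak{sl}(3)$; the regular nilpotent has a single-point Springer fibre. This is a slip of terminology, not of mathematics.
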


\begin{proof}
Note that~$A_\lambda^\mu$ and $A_{\widehat\mu}^\lambda\cong E(A^\mu_\lambda)$ 
are not derived equivalent, if their centres are not isomorphic, see \cite[Proposition~9.2]{Rickard}.
Consider the socle of the left regular module for the two centres. For $A_\lambda$
this socle is simple, by Subsection~\ref{sscentre}. It follows easily from \cite[Lemma~6.2]{Brundan}
that the number of simple modules in that socle for $A^\lambda$ must be at least
the number of non-isomorphic indecomposable projective-injective modules.
This is the cardinality of the right Kazhdan-Lusztig 
cell of $w_0^\lambda w_0$.
\end{proof}

\subsection{Gradable derived equivalences}

In this subsection we define a special case of the concept of a derived equivalence 
(see \cite[Definition~6.5]{Rickard}), which we will investigate for category 
$\cO$ further in the paper.

\begin{definition}\label{defgradeq}
Two finite dimensional graded algebras $B$ and $D$ are said to be gradable 
derived equivalent if one of the following equivalent properties is satisfied.
\begin{enumerate}[$($i$)$]
\item\label{refnew21} There is a triangulated  equivalence from 
$\cD^b(B\mbox{-\rm gmod})$ to $\cD^b(D\mbox{-\rm gmod})$ which commutes with 
the degree shift functor $\langle 1\rangle$.
\item\label{refnew21-2} There is a triangulated  equivalence from 
$\cD^b(B\mbox{-\rm mod})$ to $\cD^b(D\mbox{-\rm mod})$, with inverse $G$, 
such that $F$ and $G$ admit graded lifts.
\end{enumerate}
\end{definition}

\begin{remark}
{\rm 
It is clear that the failure of the Koszul duality functor to satisfy 
the requirement in Definition~\ref{defgradeq}\eqref{refnew21}, see equation \eqref{KdFshift}, is precisely 
what prevents it from being the graded lift of an ungraded equivalence.
}
\end{remark}

Now we state and prove two simple propositions which demonstrate 
equivalence of the two properties in Definition~\ref{defgradeq}.

\begin{proposition}
\label{descequiv}
Consider two finite dimensional graded algebras $B$ and~$D$ such that there is an equivalence
\begin{displaymath}
\widetilde F:\cD^b(B\mbox{-\rm gmod})\to \cD^b(D\mbox{-\rm gmod}).
\end{displaymath}
If $\widetilde F$ commutes with  $\langle 1\rangle$, 
then $\tilde F$ is a graded lift of an equivalence
\begin{displaymath}
F:\cD^b(B\mbox{-\rm mod})\to \cD^b(D\mbox{-\rm mod}).
\end{displaymath}
Moreover, if $\widetilde G$ is inverse to $\widetilde F$, then $\widetilde G$ is the graded lift of an inverse $G$ to $F$.
\end{proposition}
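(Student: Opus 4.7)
The plan is to invoke Rickard's Morita theorem for derived categories. Set $\widetilde{T}:=\widetilde{F}(B)\in\cD^b(D\mbox{-gmod})$, let $\cF_D:\cD^b(D\mbox{-gmod})\to\cD^b(D\mbox{-mod})$ denote the forgetful functor, and put $T:=\cF_D(\widetilde{T})$. I aim to verify that $T$ is a tilting complex in $\cD^b(D\mbox{-mod})$ with endomorphism algebra the underlying ungraded algebra of $B$; Rickard's theorem then provides a triangulated equivalence $F:\cD^b(B\mbox{-mod})\to\cD^b(D\mbox{-mod})$ satisfying $F(B)\cong T$. The same recipe applied to $\widetilde{G}$ supplies the candidate inverse $G$.

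The key computational input is the identity
\begin{displaymath}
\Hom_{\cD^b(D\mbox{-mod})}(T,T[n])\;\cong\;\bigoplus_{i\in\mZ}\Hom_{\cD^b(D\mbox{-gmod})}(\widetilde{T},\widetilde{T}\langle i\rangle[n]),
\end{displaymath}
valid because $\widetilde{T}$ is perfect (being the image under an equivalence of $B$) and a homomorphism between finite-dimensional graded modules splits into its homogeneous components. Since $\widetilde{F}$ commutes with both $[1]$ and $\langle 1\rangle$, each summand equals $\Ext^n_{B\mbox{-gmod}}(B,B\langle i\rangle)$, which vanishes for $n\neq 0$ and, at $n=0$, reassembles precisely to the underlying ungraded $B$. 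For generation, the collection $\{B\langle i\rangle\}_{i\in\mZ}$ thickly generates $\cD^b(B\mbox{-gmod})$, so $\{\widetilde{T}\langle i\rangle\}_{i\in\mZ}=\{\widetilde{F}(B\langle i\rangle)\}_{i\in\mZ}$ thickly generates $\cD^b(D\mbox{-gmod})$; applying $\cF_D$ and using $\cF_D(D\langle 0\rangle)=D$, the object $T$ thickly generates $\cD^b(D\mbox{-mod})$. Rickard's theorem then delivers $F$, and the same argument applied to $\widetilde{G}$ produces $G$.

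To identify $\widetilde{F}$ as a graded lift of $F$, I would realize $\widetilde{F}$ through a two-sided graded tilting complex $\widetilde{P}$ of $D\mbox{-}B$-bimodules; its ungraded forget $P$ is then a two-sided tilting complex representing $F$, yielding the natural isomorphism $\cF_D\circ\widetilde{F}\cong F\circ\cF_B$ automatically. The same construction for $\widetilde{G}$ supplies $\cF_B\circ\widetilde{G}\cong G\circ\cF_D$, and then $FG(D)\cong\cF_D(\widetilde{F}\widetilde{G}(D))\cong\cF_D(D)=D$ together with compatible action on endomorphisms forces $FG\cong\id$ by Rickard uniqueness, and symmetrically $GF\cong\id$. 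The main obstacle is precisely this last coherence step: upgrading agreement on a single generator to a natural isomorphism of functors. The cleanest formalisation is Keller's enhancement of Rickard's theorem via two-sided tilting complexes, where the forgetful functor on bimodules directly intertwines the graded and ungraded equivalences and the required isomorphism becomes tautological. All other steps reduce to the Hom-decomposition identity together with the facts that forgetful functors preserve triangles, compact objects, and thick-subcategory generators.
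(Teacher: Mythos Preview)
Your approach is essentially the same as the paper's: build a tilting complex by transporting a projective generator through the graded equivalence, forget the grading, and invoke Rickard's theorem. The paper goes in the opposite direction---it forgets the grading on $\widetilde{F}^{-1}(P_D)$ to obtain a tilting complex in $\cD^b(B\text{-mod})$ with endomorphism ring $D^{\mathrm{opp}}$---but this is symmetric to your choice of $\widetilde{F}(B)$. Your Hom-decomposition computation and generation argument are correct and match what the paper calls ``straightforward''.

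Where you diverge is in how seriously you treat the claim that $\widetilde{F}$ is a graded lift of $F$. You flag this as the main obstacle and propose realising $\widetilde{F}$ via a two-sided graded tilting complex so that forgetting the grading on the bimodule gives the required natural isomorphism $\cF_D\circ\widetilde{F}\cong F\circ\cF_B$. This is a legitimate route, but be aware that it invokes more than Rickard's 1989 theorem: you need that the given $\widetilde{F}$ is itself standard (derived tensor with a bimodule complex), which over a field follows from later work of Rickard and Keller but is not automatic from the bare triangulated statement. The paper, by contrast, is content with a looser reading: it constructs $F$ from the tilting complex and does not spell out the coherence isomorphism, then verifies that $G$ is inverse to $F$ by checking that $FG$ and $GF$ restrict to the identity on projectives and invoking that triangulated functors agreeing on generators agree everywhere. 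Your ``Rickard uniqueness'' appeal for $FG\cong\id$ is in the same spirit. So your proof is correct at the same level of rigor as the paper's, and your extra care about the lift is a genuine refinement---just be precise about which version of the bimodule realisation theorem you are citing.
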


\begin{proof}
Take a minimal projective generator $P_D$ for $D$-mod and the graded lift $\widetilde P_D$.
Denote by $\cT^\bullet$ the object in $\cD^b(B\mbox{-mod})$ obtained by forgetting the grading on
$\widetilde{F}^{-1}(P_D^\bullet)$. One deduces straightforwardly that
$\cT^\bullet$ is a tilting complex according to \cite[Definition~6.5]{Rickard} with $D^{\mathrm{opp}}\cong \End_{\cD^b(B\mbox{-}{\rm mod})}(\cT^\bullet)$.
The first assertion of the proposition then follows from \cite[Theorem~6.4]{Rickard}.

Now we consider the inverse $\widetilde G$. From $\widetilde F\widetilde G\cong \Id\cong\widetilde G\widetilde F$ and $\widetilde F\langle1\rangle\cong \langle 1\rangle \widetilde F$ it follows that 
$\widetilde G\langle 1\rangle\cong\langle 1\rangle\widetilde G.$
Hence we can use the first part to obtain that $\widetilde G$ is the graded lift of a triangulated functor $G$. The construction, moreover, implies that $FG$ and $GF$ acts as identity functors restricted to the subcategories of projective modules. As they are triangulated functors it follows that $F$ and $G$ are mutually inverse.
\end{proof}

\begin{proposition}\label{liftequiv}
Let $B$ and $D$ be finite dimensional graded algebras and 
\begin{displaymath}
F:\;\cD^b(B\mbox{-\rm mod})\;\to\; \cD^b(D\mbox{-\rm mod})
\end{displaymath}
an equivalence with inverse $G$. Assume that $F$ and $G$ 
admit graded lifts $\widetilde F$ and $\widetilde G$, respectively.
Then $\widetilde F$ gives an equivalence of triangulated categories
\begin{displaymath} 
\widetilde F:\cD^b(B\mbox{-\rm gmod})\to \cD^b(D\mbox{-\rm gmod}).
\end{displaymath}
\end{proposition}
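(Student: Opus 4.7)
The plan is to verify that the given graded lift $\widetilde F$ is both fully faithful and essentially surjective as a triangulated functor $\cD^b(B\text{-gmod}) \to \cD^b(D\text{-gmod})$, using $\widetilde G$ as an auxiliary tool for the essential surjectivity. I would first invoke the standing convention that a graded lift of a functor commutes with the grading shift $\langle 1 \rangle$ up to natural isomorphism, so that both $\widetilde F$ and $\widetilde G$ have this property.

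For full faithfulness, the idea is to exploit the canonical decomposition, obtained from graded projective resolutions: for any $\widetilde M, \widetilde N \in \cD^b(B\text{-gmod})$ with underlying ungraded objects $M, N$,
\begin{equation*}
\Hom_{\cD^b(B\text{-mod})}(M, N) \;\cong\; \bigoplus_{j \in \mZ} \Hom_{\cD^b(B\text{-gmod})}(\widetilde M, \widetilde N\langle j \rangle),
\end{equation*}
and analogously on the $D$-side. Since $\widetilde F$ lifts $F$ and intertwines each shift $\langle j \rangle$, the isomorphism induced by $F$ on ungraded $\Hom$ decomposes as the direct sum over $j$ of the graded maps
\begin{equation*}
\widetilde F : \Hom_{\cD^b(B\text{-gmod})}(\widetilde M, \widetilde N\langle j\rangle) \longrightarrow \Hom_{\cD^b(D\text{-gmod})}(\widetilde F \widetilde M, \widetilde F \widetilde N\langle j\rangle).
\end{equation*}
Since the total map is an isomorphism and the decomposition is respected by $\widetilde F$, each summand must be an isomorphism. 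This would give full faithfulness.

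For essential surjectivity, I would pick an indecomposable $\widetilde N \in \cD^b(D\text{-gmod})$ and observe that $\widetilde F \widetilde G \widetilde N$ is a graded lift of the ungraded object $FG N \cong N$. Invoking the standard fact that any two graded lifts of a given indecomposable complex over a finite dimensional graded algebra differ by a unique grading shift, I obtain $\widetilde F \widetilde G \widetilde N \cong \widetilde N \langle k \rangle$ for some $k \in \mZ$, so that $\widetilde N \cong \widetilde F(\widetilde G \widetilde N \langle -k \rangle)$ lies in the essential image. The general case then follows by Krull--Schmidt, which holds in $\cD^b(D\text{-gmod})$ because $D$ is finite dimensional.

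The main hurdle I expect is justifying the uniqueness-up-to-shift of graded lifts of indecomposable complexes in the bounded derived category, which is needed for essential surjectivity. For indecomposable modules this is classical, and the extension to $\cD^b$ should follow by representing complexes via minimal graded projective resolutions combined with the Krull--Schmidt property; a precise reference (for instance to \cite{BGS} or \cite{Stroppel}) may be needed to nail this down.
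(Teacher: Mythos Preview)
Your proposal is correct and follows essentially the same argument as the paper, only with the two halves in reverse order. The paper proves essential surjectivity first, using precisely the uniqueness-up-to-shift of graded lifts of indecomposables that you identify as the main hurdle; for this it cites \cite[Lemma~2.5.3]{BGS} and remarks that the proof extends to the derived category, which is exactly the reference you were looking for. Full faithfulness is then deduced from the same direct-sum decomposition of the ungraded $\Hom$ over grading shifts that you describe.
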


\begin{proof}
First we prove that $\widetilde F$ is essentially surjective (dense). 
Consider the forgetful functor $f_D:\cD^b(D\mbox{-gmod})\to \cD^b(D\mbox{-mod})$ and an indecomposable object $\cX^\bullet$ in $\cD^b(D\mbox{-gmod})$ 
and set $\cY^\bullet:=\widetilde G\cX^\bullet$. As we have
$f_D \widetilde F\widetilde G\cong f_D$, it follows that $f_D\widetilde F (\cY^\bullet)\;\cong \;f_D\cX^\bullet.$
By \cite[Lemma~2.5.3]{BGS}, the proof of which extends easily to the 
derived category, we then find that there is $j\in\mN$ such that 
$\cX^\bullet \cong \widetilde F(\cY^\bullet)\langle j\rangle$, or
\begin{displaymath}
\widetilde F (\cY^\bullet \langle j\rangle)\;\cong \cX^\bullet.
\end{displaymath}
The density of $\widetilde F$ hence follows.

For $\cX^\bullet, \cY^\bullet\in \cD^b(B\mbox{-gmod})$, we have the commutative diagram
\begin{displaymath}
\xymatrix{
\bigoplus_{j\in\mN}\Hom_{\cD^b(B\mbox{-}\rm{gmod})}(\cX^\bullet,\cY^\bullet
\langle j\rangle)\ar[r]^{\widetilde F}\ar[d]^{ f_B}&  
\bigoplus_{j\in\mN}\Hom_{\cD^b(D\mbox{-}\rm{gmod})}
(\widetilde{F}\cX^\bullet,\widetilde{F}\cY^\bullet\langle j\rangle)\ar[d]^{f_D}\\
\Hom_{\cD^b(B\mbox{-}\rm{mod})}(f_B\cX^\bullet,f_B\cY^\bullet)\ar[r]^{F}&
\Hom_{\cD^b(D\mbox{-}\rm{mod})}(f_D\widetilde{F}\cX^\bullet,f_D\widetilde{F}\cY^\bullet )\\
} 
\end{displaymath}
Here $F$, $f_B$ and $f_D$ act by isomorphisms and hence so does $\widetilde F$. As $\widetilde F$ respects it then follows easily that $\widetilde F$ is full and faithful.
Hence $\widetilde F$ is an equivalence of categories. By construction it 
is also an equivalence of triangulated categories.
\end{proof}

Finally, we note the following immediate extension of \cite[Proposition~9.2]{Rickard}.

\begin{lemma}\label{gradedcentre}
Consider two finite dimensional graded algebras $B,D$ which are gradable derived equivalent. Then there is an isomorphism of graded algebras $\cZ(B)\cong \cZ(D)$, with canonically inherited grading on the centres.
\end{lemma}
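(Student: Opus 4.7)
The plan is to adapt Rickard's proof of \cite[Proposition~9.2]{Rickard} to the graded setting by identifying the graded centre with a space of natural transformations of the identity functor on the derived category of graded modules, and then exploiting the compatibility of the equivalence with $\langle 1\rangle$ to transport this structure.

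First I would recall (or quickly verify) that for a finite dimensional graded algebra $B$, the graded centre $\cZ(B)$ decomposes as $\cZ(B)=\bigoplus_{j\in\mZ}\cZ(B)_j$, where $\cZ(B)_j$ consists of central elements of degree $j$. Each such $z\in\cZ(B)_j$ induces a natural transformation $\eta_z:\Id\Rightarrow \langle j\rangle$ on $B$-gmod by $\eta_z(M):M\to M\langle j\rangle$, $m\mapsto zm$; conversely, any such natural transformation is determined by its action on ${}_BB$, which yields a degree $j$ central element. Passing to $\cD^b(B\text{-gmod})$ and using that projective resolutions exist and that $\Id$ is a triangulated endofunctor, one obtains a canonical isomorphism of graded algebras
\begin{displaymath}
\cZ(B)\;\cong\;\bigoplus_{j\in\mZ}\mathrm{Nat}\bigl(\Id_{\cD^b(B\text{-gmod})},\,\langle j\rangle\bigr),
\end{displaymath}
where the right hand side is equipped with composition and the obvious $\mZ$-grading, and with the algebra structure matching the centre's multiplication.

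Next I would invoke Definition~\ref{defgradeq}\eqref{refnew21} to obtain a triangulated equivalence $\widetilde F:\cD^b(B\text{-gmod})\to \cD^b(D\text{-gmod})$ satisfying $\widetilde F\langle 1\rangle\cong\langle 1\rangle\widetilde F$, and hence $\widetilde F\langle j\rangle\cong\langle j\rangle\widetilde F$ for all $j\in\mZ$. Conjugation by $\widetilde F$ induces, for each $j$, a bijection
\begin{displaymath}
\mathrm{Nat}(\Id,\langle j\rangle)\;\tilde\to\;\mathrm{Nat}(\widetilde F,\widetilde F\langle j\rangle)\;\tilde\to\;\mathrm{Nat}(\widetilde F,\langle j\rangle \widetilde F)\;\tilde\to\;\mathrm{Nat}(\Id,\langle j\rangle)
\end{displaymath}
on the respective derived categories, where the middle isomorphism uses the fixed natural isomorphism $\widetilde F\langle j\rangle\cong\langle j\rangle\widetilde F$. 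Taking the direct sum over $j$ gives a graded $\mZ$-linear isomorphism between the two graded centres. Compatibility with composition (and hence with the algebra structure) is immediate from the fact that conjugation by an equivalence is a ring homomorphism on natural transformations of the identity.

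The only genuinely delicate point is verifying the identification of $\cZ(B)$ with graded natural transformations of $\Id$ on $\cD^b(B\text{-gmod})$; this is where one has to mildly enhance the argument of \cite[Section~9]{Rickard} rather than quote it directly. Once that is in place, the rest is a formal manipulation, and the desired isomorphism $\cZ(B)\cong \cZ(D)$ of graded algebras follows.
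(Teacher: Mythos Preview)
Your proposal is correct and follows precisely the approach the paper indicates: the paper offers no proof beyond noting that this lemma is an immediate extension of \cite[Proposition~9.2]{Rickard}, and your argument spells out that extension by replacing endomorphisms of the identity functor with natural transformations $\Id\Rightarrow\langle j\rangle$ and using the compatibility of the equivalence with $\langle 1\rangle$. (One minor convention point: with the paper's shift convention $(M\langle b\rangle)_i=M_{i-b}$, a degree-$j$ central element $z$ gives a graded morphism $M\to M\langle -j\rangle$ rather than $M\to M\langle j\rangle$, but this sign is immaterial to the argument.)
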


\subsection{An application: derived shuffling in the parabolic setting}

We use the results of the previous subsection to extend  \eqref{dershuff0} to the form of \eqref{dertwist} 
and \eqref{commtwtr}. First we point out two subtleties (a) and (b). Consider the exact inclusion $\imath^\mu:\cO^\mu_0\to\cO_0$ leading to a faithful (see e.g. \cite[Lemma 2.6]{Back}) triangulated functor 
\begin{equation*}\label{-imatheq}
\imath^\mu:\;\cD^b(\cO_0^\mu)\;\to\;\cD^b(\cO_0).
\end{equation*}
Hence $\cD^b(\cO_0^\mu)$ is canonically equivalent to a subcategory 
of $\cD^b(\cO_0)$.
 
(a) By construction, $\cO^\mu_0$ is a full Serre subcategory of $\cO_0$ and $C_s$ restricts to a right exact endofunctor of 
$\cO^\mu_0$, which we denote by $C^{\mu}_s$. However, its left derived 
functor $\cL C^{\mu}_s$ need not be isomorphic to the restriction of 
$\cL C_s$ to $\cD^b(\cO_0^\mu)$, viewed as a subcategory 
as above. A trivial example is $\fg=\mathfrak{sl}(2)$, as then 
$C_s^{\mu}\cong 0$ for $\mu$ singular. However, a restriction of $\cL C_s$ 
is never zero as $\cL C_s$ is an equivalence.

(b) The objects of the subcategory $\cD^b(\cO_0^\mu)$ are 
the complexes in $\cD^b(\cO_0)$ for which the module in each 
position is a module in the subcategory $\cO_0^\mu$ of $\cO_0$. 
However, this subcategory is neither {\em full}, nor {\em isomorphism closed}. 
It is not full as, for instance, there 
can be higher extensions in $\cO_0$ between projective objects in 
$\cO_0^\mu$. It is not isomorphism closed, see, for instance, the projective resolution in $\cD^b(\cO_0)$ of a 
module in $\cO^\mu_0$. The functor
$\cL C_{s}$ maps, by the definition of a derived functor, objects in 
$\cD^b(\cO_0^\mu)$ to something only {\em isomorphic} to objects in 
$\cD^b(\cO_0^\mu)$. To properly define a restriction of $\cL C_{s}$ to $\cD^b(\cO_0^\mu)$ 
is hence a non-trivial problem.

\begin{proposition}\label{commdiagshuff}
For any $w\in W$, there is an endofunctor $\cL C_{w}$ of $\cD^b(\cO_0^\mu)$, which yields an auto-equivalence and admits a commuting diagram
\begin{equation*}
\xymatrix{
\cD^b(\cO_0)\ar[rr]^{\cL C_w}&&\cD^b(\cO_0)\\
\cD^b(\cO_{0}^\mu)\ar[rr]^{\cL C_w}\ar[u]^{\imath^\mu}&&
\cD^b(\cO_{0}^\mu)\ar[u]^{\imath^\mu}. \\
}
\end{equation*}
The same holds in the graded setting.
\end{proposition}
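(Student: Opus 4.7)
The plan is to construct $\cL C_w$ on $\cD^b(\cO_0^\mu)$ and to verify the commuting square first in the graded setting using Koszul duality, and then to descend to the ungraded setting via Proposition~\ref{descequiv}. In the graded setting, I would transport the graded derived twisting functor $\cL T_w$ on $\cD^b({}^\mZ\cO_{\widehat\mu})$, which is an auto-equivalence by the graded lift of~\eqref{dertwist}, through the Koszul duality equivalence $\cK:=\cK^\mu_0\colon\cD^b({}^{\mZ}\cO^\mu_0)\to\cD^b({}^{\mZ}\cO_{\widehat\mu})$ from~\eqref{KoszuleqO}, obtaining a graded auto-equivalence of $\cD^b({}^{\mZ}\cO^\mu_0)$. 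For $\mu=0$ the analogous construction, using the Koszul self-duality of $\cO_0$ together with the intertwining of twisting and shuffling under Koszul duality from~\cite[Section~6.5]{MOS}, recovers the graded lift of $\cL C_w$ on $\cD^b({}^\mZ\cO_0)$, justifying the use of the same notation $\cL C_w$ for the new functor on $\cD^b({}^\mZ\cO^\mu_0)$.

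The commuting square in the graded setting would then follow by Koszul-dualising the graded version of~\eqref{commtwtr} between $\cL T_w$ and $\theta^{out}_{\widehat\mu}$, using that under Koszul duality the exact inclusion $\imath^\mu$ corresponds to the graded translation functor $\theta^{out}_{\widehat\mu}$ (up to the relevant Koszul shifts); this latter intertwining is obtained by passing to right adjoints in the Koszul duality of~\cite{RH} between the Zuckerman functor $Z^\mu$ and the translation to the wall $\theta^{on}_{\widehat\mu}$. To pass to the ungraded statement, I would apply Proposition~\ref{descequiv}: although $\cK$ itself only twists the degree shift $\langle 1\rangle$ according to~\eqref{KdFshift}, these twists cancel when $\cK$ is used to conjugate $\cL T_w$, so the graded endofunctor commutes with $\langle 1\rangle$ and hence descends to an ungraded auto-equivalence. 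The graded commutative square then reduces, upon forgetting gradings, to the required ungraded diagram.

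The main obstacle is pinning down the Koszul-duality intertwining of $\imath^\mu$ with $\theta^{out}_{\widehat\mu}$ in the parabolic-singular setting, which goes beyond the principal-block correspondences of~\cite{RH,MOS}. A complete treatment is exactly the content of Section~\ref{secGradTrans}; for this proposition, however, it should already suffice to combine the principal-block Koszul-duality intertwinings with the abstract equivalence~\eqref{KoszuleqO} and the naturality of adjunctions under derived equivalences.
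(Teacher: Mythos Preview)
Your proposal is correct and follows essentially the same route as the paper: conjugate the graded $\cL T_w$ on $\cD^b({}^\mZ\cO_{\widehat\mu})$ by the Koszul duality functor to define the parabolic $\cL C_w$, Koszul-dualise the graded version of~\eqref{commtwtr} via the intertwining $\imath^\mu\leftrightarrow\theta^{out}_{\widehat\mu}$ to obtain the commuting square, and then descend to the ungraded setting using Proposition~\ref{descequiv} and the cancellation of Koszul shifts. Your concern about the intertwining is unnecessary: the case needed here (regular parabolic versus singular non-parabolic) is exactly what \cite{RH} and \cite[Sections~6.4--6.5]{MOS} already establish, and the paper cites precisely these references (alongside the later Proposition~\ref{RHresult} only for convenience), so the full generality of Section~\ref{secGradTrans} is not required.
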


\begin{proof}
An inverse of $\cL T_w$ is given by by $\dd\cL T_{w}\dd$, 
see \cite[Section~4]{AS}. Proposition~\ref{liftequiv} 
thus implies that the graded lift of $\cL T_w$ induces an 
auto-equivalence of $\cD^b({}^{\mZ}\cO_\lambda)$. So the diagram \eqref{commtwtr} admits a graded lift with equivalences
 on the horizontal arrows.
Then we apply \cite[Sections 6.4 and 6.5]{MOS}, see also \cite{RH} 
or Proposition~\ref{RHresult}. This implies a commutative diagram, 
where the horizontal arrows are equivalences
\begin{displaymath}
\xymatrix{
\cD^b({}^{\mZ}\cO_0)\ar[rr]^{\cL C_w}&&\cD^b({}^{\mZ}\cO_0)\\
\cD^b({}^{\mZ}\cO_{0}^\mu)\ar[rr]^{\widetilde F}\ar[u]^{\imath^\mu}&&
\cD^b({}^{\mZ}\cO_{0}^\mu)\ar[u]^{\imath^\mu}, \\
}
\end{displaymath}
for $\widetilde{F}:=(\cK^{\widehat\mu})^{-1}\circ\cL T_{w}\circ\cK^{\widehat\mu}$. 
As $\cL T_w$ is a triangulated functor which commutes with $\langle 1\rangle$, equation \eqref{KdFshift} implies that $\widetilde{F}$ commutes with $\langle 1\rangle$, so 
Proposition~\ref{descequiv} implies that $\widetilde{F}$ is the lift of an ungraded 
equivalence and hence a gradable derived equivalence.
\end{proof}
 
\begin{remark}{\rm
Consider the complex $0\to \mathrm{Id}_{\cO_0}\to \theta_s\to 0$ of exact functors, 
where the non-zero map is given by the adjunction morphism, {\it cf.} \cite{Rickard2} and
\cite[Remark~5.8]{shuffling}. Applying this to a complex in 
$\cD^b(\cO_0)$ and taking the total complex defines a triangulated endofunctor of 
$\cD^b(\cO_0)$, isomorphic to $\cL C_{s}$. This endofunctor, by construction,
preserves the image of $\cD^b(\cO_0^\mu)$ under $\imath^\mu$, since both the identity and
$\theta_s$ do. This gives an alternative construction of the equivalence in 
Proposition~\ref{commdiagshuff}.
}
\end{remark}

\section{Graded translation functors}\label{secGradTrans}

\subsection{Translation through the principal block}

\begin{proposition}\label{thetatheta}
As graded functors, we have
\begin{displaymath}
\theta^{on}_\lambda\theta^{out}_\lambda\langle 0\rangle\,\cong\, 
\bigoplus_{j\in\mN}\, \Id_{\cO_\lambda}^{\oplus c_j}\langle 
j-l(w_0^\lambda)\rangle,\quad\mbox{with}\quad c_j:= 
\dim \left(\mathtt{C}(W_\lambda)\right)_{j}.
\end{displaymath}
In particular, $\pm l(w_0^\lambda)$ are exactly the extremal degrees in 
which $\Id$ appears.
\end{proposition}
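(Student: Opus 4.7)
The plan is to reduce the statement to a module-theoretic computation via Soergel's combinatorial functor $\mV_\lambda$. First, observe that both sides of the claimed isomorphism are exact endofunctors of ${}^{\mZ}\cO_\lambda$ (each translation factor is exact and shifts of the identity are exact). It therefore suffices to establish the isomorphism on the additive subcategory of graded projective modules, where, by the Struktursatz of \cite{SoergelD}, the functor $\mV_\lambda$ is fully faithful in its appropriate graded lift. Using the graded lift of \eqref{Soergeltrans}, together with the grading shift forced by the normalisations \eqref{normon}, \eqref{projout} and the graded adjunction \eqref{adjtransg}, we obtain
\begin{displaymath}
\mV_\lambda\bigl(\theta^{on}_\lambda\theta^{out}_\lambda M\bigr)\;\cong\; \res^{\mathtt{C}}_{\mathtt{C}_\lambda}\bigl(\mathtt{C}\otimes_{\mathtt{C}_\lambda}\mV_\lambda M\bigr)\langle -l(w_0^\lambda)\rangle,
\end{displaymath}
naturally in $M$. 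This reduces the problem to determining the graded left $\mathtt{C}_\lambda$-module structure of $\mathtt{C}$.

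The central algebraic input I would establish is the identity
\begin{displaymath}
\mathtt{C}\;\cong\;\bigoplus_{j\in\mN}\mathtt{C}_\lambda\langle j\rangle^{\oplus c_j}
\end{displaymath}
of graded $\mathtt{C}_\lambda$-modules. Freeness of $\mathtt{C}$ over $\mathtt{C}_\lambda$ is a consequence of Chevalley's theorem, since $S(\fh)$ is free over $S(\fh)^{W_\lambda}$ and this property descends to the coinvariant quotient. The graded rank is then fixed by the Hilbert--Poincar\'e identity $p(\mathtt{C})=p(\mathtt{C}_\lambda)\cdot p(\mathtt{C}(W_\lambda))$, which one verifies directly from the product formulas for the Hilbert series of coinvariant rings of finite reflection groups; equivalently, $V:=\mathtt{C}/\mathtt{C}_\lambda^+\mathtt{C}$ has the same graded dimension as $\mathtt{C}(W_\lambda)$.

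Combining these two displays, one obtains
\begin{displaymath}
\mV_\lambda\bigl(\theta^{on}_\lambda\theta^{out}_\lambda M\bigr)\;\cong\;\bigoplus_{j\in\mN}\bigl(\mV_\lambda M\bigr)^{\oplus c_j}\langle j-l(w_0^\lambda)\rangle,
\end{displaymath}
and the proposition follows from the full faithfulness of $\mV_\lambda$ on projectives combined with the fact that both functors in question are determined by their action on a projective generator. For the extremal-degree assertion, $\mathtt{C}(W_\lambda)$ is Frobenius with bottom degree $0$ and top degree $2l(w_0^\lambda)$ (with the convention that $\fh$ lies in degree~$2$), so $c_j\neq 0$ precisely for $j\in\{0,2,\ldots,2l(w_0^\lambda)\}$ and $c_0=c_{2l(w_0^\lambda)}=1$; after the shift by $-l(w_0^\lambda)$ the extremal values become $\pm l(w_0^\lambda)$.

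The main obstacle is the precise bookkeeping of the grading shift $l(w_0^\lambda)$: it is invisible in the ungraded intertwining formula, and must be extracted from the asymmetric graded adjunction \eqref{adjtransg}. It is precisely this shift that renders the multiset of grading shifts symmetric about $0$, a symmetry forced abstractly by the (graded) self-adjointness of $\theta^{on}_\lambda\theta^{out}_\lambda$ and serving as a useful sanity check on the calculation.
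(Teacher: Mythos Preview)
Your argument is correct and follows the same overall reduction via $\mV_\lambda$ and \eqref{Soergeltrans} that the paper uses, but the algebraic core is handled differently. You prove directly that $\mathtt{C}$ is a graded free $\mathtt{C}_\lambda$-module with graded rank given by the Hilbert series of $\mathtt{C}(W_\lambda)$, invoking Chevalley's theorem for the reflection subgroup $W_\lambda$ and a Hilbert series identity. The paper instead imports the \emph{ungraded} decomposition of $\theta^{on}_\lambda\theta^{out}_\lambda$ from \cite{BG} and \cite{Jantzen}, which already forces $\mathtt{C}$ to be free over $\mathtt{C}_\lambda$ of total rank $|W_\lambda|$; it then only needs to identify the grading shifts, which it does by constructing a graded surjection $\mathtt{C}\twoheadrightarrow\mathtt{C}(W_\lambda)$ annihilated by $(\mathtt{C}_\lambda)_+$, yielding the inequality $d_j\ge c_j$ which becomes an equality upon summing. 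Your route is more self-contained and avoids the external ungraded input, at the cost of invoking the freeness statement for coinvariants of a parabolic reflection subgroup (standard, but not proved in the paper); the paper's route trades that for a softer inequality argument combined with a classical representation-theoretic fact.
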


\begin{proof}
The ungraded statement follows easily from \cite[Theorem~3.3]{BG} and \cite[Formula~4.13(1)]{Jantzen}.
It thus suffices to prove that
\begin{displaymath}
\mV_\lambda \theta_\lambda^{on}\theta_{\lambda}^{out}\;\cong\;\bigoplus_{j\in\mN}\, 
\mV_{\lambda}^{\oplus d_j}\langle 
j-l(w_0^\lambda)\rangle,\quad\mbox{for some}\quad d_j\ge 
\dim \left(\mathtt{C}(W_\lambda)\right)_{j}.
\end{displaymath}
Using equation \eqref{Soergeltrans} and ignoring an overall grading shift, we find
\begin{displaymath}
\mV_\lambda \theta_\lambda^{on}\theta_{\lambda}^{out}\;\cong\; 
\mathtt{C}\otimes_{\mathtt{C}_\lambda}\; \mV_\lambda.
\end{displaymath}
So it suffices to prove that 
\begin{displaymath}
\dim \mathtt{C}_{2i}-\dim(\mathtt{C}_\lambda)_{2i}+\dim (\mC)_{2i}\ge\dim \mathtt{C}(W_\lambda)_{2i}, 
\end{displaymath}
for all $i\in\mN$, where, of course, $\dim (\mC)_{2i}=\delta_{i0}$. Consider a $W_\lambda$-equivariant 
morphism $\fh\tto\fh_\lambda$. This extends to a graded morphism $S(\fh)\tto S(\fh_\lambda)$ which 
we compose with the canonical surjection $S(\fh_\lambda)\tto \mathtt{C}(W_\lambda)$ to find
$\xi: S(\fh)\tto \mathtt{C}(W_\lambda).$ By construction $S(\fh)_+^{W}$, and hence the 
corresponding ideal, is in the kernel of $\xi$. This implies a morphism of graded algebras
\begin{displaymath}
\eta: \mathtt{C}\tto \mathtt{C}(W_\lambda). 
\end{displaymath}
As $(\mathtt{C}_\lambda)_+=\mathtt{C}^{W_\lambda}_+$ is in the kernel of $\eta$, 
this proves the desired inequalities.
\end{proof}

As an application, we prove the following proposition, which we need later.

\begin{proposition}\label{deg01}
Consider two objects $\cX^\bullet,\cY^\bullet$ of~$\cD^b({}^\mZ\cO_\lambda^\mu)$ such that 
\begin{displaymath}
\hom_{\cD^b(\cO_\lambda^\mu)}(\cX^\bullet, \cY^\bullet\langle j\rangle)=0\qquad\mbox{if}\quad j>0. 
\end{displaymath}
Then $\theta_\lambda^{out}$ induces isomorphisms
\begin{eqnarray*}
\hom_{\cD^b(\cO_0^\mu)}(\theta_\lambda^{out}\cX^\bullet,\theta_\lambda^{out}\cY^\bullet)&\cong &\hom_{\cD^b(\cO_\lambda^\mu)}(\cX^\bullet,\cY^\bullet),\\
\hom_{\cD^b(\cO_0^\mu)}(\theta_\lambda^{out}\cX^\bullet\langle 1\rangle,\theta_\lambda^{out}\cY^\bullet)&\cong &\hom_{\cD^b(\cO_\lambda^\mu)}(\cX^\bullet\langle 1\rangle,\cY^\bullet).
\end{eqnarray*}
\end{proposition}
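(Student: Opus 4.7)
The plan is to combine the graded adjunction~\eqref{adjtransg} with the decomposition in Proposition~\ref{thetatheta}. Since $\theta_\lambda^{out}$ and $\theta_\lambda^{on}$ are exact and preserve local $\fq_\mu$-finiteness, they descend to adjoint triangulated functors between $\cD^b({}^{\mZ}\cO_\lambda^\mu)$ and $\cD^b({}^{\mZ}\cO_0^\mu)$. The derived version of~\eqref{adjtransg} therefore gives, for any object $\cZ^\bullet$ of~$\cD^b({}^{\mZ}\cO_\lambda^\mu)$,
\begin{displaymath}
\hom_{\cD^b(\cO_0^\mu)}(\theta_\lambda^{out}\cX^\bullet,\theta_\lambda^{out}\cY^\bullet)\;\cong\;\hom_{\cD^b(\cO_\lambda^\mu)}\bigl(\cX^\bullet,\theta_\lambda^{on}\theta_\lambda^{out}\cY^\bullet\langle l(w_0^\lambda)\rangle\bigr),
\end{displaymath}
and similarly with $\cX^\bullet$ replaced by $\cX^\bullet\langle 1\rangle$ on both sides.

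Next, I would invoke Proposition~\ref{thetatheta} to rewrite
\begin{displaymath}
\theta_\lambda^{on}\theta_\lambda^{out}\langle l(w_0^\lambda)\rangle\;\cong\;\bigoplus_{j\in\mN}\Id_{\cO_\lambda^\mu}^{\oplus c_j}\langle j\rangle,\qquad c_j=\dim\mathtt{C}(W_\lambda)_j.
\end{displaymath}
Here I use that the grading on $\mathtt{C}(W_\lambda)$ is inherited from $S(\fh_\lambda)$ with $\fh$ placed in degree $2$, so that $c_0=1$ and $c_j=0$ whenever $j$ is odd.

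For the first asserted isomorphism, the right-hand side becomes $\bigoplus_{j\ge 0}c_j\hom_{\cD^b(\cO_\lambda^\mu)}(\cX^\bullet,\cY^\bullet\langle j\rangle)$; the vanishing hypothesis kills every summand with $j>0$, leaving precisely $\hom_{\cD^b(\cO_\lambda^\mu)}(\cX^\bullet,\cY^\bullet)$. For the second isomorphism, the analogous computation yields $\bigoplus_{j\ge 0}c_j\hom_{\cD^b(\cO_\lambda^\mu)}(\cX^\bullet,\cY^\bullet\langle j-1\rangle)$; the odd $j$ contributions vanish because $c_j=0$, the even contributions with $j\ge 2$ vanish by the hypothesis applied to $\cY^\bullet\langle j-1\rangle$ with $j-1\ge 1$, and only the $j=0$ term survives, giving $\hom_{\cD^b(\cO_\lambda^\mu)}(\cX^\bullet\langle 1\rangle,\cY^\bullet)$.

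The argument is essentially bookkeeping once Proposition~\ref{thetatheta} is in hand; the only point requiring attention is that the parity vanishing of $\mathtt{C}(W_\lambda)$ in odd degrees is exactly what makes the second isomorphism work (without it, the $j=1$ summand would obstruct the statement). A minor verification is that the adjunction \eqref{adjtransg}, originally stated at the level of module categories, restricts to the parabolic subcategories and lifts to the derived categories --- both follow from exactness of the translation functors and from the fact that $\cO^\mu$ is a Serre subcategory of $\cO$.
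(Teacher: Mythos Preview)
Your proof is correct and follows essentially the same approach as the paper: both combine the adjunction~\eqref{adjtransg} with the decomposition of Proposition~\ref{thetatheta}, and then read off the two cases from $c_0=1$ and $c_1=0$. One small remark: for the second isomorphism you invoke the full parity vanishing $c_j=0$ for all odd~$j$, but in fact only $c_1=0$ is needed, since for $j\ge 2$ the hypothesis already kills $\hom(\cX^\bullet,\cY^\bullet\langle j-1\rangle)$ regardless of the parity of~$j$.
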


\begin{proof}
Consider $i\in\mZ$, then equation~\eqref{adjtransg} and Proposition~\ref{thetatheta} 
imply that
\begin{displaymath}
\hom_{\cD^b(\cO_0^\mu)}(\theta_\lambda^{out}\cX^\bullet\langle i\rangle,\theta_\lambda^{out}\cY^\bullet)
\cong \hom_{\cD^b(\cO_\lambda^\mu)}(\cX^\bullet\langle i\rangle,\bigoplus_{j\in\mN}
(\cY^\bullet)^{\oplus c_j}\langle j\rangle).
\end{displaymath}
Now, if $i=0$, the result follows by the assumptions as $c_0=1$. 
If $i=1$, the result follows from the assumptions and $c_1=0$ and $c_0=1$.
\end{proof}

This proposition generalises Corollary~\ref{deg1mor} and hence provides an alternative proof.

\subsection{Translating standard modules}

In this subsection we completely describe the graded translation of standard modules 
to and from the wall. These results generalise 
\cite[Theorem~8.1(3) and Theorem~8.2(2)]{Stroppel} to arbitrary walls and 
the parabolic setting. Consider the bijection
\begin{displaymath}
b_\lambda:\, W\to X_\lambda\times W_\lambda, 
\end{displaymath}
which is inverse to multiplication and denote by $b_\lambda^1$ and $b_\lambda^2$
the composition of~$b_\lambda$ with the projection on the 
$X_\lambda$-component and the $W_\lambda$-component, respectively.

\begin{theorem}\label{thmDeltaon}
For any $x\in X^\mu$, we have
\begin{displaymath}
\theta^{on}_\lambda \Delta^{\mu}(x)\cong\begin{cases}\Delta^{\mu}(b^1_\lambda(x)\cdot\lambda)
\langle l(b^2_\lambda(x))- l(w_0^\lambda)\rangle, & \mbox{ if } b^1_\lambda(x)\in X^\mu;\\0,&\mbox{otherwise.} \end{cases}
\end{displaymath}
In particular, for any $y\in X_\lambda$ and $u\in W_\lambda$, we have
\begin{displaymath}
\theta^{on}_\lambda \Delta(yu)\,\cong \, \Delta(y\cdot\lambda)\langle l(u) -l(w_0^\lambda)\rangle. 
\end{displaymath}
\end{theorem}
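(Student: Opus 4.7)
The plan is to establish the non-parabolic formula (the second display) first and then deduce the parabolic one by applying the Zuckerman functor $Z^\mu$. For the ungraded part of the non-parabolic identity $\theta^{on}_\lambda\Delta(yu)\cong\Delta(y\cdot\lambda)$ with $y\in X_\lambda$ and $u\in W_\lambda$, I would appeal to the classical translation-of-Verma-modules computation, see~\cite[Section~7.6]{Humphreys} or~\cite{Jantzen}: the central-character projection of $\Delta(yu)\otimes V(\lambda)$ yields a single Verma module with highest weight $yu\cdot\lambda=y\cdot\lambda$, since $u\in W_\lambda$ fixes $\lambda$ under the dot action.

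For the graded shift $\langle l(u)-l(w_0^\lambda)\rangle$, I would proceed by induction on $l(u)$. The base case $u=e$ is direct: applying the exact functor $\theta^{on}_\lambda$ to the surjection $\Delta(y)\twoheadrightarrow L(y)$ and using \eqref{normon} (which applies since $y\in X_\lambda$) shows that $\theta^{on}_\lambda\Delta(y)$ has simple graded top $L(y\cdot\lambda)\langle -l(w_0^\lambda)\rangle$, which pins the graded lift to $\Delta(y\cdot\lambda)\langle -l(w_0^\lambda)\rangle$, matching the formula. For the inductive step, I would pick a simple reflection $s\in B_\lambda$ with $l(us)>l(u)$ and apply the exact functor $\theta^{on}_\lambda$ to a graded short exact sequence of Verma modules arising from $\theta_s\Delta(yu)$ in $\cO_0$ (see~\cite[Theorem~8.1]{Stroppel}), using the natural commutation of $\theta^{on}_\lambda$ with projective functors to transport the inductive hypothesis to the desired shift for $us$. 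A more direct alternative uses Soergel's combinatorial functor: equation~\eqref{Soergeltrans} yields
\begin{displaymath}
\mV_\lambda\theta^{on}_\lambda\Delta(yu)\;\cong\; \res^{\mathtt{C}}_{\mathtt{C}_\lambda}\mV\Delta(yu),
\end{displaymath}
and a direct comparison of graded dimensions with $\mV_\lambda\Delta(y\cdot\lambda)$ extracts the shift.

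For the parabolic case, I would use the identity $\Delta^\mu(x)\cong Z^\mu\Delta(x)$ for $x\in X^\mu$ together with the natural isomorphism $Z^\mu\theta^{on}_\lambda\cong\theta^{on}_\lambda Z^\mu$. The latter holds by uniqueness of graded left adjoints, since both sides are left adjoint to the composition of $\imath^\mu$ with $\theta^{out}_\lambda$ (noting that translation preserves parabolic category~$\cO$). This gives
\begin{displaymath}
\theta^{on}_\lambda\Delta^\mu(x)\;\cong\; Z^\mu\theta^{on}_\lambda\Delta(x)\;\cong\; Z^\mu\Delta(b^1_\lambda(x)\cdot\lambda)\langle l(b^2_\lambda(x))-l(w_0^\lambda)\rangle.
\end{displaymath}
The Zuckerman functor evaluates to $\Delta^\mu(b^1_\lambda(x)\cdot\lambda)$ when the simple top $L(b^1_\lambda(x)\cdot\lambda)$ lies in $\cO^\mu$, i.e.\ $b^1_\lambda(x)\in X^\mu$, and to zero otherwise, completing the proof.

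The main obstacle I anticipate is the inductive step for the grading: once $u\neq e$, the top $L(yu)$ of $\Delta(yu)$ is annihilated by $\theta^{on}_\lambda$ (since $yu\notin X_\lambda$), so the shift cannot be read off from the top of the source. The cleanest resolutions are either the $\theta_s$-SES argument combined with the commutation $\theta^{on}_\lambda\theta_s\cong\theta_s\theta^{on}_\lambda$, or the direct graded-character computation through Soergel's functor $\mV$.
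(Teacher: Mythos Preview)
Your reduction to the non-parabolic case via the Zuckerman functor, and your appeal to \cite{Jantzen} or \cite[Section~7.6]{Humphreys} for the ungraded identity, match the paper exactly. The difference lies in how the graded shift is pinned down.

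The paper does not induct on $l(u)$. Instead it proves a separate lemma: $[\Delta(yu):L(y)\langle j\rangle]=\delta_{j,l(u)}$ for $y\in X_\lambda$, $u\in W_\lambda$ (established via Koszul duality, reducing to $\dim\Ext^j_{\cO}(\Delta(vy'),L(y'))=\delta_{j,l(v)}$). Since $L(y)$ is the only composition factor of $\Delta(yu)$ not annihilated by $\theta^{on}_\lambda$ which could produce the simple top $L(y\cdot\lambda)$, this multiplicity together with \eqref{normon} immediately gives the shift $\langle l(u)-l(w_0^\lambda)\rangle$. This is a one-step argument once the lemma is in place.

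Your inductive route is workable, but the phrase ``commutation $\theta^{on}_\lambda\theta_s\cong\theta_s\theta^{on}_\lambda$'' is not quite right and this is where the argument needs care: for $s\in W_\lambda$ there is no wall-crossing functor $\theta_s$ on $\cO_\lambda$, since $\lambda$ already lies on the $s$-wall. What one actually has is a decomposition $\theta^{on}_\lambda\theta_s\cong\theta^{on}_\lambda\langle 1\rangle\oplus\theta^{on}_\lambda\langle -1\rangle$ (a special case of Proposition~\ref{thetatheta} applied after factoring $\theta^{on}_\lambda$ through the $s$-wall). Feeding this into the $\theta_s$-SES and comparing graded tops does recover the shift, but you have to argue carefully which summand carries the surjection. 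The paper's multiplicity lemma sidesteps all of this bookkeeping. Your alternative via $\mV$ would also work but again requires an explicit graded computation that the paper's approach avoids.
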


\begin{theorem}\label{thmDeltaout}
For any $x\in X^\mu_\lambda$, the standard filtration 
of~$\theta_\lambda^{out}\Delta^\mu(x\cdot\lambda)$ satisfies
\begin{displaymath}
\left(\theta_{\lambda}^{out}\Delta^{\mu}(x\cdot\lambda):
\Delta^\mu(xu)\langle j\rangle\right)=\delta_{j,l(u)}\qquad \text{ for all }\quad u\in W_\lambda, 
\end{displaymath}
moreover, there are no other standard modules appearing in the filtration.
\end{theorem}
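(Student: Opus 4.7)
The plan is to detect the standards appearing in $\theta_\lambda^{out}\Delta^\mu(x\cdot\lambda)$ by testing against costandards, thereby reducing the whole computation to Theorem~\ref{thmDeltaon} via the adjunction \eqref{adjtransg} and the graded interaction between $\theta_\lambda^{on}$ and the duality $\dd$. Extending \eqref{adjtransg} to higher $\Ext$-groups (both translation functors are exact and biadjoint as projective functors) yields, for every $y\in X^\mu$ and $j\in\mZ$,
\begin{displaymath}
\Ext^i_{\cO_0^\mu}(\theta_\lambda^{out}\Delta^\mu(x\cdot\lambda),\nabla^\mu(y)\langle j\rangle)\;\cong\;\Ext^i_{\cO_\lambda^\mu}(\Delta^\mu(x\cdot\lambda),\theta_\lambda^{on}\nabla^\mu(y)\langle j+l(w_0^\lambda)\rangle),
\end{displaymath}
so the problem reduces to a description of $\theta_\lambda^{on}\nabla^\mu(y)$.

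To compute this, I would note that $\theta_\lambda^{on}$, being a projective functor, commutes with $\dd$ in the ungraded setting, so the graded lifts $\theta_\lambda^{on}\circ\dd$ and $\dd\circ\theta_\lambda^{on}$ differ only by an overall shift $\langle c\rangle$. Evaluating at $L(w_0^\lambda)\langle 0\rangle$ and using $w_0^\lambda\cdot\lambda=\lambda$ together with \eqref{normon} pins down $c=-2l(w_0^\lambda)$, giving the graded identity
\begin{displaymath}
\theta_\lambda^{on}\circ\dd\;\cong\;\dd\circ\theta_\lambda^{on}\langle -2l(w_0^\lambda)\rangle.
\end{displaymath}
Applying this to $\nabla^\mu(y)=\dd\Delta^\mu(y)$ and invoking Theorem~\ref{thmDeltaon} yields
\begin{displaymath}
\theta_\lambda^{on}\nabla^\mu(y)\;\cong\;\begin{cases}\nabla^\mu(b^1_\lambda(y)\cdot\lambda)\langle -l(b^2_\lambda(y))-l(w_0^\lambda)\rangle, & b^1_\lambda(y)\in X^\mu,\\ 0, & \text{otherwise.}\end{cases}
\end{displaymath}

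Substituting back, the right-hand side of the first display becomes $\Ext^i_{\cO_\lambda^\mu}(\Delta^\mu(x\cdot\lambda),\nabla^\mu(b^1_\lambda(y)\cdot\lambda)\langle j-l(b^2_\lambda(y))\rangle)$, which vanishes for $i\geq 1$ by the graded $\Ext$-orthogonality of standards and costandards in a quasi-hereditary algebra. Hence $M:=\theta_\lambda^{out}\Delta^\mu(x\cdot\lambda)$ satisfies $\Ext^{\geq 1}_{\cO_0^\mu}(M,\nabla^\mu(y)\langle j\rangle)=0$ for all $y,j$, so the standard quasi-hereditary criterion endows $M$ with a graded standard filtration whose multiplicities equal the $i=0$ Hom dimensions, namely $\delta_{x,b^1_\lambda(y)}\delta_{j,l(b^2_\lambda(y))}$. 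Since $x\in X_\lambda$, the condition $b^1_\lambda(y)=x$ is equivalent to $y=xu$ for a unique $u=b^2_\lambda(y)\in W_\lambda$; for any simple $s\in W_\mu$ the triangle inequality together with $l(sx)=l(x)+1$ gives $l(sxu)\geq l(sx)-l(u)=l(xu)+1>l(xu)$, so $xu\in X^\mu$ and $\Delta^\mu(xu)$ is defined. This delivers the claimed multiplicity $\delta_{j,l(u)}$ and simultaneously rules out any other standards. The main obstacle is the graded duality identity above; the remainder is a routine assembly of adjunction, Theorem~\ref{thmDeltaon}, and $\Ext$-orthogonality.
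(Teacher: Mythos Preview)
Your proof is correct and follows essentially the same route as the paper: both use the adjunction \eqref{adjtransg} to rewrite the filtration multiplicities as $\hom_{\cO^\mu_\lambda}(\Delta^\mu(x\cdot\lambda),\theta_\lambda^{on}\nabla^\mu(y)\langle j+l(w_0^\lambda)\rangle)$ and then invoke Theorem~\ref{thmDeltaon} (in its dualised form) to evaluate this. The paper is terser in two places where you supply extra detail: it cites \cite[Theorem~3.3(d)]{Humphreys} for the multiplicity formula, taking the existence of a standard filtration of $\theta_\lambda^{out}\Delta^\mu(x\cdot\lambda)$ for granted, whereas you re-derive it from the $\Ext^{\ge 1}$-vanishing; and it applies Theorem~\ref{thmDeltaon} to costandards without spelling out the graded commutation $\theta_\lambda^{on}\circ\dd\cong\dd\circ\theta_\lambda^{on}\langle -2l(w_0^\lambda)\rangle$, which you compute explicitly. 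Your verification that $xu\in X^\mu$ is also a detail the paper leaves implicit.
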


In the remainder of this subsection we prove these two theorems.

\begin{lemma}\label{on2}
For any $x\in X_\lambda$, $u\in W_\lambda$ and $j\in\mN$, we have
\begin{displaymath}
[\Delta(xu):L(x)\langle j\rangle]=\delta_{j,l(u)}. 
\end{displaymath}
\end{lemma}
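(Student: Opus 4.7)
The plan is to proceed by induction on $l(u)$. For the base case $u = e$, the claim $[\Delta(x): L(x)\langle j\rangle] = \delta_{j,0}$ reduces ungradedly to $[\Delta(x): L(x)] = 1$, which follows since the $x\cdot 0$-weight space of $\Delta(x)$ is one-dimensional and only $L(x)$ among the composition factors can contribute to it: any other composition factor $L(z)$ satisfies $z > x$ in Bruhat order (by BGG), hence $z \cdot 0 < x \cdot 0$, and so trivially contributes no $x \cdot 0$-weight vectors. The single graded occurrence of $L(x)$ is then $L(x)\langle 0 \rangle$, the simple top.

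For the inductive step, I would write $u = u' s$ with $s$ simple and $l(u') = l(u) - 1$. Since $x$ is the longest representative of the coset $xW_\lambda$, we have $l(xv) = l(x) - l(v)$ for every $v \in W_\lambda$, so $l(xu) = l(xu') - 1$ and thus $xu' = (xu)s > xu$. The graded BGG embedding then produces a short exact sequence
\begin{equation*}
0 \to \Delta(xu') \langle 1 \rangle \to \Delta(xu) \to Q \to 0
\end{equation*}
in ${}^{\mZ}\cO_0$; here the grading shift $\langle 1 \rangle$ is forced by the Koszul normalisation, since the image of the top $L(xu')\langle 0 \rangle$ of $\Delta(xu')$ must sit inside the degree-one part of the radical of $\Delta(xu)$.

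Ungradedly, both $[\Delta(xu): L(x)]$ and $[\Delta(xu'): L(x)]$ equal $1$, by the Kazhdan-Lusztig formula $[\Delta(w\cdot 0): L(x\cdot 0)] = P_{w,x}(1)$ combined with the classical fact that $P_{w,x}(q) = 1$ whenever $w$ lies in the same coset as the longest representative $x$. Hence $[Q: L(x)] = 0$, so every occurrence of $L(x)$ in $\Delta(xu)$ lies inside the submodule $\Delta(xu')\langle 1 \rangle$, and the inductive hypothesis places $L(x)$ in $\Delta(xu)$ exactly in degree $l(u') + 1 = l(u)$ with multiplicity one. The main technical point is verifying the exact grading shift in the BGG embedding; alternatively, one can avoid the induction altogether by invoking the BGS identification of graded Verma composition multiplicities with coefficients of Kazhdan-Lusztig polynomials and reading off $\delta_{j, l(u)}$ directly from $P_{xu, x}(v) = 1$.
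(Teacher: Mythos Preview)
Your proof is correct, but it follows a genuinely different route from the paper's.

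The paper Koszul-dualises the statement to
\[
\dim\Ext^j_{\cO}(\Delta(vy),L(y))=\delta_{j,l(v)}\qquad (y\in X^\lambda,\ v\in W_\lambda),
\]
then argues on the Ext side: vanishing for $j>l(v)$ comes from the bound on the projective dimension of $\Delta(vy)$ (\cite[Theorem~6.11]{Humphreys}), and an inductive identity $\dim\Ext^k(\Delta(svy),L(y))=\dim\Ext^{k-1}(\Delta(vy),L(y))$ from \cite[Proposition~3]{MR2366357} handles the rest. Your argument stays on the Verma side and inducts through the graded embedding $\Delta(xu')\langle 1\rangle\hookrightarrow\Delta(xu)$; the two inductions are essentially Koszul dual to one another. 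The real difference lies in the external input: the paper uses only the projective-dimension bound, whereas your step $[Q:L(x)]=0$ rests on the classical fact $P_{xu,x}(q)=1$. That fact is of course true (it follows, for instance, from the recursion $P_{y,w}=P_{ys,w}$ when $ws<w$ and $ys>y$, applied with any simple $s\in W_\lambda$), but once you have it the induction becomes redundant, as you yourself remark at the end: the BGS identification of $[\Delta(y):L(w)\langle j\rangle]$ with a coefficient of $P_{y,w}$ gives the lemma in one line. If you wanted a self-contained inductive proof not invoking $P_{xu,x}=1$, you could instead obtain the ungraded equality $[\Delta(xu):L(x)]=1$ directly by applying the exact functor $\theta_\lambda^{on}$ and using \eqref{thetaon}; this would make your induction independent of Kazhdan--Lusztig combinatorics and closer in spirit to the paper's argument. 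Finally, your justification of the shift $\langle 1\rangle$ is fine but worth stating crisply: since $xu'=(xu)s$ with $s$ simple and $l(xu')=l(xu)+1$, one has $P_{xu,xu'}=1$, hence the unique occurrence of $L(xu')$ in $\Delta(xu)$ sits in degree~$1$.
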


\begin{proof}
We prove the Koszul dual statement, which is 
\begin{displaymath}
\dim\Ext^j_{\cO}(\Delta(vy),L(y))=\delta_{j,l(v)}, 
\end{displaymath}
for any $y\in X^\lambda$, $v\in W_\lambda$ and $i\in\mN$. 
That the left-hand side is zero, for~$j>l(v)$, follows 
immediately from  \cite[Theorem~6.11]{Humphreys}. 
For any simple reflection~$s\in W_\lambda$ and 
$v\in W_\lambda$ such that $sv>v$, the procedure 
in the proof of \cite[Proposition~3]{MR2366357} shows
\begin{displaymath}
\dim \Ext^k_{\cO}(\Delta(svy),L(y))\;=\;\dim\Ext^{k-1}_{\cO}(\Delta(vy),L(y)),
\end{displaymath}
which proves the claim inductively.
\end{proof}

\begin{proof}[Proof of Theorem~\ref{thmDeltaon}]
It suffices to prove the non-parabolic case, as the remainder follows 
via an application of the Zuckerman functor, which commutes with translation functors.
The non-parabolic result follows from equation~\eqref{normon}, 
Lemma~\ref{on2} and \cite[Formula~4.12(2)]{Jantzen}.
\end{proof}

\begin{proof}[Proof of Theorem~\ref{thmDeltaout}]
For any $y\in X^\mu$, \cite[Theorem~3.3(d)]{Humphreys} and 
equation~\eqref{adjtransg} yield
\begin{displaymath}
\left(\theta_{\lambda}^{out}\Delta^{\mu}(x\cdot\lambda):\Delta^\mu(y)\langle j\rangle\right)=
\hom_{\cO^\mu_\lambda}(\Delta^{\mu}(x\cdot\lambda),\theta_\lambda^{on}\nabla^\mu(y)\langle j+l(w_0^\lambda)\rangle). 
\end{displaymath}
Theorem~\ref{thmDeltaon} then implies that the only $y$ which can appear 
non-trivially are those for which we have 
$b^1_\lambda(y)=x$, which is precisely the set $x W_\lambda\subset X^\mu$.
\end{proof}

\begin{remark}{\rm
By using Proposition~\ref{RHresult}, an 
alternative proof of Theorem~\ref{thmDeltaout} would be to determine a resolution
of standard modules in~$\cO^\mu$ by Verma modules. This can be obtained by 
applying parabolic induction to~$\fg$ of the BGG resolutions for the Levi subalgebra 
of $\fq_\mu$, see e.g. \cite[Section~6]{Humphreys}.}
\end{remark}


\subsection{Koszul duality}

In this subsection we derive a slight generalisation of \cite[Theorem~4.1]{RH} 
and \cite[Theorem~35]{MOS}.

\begin{proposition}\label{RHresult}
There are commutative diagrams of functors as follows:
\begin{displaymath}
\xymatrix{
\cD^b({}^{\mZ}\cO^\mu_\lambda)\ar[rr]^{\imath^\mu}&&\cD^b({}^{\mZ}\cO_\lambda)\ar@{-}[d]^{K^{\widehat\lambda}}&& 
\cD^b({}^{\mZ}\cO_\lambda)\ar[rr]^{\cL Z^\mu}&&\cD^b({}^{\mZ}\cO^\mu_\lambda)\ar@{-}[d]^{K^{\widehat\lambda}_\mu}\\
\cD^b({}^{\mZ}\cO^{\widehat\lambda}_{\mu})\ar[rr]^{\theta^{out}_{\mu}}\ar[u]^{K^{\widehat\lambda}_{\mu}}&&
\cD^b({}^{\mZ}\cO^{\widehat\lambda}_0)\ar[u]&&      \cD^b({}^{\mZ}\cO^{\widehat\lambda}_0)\ar[rr]^{\theta^{on}_{\mu}\langle l(w_0^\mu)\rangle}\ar[u]^{K^{\widehat\lambda}}&&\cD^b({}^{\mZ}\cO_{\mu}^{\widehat\lambda})\ar[u] \\
}
\end{displaymath}
\end{proposition}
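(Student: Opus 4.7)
The plan is to reduce the two commuting diagrams to a single one via adjunction, and then to establish the single remaining diagram by generalising the known regular case. Note first that $Z^\mu$ is left adjoint to $\imath^\mu$, as recalled in Subsection~\ref{prel1}, and that by~\eqref{adjtransg} the pair $(\theta^{out}_\mu,\theta^{on}_\mu\langle l(w_0^\mu)\rangle)$ is adjoint. Since $\cK^{\widehat\lambda}_\mu$ and $\cK^{\widehat\lambda}$ are contravariant triangulated equivalences, they interchange left and right adjoints of triangulated functors. Consequently, a commutative diagram realising one of the two assertions of the proposition yields, after passing to adjoints under these equivalences, a commutative diagram realising the other. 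It therefore suffices to prove, say, the first diagram.

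For $\lambda=0$, the first diagram is precisely the content of~\cite[Theorem~4.1]{RH} (compare~\cite[Section~6.4]{MOS}). To extend to arbitrary integral dominant $\lambda$, I would exploit the equivalence $\epsilon^\mu_\lambda$ from~\eqref{eqeps} together with the standard Koszulity of $A^\mu_\lambda$ established in~\cite{MR2563180}: standard modules admit linear projective resolutions, so the Koszul duality functors involved send standards to standards with prescribed grading shifts. Combined with the explicit descriptions of $\theta^{on}_\mu$ and $\theta^{out}_\mu$ on standard modules provided by Theorems~\ref{thmDeltaon} and~\ref{thmDeltaout}, this allows one to compute both compositions $\cK^{\widehat\lambda}\circ\imath^\mu$ and $\theta^{out}_\mu\circ\cK^{\widehat\lambda}_\mu$ on every $\Delta^\mu(x\cdot\lambda)$ and to match them object by object. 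Since standard modules generate $\cD^b({}^{\mZ}\cO^\mu_\lambda)$ in the triangulated sense, an isomorphism on standards upgrades to a natural isomorphism of triangulated functors on the entire bounded derived category.

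The main technical obstacle is the bookkeeping of grading shifts. The shift $\langle l(w_0^\mu)\rangle$ appearing in the second diagram, the shifts $\langle l(b^2_\lambda(x))-l(w_0^\lambda)\rangle$ produced by Theorem~\ref{thmDeltaon}, the parabolic filtration shifts of Theorem~\ref{thmDeltaout}, and the Koszul duality shift convention~\eqref{KdFshift} must combine coherently. This is governed by the combinatorics of the bijection $b_\mu\colon W\to X_\mu\times W_\mu$, and checking that all lengths and signs cancel precisely, uniformly in $x\in X^\mu_\lambda$, will be the most delicate part of the argument.
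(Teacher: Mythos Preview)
Your reduction of the second diagram to the first via adjunction is correct and is exactly how the paper proceeds. The problem lies in your strategy for the first diagram.

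The claim that ``an isomorphism on standards upgrades to a natural isomorphism of triangulated functors on the entire bounded derived category'' is not justified and is, in general, false. Knowing that two triangulated functors $F,G$ satisfy $F(\Delta^\mu(x\cdot\lambda))\cong G(\Delta^\mu(x\cdot\lambda))$ for each $x$ does not produce a natural transformation $F\Rightarrow G$, let alone an isomorphism. Even if you verified compatibility with all morphisms between standards, there is no mechanism in a bare triangulated category for gluing such data into a natural isomorphism across cones; the non-functoriality of cones obstructs exactly this kind of extension argument. Your use of Theorems~\ref{thmDeltaon} and~\ref{thmDeltaout} would at best confirm that the two compositions land in isomorphic objects, which is necessary but far from sufficient.

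The paper avoids this trap by working one level down. Instead of comparing the derived functors directly, it establishes the commuting square at the level of the abelian equivalence $\epsilon^\mu_\lambda:\mathfrak{LP}^\mu_\lambda\,\tilde\to\,{}^{\mZ}\cO^\lambda_{\widehat\mu}$ of~\eqref{eqeps}. Concretely, Lemma~\ref{embLP} shows that $\theta^{out}_\mu$ restricts to a fully faithful functor $\mathfrak{LP}^\mu_\lambda\to\mathfrak{LP}^\mu$, and Corollary~\ref{corcor} then checks that $\theta^{out}_\mu\circ(\epsilon^{\widehat\lambda}_\mu)^{-1}\cong(\epsilon^{\widehat\lambda})^{-1}\circ\imath^\mu$ as functors of abelian categories, where naturality is verified on simples and degree-one morphisms via Corollary~\ref{deg1mor}. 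Because this is an honest natural isomorphism of exact functors between abelian categories, the general machinery of \cite[Theorem~30 and Proposition~21]{MOS} then transports it to the desired isomorphism of triangulated functors on derived categories. The point is that naturality is secured at the abelian stage, where it can actually be checked, rather than asserted at the triangulated stage, where it cannot.
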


Before proving this we list some consequences. Recall the $c_j$'s from Proposition~\ref{thetatheta}.

\begin{corollary}\label{Zi}
On~$\cD^b({}^{\mZ}\cO_\lambda^\mu)$, we have 
\begin{displaymath}
\cL Z^\mu \circ \imath^\mu\cong\bigoplus_{j\in\mN} \Id_{{}^{\mZ}\cO^\mu_\lambda}^{\oplus c_j}[j]\langle j\rangle. 
\end{displaymath}
For any $M$ in~${}^{\mZ}\cO_\lambda$ which is locally $U(\fq^\mu)$-finite and for any $k\in\mN$, we have
\begin{displaymath}
\cL_kZ^\mu M\cong M^{\oplus c_k}\langle k\rangle. 
\end{displaymath}
\end{corollary}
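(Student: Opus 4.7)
The plan is to deduce both assertions from Proposition~\ref{RHresult} by transporting the identification of Proposition~\ref{thetatheta} through the Koszul-duality equivalences.

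First, I would compose the two commutative diagrams in Proposition~\ref{RHresult}. The first reads $\imath^\mu\circ K^{\widehat\lambda}_\mu\cong K^{\widehat\lambda}\circ\theta^{out}_\mu$ and the second reads $\cL Z^\mu\circ K^{\widehat\lambda}\cong K^{\widehat\lambda}_\mu\circ\theta^{on}_\mu\langle l(w_0^\mu)\rangle$. Splicing them together yields
\begin{displaymath}
\cL Z^\mu\circ\imath^\mu\circ K^{\widehat\lambda}_\mu\;\cong\; K^{\widehat\lambda}_\mu\circ\bigl(\theta^{on}_\mu\langle l(w_0^\mu)\rangle\circ\theta^{out}_\mu\bigr),
\end{displaymath}
which identifies the endofunctor $\cL Z^\mu\circ\imath^\mu$ of $\cD^b({}^{\mZ}\cO^\mu_\lambda)$ with the endofunctor $\theta^{on}_\mu\langle l(w_0^\mu)\rangle\circ\theta^{out}_\mu$ of $\cD^b({}^{\mZ}\cO^{\widehat\lambda}_\mu)$ via the contravariant equivalence $K^{\widehat\lambda}_\mu$.

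Next, I apply Proposition~\ref{thetatheta} with the role of $\lambda$ played by $\mu$: after incorporating the shift by $l(w_0^\mu)$ it gives
\begin{displaymath}
\theta^{on}_\mu\langle l(w_0^\mu)\rangle\circ\theta^{out}_\mu\;\cong\;\bigoplus_{j\in\mN}\Id^{\oplus c_j}\langle j\rangle,
\end{displaymath}
with $c_j=\dim\mathtt{C}(W_\mu)_j$. To transport this through $K^{\widehat\lambda}_\mu$, I would use the shift formula~\eqref{KdFshift}: for any $N^\bullet$ the computation $K^{\widehat\lambda}_\mu\bigl(X\langle j\rangle\bigr)\cong K^{\widehat\lambda}_\mu(X)[j]\langle j\rangle$ shows that the pure grading shift $\langle j\rangle$ on the bottom corresponds to $[j]\langle j\rangle$ on the top (direct sums being preserved even by the contravariant $K^{\widehat\lambda}_\mu$). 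Thus
\begin{displaymath}
\cL Z^\mu\circ\imath^\mu\;\cong\;\bigoplus_{j\in\mN}\Id^{\oplus c_j}[j]\langle j\rangle
\end{displaymath}
on $\cD^b({}^{\mZ}\cO^\mu_\lambda)$, proving the first claim.

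For the second claim, a locally $U(\fq^\mu)$-finite module $M\in{}^{\mZ}\cO_\lambda$ lies in the Serre subcategory ${}^{\mZ}\cO^\mu_\lambda$, so it equals $\imath^\mu M$. Applying the first claim and taking the $(-k)$-th cohomology (using that $\cL_kZ^\mu=H^{-k}\circ\cL Z^\mu$, and that with the conventions of the preliminaries $M[j]$ places $M$ in cohomological degree $-j$) isolates exactly the summand with $j=k$, giving $\cL_kZ^\mu M\cong M^{\oplus c_k}\langle k\rangle$. The only real subtlety is bookkeeping: one has to keep track of the contravariance of $K^{\widehat\lambda}_\mu$ when composing the two diagrams and to apply~\eqref{KdFshift} correctly so the resulting cohomological shift $[j]$ matches the grading shift $\langle j\rangle$; once this is done the corollary is a direct consequence of Propositions~\ref{thetatheta} and~\ref{RHresult}.
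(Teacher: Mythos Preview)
Your proof is correct and follows essentially the same approach as the paper: both apply Proposition~\ref{RHresult} to transport $\cL Z^\mu\circ\imath^\mu$ to $\theta^{on}_\mu\langle l(w_0^\mu)\rangle\circ\theta^{out}_\mu$ via Koszul duality, invoke Proposition~\ref{thetatheta}, and use the shift formula~\eqref{KdFshift} to translate the grading shifts $\langle j\rangle$ into $[j]\langle j\rangle$. Your write-up spells out the splicing of the two diagrams and the cohomology bookkeeping for the second claim in more detail than the paper, which simply remarks that the differentials in $\cL Z^\mu\circ\imath^\mu(N^\bullet)$ are trivial; but the argument is the same.
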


\begin{proof}
This is a direct application of Proposition~\ref{RHresult} to Proposition~\ref{thetatheta}. 
In particular, the maps in the complex $\cL Z^\mu \circ \imath^\mu(N^\bullet)$ are trivial 
for any $N\in{}^{\mZ}\cO^\mu_\lambda$, which yields the result about the cohomology functors.
\end{proof}

\begin{remark}{\rm
\label{Zinongrad}
Note that Corollary~\ref{Zi} also implies the isomorphism
\begin{displaymath}
\cL Z^\mu \circ \imath^\mu\cong\bigoplus_{j\in\mN} 
\Id_{\cO^\mu_\lambda}^{\oplus c_j}[j]\;\quad\;\mbox{ on }\quad\cD^b(\cO_\lambda^\mu)
\end{displaymath}
as all projective modules and homomorphism spaces between them are gradable.
}
\end{remark}

\begin{corollary}\label{imathend}
Consider two objects $\cX^\bullet$ and $\cY^\bullet$ in $\cD^b(\cO_\lambda^\mu)$ such that 
\begin{displaymath}
\Hom_{\cD^b(\cO_\lambda^\mu)}(\cX^\bullet,\cY^\bullet[k])=0,\qquad \text{ for all }\quad \;k\not=0. 
\end{displaymath}
Then $\imath^\mu$ induces an isomorphism 
\begin{displaymath}
\imath^\mu\,:\;\Hom_{\cD^b(\cO_\lambda^\mu)}
(\cX^\bullet,\cY^\bullet)\;\,\tilde\to\;\,\Hom_{\cD^b(\cO_\lambda)}
(\imath^\mu\cX^\bullet,\imath^\mu\cY^\bullet). 
\end{displaymath}
\end{corollary}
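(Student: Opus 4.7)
The plan is to mimic the strategy of the proof of Proposition~\ref{deg01}, with the translation adjunction and Proposition~\ref{thetatheta} replaced, respectively, by the $(\cL Z^\mu,\imath^\mu)$-adjunction and Remark~\ref{Zinongrad}. First I would use that $\imath^\mu$ is exact (so it requires no derived functor on its own side), while its left adjoint on the abelian level is $Z^\mu$, to pass to the derived adjunction
\begin{displaymath}
\Hom_{\cD^b(\cO_\lambda)}(\imath^\mu\cX^\bullet,\imath^\mu\cY^\bullet)\;\cong\;\Hom_{\cD^b(\cO_\lambda^\mu)}(\cL Z^\mu\imath^\mu\cX^\bullet,\cY^\bullet).
\end{displaymath}

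Then I would feed in the decomposition $\cL Z^\mu\imath^\mu\cX^\bullet\cong\bigoplus_{j\in\mN}(\cX^\bullet)^{\oplus c_j}[j]$ supplied by Remark~\ref{Zinongrad}, which splits the right hand side as $\bigoplus_{j\in\mN}\Hom_{\cD^b(\cO_\lambda^\mu)}(\cX^\bullet,\cY^\bullet[-j])^{\oplus c_j}$. The vanishing hypothesis kills every summand with $j>0$, and since $c_0=1$ only the term $\Hom_{\cD^b(\cO_\lambda^\mu)}(\cX^\bullet,\cY^\bullet)$ survives. This already produces an abstract isomorphism of the two Hom spaces in the statement.

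It remains to identify the isomorphism just obtained with the map actually induced by $\imath^\mu$. Under the adjunction, $\imath^\mu$ on Hom corresponds to pre-composition with the counit $\epsilon_{\cX^\bullet}:\cL Z^\mu\imath^\mu\cX^\bullet\to\cX^\bullet$, and the triangle identity $\imath^\mu(\epsilon_{\cX^\bullet})\circ\eta_{\imath^\mu\cX^\bullet}=\mathrm{id}$ forces $\epsilon_{\cX^\bullet}$ to act as the identity on the $j=0$ summand under the decomposition of Remark~\ref{Zinongrad}. The main obstacle I anticipate is exactly this bookkeeping---tracking the precise form of the counit inside the abstract decomposition from Corollary~\ref{Zi}. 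A cleaner shortcut that sidesteps it is to appeal to the faithfulness of $\imath^\mu$ as a triangulated functor (cf.\ the discussion preceding Proposition~\ref{commdiagshuff} in Section~\ref{secgranongra}, which works equally well for singular blocks): the natural map $\imath^\mu$ on Hom is then injective, and since Hom spaces in $\cD^b$ of a finite-dimensional algebra's module category between bounded complexes of finite-dimensional modules are finite-dimensional, the dimension count from the first two steps upgrades this injection to an isomorphism.
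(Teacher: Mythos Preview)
Your proposal is correct and, in fact, your ``cleaner shortcut'' at the end is precisely the paper's proof: the paper first invokes faithfulness of $\imath^\mu$ to get injectivity, then uses the $(\cL Z^\mu,\imath^\mu)$-adjunction together with Remark~\ref{Zinongrad} to match dimensions. The detour through tracking the counit is unnecessary, but harmless.
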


\begin{proof}
As $\imath^\mu$ is faithful, the morphism is always injective. 
So it suffices to prove that the dimensions of the spaces of homomorphisms agree.
This follows from 
\begin{displaymath}
\Hom_{\cD^b(\cO_\lambda)}(\imath^\mu\cX^\bullet,\imath^\mu\cY^\bullet)\;\cong\;\Hom_{\cD^b(\cO_\lambda)}(\cL Z^\mu\imath^\mu\cX^\bullet,\cY^\bullet), 
\end{displaymath}
given by adjunction, and Remark \ref{Zinongrad}.
\end{proof}

\begin{corollary}\label{corisomclosed}
Consider two objects $\cX^\bullet$ and $\cY^\bullet$ in $\cD^b(\cO_\lambda^\mu)$. If either 
\begin{displaymath}
\theta_\lambda^{out}\cX^\bullet\cong\theta_\lambda^{out}
\cY^\bullet\,\,\mbox{ inside }\cD^b(\cO_0^\mu)\quad \text{ or }\quad
\imath^\mu\cX^\bullet\cong \imath^\mu \cY^\bullet\,\,\mbox{ inside }\cD^b(\cO_\lambda),
\end{displaymath}
then $\cX^\bullet$ and $\cY^\bullet$ are isomorphic in $\cD^b(\cO_\lambda^\mu)$. 
The same is true for $\cD^b({}^{\mZ}\cO^\mu_\lambda)$.
\end{corollary}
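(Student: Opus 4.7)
\textbf{Plan of proof for Corollary~\ref{corisomclosed}.}
The strategy in both cases is to turn the assumed isomorphism into an isomorphism internal to $\cD^b(\cO^\mu_\lambda)$ (or its graded analogue) by applying an adjoint/partner functor, and then to extract $\cX^\bullet\cong\cY^\bullet$ by a Krull--Schmidt-style cancellation. Recall that $\cD^b(\cO^\mu_\lambda)$ is Hom-finite over $\mC$, so Krull--Schmidt holds.

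For the case $\theta^{out}_\lambda\cX^\bullet\cong\theta^{out}_\lambda\cY^\bullet$, the plan is to apply $\theta^{on}_\lambda$. By Theorem~\ref{thmDeltaon} (via standard filtrations) the functor $\theta^{on}_\lambda$ preserves the parabolic subcategory, so it sends $\cD^b(\cO^\mu_0)$ to $\cD^b(\cO^\mu_\lambda)$. Applying $\theta^{on}_\lambda$ and using the ungraded form of Proposition~\ref{thetatheta}, which gives $\theta^{on}_\lambda\theta^{out}_\lambda\cong\Id^{\oplus|W_\lambda|}$, yields $\cX^{\bullet\,\oplus|W_\lambda|}\cong\cY^{\bullet\,\oplus|W_\lambda|}$. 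Krull--Schmidt cancellation then delivers $\cX^\bullet\cong\cY^\bullet$.

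For the case $\imath^\mu\cX^\bullet\cong\imath^\mu\cY^\bullet$, apply $\cL Z^\mu$ and invoke Remark~\ref{Zinongrad} to obtain, inside $\cD^b(\cO^\mu_\lambda)$,
\begin{displaymath}
\bigoplus_{j\in\mN}(\cX^\bullet[j])^{\oplus c_j}\;\cong\;\bigoplus_{j\in\mN}(\cY^\bullet[j])^{\oplus c_j}.
\end{displaymath}
Direct cancellation is not available because of the homological shifts. Instead, for each indecomposable $I$ in $\cD^b(\cO^\mu_\lambda)$ write $m_I(\cW^\bullet)$ for the Krull--Schmidt multiplicity of $I$ in $\cW^\bullet$, and set $a_k:=m_{I[k]}(\cX^\bullet)-m_{I[k]}(\cY^\bullet)$. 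Comparing multiplicities of $I[k]$ on both sides, using $m_{I[k]}(\cW^\bullet[j])=m_{I[k-j]}(\cW^\bullet)$, gives the recurrence
\begin{displaymath}
\sum_{j\ge 0}c_j\, a_{k-j}\;=\;0\qquad\text{for all }k\in\mZ.
\end{displaymath}
Since $c_0=1$ and $a_k=0$ for $k\ll 0$ (the complexes are bounded), an induction on $k$ forces $a_k=0$ for every $k$, so $m_I(\cX^\bullet)=m_I(\cY^\bullet)$ for every indecomposable $I$ and therefore $\cX^\bullet\cong\cY^\bullet$.

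For the graded version the same two arguments run verbatim, but with the bigraded shifts $[j]\langle j\rangle$ (respectively $\langle j-l(w_0^\lambda)\rangle$) coming from the graded Corollary~\ref{Zi} and Proposition~\ref{thetatheta}; the multiplicity recurrence is now indexed by the free abelian group of shifts $[{\cdot}]\langle{\cdot}\rangle$, but again terminates because bounded complexes in $\cD^b({}^{\mZ}\cO^\mu_\lambda)$ have only finitely many nonzero bigraded positions, and $c_0=1$ remains the leading coefficient. The only subtlety worth checking, and what I regard as the one genuine point to be careful about, is the termination of this recurrence in both the homological and the internal grading directions; once this boundedness is recorded, the Krull--Schmidt argument goes through without change.
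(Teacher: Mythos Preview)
Your proof is correct and follows essentially the same approach as the paper: apply the adjoint functor ($\theta^{on}_\lambda$ or $\cL Z^\mu$), use Proposition~\ref{thetatheta} or Corollary~\ref{Zi}/Remark~\ref{Zinongrad} to identify the composite with a sum of shifted identities, and then cancel via Krull--Schmidt. The paper's proof is extremely terse (``follows immediately''), and what you have done is spell out the recurrence that makes the cancellation rigorous when shifts are present; your induction on the minimal $k$ with $a_k\neq 0$, using $c_0=1$ and boundedness, is exactly the right way to justify this step.
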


\begin{proof}
The property in the graded setting follows immediately from 
Proposition~\ref{thetatheta} and Corollary \ref{Zi}. The ungraded 
version now follows from Remark~\ref{Zinongrad}.
\end{proof}

Now we turn to the proof of Proposition~\ref{RHresult}.

\begin{lemma}\label{embLP}
The translation functor 
$\theta_\lambda^{out}:\cD^b({}^{\mZ}\cO_\lambda^\mu)\to \cD^b({}^{\mZ}\cO^\mu_0)$ 
restricts to a full and faithful functor
$\theta_\lambda^{out}:\;\;\mathfrak{LP}^\mu_\lambda\;\to\;\mathfrak{LP}^\mu$, the image
of which is the full subcategory of $\mathfrak{LP}^\mu$ given by 
linear complexes of projective modules in ${\rm add}(\theta_\lambda^{out}P_\lambda^\mu)$.
\end{lemma}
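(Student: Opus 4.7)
The plan is to establish three assertions separately: (i) the functor $\theta_\lambda^{out}$ sends $\mathfrak{LP}^\mu_\lambda$ into $\mathfrak{LP}^\mu$; (ii) the restricted functor is full and faithful; (iii) its essential image is the described full subcategory.

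For (i), I will use the parabolic analogue of \eqref{projout}, namely $\theta_\lambda^{out}P^\mu(x\cdot\lambda)\langle 0\rangle\cong P^\mu(x)\langle 0\rangle$ for $x\in X^\mu_\lambda$. This follows from \eqref{projout} by applying the Zuckerman functor $Z^\mu$, which commutes with $\theta_\lambda^{out}$ and sends $P(x\cdot\lambda)$ to $P^\mu(x\cdot\lambda)$ and $P(x)$ to $P^\mu(x)$. Since $\theta_\lambda^{out}$ is exact and degree-preserving on graded categories, a complex whose $-i$-th term is a sum of projectives generated in degree $i$ with degree-zero differentials is sent to one with the same properties, so $\theta_\lambda^{out}\mathfrak{LP}^\mu_\lambda\subseteq \mathfrak{LP}^\mu$.

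For (ii), I apply Proposition \ref{deg01}. Its hypothesis,
\begin{displaymath}
\hom_{\cD^b({}^{\mZ}\cO^\mu_\lambda)}(\cX^\bullet,\cY^\bullet\langle j\rangle)=0\quad\text{for }j>0,
\end{displaymath}
holds for any two objects of $\mathfrak{LP}^\mu_\lambda$: morphisms in the derived category between bounded complexes of projectives are represented by chain maps modulo homotopy, and each component of a chain map $\cX^\bullet\to \cY^\bullet\langle j\rangle$ between linear complexes is a degree-zero graded morphism from a projective generated in degree $i$ into one generated in degree $i+j$, which vanishes for $j>0$ since the target has no elements in degrees below $i+j$. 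The first conclusion of Proposition \ref{deg01} then yields the required bijection on Hom-spaces.

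For (iii), the inclusion of the essential image in the claimed subcategory is immediate from (i). Conversely, given a linear complex $\cY^\bullet$ in $\mathfrak{LP}^\mu$ whose terms lie in $\add(\theta_\lambda^{out}P^\mu_\lambda)$, I pick, for each $i$, a graded projective $\cX^{-i}\in{}^{\mZ}\cO^\mu_\lambda$ generated in degree $i$ with $\theta_\lambda^{out}\cX^{-i}\cong \cY^{-i}$, using the parabolic version of \eqref{projout}. The differential $d^{-i}:\cY^{-i}\to \cY^{-i+1}$ is, after a trivial grading identification, a degree-one morphism between projectives in $\cO^\mu_0$ generated in the same degree, and Corollary \ref{deg1mor} provides a unique lift $\widetilde d^{-i}:\cX^{-i}\to \cX^{-i+1}$. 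The identity $\widetilde d^{-i+1}\widetilde d^{-i}=0$ follows because its image under $\theta_\lambda^{out}$ equals $d^{-i+1}d^{-i}=0$ and $\theta_\lambda^{out}$ is faithful on Hom-spaces (a standard consequence of Proposition \ref{thetatheta} via adjunction, since $\Id$ appears as a direct summand of $\theta_\lambda^{on}\theta_\lambda^{out}$). Thus $\cX^\bullet\in\mathfrak{LP}^\mu_\lambda$ and $\theta_\lambda^{out}\cX^\bullet\cong\cY^\bullet$.

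The main potential difficulty lies in verifying the Ext-vanishing hypothesis of Proposition \ref{deg01} for all pairs of linear complexes; once this is secured, the remaining assertions reduce to formal consequences of \eqref{projout}, Corollary \ref{deg1mor}, and the faithfulness of $\theta_\lambda^{out}$.
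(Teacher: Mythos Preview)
Your proposal is correct and follows essentially the same route as the paper: both invoke \eqref{projout} (in its parabolic version) for step (i), Proposition~\ref{deg01} for fullness and faithfulness in step (ii), and Corollary~\ref{deg1mor} for the description of the image in step (iii). Your write-up simply makes explicit the verifications that the paper leaves to the reader, namely the vanishing hypothesis of Proposition~\ref{deg01} and the lifting of differentials via faithfulness of $\theta_\lambda^{out}$.
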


\begin{proof}
Recall that the categories of 
linear complexes of projective modules are full subcategories of the derived 
categories. By equation \eqref{projout}, $\theta_\lambda^{out}$ 
restricts to a functor from $\mathfrak{LP}^\mu_\lambda$ to 
$\mathfrak{LP}^\mu$. To prove that this is full and faithful, 
consider two linear complexes $\cP^\bullet$ and $\cQ^\bullet$ of projective 
modules in~${}^\mZ\cO^\mu_\lambda$. Proposition~\ref{deg01} now implies 
\begin{displaymath}
\theta_\lambda^{out}:\; \hom_{\mathfrak{LP}_\lambda^\mu}
(\cP^\bullet, \cQ^\bullet)\,\, \tilde\to\,\,
\hom_{\mathfrak{LP}^\mu}
(\theta^{out}_\lambda\cP^\bullet,\theta^{out}_\lambda\cQ^\bullet). 
\end{displaymath}
The description of the image is a consequence of Corollary~\ref{deg1mor}.
\end{proof}
\begin{corollary}
\label{corcor}
The following is a commutative diagram of functors:
\begin{displaymath}
\xymatrix{
{}^{\mZ}\cO^\mu_\lambda\ar[rr]^{\imath^\mu}\ar[d]&&{}^{\mZ}
\cO_\lambda\ar[d]^{(\epsilon^{\widehat\lambda})^{-1}}\\
\mathfrak{LP}^{\widehat\lambda}_{\mu}\ar[rr]^{\theta^{out}_{\mu}}
\ar@{-}[u]^{(\epsilon^{\widehat\lambda}_{\mu})^{-1}}&&\mathfrak{LP}^{\widehat\lambda}\ar@{-}[u]   \\
   }
\end{displaymath}\vspace{-2mm}
\end{corollary}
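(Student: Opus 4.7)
The plan is to reformulate the square by applying the contravariant equivalence $\epsilon^{\widehat\lambda}_{\mu}$ along the left column, which reduces the claim to exhibiting a natural isomorphism
\[
\imath^\mu\circ \epsilon^{\widehat\lambda}_{\mu}\;\cong\;\epsilon^{\widehat\lambda}\circ \theta^{out}_\mu
\]
of contravariant functors $\mathfrak{LP}^{\widehat\lambda}_\mu \to {}^\mZ\cO_\lambda$. Both sides are visibly fully faithful: the left-hand side composes the equivalence \eqref{eqeps} with the exact inclusion $\imath^\mu$, while the right-hand side composes the fully faithful embedding $\theta^{out}_\mu$ furnished by Lemma~\ref{embLP} with the equivalence $\epsilon^{\widehat\lambda}$.

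To exhibit the natural isomorphism, I would check agreement on the indecomposable simple objects of $\mathfrak{LP}^{\widehat\lambda}_\mu$ and extend by exactness of both sides. A simple object of $\mathfrak{LP}^{\widehat\lambda}_\mu$ is an indecomposable projective of ${}^\mZ\cO^{\widehat\lambda}_\mu$ placed in a single position, namely $P^{\widehat\lambda}_\mu(y\cdot\widehat\lambda)\langle j\rangle$ in position $-j$ for some $y\in X^{\widehat\lambda}_\mu$ and $j\in\mZ$. Applying $\theta^{out}_\mu$ termwise and using the graded form of \eqref{projout} yields the simple object $P^{\widehat\lambda}(y)\langle j\rangle$ of $\mathfrak{LP}^{\widehat\lambda}$ in position $-j$, on which $\epsilon^{\widehat\lambda}$ returns the simple of ${}^\mZ\cO_\lambda$ associated with it under Koszul duality. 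On the other side, $\epsilon^{\widehat\lambda}_\mu$ associates the corresponding simple of ${}^\mZ\cO^\mu_\lambda$, which $\imath^\mu$ then identifies with its image in ${}^\mZ\cO_\lambda$. The two outputs coincide because the labelling bijections induced by the two Koszul dualities restrict to one another along $X^{\widehat\lambda}_\mu\subset X^{\widehat\lambda}$ and $X^\mu_\lambda\subset X_\lambda$.

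Naturality in morphisms between shifted simple objects then follows from Corollary~\ref{deg1mor}, applied to ${}^\mZ\cO^{\widehat\lambda}_\mu$, which identifies the relevant degree-one $\mathrm{Hom}$-spaces with those in the principal parabolic block ${}^\mZ\cO^{\widehat\lambda}$ via $\theta^{out}_\mu$. The main obstacle, as I see it, is the bookkeeping required to check that the two Koszul dualities in the square are genuinely compatible with $\imath^\mu$ and $\theta^{out}_\mu$, rather than merely being abstractly equivalent as contravariant functors; this compatibility is settled by unwinding the construction of $\epsilon$ from \cite{MOS, Martinez} on a single-position object, where $\epsilon$ attaches a simple whose highest weight is determined solely by the indecomposable projective summand, independently of the ambient category in which the projective is taken.
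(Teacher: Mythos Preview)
Your proposal is correct and follows essentially the same route as the paper's proof. The paper checks the equivalent relation $(\epsilon^{\widehat\lambda})^{-1}\imath^\mu\cong\theta^{out}_\mu(\epsilon^{\widehat\lambda}_\mu)^{-1}$ by evaluating on the simples $L(x\cdot\lambda)$ of ${}^{\mZ}\cO^\mu_\lambda$ (where \eqref{projout} and the standard action of $\cK$ on simples give $P^{\widehat\lambda}(w_0x^{-1})^\bullet$ on both sides), and then invokes Corollary~\ref{deg1mor} to pass to arbitrary $M$; you instead evaluate the inverse formulation on the simples of $\mathfrak{LP}^{\widehat\lambda}_\mu$, which are exactly the indecomposable projectives in a single position, and use Corollary~\ref{deg1mor} for the degree-one morphisms---the two arguments are mirror images of one another using the same ingredients.
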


\begin{proof}
Using the standard properties of $\cK_\mu^{\widehat\lambda}$, see \cite[Theorem 3.11.1]{BGS}, and equation \eqref{projout} implies that, for any $x\in X^\mu_{\lambda}$,
\begin{displaymath}
\theta_{\mu}^{out} (\epsilon^{\widehat\lambda}_\mu)^{-1}L(x\cdot\lambda)\;\cong\;  P^{\widehat\lambda}(w_0 x^{-1})^\bullet\;\cong\;(\epsilon^{\widehat\lambda})^{-1}L(x\cdot\lambda).
\end{displaymath}
Corollary \ref{deg1mor} can then be applied to show that the complexes corresponding to $\theta_{\mu}^{out} (\epsilon^{\widehat\lambda}_\mu)^{-1}M$ and $(\epsilon^{\widehat\lambda})^{-1}\imath^\mu M$, for any $M$ in ${}^{\mZ}\cO^\mu_\lambda$, are isomorphic. As, by construction, this isomorphism is natural, this concludes the proof.
\end{proof}

\begin{proof}[Proof of Proposition~\ref{RHresult}]
The first diagram follows from \cite[Theorem~30 and Proposition~21]{MOS}, 
in combination with Corollary~\ref{corcor}. 
The second diagram is the adjoint reformulation of the first.
\end{proof}

\subsection{A generalisation of Bott's theorem}
Take $\rho$ to be the half of the sum of positive roots. 
Then $Z^{-\rho}$ is the Zuckerman functor to the category of 
finite dimensional modules. Bott's extension of the Borel-Weil theorem then reads
\begin{displaymath}
\cL_kZ^{-\rho}(\Delta(x))=\delta_{k,l(x)} L(e)\quad\mbox{and}\quad 
\cL_kZ^{-\rho}(\Delta(y\cdot\lambda))=0,\quad\text{ for all }\quad k\in\mN ,
\end{displaymath}
for all $x\in W$ and all $y\in X_\lambda$ with $\lambda$ singular. 
For an overview of the approach with Zuckerman functors, see e.g. \cite{Co, Santos}.
We can now generalise this. Let
\begin{displaymath}
b^\mu: W\to W_\mu \times X^\mu 
\end{displaymath}
denote the bijection defined as the inverse of multiplication. 
Define $x_1$ and $x^1$ by $b^\mu(x)=(x_1,x^1)$ as in \cite[Proposition 3.4]{Lepowsky}.

\begin{theorem}\label{Bott}
For any $x\in X_\lambda$ and any integral dominant $\mu$, we have
\begin{displaymath}
\cL_k Z^\mu(\Delta(x\cdot\lambda))=
\begin{cases}\delta_{k,l(x_1)}\Delta^\mu(x^1\cdot\lambda),& \mbox{if }x^1\in X_\lambda;\\ 
0,&\mbox{otherwise}.\end{cases}
\end{displaymath}
In particular, for any $y\in X^\mu$ and $u\in W_\mu$, we have
\begin{displaymath}
\cL_k Z^\mu(\Delta(uy))=\delta_{k,l(u)}\Delta^\mu(y).
\end{displaymath}
\end{theorem}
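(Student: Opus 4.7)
I would derive the theorem by Koszul duality, applying the second commutative diagram of Proposition~\ref{RHresult}, which gives the intertwining
\begin{displaymath}
\cL Z^\mu \circ \cK^{\widehat\lambda} \;\cong\; \cK^{\widehat\lambda}_\mu \circ \theta^{on}_\mu\langle l(w_0^\mu)\rangle.
\end{displaymath}
Since $\theta^{on}_\mu$ acts on simple modules by the explicit formula~\eqref{normon}, sending $L(y)$ to $L(y\cdot\mu)\langle -l(w_0^\mu)\rangle$ when $y\in X_\mu$ and to zero otherwise, the computation of $\cL Z^\mu\Delta(x\cdot\lambda)$ reduces to identifying the Koszul dual of a Verma module and then keeping track of shifts.

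The first step is to identify $(\cK^{\widehat\lambda})^{-1}\Delta(x\cdot\lambda)$ as a shifted simple module in ${}^{\mZ}\cO_0^{\widehat\lambda}$. By the standard Koszulity of $A_\lambda$ (see \cite{MR2563180, ADL}), Verma modules admit linear projective resolutions, and applying the equivalence $\epsilon^{\widehat\lambda}$ of~\eqref{eqeps} produces a single (shifted) simple module $L(z)\langle a\rangle$, with $z$ and $a$ determined combinatorially by $x$. The second step is to apply $\theta^{on}_\mu\langle l(w_0^\mu)\rangle$: by~\eqref{normon}, the result is zero precisely when $z\notin X_\mu$, and one checks that this condition is equivalent to $x^1\notin X_\lambda$; otherwise the result is a shifted simple $L(z\cdot\mu)\langle a\rangle$ in ${}^{\mZ}\cO_\mu^{\widehat\lambda}$.

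The third step is to apply $\cK^{\widehat\lambda}_\mu$ to obtain the answer back in ${}^{\mZ}\cO^\mu_\lambda$. Identifying the Koszul dual of the simple module $L(z\cdot\mu)$ as the standard module $\Delta^\mu(x^1\cdot\lambda)$ (again via linear projective resolutions under $\epsilon^{\widehat\lambda}_\mu$), and using the shift formula~\eqref{KdFshift} to convert the pure grading shift $\langle a\rangle$ into the combined $[a]\langle a\rangle$ shift, the result collapses to a Verma module concentrated in a single cohomological degree. A careful bookkeeping of grading shifts through the factorisation $x=x_1x^1$ with $l(x)=l(x_1)+l(x^1)$ pins down this cohomological degree as exactly $l(x_1)$, so that the only nonzero cohomology is $\cL_{l(x_1)}Z^\mu\Delta(x\cdot\lambda)\cong\Delta^\mu(x^1\cdot\lambda)$, as claimed. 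The particular case for $y\in X^\mu$ and $u\in W_\mu$ follows by specialising to $\lambda=0$, whereupon $X_\lambda=W$ and the case condition is vacuous, with $x_1=u$ and $x^1=y$.

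The main obstacle is the degree bookkeeping: one must verify that the combinatorially defined element $z$ and shift $a$ produced by the Koszul duality of a Verma satisfy the compatibility $z\in X_\mu \iff x^1\in X_\lambda$, and that the resulting cohomological concentration is $l(x_1)$ rather than a shifted variant involving $l(w_0^\mu)$. An alternative approach, implicit in the remark following Theorem~\ref{thmDeltaout}, would be to apply $\cL Z^\mu$ to a parabolic BGG resolution of $\Delta^\mu(y\cdot\lambda)$ by Verma modules of $\cO_\lambda$ and use Corollary~\ref{Zi}; I would only use this as a consistency check, since the combinatorics of the resolution is precisely what is encoded more cleanly by the Koszul duality argument above.
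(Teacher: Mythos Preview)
Your overall strategy via Proposition~\ref{RHresult} is exactly what the paper does, but the central step is mistaken: under Koszul duality, Verma modules are \emph{not} sent to simple modules. By \cite[Theorem~3.11.1]{BGS} (see also the formula quoted after Corollary~\ref{tiltingdual}), one has $\cK^\mu_\lambda(\Delta^\mu(x\cdot\lambda)^\bullet)\cong \Delta^\lambda(x^{-1}w_0\cdot\widehat\mu)^\bullet$; standard modules go to standard modules. Hence $(\cK^{\widehat\lambda})^{-1}\Delta(x\cdot\lambda)$ is a \emph{parabolic Verma module} $\Delta^{\widehat\lambda}(z)$ in ${}^{\mZ}\cO^{\widehat\lambda}_0$, not a shifted simple. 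The equivalence $\epsilon$ does send an indecomposable projective concentrated in one degree to a simple, but the linear projective resolution of a Verma module is a genuine complex spread over many positions, and its image under $\epsilon$ is the corresponding standard module on the dual side.

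Consequently, formula~\eqref{normon} for $\theta^{on}_\mu$ on simples is of no use here; what you need is the action of $\theta^{on}_\mu$ on parabolic standard modules, and that is precisely Theorem~\ref{thmDeltaon}. Once you make this correction, your argument becomes the paper's one-line proof: Theorem~\ref{Bott} is the Koszul dual of Theorem~\ref{thmDeltaon} via Proposition~\ref{RHresult}. The combinatorial compatibility you flag as an obstacle (the condition $x^1\in X_\lambda$ and the cohomological degree $l(x_1)$) is then nothing other than the condition $b^1_\lambda(x)\in X^\mu$ and the shift $l(b^2_\lambda(x))$ in Theorem~\ref{thmDeltaon}, transported through the bijection $x\mapsto x^{-1}w_0$ that Koszul duality implements on indexing sets; no independent bookkeeping is required.
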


Setting $\mu=-\rho$ (so $x=x_1$ and $x^1=e$) yields the original result of~Bott.

\begin{proof}
This follows from a direct application of the Koszul duality functor to 
the statement in Theorem~\ref{thmDeltaon} by using Proposition~\ref{RHresult}.
\end{proof}

\begin{remark}{\rm
This could also be proved directly by using parabolic induction
to reduce to the Bott's theorem~for the Levi subalgebra. 
This would yield an alternative proof of Theorem~\ref{thmDeltaon} 
by applying Proposition~\ref{RHresult}.}
\end{remark}

\subsection{Translation through the wall}

In this subsection we gather some technical results on translation 
through the wall which will be needed later and which can be 
formulated most generally by ignoring grading.

\begin{lemma}
Consider an indecomposable $\cX^\bullet\in\Ob(\cD^b(\cO_0))$ 
such that we have $\theta_{w_0^\lambda}\cX^\bullet\cong \left(\cX^\bullet\right)^{\oplus |W_\lambda|}$. 
Then there is an indecomposable $\cY^\bullet\in \Ob(\cD^b(\cO_\lambda))$ 
and $k\in\mN$ such that
$\theta_\lambda^{on} \cX^\bullet\cong\left(\cY^\bullet\right)^{\oplus k}$.
\end{lemma}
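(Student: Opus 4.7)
The plan is to combine Krull--Schmidt in $\cD^b(\cO_\lambda)$ with the biadjunction between $\theta_\lambda^{on}$ and $\theta_\lambda^{out}$, applied twice, exploiting the identity $\theta_\lambda^{out}\theta_\lambda^{on}=\theta_{w_0^\lambda}$ together with Proposition~\ref{thetatheta}. The key input is that the ungraded form of Proposition~\ref{thetatheta} gives
\begin{displaymath}
\theta_\lambda^{on}\theta_\lambda^{out}\;\cong\;\Id_{\cO_\lambda}^{\oplus |W_\lambda|},
\end{displaymath}
since $\sum_j c_j=\dim \mathtt{C}(W_\lambda)=|W_\lambda|$, and this clearly extends to an isomorphism of triangulated endofunctors of $\cD^b(\cO_\lambda)$.

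First, by Krull--Schmidt in $\cD^b(\cO_\lambda)$ (which holds as the $\Hom$-spaces are finite-dimensional), write $\theta_\lambda^{on}\cX^\bullet\cong\bigoplus_i \cY_i^{\oplus n_i}$ for pairwise non-isomorphic indecomposables $\cY_i$ with $n_i\geq 1$. Apply $\theta_\lambda^{out}$ and use the hypothesis to get
\begin{displaymath}
(\cX^\bullet)^{\oplus |W_\lambda|}\;\cong\;\theta_{w_0^\lambda}\cX^\bullet\;\cong\;\bigoplus_i (\theta_\lambda^{out}\cY_i)^{\oplus n_i}.
\end{displaymath}
Krull--Schmidt in $\cD^b(\cO_0)$ then forces each $\theta_\lambda^{out}\cY_i\cong(\cX^\bullet)^{\oplus k_i}$ with $k_i\in\mN$, satisfying $\sum_i k_i n_i=|W_\lambda|$.

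Next, apply $\theta_\lambda^{on}$ to each $\theta_\lambda^{out}\cY_i\cong(\cX^\bullet)^{\oplus k_i}$ and use the ungraded form of Proposition~\ref{thetatheta}: this yields
\begin{displaymath}
\cY_i^{\oplus |W_\lambda|}\;\cong\;\theta_\lambda^{on}\theta_\lambda^{out}\cY_i\;\cong\;(\theta_\lambda^{on}\cX^\bullet)^{\oplus k_i}\;\cong\;\bigoplus_j \cY_j^{\oplus n_j k_i}.
\end{displaymath}
Since $\cX^\bullet$ is nonzero (being indecomposable), at least one $k_i$ is strictly positive; for such an index, Krull--Schmidt forces $n_j=0$ for all $j\neq i$. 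Thus $\theta_\lambda^{on}\cX^\bullet\cong\cY^{\oplus k}$ for a single indecomposable $\cY:=\cY_i$ and $k:=n_i$, as required.

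I do not anticipate a serious obstacle: the argument is a clean two-step Krull--Schmidt bookkeeping once one has the ungraded form of $\theta_\lambda^{on}\theta_\lambda^{out}$ from Proposition~\ref{thetatheta} at the level of complexes. The only thing worth double-checking is that Proposition~\ref{thetatheta} is available as an isomorphism of triangulated endofunctors of $\cD^b(\cO_\lambda)$ (and not merely on the underlying abelian category), but this is automatic since it is stated as an isomorphism of exact functors.
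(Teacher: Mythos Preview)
Your proof is correct and follows essentially the same approach as the paper: pick indecomposable summands of $\theta_\lambda^{on}\cX^\bullet$, push them back with $\theta_\lambda^{out}$ to recognise copies of $\cX^\bullet$ via the hypothesis, then apply $\theta_\lambda^{on}$ again and use the ungraded form of Proposition~\ref{thetatheta} to conclude by Krull--Schmidt. The only cosmetic difference is that the paper fixes a single indecomposable summand $\cY^\bullet$ from the start and shows $\theta_\lambda^{on}\cX^\bullet$ is $|W_\lambda|/p$ copies of it, whereas you write the full Krull--Schmidt decomposition and eliminate all but one isomorphism type; the arguments are interchangeable.
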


\begin{proof}
We take $\cY^\bullet$ to be some indecomposable direct summand of 
$\theta^{on}_\lambda\cX^\bullet$. Then there is some 
$\cA^\bullet\in\Ob(\cD^b(\cO_\lambda))$ such that
\begin{displaymath}
\theta^{on}_\lambda\cX^\bullet\;\cong\;\cY^\bullet\oplus\cA^\bullet.
\end{displaymath}
Applying $\theta^{out}_\lambda$ to the above isomorphism implies that 
there must be some $p\in\mN$ for which
$\theta^{out}_\lambda\cY^\bullet\cong\left(\cX^\bullet\right)^{\oplus p}.$
Applying $\theta^{on}_\lambda$, while using the ungraded version of 
Proposition~\ref{thetatheta}, to the latter isomorphism then yields
\begin{displaymath}
\left(\cY^{\bullet}\right)^{\oplus |W_\lambda|}\;\cong\; 
\theta^{on}_\lambda \left(\cX^\bullet\right)^{\oplus p}. 
\end{displaymath}
This implies the claim with $k=|W_\lambda|/p$.
\end{proof}

\begin{corollary}\label{critoutof}
Consider indecomposable $\cX^\bullet\in\Ob(\cD^b(\cO_0))$ 
such that we have $\theta_{w_0^\lambda}\cX^\bullet\cong 
\left(\cX^\bullet\right)^{\oplus |W_\lambda|}$. There 
is an indecomposable $\cY^\bullet\in \Ob(\cD^b(\cO_\lambda))$ 
such that  $\theta_\lambda^{out}\cY^\bullet\cong \cX^\bullet$ 
if and only if $\theta_\lambda^{on}\cX^\bullet$ contains $|W_\lambda|$ indecomposable summands.
\end{corollary}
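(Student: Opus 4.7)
The plan is to handle the two implications separately, both of which reduce to straightforward consequences of the preceding lemma together with the (ungraded version of) Proposition~\ref{thetatheta}. The key input that keeps appearing is that, in the ungraded setting,
$$ \theta^{on}_\lambda\theta^{out}_\lambda\;\cong\;\mathrm{Id}_{\cO_\lambda}^{\oplus |W_\lambda|}, $$
since the total dimension of the coinvariant algebra $\mathtt{C}(W_\lambda)$ is $|W_\lambda|$, so that summing $c_j=\dim \mathtt{C}(W_\lambda)_j$ over all $j$ gives $|W_\lambda|$.

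For the forward direction, I would assume an indecomposable $\cY^\bullet\in\Ob(\cD^b(\cO_\lambda))$ with $\theta_\lambda^{out}\cY^\bullet\cong\cX^\bullet$ and simply apply $\theta_\lambda^{on}$ to both sides. The display above immediately yields
$$ \theta_\lambda^{on}\cX^\bullet\;\cong\;\theta_\lambda^{on}\theta_\lambda^{out}\cY^\bullet\;\cong\;(\cY^\bullet)^{\oplus |W_\lambda|}, $$
which exhibits $|W_\lambda|$ indecomposable summands, as required.

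For the reverse direction, I would first invoke the preceding lemma (valid under the standing hypothesis $\theta_{w_0^\lambda}\cX^\bullet\cong(\cX^\bullet)^{\oplus |W_\lambda|}$) to write $\theta_\lambda^{on}\cX^\bullet\cong(\cY^\bullet)^{\oplus k}$ for some indecomposable $\cY^\bullet$ and some $k\in\mN$. The assumption that $\theta_\lambda^{on}\cX^\bullet$ has exactly $|W_\lambda|$ indecomposable summands forces $k=|W_\lambda|$. Applying $\theta_\lambda^{out}$ then gives
$$ (\theta_\lambda^{out}\cY^\bullet)^{\oplus |W_\lambda|}\;\cong\;\theta_{w_0^\lambda}\cX^\bullet\;\cong\;(\cX^\bullet)^{\oplus |W_\lambda|}. $$
The final step is a Krull-Schmidt count: writing $\theta_\lambda^{out}\cY^\bullet$ as a sum of $r$ indecomposable summands produces $r|W_\lambda|$ indecomposable summands on the left, while the right-hand side has exactly $|W_\lambda|$ indecomposable summands (copies of the indecomposable $\cX^\bullet$), so $r=1$ and $\theta_\lambda^{out}\cY^\bullet\cong\cX^\bullet$.

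The only non-formal point I expect to need care with is the legitimacy of the Krull-Schmidt argument in $\cD^b(\cO_0)$. Since the blocks of $\cO$ are finite-dimensional algebras and we work with bounded complexes of finitely generated modules, endomorphism rings in $\cD^b(\cO_0)$ are finite-dimensional, so the standard Krull-Schmidt theorem applies and the cancellation step causes no difficulty. Everything else is pure bookkeeping with the identities above.
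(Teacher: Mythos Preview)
Your proof is correct and follows exactly the approach the paper intends: the corollary is stated without proof in the paper because both implications are immediate from the preceding lemma together with the ungraded version of Proposition~\ref{thetatheta}. Your explicit Krull--Schmidt cancellation in $\cD^b(\cO_0)$ (justified by finite-dimensionality of endomorphism rings) is precisely what is needed, and the paper's own proof of the preceding lemma already establishes the relation $k\cdot p=|W_\lambda|$ with $\theta_\lambda^{out}\cY^\bullet\cong(\cX^\bullet)^{\oplus p}$, which is an equivalent way of packaging your reverse direction.
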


\begin{lemma}\label{Vfaith}
Consider a module $M\in\Ob(\cO_0)$ 
such that $\theta_{w_0^\lambda}M\cong M^{\oplus |W_\lambda|}$ and 
for which $\mV$ yields an algebra isomorphism
$\mV\,:\;\;\End_{\cO_0}(M)\,\;\tilde\to\;\,\End_{\mathtt{C}}(\mV M)$.
Then $\mV_\lambda$ also induces an algebra isomorphism
$\mV_\lambda\,:\;\End_{\cO_\lambda}( \theta_\lambda^{on}M)\;\,\tilde\to\;\,
\End_{\mathtt{C}_\lambda}(\mV_\lambda\theta_\lambda^{on}M)$.
\end{lemma}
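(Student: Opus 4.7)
The plan is to reduce the claim, via adjunctions on both sides, to the hypothesis about~$\mV$ on~$M$.

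First, the ungraded specialisation of~\eqref{adjtransg} gives the adjunction $\theta_\lambda^{out}\dashv\theta_\lambda^{on}$, which yields
\begin{displaymath}
\End_{\cO_\lambda}(\theta_\lambda^{on}M)\;\cong\;\Hom_{\cO_0}(\theta_\lambda^{out}\theta_\lambda^{on}M,\,M)\;=\;\Hom_{\cO_0}(\theta_{w_0^\lambda}M,\,M).
\end{displaymath}
In parallel, the extension-of-scalars adjunction $\mathtt{C}\otimes_{\mathtt{C}_\lambda}(-)\dashv\res^{\mathtt{C}}_{\mathtt{C}_\lambda}$, combined with~\eqref{Soergeltrans}, gives
\begin{displaymath}
\End_{\mathtt{C}_\lambda}(\mV_\lambda\theta_\lambda^{on}M)\;=\;\End_{\mathtt{C}_\lambda}(\res\mV M)\;\cong\;\Hom_{\mathtt{C}}(\mathtt{C}\otimes_{\mathtt{C}_\lambda}\res\mV M,\,\mV M)\;\cong\;\Hom_{\mathtt{C}}(\mV\theta_{w_0^\lambda}M,\,\mV M).
\end{displaymath}

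I would then fit these into the square
\begin{displaymath}
\xymatrix{\End_{\cO_\lambda}(\theta_\lambda^{on}M)\ar[rr]^{\mV_\lambda}\ar[d]_{\wr} && \End_{\mathtt{C}_\lambda}(\mV_\lambda\theta_\lambda^{on}M)\ar[d]^{\wr}\\
\Hom_{\cO_0}(\theta_{w_0^\lambda}M,\,M)\ar[rr]^{\mV} && \Hom_{\mathtt{C}}(\mV\theta_{w_0^\lambda}M,\,\mV M)}
\end{displaymath}
whose vertical arrows are the adjunction isomorphisms above. Fixing an isomorphism $\theta_{w_0^\lambda}M\cong M^{\oplus|W_\lambda|}$ and applying $\mV$ to it gives a decomposition $\mV\theta_{w_0^\lambda}M\cong(\mV M)^{\oplus|W_\lambda|}$; with respect to these decompositions the bottom arrow is the direct sum of $|W_\lambda|$ copies of $\mV:\End_{\cO_0}(M)\to\End_{\mathtt{C}}(\mV M)$, which is an isomorphism by hypothesis. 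Hence the bottom row, and therefore the top row, are isomorphisms. Since $\mV_\lambda$ is a functor, the induced top map automatically respects composition, giving the required algebra isomorphism.

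The main obstacle is the commutativity of the above square. This is a routine but slightly delicate instance of ``compatibility of adjunctions under an intertwining functor'': one has to verify that, after applying $\mV$, the natural isomorphisms in~\eqref{Soergeltrans} transport the counit $\theta_\lambda^{out}\theta_\lambda^{on}\to\Id_{\cO_0}$ to the counit $\mathtt{C}\otimes_{\mathtt{C}_\lambda}\res\to\Id_{\mathtt{C}\text{-}\mathrm{mod}}$. Once that is checked, both vertical adjunction bijections are realised as precomposition with the same counit (up to the identifications in~\eqref{Soergeltrans}), and commutativity of the square is immediate from the naturality of~$\mV$.
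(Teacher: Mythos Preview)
Your argument is correct and close in spirit to the paper's, but the two diagrams are organised differently, and this changes where the work lies.

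The paper's square has $\End_{\cO_\lambda}(\theta_\lambda^{on}M)\xrightarrow{\mV_\lambda}\End_{\mathtt{C}_\lambda}(\mV_\lambda\theta_\lambda^{on}M)$ on the \emph{bottom} and $\End_{\cO_0}(M^{\oplus|W_\lambda|})\xrightarrow{\mV}\End_{\mathtt{C}}(\mV M^{\oplus|W_\lambda|})$ on the \emph{top}, with vertical arrows given simply by applying the functors $\theta_\lambda^{out}$ and $\ind_{\mathtt{C}_\lambda}^{\mathtt{C}}$ to morphisms (followed by the identifications $\theta_\lambda^{out}\theta_\lambda^{on}M\cong M^{\oplus|W_\lambda|}$ and its $\mV$-image). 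Commutativity is then \emph{pure naturality} of the isomorphism $\mV\theta_\lambda^{out}\cong\mathtt{C}\otimes_{\mathtt{C}_\lambda}\mV_\lambda$ from~\eqref{Soergeltrans}; no counit compatibility is needed. The verticals are only injections (faithfulness of $\theta_\lambda^{out}$ and of induction along a free extension), so one obtains injectivity of $\mV_\lambda$ directly; surjectivity then follows from an implicit dimension count, which is precisely your adjunction computation showing both sides have dimension $|W_\lambda|\cdot\dim\End_{\cO_0}(M)$.

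Your version replaces the injective verticals by the adjunction \emph{isomorphisms}, which is cleaner once the square commutes, but shifts the burden to the mate condition you flag: that the two natural isomorphisms in~\eqref{Soergeltrans} are conjugate under the adjunctions $(\theta_\lambda^{out},\theta_\lambda^{on})$ and $(\ind,\res)$. This is true (and checkable from Soergel's explicit descriptions), but it is genuinely an extra verification beyond naturality. So the trade-off is: the paper avoids the mate check at the cost of an implicit dimension argument; you make the dimension argument explicit via adjunction at the cost of the mate check. Both are valid.
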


\begin{proof}
Equation~\eqref{Soergeltrans} and the faithfulness of $\theta_\lambda^{out}$ and induction imply 
that we have a commuting diagram of algebra homomorphisms:
\begin{displaymath}
\xymatrix{
\End_{\cO_0}(M^{\oplus |W_\lambda| })\ar[rr]^{\mV}&&\End_{\mathtt{C}}(\mV M^{\oplus |W_\lambda| })\\
\End_{\cO_\lambda}(\theta_\lambda^{on}M)\ar[rr]^{\mV_\lambda}\ar@{^{(}->}[u]^{\theta_\lambda^{out}}&&
\End_{\mathtt{C}_\lambda}(\mV_\lambda\theta_\lambda^{on}M)\ar@{^{(}->}[u]^{\ind_{\mathtt{C}_\lambda}^{\mathtt{C}}}\\
}
\end{displaymath}
As the upper horizontal arrow is an isomorphism by assumption, so is  $\mV_\lambda$.
\end{proof}


\begin{lemma}\label{XYZ}
Consider objects $\cX^\bullet$ of $\cD^b(\cO_\lambda)$ and 
$\cY^\bullet$ of $\cD^b(\cO^\mu_0)$ such that  we have
$\theta_\lambda^{out}\cX^\bullet\cong\imath^\mu \cY^\bullet$ in $\cD^b(\cO_0)$. 
Then there exists $\cZ^\bullet\in\cD^b(\cO_\lambda^\mu)$ such that
\begin{displaymath}
\cX^\bullet\,\cong \imath^\mu \cZ^\bullet\qquad\mbox{and}\qquad 
\cY^\bullet\,\cong\, \theta_{\lambda}^{out}\cZ^\bullet. 
\end{displaymath}
\end{lemma}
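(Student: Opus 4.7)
The plan is to exhibit $\cZ^\bullet$ by first showing that $\cX^\bullet$ itself lies in the essential image of $\imath^\mu\colon\cD^b(\cO_\lambda^\mu)\to\cD^b(\cO_\lambda)$, and then to transfer the hypothesis to the desired isomorphism in $\cD^b(\cO_0^\mu)$ by invoking Corollary~\ref{corisomclosed}. The first ingredient is a cohomological observation: since $\theta_\lambda^{out}$ and $\imath^\mu$ are both exact, taking cohomology of the assumed isomorphism gives $\theta_\lambda^{out} H^i(\cX^\bullet)\cong\imath^\mu H^i(\cY^\bullet)$ in $\cO_0$ for every $i$, so that each $\theta_\lambda^{out} H^i(\cX^\bullet)$ belongs to the Serre subcategory $\cO_0^\mu$.

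Next I would prove the module-level sub-lemma that, for $M\in\cO_\lambda$, $\theta_\lambda^{out} M\in\cO_0^\mu$ forces $M\in\cO_\lambda^\mu$. Since $\theta_\lambda^{on}$ restricts to a functor $\cO_0^\mu\to\cO_\lambda^\mu$ (translation commutes with the Zuckerman functor), we obtain $\theta_\lambda^{on}\theta_\lambda^{out} M\in\cO_\lambda^\mu$; but the ungraded form of Proposition~\ref{thetatheta} identifies this object with $M^{\oplus|W_\lambda|}$, and the Serre property of $\cO_\lambda^\mu\subset\cO_\lambda$ then forces $M\in\cO_\lambda^\mu$. Applied termwise, every $H^i(\cX^\bullet)$ lies in $\cO_\lambda^\mu$. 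I now invoke the essential surjectivity of $\imath^\mu\colon\cD^b(\cO_\lambda^\mu)\to\cD^b(\cO_\lambda)$ onto the full subcategory of complexes whose cohomology lies in $\cO_\lambda^\mu$, proved by induction on the length of the cohomological support using the canonical truncation triangles
\begin{displaymath}
\tau_{\le k-1}\cX^\bullet\;\to\;\tau_{\le k}\cX^\bullet\;\to\;H^k(\cX^\bullet)[-k],
\end{displaymath}
which produces $\cZ^\bullet\in\cD^b(\cO_\lambda^\mu)$ with $\imath^\mu\cZ^\bullet\cong\cX^\bullet$. Then the commutation $\theta_\lambda^{out}\imath^\mu\cong\imath^\mu\theta_\lambda^{out}$ yields $\imath^\mu\theta_\lambda^{out}\cZ^\bullet\cong\theta_\lambda^{out}\cX^\bullet\cong\imath^\mu\cY^\bullet$, and Corollary~\ref{corisomclosed} upgrades this to $\theta_\lambda^{out}\cZ^\bullet\cong\cY^\bullet$ inside $\cD^b(\cO_0^\mu)$, concluding the proof.

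The main obstacle is the essential-surjectivity step. The naive candidate $\cZ^\bullet=\cL Z^\mu\cX^\bullet$ does not work directly, since Corollary~\ref{Zi} would give $\theta_\lambda^{out}\cZ^\bullet\cong\bigoplus_j\cY^\bullet[j]^{\oplus c_j}$ rather than $\cY^\bullet$ alone. Moreover, for a general Serre inclusion, not every complex with cohomology in the subcategory comes from its bounded derived category: higher extension classes in $\cD^b(\cO_\lambda)$ between objects of $\cO_\lambda^\mu$ may fail to lift. What rescues the inductive truncation argument here is precisely the extra hypothesis, namely that $\theta_\lambda^{out}\cX^\bullet$ itself lies in the image of $\imath^\mu$: by passing through the commuting square of $\theta_\lambda^{out}$ and $\imath^\mu$ on $\mathrm{Hom}$-groups, and exploiting the faithfulness of $\theta_\lambda^{out}$ that follows from Proposition~\ref{thetatheta}, the required extension class at each inductive step is forced to admit a realization in $\cD^b(\cO_\lambda^\mu)$.
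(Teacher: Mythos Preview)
Your overall strategy—show $\cX^\bullet$ lies in the essential image of $\imath^\mu$ and then invoke Corollary~\ref{corisomclosed}—is exactly the paper's, but your execution takes an unnecessary detour and leaves a real gap. The truncation-and-reconstruction step is where you yourself flag the obstacle, and the closing ``rescue'' paragraph does not close it: you never make precise how faithfulness of $\theta_\lambda^{out}$ forces the connecting class in $\Hom_{\cD^b(\cO_\lambda)}(H^k(\cX^\bullet)[-k],\tau_{\le k-1}\cX^\bullet[1])$ to lift to $\cD^b(\cO_\lambda^\mu)$, and in fact there is no obvious mechanism for this at an arbitrary inductive stage.

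The paper sidesteps the reconstruction problem entirely by applying $\theta_\lambda^{on}$ \emph{before} passing to cohomology rather than after. From $\theta_\lambda^{out}\cX^\bullet\cong\imath^\mu\cY^\bullet$ in $\cD^b(\cO_0)$, applying $\theta_\lambda^{on}$ and using its commutation with $\imath^\mu$ together with Proposition~\ref{thetatheta} yields a \emph{derived-level} isomorphism
\begin{displaymath}
(\cX^\bullet)^{\oplus|W_\lambda|}\;\cong\;\imath^\mu\bigl(\theta_\lambda^{on}\cY^\bullet\bigr)
\end{displaymath}
in $\cD^b(\cO_\lambda)$. Thus $\cX^\bullet$ is already a direct summand of an object in the essential image of $\imath^\mu$, and one extracts $\cZ^\bullet$ by Krull--Schmidt in one step; the rest is exactly your final paragraph using Corollary~\ref{corisomclosed}. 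Your module-level sublemma is precisely this $|W_\lambda|$-copies trick, but applied to each $H^i(\cX^\bullet)$ separately; applying it once, globally in the derived category, is both shorter and avoids the lifting problem you could not resolve.
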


\begin{proof}
Applying $\theta_\lambda^{on}$ to 
$\theta_\lambda^{out}\cX^\bullet\cong\imath^\mu \cY^\bullet$ and using the 
commutation of inclusion and translation, implies 
\begin{displaymath}
\left(\cX^\bullet\right)^{\oplus |W_\lambda|}\;\cong\;
\imath^\mu \theta_\lambda^{on}\cY^\bullet. 
\end{displaymath}
Hence $\cX^\bullet\cong \imath^\mu\cZ^\bullet$, for some $\cZ^\bullet$. 
Applying $\theta^{out}_\lambda$ to the latter yields an isomorphism 
$\imath^\mu\cY^\bullet\cong \imath^\mu\theta_\lambda^{out}\cZ^\bullet$, 
so the conclusion follows from Corollary \ref{corisomclosed}.
\end{proof}


\section{Shuffling and projective functors}\label{secProjShuff}

Unlike twisting functors, shuffling functors do not commute with projective functors in general. 
Here we investigate their intertwining relations. We consider the principle block $\cO_0$ 
for an arbitrary reductive Lie algebra~$\fg$.

\begin{theorem}\label{simplerefl}
For two simple reflections $s,t\in W$, we have 
\begin{displaymath}
\begin{cases}
\theta_s C_t\cong C_t\theta_s, & \mbox{if $st=ts$};\\
\theta_{s} C_{st}\cong C_{st}\theta_t, &\mbox{if $s,t$ generate $S_3$}.
\end{cases} 
\end{displaymath}
\end{theorem}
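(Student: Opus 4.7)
The plan is to deduce both isomorphisms from the defining exact sequence $\mathrm{Id}\xrightarrow{\eta_s}\theta_s\to C_s\to 0$ of right exact endofunctors of $\cO_0$ (with $\eta_s$ the adjunction unit), combined with the standard decomposition rules for composed projective functors.

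For part~(i), since $\theta_s$ is exact, applying it to the sequence defining $C_t$ and separately precomposing this sequence with $\theta_s$ produce the presentations
\begin{displaymath}
\theta_sC_t\;\cong\;\mathrm{coker}\bigl(\theta_s\eta_t\colon\theta_s\to\theta_s\theta_t\bigr)\quad\text{and}\quad C_t\theta_s\;\cong\;\mathrm{coker}\bigl(\eta_t\theta_s\colon\theta_s\to\theta_t\theta_s\bigr).
\end{displaymath}
When $st=ts$ the indecomposable projective functor $\theta_{st}=\theta_{ts}$ is realised by either composition $\theta_s\theta_t\cong\theta_{st}\cong\theta_t\theta_s$. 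Transferring through Soergel's functor $\mV$, this identification becomes the canonical isomorphism $\mathtt{C}\otimes_{\mathtt{C}^s}\mathtt{C}\otimes_{\mathtt{C}^t}(-)\cong\mathtt{C}\otimes_{\mathtt{C}^t}\mathtt{C}\otimes_{\mathtt{C}^s}(-)$ of Soergel bimodules, under which the two whiskerings of $\eta_t$ with $\theta_s$ are identified. Taking cokernels yields $\theta_sC_t\cong C_t\theta_s$.

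For part~(ii), the paper's convention gives $C_{st}=C_tC_s$, so the desired isomorphism reads $\theta_sC_tC_s\cong C_tC_s\theta_t$. The plan is to pass to the derived category, where $\cL C_s$ is represented by the two-term complex $\mathcal{K}_s:=[\mathrm{Id}\to\theta_s]$ of exact functors. Both $\theta_s\mathcal{K}_t\mathcal{K}_s$ and $\mathcal{K}_t\mathcal{K}_s\theta_t$ become total complexes of bicomplexes whose terms involve $\theta_s$, $\theta_t$, $\theta_s^2$, $\theta_t^2$, $\theta_s\theta_t\theta_s$ and $\theta_t\theta_s\theta_t$. Invoking the decompositions $\theta_s^2\cong\theta_s^{\oplus 2}$, $\theta_s\theta_t\theta_s\cong\theta_{sts}\oplus\theta_s$, $\theta_t\theta_s\theta_t\cong\theta_{tst}\oplus\theta_t$, together with the braid identity $\theta_{sts}=\theta_{tst}$, one recognises this as an instance of the braid relation for the Rouquier-type complexes $\mathcal{K}_s$, providing a quasi-isomorphism between the two total complexes. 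To descend to the underived statement, one uses that $\eta_s$ and $\eta_t$ are injective on projective modules, so $\mathcal{K}_s$ has cohomology concentrated in degree zero when evaluated on projectives; the derived isomorphism then restricts to an isomorphism of the underived right exact functors on projectives and hence extends to all of $\cO_0$. As a consistency check, the Grothendieck-group identities $(1+s)t=t(1+s)$ (when $st=ts$) and $(1+s)ts=ts(1+t)$ (when $sts=tst$) in $\mZ[W]$ match the statement exactly.

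The main obstacle is the compatibility of the abstract isomorphisms of composed projective functors with the explicit natural transformations built from the adjunction units, particularly in part~(ii). This is essentially the content of Rouquier's braid relations, and the cleanest verification proceeds by transferring the whole argument through $\mV$ to the setting of Bott-Samelson bimodules, where the relevant identities are classical.
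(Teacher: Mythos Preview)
Your treatment of part~(i) for distinct commuting $s,t$ is essentially the paper's argument: both reduce to the observation that the two presentations $\theta_s\to\theta_s\theta_t$ and $\theta_s\to\theta_t\theta_s$ have the same cokernel. The paper phrases this via uniqueness of the graded morphism $\theta_s\langle 1\rangle\to\theta_{st}$ (from Kazhdan--Lusztig combinatorics), which is cleaner than invoking compatibility of Soergel bimodule isomorphisms with units, but the content is the same. Note, however, that your argument tacitly assumes $\theta_s\theta_t\cong\theta_{st}$ is indecomposable, so it does not cover $s=t$; the paper handles this case separately using the kernel functor $F_s$ and the decomposition $\theta_s^2\cong\theta_s\langle 1\rangle\oplus\theta_s\langle -1\rangle$.

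Part~(ii) has a genuine gap. The Rouquier braid relation is the isomorphism $\mathcal{K}_s\mathcal{K}_t\mathcal{K}_s\simeq\mathcal{K}_t\mathcal{K}_s\mathcal{K}_t$, which is \emph{not} the statement you need. Writing out the total complexes, $\theta_s\mathcal{K}_t\mathcal{K}_s$ has terms built from $\theta_s,\theta_s^2,\theta_s\theta_t,\theta_s\theta_t\theta_s$, whereas $\mathcal{K}_t\mathcal{K}_s\theta_t$ has terms built from $\theta_t,\theta_t^2,\theta_s\theta_t,\theta_t\theta_s\theta_t$; after decomposing, these are visibly different complexes (one has extra copies of $\theta_s$, the other of $\theta_t$), so you cannot conclude by matching terms. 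One can try to realise $\theta_s$ as a cone on $\cL C_s[-1]\to\mathrm{Id}$ and then invoke the braid relation, but this only identifies the \emph{source and target} of two cone diagrams, not the maps themselves; the required compatibility of the braid isomorphism with the adjunction units is exactly the hard part, and you have deferred it to ``classical identities'' without reference or argument.

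The paper's proof of part~(ii) avoids this entirely. It builds a $3\times 3$ diagram with exact rows and columns from the definitions of $C_s$ and $C_t$, then composes with $\theta_s$ (respectively $\theta_t$) and chases the diagram to produce explicit presentations
\begin{displaymath}
\theta_s\theta_t\langle 1\rangle\to\theta_{sts}\to\theta_sC_{st}\to 0\qquad\text{and}\qquad\theta_s\theta_t\langle 1\rangle\to\theta_{sts}\to C_{st}\theta_t\to 0.
\end{displaymath}
The conclusion then follows because the graded morphism space $\mathrm{Hom}(\theta_s\theta_t\langle 1\rangle,\theta_{sts})$ is one-dimensional. This is a direct computation in the graded category and does not require any derived or homotopy-theoretic input.
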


The above statement is formulated in \cite[Section~11]{simple}, however, 
the proof of \cite[Section 11]{simple} is not complete (it proves only a special case of the statement). 
Below, we follow the alternative argument outlined in \cite[Remarks~11.2 and~11.4]{simple}.

\begin{corollary}\label{corforequi}
Consider $\nu\in\intdom$ such that $W_\nu$ is of type~$A$. 
For any simple reflection $s\in W_\nu\times W_\nu^\dagger$, we have
$C_{w_0^\nu}\,\theta_s\;\cong\; \theta_{s'}\, C_{w_0^\nu}$,
with $s'$ the simple reflection defined as $s'=w_0^\nu s w_0^\nu$.
\end{corollary}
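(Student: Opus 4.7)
The plan is to treat the two kinds of simple reflections in $W_\nu\times W_\nu^\dagger$ separately: those in $W_\nu^\dagger$ and those in $W_\nu$.

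For $s\in W_\nu^\dagger$, the definition of $W_\nu^\dagger$ implies that $s$ commutes with every simple reflection of $W_\nu$, hence with $w_0^\nu$, so that $s'=s$. Fixing a reduced expression $w_0^\nu=t_1\cdots t_l$ in simple generators of $W_\nu$, I will iterate the first case of Theorem~\ref{simplerefl} along the factorisation $C_{w_0^\nu}=C_{t_l}\cdots C_{t_1}$ to move $\theta_s$ past each $C_{t_i}$, obtaining $C_{w_0^\nu}\theta_s\cong\theta_s C_{w_0^\nu}=\theta_{s'}C_{w_0^\nu}$.

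For $s\in W_\nu$, the analogous commutation argument applied to simple reflections in different irreducible components of $W_\nu$ reduces the problem to the case where $W_\nu$ is irreducible, so that $W_\nu\cong S_n$ with simple generators $s_1,\ldots,s_{n-1}$ and $s=s_i$. Conjugation by $w_0^\nu$ then realises the diagram automorphism, giving $s'=s_{n-i}$, which is again a simple reflection. I will proceed by induction on $n$, with the base case $n=2$ a trivial instance of Relation A. For the inductive step, I will employ the standard reduced factorisation $w_0^\nu=w_0^{(n-1)}\cdot(s_{n-1}s_{n-2}\cdots s_1)$, where $w_0^{(n-1)}$ denotes the longest element of the subgroup $\langle s_1,\ldots,s_{n-2}\rangle\cong S_{n-1}$, yielding $C_{w_0^\nu}\cong C_{s_1}C_{s_2}\cdots C_{s_{n-1}}\circ C_{w_0^{(n-1)}}$. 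When $i<n-1$, the induction hypothesis inside $\langle s_1,\ldots,s_{n-2}\rangle$ turns $C_{w_0^{(n-1)}}\theta_{s_i}$ into $\theta_{s_{n-1-i}}C_{w_0^{(n-1)}}$, and $\theta_{s_{n-1-i}}$ can then be propagated leftward through $C_{s_{n-1}}\cdots C_{s_1}$ using Relation A for the factors commuting with $s_{n-1-i}$ and a single application of Relation B at the unique adjacent pair $(s_{n-1-i},s_{n-i})$, which both consumes $C_{s_{n-i}}$ and converts the index to $s_{n-i}$. The extremal case $i=n-1$ is handled by the mirror decomposition $w_0^\nu=\widetilde{w}_0^{(n-1)}\cdot(s_1\cdots s_{n-1})$ with $\widetilde{w}_0^{(n-1)}$ the longest element of $\langle s_2,\ldots,s_{n-1}\rangle$, together with the symmetric version of the argument.

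The main obstacle is combinatorial: one must verify that the propagation through $C_{s_1}\cdots C_{s_{n-1}}$ requires exactly one application of Relation B at the correct adjacent pair, so that the resulting index matches $s_{n-i}$ as dictated by the diagram automorphism. This is the categorical shadow of the type $A$ identity $w_0^\nu s_i w_0^\nu=s_{n-i}$, realised step by step via commutation and braid moves.
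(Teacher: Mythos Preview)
Your proof is correct, but it organises the computation differently from the paper. Both arguments rest entirely on repeated use of Theorem~\ref{simplerefl}; the difference is in how one decomposes $w_0^\nu$.

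The paper does not induct on $n$. Instead it writes down one of two explicit reduced expressions for $w_0^\nu$ depending on whether $i\le n/2$ or $i\ge n/2$ (the ``staircase'' words $(s_n\cdots s_1)(s_n\cdots s_2)\cdots(s_n)$ and its mirror), and pushes $\theta_{s_i}$ through the entire word in one go, applying Relations~A and~B block by block. Your approach factors $w_0^\nu=w_0^{(n-1)}\cdot(s_{n-1}\cdots s_1)$, invokes the induction hypothesis on the $S_{n-1}$ part, and then handles the single Coxeter-element factor $C_{s_1}\cdots C_{s_{n-1}}$ with one braid move. Your case split ($i<n-1$ versus $i=n-1$) plays the same role as the paper's ($i\le n/2$ versus $i\ge n/2$), but yours is asymmetric because the induction peels off one end of the Dynkin diagram.

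What each buys: the paper's argument is non-recursive and self-contained once one accepts the reduced expressions, but it requires verifying that those particular long words really are reduced and that the propagation through them works. Your inductive argument is cleaner to verify step by step and makes the appearance of exactly one braid move (at the pair $(s_{n-1-i},s_{n-i})$) transparent; it also makes explicit why the result is the categorical shadow of $w_0^\nu s_i w_0^\nu=s_{n-i}$, as you note. Both are straightforward once Theorem~\ref{simplerefl} is in hand.
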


Set $\cQ_{(w)}:=\add(C_wP)$, with $w\in W$,
and $P$ a projective generator for~$\cO_0$.

\begin{proposition}\label{propcommu}
{\hspace{2mm}}

\begin{enumerate}[$($i$)$]
\item\label{propcommu.1}
For $\nu\in\intdom$ and a simple reflection 
$s\in W_\nu\times W_\nu^\dagger$, we have
$C_{w_0^\nu}\theta_s\;\cong\;  \theta_{w_0^\nu sw_0^\nu} C_{w_0^\nu}$
if and only if the category $\cQ_{(w_0^\nu)}$ is stable under 
action of $\theta_{w_0^\nu sw_0^\nu}$.
\item\label{propcommu.2} 
For any $y\in W$, we have $C_{w_0}\theta_y\;\cong\;  \theta_{w_0 yw_0} C_{w_0}$. 
\end{enumerate}
\end{proposition}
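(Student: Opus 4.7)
For the ``only if'' direction of (i), I evaluate the assumed isomorphism $C_{w_0^\nu}\theta_s\cong\theta_{w_0^\nu sw_0^\nu}C_{w_0^\nu}$ at the projective generator $P$ of $\cO_0$. Since projective functors preserve projectives, $\theta_sP\in\add(P)$, and hence $C_{w_0^\nu}\theta_sP\in\add(C_{w_0^\nu}P)=\cQ_{(w_0^\nu)}$. The isomorphism then forces $\theta_{w_0^\nu sw_0^\nu}C_{w_0^\nu}P\in\cQ_{(w_0^\nu)}$; as $C_{w_0^\nu}P$ generates $\cQ_{(w_0^\nu)}$ and $\theta_{w_0^\nu sw_0^\nu}$ is additive, the whole subcategory $\cQ_{(w_0^\nu)}$ is stable.

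For the ``if'' direction of (i), both $F:=C_{w_0^\nu}\theta_s$ and $G:=\theta_{w_0^\nu sw_0^\nu}C_{w_0^\nu}$ are right exact, so it suffices to produce a natural isomorphism of their restrictions to $\add(P)$. Under the stability hypothesis, both restrictions take values in $\cQ_{(w_0^\nu)}$. I would compare them by applying Soergel's combinatorial functor $\mV$ and translating the claim into a statement about Soergel bimodules, in which projective functors become tensor products with the bimodules associated to the $\theta_x$ and the shuffling functor $C_{w_0^\nu}$ admits an explicit cone/cokernel description. The commutation relations of Theorem~\ref{simplerefl} then furnish the base cases, and an induction along a reduced expression for $w_0^\nu$ produces the desired isomorphism at the bimodule level, which lifts back to $\cO_0$.

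For part (ii), I take $\nu=-\rho$, so that $W_\nu=W$ and $w_0^\nu=w_0$. By~\eqref{eqInRi1}, $C_{w_0}$ realises Ringel self-duality of $\cO_0$, hence $\cQ_{(w_0)}=\add(T)$ is the additive category of characteristic quasi-hereditary tilting modules. Projective functors, being exact and preserving both standard and costandard filtrations, stabilise $\add(T)$. Part (i) therefore gives $C_{w_0}\theta_s\cong\theta_{w_0sw_0}C_{w_0}$ for every simple reflection $s$. For general $y\in W$ with reduced expression $y=s_1\cdots s_k$, iterating along the expression yields
\begin{displaymath}
C_{w_0}\theta_{s_1}\cdots\theta_{s_k}\;\cong\;\theta_{w_0s_1w_0}\cdots\theta_{w_0s_kw_0}C_{w_0}.
\end{displaymath}
The indecomposable $\theta_y$ appears with multiplicity one in $\theta_{s_1}\cdots\theta_{s_k}$, while all other indecomposable summands $\theta_z$ satisfy $l(z)<l(y)$. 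Conjugation by $w_0$ permutes simple reflections and is compatible with these decompositions, so $\theta_{w_0yw_0}$ occurs with multiplicity one on the right. Induction on $l(y)$ disposes of the lower-order summands, and a Krull--Schmidt comparison isolates $C_{w_0}\theta_y\cong\theta_{w_0yw_0}C_{w_0}$.

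The main obstacle is the ``if'' direction of (i): upgrading objectwise agreement to a natural isomorphism of functors compatible with morphisms. The Soergel-bimodule approach requires a careful description of shuffling on the $\mV$-side and of how the basic commutations in Theorem~\ref{simplerefl} compose along a reduced expression of $w_0^\nu$. The iteration in (ii) additionally needs the verification that $C_{w_0}$, as an auto-equivalence of $\cD^b(\cO_0)$, is compatible with the Krull--Schmidt decomposition of products of $\theta_{s_i}$'s.
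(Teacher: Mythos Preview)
Your treatment of the ``only if'' direction of \eqref{propcommu.1} and of part \eqref{propcommu.2} is correct and essentially identical to the paper's argument.

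The gap is in the ``if'' direction of \eqref{propcommu.1}. Your outline (reduce to projectives by right exactness, then pass to the $\mV$-side) is the right shape, but two essential ingredients are missing, and your proposed substitute does not work in general. First, the mechanism for ``lifting back'' is Lemma~\ref{catQ}: $\mV$ is \emph{full and faithful} on $\cQ_{(w_0^\nu)}$. This is precisely why the stability hypothesis matters --- it guarantees that both $C_{w_0^\nu}\theta_s$ and $\theta_{w_0^\nu s w_0^\nu}C_{w_0^\nu}$ send projectives into $\cQ_{(w_0^\nu)}$, where $\mV$ detects isomorphisms. You gesture at this with ``lifts back to $\cO_0$'' but never state or use it. Second, the paper does \emph{not} obtain the $\mV$-level isomorphism by inducting on a reduced expression via Theorem~\ref{simplerefl}. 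Instead, Lemma~\ref{LEMeqapr19-1} identifies $\mV C_t\cong {}^{t}\mathtt{C}\otimes_{\mathtt{C}}\mV$ for each simple reflection, which composes to $\mV C_{w_0^\nu}\cong {}^{w_0^\nu}\mathtt{C}\otimes_{\mathtt{C}}\mV$ without any commutation hypothesis; the required isomorphism $\mV C_{w_0^\nu}\theta_s\cong\mV\theta_{w_0^\nu s w_0^\nu}C_{w_0^\nu}$ is then a direct bimodule computation (Corollary~\ref{Visom}).

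Your proposed induction via Theorem~\ref{simplerefl} would require, at each step of a reduced expression $w_0^\nu=s_1\cdots s_k$, moving some $\theta$ past a single $C_{s_i}$; but Theorem~\ref{simplerefl} only tells you how to do this when the two reflections involved either commute or generate $S_3$. That is exactly why Corollary~\ref{corforequi} is stated only for $W_\nu$ of type~$A$ and why specific reduced expressions are chosen there. Part \eqref{propcommu.1} carries no type restriction on $W_\nu$, so your inductive scheme breaks down already for $W_\nu$ of type $B_2$, where no pair of simple reflections generates $S_3$. The paper's bimodule computation, by contrast, is type-free.
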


\begin{remark}
{\rm 
Note that the stability of $\cQ_{(w)}$ is not an obvious property. Already for $\mathfrak{g}=\mathfrak{sl}(3)$, the category $\cQ_{(s)}$ is not stable under projective functors.
}
\end{remark}

Now we start the proofs. Define the graded lift of $C_t$ via the exact sequence 
\begin{equation}\label{deffshufflgr}
\Id\langle 1\rangle \,\overset{\rm adj}{\longrightarrow}\, \theta_t\,\to\, C_t\,\to\, 0.
\end{equation}

\begin{lemma}\label{lemF}
For a simple reflection $t$, define the functor $F_t$ as the kernel 
of the adjunction morphism in \eqref{deffshufflgr}. Then $F_t\theta_t\cong 0 \cong \theta_t F_t$.
\end{lemma}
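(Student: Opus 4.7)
The plan is to apply the exact functor $\theta_t$ to the four-term exact sequence of functors
\begin{equation*}
0 \to F_t \to \Id\langle 1\rangle \xrightarrow{\alpha} \theta_t \to C_t \to 0,
\end{equation*}
which arises from the definition of $F_t$ together with \eqref{deffshufflgr}. Applying $\theta_t$ on the right yields
\begin{equation*}
0 \to F_t\theta_t \to \theta_t\langle 1\rangle \xrightarrow{\alpha\theta_t} \theta_t\theta_t \to C_t\theta_t \to 0,
\end{equation*}
so the claim $F_t\theta_t \cong 0$ reduces to showing that $\alpha\theta_t$ is a monomorphism, and the companion claim $\theta_t F_t \cong 0$ will follow by the symmetric argument after applying $\theta_t$ on the left.

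The main tool is the standard graded decomposition $\theta_t\theta_t \cong \theta_t\langle 1\rangle\oplus\theta_t\langle -1\rangle$. This follows from the realisation $\theta_t\cong\theta^{out}_{\nu}\theta^{on}_{\nu}$ for a weight $\nu$ on the $t$-wall together with Proposition~\ref{thetatheta} applied to $W_\nu=\{e,t\}$, with grading conventions fixed by \eqref{adjtransg} and \eqref{normon}. With respect to this decomposition, $\alpha\theta_t$ has two components. The component into $\theta_t\langle -1\rangle$ is a degree-zero natural transformation $\theta_t\langle 1\rangle\to\theta_t\langle -1\rangle$, equivalently an element of internal degree $-2$ in the graded endomorphism algebra of $\theta_t$. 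Since $\theta_t$ is graded indecomposable with non-negatively graded endomorphism algebra whose degree-zero part is one-dimensional, this component must vanish. Hence $\alpha\theta_t$ factors through the canonical summand inclusion $\theta_t\langle 1\rangle\hookrightarrow\theta_t\theta_t$ via a scalar endomorphism of $\theta_t\langle 1\rangle$.

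It then remains to check that this scalar is nonzero, after which $\alpha\theta_t$ is a split monomorphism and the lemma follows. I would verify nonvanishing by evaluating $\alpha\theta_t$ on a convenient test module, for instance on $P(e)$ (or on a dominant Verma module), where $\theta_t P(e)$ and the adjunction morphism admit explicit descriptions; alternatively, one may work in the Soergel-bimodule realisation of projective functors, in which the decomposition of $\theta_t\theta_t$ is given by explicit idempotents and $\alpha\theta_t$ is identified with a canonical summand inclusion via one of the zigzag identities for the self-adjoint functor $\theta_t$. The main obstacle is precisely this final identification: the decomposition of $\theta_t\theta_t$ and the positivity of its endomorphism grading are standard, but verifying that the $\theta_t\langle 1\rangle$-component of $\alpha\theta_t$ is genuinely nonzero (rather than merely permitted to be nonzero on grading grounds) requires a concrete module-level or bimodule-level computation.
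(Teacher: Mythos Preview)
Your approach is sound but genuinely different from the paper's. The paper argues directly on simple modules: since $\Id$ is exact, $F_t$ is left exact, hence so are $\theta_t F_t$ and $F_t\theta_t$; it therefore suffices to check vanishing on simples. For $\theta_t F_t$ this is immediate from the dichotomy $F_t L=0$ if $\theta_t L\neq 0$ and $F_t L\cong L$ if $\theta_t L=0$. For $F_t\theta_t$, one uses that when $\theta_t L\neq 0$ the module $\theta_t L$ has simple socle $L$, which is itself not annihilated by $\theta_t$, so the adjunction morphism is injective on $\theta_t L$ and $F_t\theta_t L=0$.

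Your route instead reduces to showing that the whiskered unit $\alpha\theta_t$ (and symmetrically $\theta_t\alpha$) is a monomorphism, then analyses this via the graded splitting $\theta_t^2\cong\theta_t\langle 1\rangle\oplus\theta_t\langle -1\rangle$. The positivity argument forcing the $\theta_t\langle -1\rangle$-component to vanish is correct, and the remaining nonvanishing of the scalar is indeed easily settled by a zigzag identity for the self-adjoint $\theta_t$, or by evaluating on $\Delta(e)$ as you suggest. What you gain is a structural argument that dovetails with the paper's subsequent proof of Theorem~\ref{simplerefl} in the case $s=t$, where exactly this decomposition and injection reappear (and where the paper invokes the present lemma to obtain injectivity). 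What the paper's argument gains is that it is entirely elementary and self-contained: no grading analysis, no nonvanishing check, just left exactness and the well-known socle structure of $\theta_t L$.
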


\begin{proof}
As $\Id$ is exact, $F_t$ is left exact. Then $\theta_t T_t$ is also left 
exact and so is $T_t \theta_t$, as $\theta_t$ maps injective modules to injective modules. 
By construction, we have
\begin{displaymath}
F_tL=0\;  \mbox{ if }\;\theta_t L\not=0\quad\mbox{and}\quad  F_tL\cong L\;\mbox{ if }\; \theta_tL\cong 0.
\end{displaymath}
This implies immediately that $\theta_t F_t\cong 0$.

To prove that $F_t\theta_t$ acts trivially on all simple modules, it suffices 
to consider a simple module $L$ with $\theta_t L\not=0$. 
But then $\theta_t L$ has simple socle $L$ and the latter is not annihilated by 
$\theta_t$. This means that the adjunction morphism is injective on both
$L$ and $\theta_t L$, which implies $F_t\theta_t L=0$. The claim follows.
\end{proof}

\begin{proof}[Proof of Theorem~\ref{simplerefl}]
Assume first that $s$ and $t$ commute but are distinct, in particular we have $\theta_s\theta_t\cong\theta_{ts}\cong\theta_t\theta_s$. We can compose 
\eqref{deffshufflgr} with $\theta_s$ on both sides. This yields exact sequences
\begin{eqnarray*}
\theta_s\langle 1 \rangle\,\to \,\theta_{st}\,\to\, C_t\theta_s\,\to\,0\qquad\mbox{and}\qquad \theta_s\langle 1\rangle\,\to\, \theta_{st}\,\to\, \theta_s C_t\,\to\,0.
\end{eqnarray*}
From \cite[Theorem~3.5]{BG} and Kazhdan-Lusztig combinatorics it follows that 
the morphism $\theta_s\langle 1 \rangle\to \theta_{st}$ 
is unique, up to a non-zero scalar, implying $C_t\theta_s\cong \theta_sC_t$.

Now consider the case $s=t$. Using Lemma \ref{lemF} and the relation 
$\theta_s^2\cong\theta_s\langle 1\rangle\oplus\theta_s\langle-1\rangle$ 
we obtain two short exact sequences
\begin{eqnarray*}
&&0\to\theta_s\langle 1\rangle\to\theta_s\langle 1\rangle\oplus \theta_s\langle -1\rangle\to \theta_sC_s\to 0;\\
&&0\to\theta_s\langle 1\rangle\to\theta_s\langle 1\rangle\oplus \theta_s\langle -1\rangle\to C_s\theta_s\to 0.
\end{eqnarray*}
The only possible way to have such an injection 
$\theta_s\langle 1\rangle\hookrightarrow\theta_s\langle 1\rangle\oplus \theta_s\langle -1\rangle$ 
corresponds to $\theta_s\langle 1\rangle\,\tilde\to\,\theta_s\langle 1\rangle$. This implies indeed $\theta_sC_s\cong\theta_s\langle -1\rangle\;\cong\; C_s\theta_s. $

Finally, assume that the simple reflections $s$ and $t$ together generate a 
group isomorphic to $S_3$. Using the definitions of $C_s$ and $C_t$ via 
\eqref{deffshufflgr}, we construct a commutative diagram with exact rows and columns as follows:
\vspace{-1mm}
\begin{displaymath}
    \xymatrix{
\Id\langle 2\rangle\ar[r]\ar[d]&\theta_s\langle 1\rangle\ar[r]\ar[d]&C_s\langle 1\rangle\ar[r]\ar[d]&0\\
\theta_t\langle 1\rangle\ar[r]\ar[d]&\theta_t\theta_s\ar[r]\ar[d]&\theta_tC_s\ar[r]\ar[d]&0\\
C_t\langle 1   \rangle\ar[r]\ar[d]&C_t\theta_s\ar[r]\ar[d] & C_{st}\ar[d]\ar[r]&0\\
0&0&0
}
\end{displaymath}
Now we compose every functor in the diagram with the exact functor $\theta_s$ 
and use the above result, for the case $s=t$, in the first row of the following diagram:
\begin{displaymath}
    \xymatrix{
0\ar[r]&\theta_s\langle 2\rangle\ar[r]^{\alpha_1}\ar[d]^{f_1}&\theta_s\langle 2\rangle\oplus \theta_s\ar[r]^{\alpha_2}\ar[d]^{g_1}&\theta_s\ar[r]\ar[d]^{h_1}&0\\
&\theta_s\theta_t\langle 1\rangle\ar[r]^{\beta_1}\ar[d]^{f_2}&\theta_s\theta_t\theta_s\ar[r]^{\beta_2}\ar[d]^{g_2}&\theta_s\theta_tC_s\ar[r]\ar[d]^{h_2}&0\\
&\theta_s C_t\langle 1   \rangle\ar[r]^{\gamma_1}\ar[d]&\theta_s C_t\theta_s\ar[r]^{\gamma_2}\ar[d] &\theta_s C_{st}\ar[d]\ar[r]&0\\
&0&0&0
}
\end{displaymath}
This diagram contains a surjection $ \theta_s\theta_t\theta_s\tto\theta_sC_{st}$ 
given by  $h_2\circ\beta_2=\gamma_2\circ h_2$. The kernel of this map can 
be described as the direct sum of the image of $\beta_1$ with the preimage 
under $\beta_2$ of the image of $h_1$. Because the top row is a split 
exact sequence, the latter corresponds to the image of $\theta_s$ under 
$g_1$. This implies the exact sequence
\begin{displaymath}
\theta_s\oplus \theta_s\theta_t\langle 1\rangle\, \to\, 
\theta_s\theta_t\theta_s\,\to\,\theta_sC_{st}\,\to\, 0. 
\end{displaymath}
From Kazhdan-Lusztig combinatorics we know that 
$\theta_s\theta_t\theta_s\cong \theta_{sts}\oplus \theta_s$. 
As there is no graded morphism $\theta_s\to\theta_{sts}$ and 
the only graded morphism $\theta_s\to\theta_s$ is the identity, up to a scalar, we finally obtain an exact sequence
\begin{displaymath}
\theta_s\theta_t\langle 1\rangle\, \to\, \theta_{sts}\,\to\,\theta_sC_{st}\,\to\, 0. 
\end{displaymath}
 
Analogously we can compose $\theta_t$ with every functor in the first diagram, eventually 
yielding an exact sequence
\begin{displaymath}
\theta_s\theta_t\langle 1\rangle\, \to\, \theta_{sts}\,\to\,C_{st}\theta_t\,\to\, 0. 
\end{displaymath}
From Kazhdan-Lusztig combinatorics it, moreover, follows that there is only 
one graded morphism $\theta_s\theta_t\langle 1\rangle \to \theta_{sts}$, which concludes the proof.
\end{proof}

\begin{proof}[Proof of Corollary~\ref{corforequi}]
If $s\in W_\nu^\dagger$, the result follows immediately from Theorem~\ref{simplerefl}. 
The case $s\in W_\nu$ can be reduced to $W_\nu \cong S_n$ generated by 
$s_1,\cdots,s_n$ and $s=s_i$, for some $1\le i \le n$. 
In case $i\le n/2$, we take the reduced form
\begin{displaymath}
w_0^\nu=s_n s_{n-1} \cdots s_2 s_1s_n s_{n-1}\cdots s_3s_2\cdots   s_ns_{n-1} s_{n-2} s_n s_{n-1} s_n;
\end{displaymath}
if $i\ge n/2$, we take
\begin{displaymath}
w_0^\nu=s_1 s_2\cdots s_{n-1}s_n s_1s_2\cdots s_{n-2}s_{n-1}\cdots s_1 s_2 s_3 s_1 s_2 s_1.
\end{displaymath}
In both cases, apply Theorem~\ref{simplerefl} repeatedly, 
giving the result with $s'=s_{n-i+1}$.
\end{proof}

\begin{lemma}\label{LEMeqapr19-1}
For any simple reflection $t\in W$, we have
$\mathbb{V} C_{t}\cong {}^{t}\mathtt{C}\otimes_{\mathtt{C}}\mathbb{V}$.
\end{lemma}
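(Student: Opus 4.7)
The plan is to apply the exact functor $\mathbb{V}$ to the defining right-exact sequence \eqref{deffshufflgr} for $C_t$ and to identify the resulting cokernel via Soergel's combinatorial description of $\theta_t$. Since $\mathbb{V} = \Hom_\cO(P(w_0\cdot 0), -)$ is exact ($P(w_0 \cdot 0)$ is projective), applying it to the sequence $\mathrm{Id}\langle 1\rangle \to \theta_t \to C_t \to 0$ gives a right-exact sequence
\begin{displaymath}
\mathbb{V}\langle 1\rangle \;\xrightarrow{\,\eta\,}\; \mathbb{V}\theta_t \;\to\; \mathbb{V} C_t \;\to\; 0.
\end{displaymath}
Writing $\theta_t = \theta^{out}_{\lambda_t}\theta^{on}_{\lambda_t}$ for $\lambda_t\in\intdom$ with $W_{\lambda_t}=\{e,t\}$ and applying both halves of equation \eqref{Soergeltrans} yields a natural isomorphism $\mathbb{V}\theta_t \cong \mathtt{C}\otimes_{\mathtt{C}^t}\mathbb{V}$. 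Thus $\mathbb{V} C_t$ is identified as the cokernel of a natural transformation $\mathbb{V}\langle 1\rangle \to \mathtt{C}\otimes_{\mathtt{C}^t}\mathbb{V}$.

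Next I would establish a short exact sequence of graded $\mathtt{C}$-bimodules
\begin{displaymath}
0 \;\to\; \mathtt{C} \;\xrightarrow{\,\eta_0\,}\; \mathtt{C}\otimes_{\mathtt{C}^t}\mathtt{C} \;\to\; {}^t\mathtt{C}\langle -2\rangle \;\to\; 0,
\end{displaymath}
where $\eta_0(c) = c\cdot(\alpha\otimes 1 + 1\otimes \alpha)$ for $\alpha\in\mathfrak h^\ast$ a root with $t(\alpha)=-\alpha$ (so that $\mathtt{C} = \mathtt{C}^t \oplus \alpha\mathtt{C}^t$). A direct verification shows that $\alpha\otimes 1+1\otimes\alpha$ is central in $\mathtt{C}\otimes_{\mathtt{C}^t}\mathtt{C}$, making $\eta_0$ a bimodule map; injectivity follows from an $\mathtt{C}^t$-rank count, and the cokernel is identified with ${}^t\mathtt{C}$ (up to shift) via $\overline{1\otimes 1}\mapsto 1$, using the relation $\overline{\alpha\otimes 1}=-\overline{1\otimes\alpha}$ which forces the left action to compute as $c\cdot\overline{1\otimes 1} = \overline{1\otimes t(c)}$. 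Tensoring on the right with $\mathbb{V}$ over $\mathtt{C}$ gives a right-exact sequence
\begin{displaymath}
\mathbb{V}\langle 1\rangle \;\xrightarrow{\,\eta_0'\,}\; \mathtt{C}\otimes_{\mathtt{C}^t}\mathbb{V} \;\to\; {}^t\mathtt{C}\otimes_\mathtt{C}\mathbb{V} \;\to\; 0,
\end{displaymath}
and the lemma reduces to showing that $\eta$ and $\eta_0'$ agree up to a non-zero scalar.

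Both $\eta$ and $\eta_0'$ are natural transformations between the same pair of graded endofunctors on $\mathtt{C}$-gmod (pulled back through $\mathbb{V}$), and by the Yoneda-style identification such natural transformations correspond bijectively to central degree-matched elements of the bimodule $\mathtt{C}\otimes_{\mathtt{C}^t}\mathtt{C}$. A direct computation (using the $\mathtt{C}^t$-basis $\{1\otimes 1, 1\otimes\alpha, \alpha\otimes 1, \alpha\otimes\alpha\}$) shows that the centre of $\mathtt{C}\otimes_{\mathtt{C}^t}\mathtt{C}$ in the relevant degree is one-dimensional, spanned precisely by $\alpha\otimes 1+1\otimes\alpha$. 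Since $\eta_0'$ is non-zero by construction, and $\eta$ is non-zero because $C_t$ is not the identity (equivalently, because the adjunction $\mathrm{Id}\langle 1\rangle \to \theta_t$ is non-trivial), the two must be proportional, which gives the isomorphism of cokernels $\mathbb{V} C_t \cong {}^t\mathtt{C}\otimes_\mathtt{C}\mathbb{V}$. The main obstacle is precisely this last identification of the adjunction morphism with the bimodule map $\eta_0$; the graded one-dimensional Hom-space is the cleanest device to force it, whereas a more hands-on route would require tracing the image of $\mathrm{id}_{P(w_0\cdot 0)}$ through the construction of Soergel's isomorphism.
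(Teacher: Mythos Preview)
Your argument is correct and follows essentially the same strategy as the paper's proof: both reduce to identifying the cokernel of the unique (up to scalar) bimodule injection $\mathtt{C}\hookrightarrow\mathtt{C}\otimes_{\mathtt{C}^t}\mathtt{C}$ with ${}^t\mathtt{C}$, and then matching the adjunction morphism with this injection. The paper invokes \cite[Theorem~4.9]{Backelin} for the uniqueness and reduces the cokernel computation to the rank-two case; you instead write down the map $c\mapsto c(\alpha\otimes 1+1\otimes\alpha)$ explicitly, verify centrality and the twisted quotient by hand, and recover the one-dimensionality of degree-two central elements via the $\mathtt{C}^t$-basis $\{1\otimes1,\alpha\otimes1,1\otimes\alpha,\alpha\otimes\alpha\}$ (which is valid for arbitrary $W$ since $\mathtt{C}=\mathtt{C}^t\oplus\alpha\mathtt{C}^t$ and commutation with $\mathtt{C}^t$ is automatic). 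The only point that deserves a sharper formulation is your phrase ``pulled back through $\mathbb{V}$'': $\eta$ is a priori a natural transformation of functors $\cO_0\to\mathtt{C}\text{-gmod}$, not of endofunctors of $\mathtt{C}\text{-gmod}$. The clean way to close this is to note that, since both sides are right exact, it suffices to work on projectives, where $\mathbb{V}$ is fully faithful by \cite[Struktursatz~9]{SoergelD}; evaluating at $P(w_0)$ and using $\End_{\cO}(P(w_0))\cong\mathtt{C}$ then forces $\eta_{P(w_0)}$ to be a $\mathtt{C}$-bimodule map, which is exactly the step covered by the Backelin citation in the paper.
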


\begin{proof}
Since both sides are right exact, it is enough to check the isomorphism on 
the category of projective modules in $\cO_0$. 
\cite[Theorem~4.9]{Backelin} implies that there is a unique injective bimodule homomorphism 
$\mathtt{C}\hookrightarrow \mathtt{C}\otimes_{\mathtt{C}^t}\mathtt{C}$. We claim that the cokernel of
this maps is isomorphic to ${}^{t}\mathtt{C}$. Indeed, this cokernel is isomorphic to $\mathtt{C}$ 
both as a left and as a right $\mathtt{C}$-module. As a $\mathtt{C}^t\text{-}\mathtt{C}^t$-bimodule, 
we obviously get a decomposition of the cokernel into a direct sum of two copies
of~$\mathtt{C}^t$. To determine the action of the remaining generator, it is enough to compute the
rank two case. In that case the statement is easily checked by a direct computation.
\end{proof}

\begin{corollary}\label{Visom}
For $\nu\in\intdom$ and a simple reflection $s\in W_\nu\times W_\nu^\dagger$, we have
\begin{displaymath}
\mV C_{w_0^\nu}\theta_s\;\cong\;  \mV \theta_{w_0^\nu sw_0^\nu} C_{w_0^\nu}. 
\end{displaymath}
\end{corollary}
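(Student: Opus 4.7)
The plan is to apply Soergel's functor $\mV$ to both sides and reduce to an isomorphism of functors $\cO_0\to\mathtt{C}\mbox{-mod}$, combining Lemma~\ref{LEMeqapr19-1} for the $C_{w_0^\nu}$ factor with the consequence $\mV\theta_s\cong \mathtt{C}\otimes_{\mathtt{C}^s}\mV$ of~\eqref{Soergeltrans} (applied to a weight $\lambda$ on the $s$-wall only) for the translation factor. The argument splits naturally into two cases according to whether the simple reflection $s$ lies in $W_\nu^\dagger$ or in $W_\nu$.

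If $s\in W_\nu^\dagger$, then $s$ is orthogonal to every simple reflection appearing in $W_\nu$ and hence commutes with every simple reflection in any reduced expression of $w_0^\nu$. Iterated application of the commuting case of Theorem~\ref{simplerefl} yields $C_{w_0^\nu}\theta_s\cong\theta_s C_{w_0^\nu}$, and $sw_0^\nu=w_0^\nu s$ forces $s'=s$, so the isomorphism holds already on $\cO_0$ without invoking $\mV$.

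For the remaining case $s\in W_\nu$, fix a reduced expression $w_0^\nu=t_1\cdots t_m$, so that $C_{w_0^\nu}\cong C_{t_m}\cdots C_{t_1}$. Iterating Lemma~\ref{LEMeqapr19-1}, and using that tensoring with ${}^u\mathtt{C}$ over $\mathtt{C}$ is precisely the twist-by-$u$ endofunctor on $\mathtt{C}\mbox{-mod}$ (so the individual twists compose to the twist by $t_m\cdots t_1 = w_0^\nu$), I obtain the identification $\mV C_{w_0^\nu}\cong {}^{w_0^\nu}\mathtt{C}\otimes_{\mathtt{C}}\mV$. Plugging this in together with $\mV\theta_s\cong\mathtt{C}\otimes_{\mathtt{C}^s}\mV$ and its analogue for $s'$ yields
\begin{displaymath}
\mV C_{w_0^\nu}\theta_s(M)\cong {}^{w_0^\nu}\mathtt{C}\otimes_{\mathtt{C}^s}\mV M\quad\text{and}\quad
\mV\theta_{s'}C_{w_0^\nu}(M)\cong \mathtt{C}\otimes_{\mathtt{C}^{s'}}\bigl({}^{w_0^\nu}\mathtt{C}\otimes_{\mathtt{C}}\mV M\bigr).
\end{displaymath}

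Using the key identity $sw_0^\nu=w_0^\nu s'$, which implies that $w_0^\nu$ restricts to an algebra isomorphism $\mathtt{C}^{s'}\;\tilde\to\;\mathtt{C}^s$, I will check that the assignment $a\otimes x\mapsto w_0^\nu(a)\otimes x$ is a well-defined natural $\mathtt{C}$-linear isomorphism from the second module above to the first. The main obstacle is to track the left $\mathtt{C}$-action carefully: when $\mV C_{w_0^\nu}(N)$ is identified with ${}^{w_0^\nu}\mathtt{C}\otimes_{\mathtt{C}}\mV(N)$, the $\mathtt{C}$-action on the target is twisted by $w_0^\nu$ on the first factor, and it is precisely this twist that converts restriction of scalars along $\mathtt{C}^{s'}\hookrightarrow\mathtt{C}$ into restriction along $\mathtt{C}^s\hookrightarrow\mathtt{C}$, yielding the required isomorphism.
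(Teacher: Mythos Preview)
Your proof is correct and follows essentially the same route as the paper: both identify $\mV C_{w_0^\nu}$ with ${}^{w_0^\nu}\mathtt{C}\otimes_{\mathtt{C}}\mV$ via iterated application of Lemma~\ref{LEMeqapr19-1}, combine this with $\mV\theta_s\cong\mathtt{C}\otimes_{\mathtt{C}^s}\mV$ from \eqref{Soergeltrans}, and then verify (what amounts to) the $\mathtt{C}$-$\mathtt{C}$ bimodule isomorphism $\mathtt{C}\otimes_{\mathtt{C}^{s'}}{}^{w_0^\nu}\mathtt{C}\cong{}^{w_0^\nu}\mathtt{C}\otimes_{\mathtt{C}^s}\mathtt{C}$ using that $w_0^\nu$ carries $\mathtt{C}^{s'}$ isomorphically onto $\mathtt{C}^s$. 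The only difference is cosmetic: the paper handles all $s\in W_\nu\times W_\nu^\dagger$ uniformly via this bimodule computation, whereas you split off the case $s\in W_\nu^\dagger$ and dispose of it directly with the commuting case of Theorem~\ref{simplerefl}---which is perfectly valid but not needed, since the bimodule argument already covers it.
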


\begin{proof}
By Subsection \ref{sscentre}, we have 
\begin{displaymath}
\mathbb{V} \theta_s\cong \mathtt{C}\otimes_{\mathtt{C}^s}\mathtt{C}\otimes_{\mathtt{C}}\mathbb{V} 
\,\,\, \text{ and }\,\,\,
\mathbb{V} \theta_{w_0^\nu s w_0^\nu}\cong \mathtt{C}\otimes_{\mathtt{C}^{w_0^\nu s w_0^\nu}}\mathtt{C}\otimes_{\mathtt{C}}\mathbb{V}.  
\end{displaymath}

By induction and the choice of a reduced expression for $w_0^\nu$, 
Lemma~\ref{LEMeqapr19-1} implies
$\mathbb{V} C_{w_0^\nu}\cong {}^{w_0^\nu}\mathtt{C}\otimes_{\mathtt{C}}\mathbb{V}$.
A straightforward computation gives an isomorphism
\begin{displaymath}
\mathtt{C}\otimes_{\mathtt{C}^{w_0^\nu s w_0^\nu}}{}^{w_0^\nu}\mathtt{C}\cong {}^{w_0^\nu}\mathtt{C}\otimes_{\mathtt{C}^s}\mathtt{C},
\end{displaymath}
of~$\mathtt{C}\text{-}\mathtt{C}$ bimodules. Combining the three 
isomorphisms yields the claim.
\end{proof}

\begin{lemma}\label{catQ}
Soergel's combinatorial functor $\mV$ is full and faithful when 
restricted to the category $\cQ_{(w)}$, for any $w\in W$.
\end{lemma}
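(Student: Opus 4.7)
The plan is to factor the claim through Soergel's Struktursatz, which yields full faithfulness of $\mV$ on projectives in $\cO_0$, and to use as the bridge the natural isomorphism $\mV C_w \cong {}^w\mathtt{C}\otimes_\mathtt{C}\mV$ of the type already established in Corollary~\ref{Visom}. Since every object of $\cQ_{(w)}$ is by definition a direct summand of $(C_w P)^{\oplus k}$ and both $\mV$ and $\Hom$ are additive, it is enough to show that $\mV$ induces isomorphisms
\begin{displaymath}
\Hom_{\cO_0}(C_w M, C_w N)\;\tilde\to\;\Hom_{\mathtt{C}}(\mV C_w M,\mV C_w N)
\end{displaymath}
for arbitrary projective $M,N\in\cO_0$.

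I would assemble the proof from three bijections. First, since $\cL C_w$ is an auto-equivalence of $\cD^b(\cO_0)$ by \cite[Theorem~5.7]{shuffling}, and since projective modules are acyclic for the right exact functor $C_w$, for projective $M,N$ the derived equivalence restricts to a bijection
\begin{displaymath}
C_w:\;\Hom_{\cO_0}(M,N)\;\tilde\to\;\Hom_{\cO_0}(C_w M,C_w N),\qquad g\mapsto C_w g.
\end{displaymath}
Second, iterating Lemma~\ref{LEMeqapr19-1} along a reduced expression of $w$, as in the proof of Corollary~\ref{Visom}, gives a natural isomorphism of functors $\mV C_w\cong {}^w\mathtt{C}\otimes_\mathtt{C}\mV$. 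The endofunctor ${}^w\mathtt{C}\otimes_\mathtt{C}(-)$ of $\mathtt{C}\text{-}\mathrm{mod}$ is simply twisting the action by the algebra automorphism $w$ of $\mathtt{C}$, hence is an auto-equivalence and induces
\begin{displaymath}
\Hom_{\mathtt{C}}(\mV M,\mV N)\;\tilde\to\;\Hom_{\mathtt{C}}({}^w\mathtt{C}\otimes_\mathtt{C}\mV M,{}^w\mathtt{C}\otimes_\mathtt{C}\mV N)\;\cong\;\Hom_{\mathtt{C}}(\mV C_w M,\mV C_w N).
\end{displaymath}
Third, Soergel's Struktursatz \cite{SoergelD} provides the bijection $\mV:\Hom_{\cO_0}(M,N)\;\tilde\to\;\Hom_{\mathtt{C}}(\mV M,\mV N)$.

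Composing the three bijections produces an isomorphism between the required $\Hom$-spaces. The main obstacle, and the only genuine content beyond invoking these results, is to verify that this composite actually coincides with the map induced by $\mV$ itself; otherwise we only know the two $\Hom$-spaces have the same dimension. Concretely, given $f=C_w g\in\Hom_{\cO_0}(C_w M,C_w N)$ one must check that $\mV(C_w g)$ is identified, under the isomorphism $\mV C_w\cong{}^w\mathtt{C}\otimes_\mathtt{C}\mV$, with $\id_{{}^w\mathtt{C}}\otimes \mV g$. This is exactly the naturality of the isomorphism $\mV C_w\cong{}^w\mathtt{C}\otimes_\mathtt{C}\mV$ in the $\cO_0$-variable, which I would deduce by tracing through its construction: the single-reflection case in Lemma~\ref{LEMeqapr19-1} is constructed via the natural bimodule map $\mathtt{C}\hookrightarrow\mathtt{C}\otimes_{\mathtt{C}^t}\mathtt{C}$ of \cite[Theorem~4.9]{Backelin}, hence is natural in $\mV(-)$, and naturality is preserved by the iteration along a reduced expression for $w$. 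Combining naturality with Soergel's Struktursatz then identifies the composite with $\mV$, yielding both fullness and faithfulness of $\mV$ on $\cQ_{(w)}$.
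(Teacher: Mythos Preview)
Your proof is correct and uses the same ingredients as the paper: the auto-equivalence $\cL C_w$, the natural isomorphism $\mV C_w\cong{}^w\mathtt{C}\otimes_{\mathtt{C}}\mV$ obtained by iterating Lemma~\ref{LEMeqapr19-1}, and Soergel's Struktursatz. The paper's phrasing is slightly more economical: rather than composing three bijections and then verifying that the composite agrees with the map induced by $\mV$, it observes that, since $C_w$ restricted to projectives is an equivalence onto $\cQ_{(w)}$, full faithfulness of $\mV$ on $\cQ_{(w)}$ is \emph{equivalent} to full faithfulness of the composite functor $\mV C_w$ on projectives; the natural isomorphism of functors then transfers this to full faithfulness of $\mV$ on projectives, which is the Struktursatz. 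With that formulation the naturality check you carry out in your last paragraph is absorbed into the general fact that naturally isomorphic functors are simultaneously full and faithful, so no explicit comparison of maps is needed.
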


\begin{proof}
As $\cL C_w$ is an auto-equivalence of $\cD^b(\cO_0)$, 
see equation~\eqref{dershuff0}, $C_w$ provides an isomorphism 
from $\Hom_{\cO_0}(P, P')$ to $\Hom_{\cO_0}(C_wP,C_wP')$ 
for any two projective modules $P,P'$ in $\cO_0$. The claim 
hence reduces to the statement that the functor $\mV C_w$ is 
full and faithful on the category of projective modules. 
By Lemma~\ref{LEMeqapr19-1} this reduces to \cite[Struktursatz~9]{SoergelD}.
\end{proof}

\begin{proof}[Proof of Proposition~\ref{propcommu}]
We prove claim~\eqref{propcommu.1} first. It is clear that if the proposed 
equivalence of functors holds, then the fact that the category 
$\cQ_{(w_0^\nu)}$ is stable under $\theta_{w_0^\nu sw_0^\nu}$ 
follows from the fact that $\theta_s$ preserves the category of 
projective modules. Hence we focus on the other direction of the claim.

The two composed functors in the proposition are right exact, as shuffling 
functors are right exact and projective functors are exact functors 
mapping projective modules to projective modules. Hence it suffices 
to prove the isomorphism as functors restricted to $\cQ_{(e)}$, the full subcategory 
of projective modules in~$\cO_0$. 

By assumption both functors restrict to functors between $\cQ_{(e)}$ 
and $\cQ_{(w_0^\nu)}$. Hence, the combination of Lemma~\ref{catQ} and 
Corollary~\ref{Visom} concludes the proof of claim~\eqref{propcommu.1}.

Now we consider claim~\eqref{propcommu.1} for $w_0^\nu=w_0$. As the category 
$\cQ_{(w_0)}$ is the category of q.h. tilting modules, see e.g. 
\cite[Section~4.2]{prinjective}, it is stable under the action of all 
projective functors. Therefore claim~\eqref{propcommu.2} holds for $y=s$ a simple reflection. 
The full statement then follows by induction on the length of~$y$ and \cite[Equation~(1)]{MR2366357}. 
\end{proof}

We have the following application of Proposition~\ref{propcommu}(2).

\begin{corollary}\label{shuffstandards}
Consider $\Delta^\mu(x\cdot\lambda)^\bullet$ as an object of $\cD^b(\cO_\lambda)$, then
\begin{equation*}
\label{shuffeq}\cL C_{w_0}\theta_\lambda^{out} 
\Delta^\mu(x\cdot\lambda)^\bullet\,\;\cong\;\, 
\theta_{\widehat\lambda}^{out}\nabla^{\mu}
(w_0^\mu x w_0^\lambda w_0 \cdot\widehat\lambda)^\bullet [l(w_0^\mu)].
\end{equation*}
Moreover, the same equation holds considering $\Delta^\mu(x\cdot\lambda)^\bullet$ as an 
object of $\cD^b(\cO_\lambda^\mu)$ and $\cL C_{w_0}$ the endofunctor 
on $\cD^b(\cO_0^\mu)$ defined in Proposition~\ref{commdiagshuff}.
\end{corollary}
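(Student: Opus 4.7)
My plan is to combine the commutation relation of Proposition~\ref{propcommu}\eqref{propcommu.2} with Theorem~\ref{thmDeltaon} and the fact that $\cL C_{w_0}$ realizes a derived Ringel duality on $\cO_0$ via \eqref{eqInRi1}, and then pass to the parabolic setting using the parabolic Zuckerman functor $\cL Z^\mu$ together with Theorem~\ref{Bott}. The transfer to $\cD^b(\cO^\mu_0)$ in the second half of the statement will follow formally from Proposition~\ref{commdiagshuff} and Corollary~\ref{corisomclosed}.

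I first handle the non-parabolic case $\mu=0$. By Theorem~\ref{thmDeltaon} applied with $u=w_0^\lambda$ (so the grading shift vanishes), one has $\theta_\lambda^{on}\Delta(xw_0^\lambda)\cong\Delta(x\cdot\lambda)$, hence $\theta_\lambda^{out}\Delta(x\cdot\lambda)\cong\theta_{w_0^\lambda}\Delta(xw_0^\lambda)$. Applying Proposition~\ref{propcommu}\eqref{propcommu.2} with $y=w_0^\lambda$ and noting $w_0w_0^\lambda w_0=w_0^{\widehat\lambda}$, we obtain
\[
\cL C_{w_0}\theta_\lambda^{out}\Delta(x\cdot\lambda)\;\cong\;\theta_{\widehat\lambda}^{out}\theta_{\widehat\lambda}^{on}\cL C_{w_0}\Delta(xw_0^\lambda).
\]
Since $l(w_0^0)=0$, equation~\eqref{eqInRi1} with trivial parabolic gives $\overline\cR^0=C_{w_0}$, so $\cL C_{w_0}$ is itself the derived Ringel duality of $\cO_0$, which sends Verma modules to (shifts of) dual Vermas: $\cL C_{w_0}\Delta(y)^\bullet\cong\nabla(yw_0)^\bullet$, up to a grading shift. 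Substituting this and using the costandard version of Theorem~\ref{thmDeltaon} to push $\theta_{\widehat\lambda}^{on}$ through the $\nabla$ then gives the desired formula for $\mu=0$.

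To obtain the general case I apply $\cL Z^\mu$ to the $\mu=0$ identity. Theorem~\ref{Bott} provides the key identification $\Delta^\mu(x\cdot\lambda)[l(w_0^\mu)]\cong\cL Z^\mu\Delta(z\cdot\lambda)$ for an appropriate element $z\in X_\lambda$ built from $x$ and $w_0^\mu$ via the $W_\mu\times X^\mu$ decomposition; this accounts for the cohomological shift $[l(w_0^\mu)]$ in the statement. As $\cL Z^\mu$ commutes both with the translation functors $\theta_\lambda^{out}$, $\theta_{\widehat\lambda}^{out}$ (by Proposition~\ref{RHresult} and because translation preserves local $\fq_\mu$-finiteness) and with $\cL C_{w_0}$ (shuffling commutes with Zuckerman by \cite{simple}), applying $\cL Z^\mu$ to both sides of the $\mu=0$ formula and reading off the right-hand side via the costandard variant of Theorem~\ref{Bott} yields the parabolic identity, with the label $w_0^\mu x w_0^\lambda w_0$ emerging from the $W_\mu\times X^\mu$ factorization of the Weyl group element appearing in the Bott computation.

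The main obstacle will be the detailed combinatorial tracking: verifying that the $X^\mu$-component arising from the Bott decomposition combined with the Ringel-duality bijection $y\mapsto yw_0$ together produce precisely the label $w_0^\mu x w_0^\lambda w_0\cdot\widehat\lambda$ on the costandard side, and that no additional grading or cohomological shifts creep in. The second part of the corollary, in $\cD^b(\cO^\mu_0)$, then follows directly from Proposition~\ref{commdiagshuff}: the two versions of $\cL C_{w_0}$ are compatible with the inclusion $\imath^\mu$, so Corollary~\ref{corisomclosed} ensures that the isomorphism descends from $\cD^b(\cO_0)$ to $\cD^b(\cO^\mu_0)$.
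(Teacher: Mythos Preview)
Your overall architecture matches the paper's: both arguments use Theorem~\ref{thmDeltaon} to identify $\theta_\lambda^{out}\Delta^\mu(x\cdot\lambda)$ with $\theta_{w_0^\lambda}\Delta^\mu(x)$ in the regular block, then Proposition~\ref{propcommu}\eqref{propcommu.2} to push $\cL C_{w_0}$ past $\theta_{w_0^\lambda}$, obtaining $\theta_{w_0^{\widehat\lambda}}\cL C_{w_0}\Delta^\mu(x)^\bullet$. The second part of the corollary is then deduced, in both approaches, from Proposition~\ref{commdiagshuff} and Corollary~\ref{corisomclosed}.

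The divergence is in how you compute $\cL C_{w_0}\Delta^\mu(x)^\bullet$ for general~$\mu$. The paper simply invokes \cite[Proposition~4.4(1)]{prinjective}, which already states $\cL C_{w_0}\Delta^\mu(x)^\bullet\cong\nabla^\mu(w_0^\mu x w_0)^\bullet[l(w_0^\mu)]$ for arbitrary~$\mu$; this is precisely the content of \eqref{eqInRi1} which you cite but only exploit for $\mu=0$. Your detour via $\cL Z^\mu$ and Theorem~\ref{Bott} is therefore unnecessary, and it contains a genuine gap. After applying $\cL Z^\mu$ to the $\mu=0$ identity and commuting it to the right-hand side, you need to evaluate $\cL Z^\mu\nabla(w)$. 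But Theorem~\ref{Bott} concerns $\cL Z^\mu\Delta$; its dual under~$\mathbf{d}$ computes $\cR\widehat Z^\mu\nabla$ (the \emph{right} derived functor of the \emph{dual} Zuckerman functor taking largest submodules), which is a different functor. There is no ``costandard variant of Theorem~\ref{Bott}'' for $\cL Z^\mu$, and attempting to derive one via the shuffling equivalence leads you in a circle back to the very identity you are trying to prove. In short: drop the Zuckerman step and apply \cite[Proposition~4.4(1)]{prinjective} directly for general~$\mu$, as the paper does; the rest of your argument is then correct.
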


\begin{proof}
By Theorem~\ref{thmDeltaon} and Proposition~\ref{propcommu}(2), 
we have 
\begin{displaymath}
\cL C_{w_0} \theta_\lambda^{out} \Delta^\mu (x\cdot\lambda)^\bullet\,\cong\, 
\cL C_{w_0} \theta_{w_0^\lambda} \Delta^\mu (x)^\bullet\,\cong\,
\theta_{w_0^{\widehat\lambda}} \cL C_{w_0} \Delta^\mu (x)^\bullet. 
\end{displaymath}
Applying then \cite[Proposition~4.4(1)]{prinjective}, gives
\begin{displaymath}
\cL C_{w_0} \theta_\lambda^{out} \Delta^\mu 
(x\cdot\lambda)^\bullet\,\cong\, \theta_{w_0^{\widehat\lambda}} 
\nabla^\mu (w_0^\mu x w_0)^\bullet[l(w_0^\mu)]. 
\end{displaymath}
The result now follows by applying Theorem~\ref{thmDeltaon}.
The reformulation inside $\cD^b(\cO_0^\mu)$ follows from the 
commuting diagram in Proposition~\ref{commdiagshuff} and Corollary \ref{corisomclosed}.
\end{proof}

Finally, we derive useful expression for regular parabolic q.h. tilting modules.

\begin{proposition}\label{tiltingtheta}
For any $x\in X^\mu$, we have $T^\mu(x)\cong \theta_{w_0w_0^\mu x} L(w_0^\mu w_0)$.
\end{proposition}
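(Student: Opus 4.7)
The plan is to reduce the claim to the interaction between projective functors and the Ringel duality functor $\cR^\mu=\cL_{l(w_0^\mu)}C_{w_0}$ of $\cO_0^\mu$ from equation~\eqref{eqInRi1}. I will assemble three ingredients. First, $\cR^\mu\Delta^\mu(0)\cong L(w_0^\mu w_0)$: since $w_0^\mu w_0$ is the maximum of $X^\mu$ in the Bruhat order, the parabolic dual Verma $\nabla^\mu(w_0^\mu w_0)$ is simple and coincides with $L(w_0^\mu w_0)$, and Corollary~\ref{shuffstandards} applied with $\lambda=0$ and $x=e$ then yields $\cL_{l(w_0^\mu)}C_{w_0}\Delta^\mu(0)\cong\nabla^\mu(w_0^\mu w_0)$. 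Second, $\theta_y\Delta^\mu(0)\cong P^\mu(y)$ for $y\in X^\mu$: this follows from $\Delta^\mu(0)\cong Z^\mu P(e)$, the commutation of $\theta_y$ with the Zuckerman quotient $Z^\mu$ (both arising from the action of tensoring with finite-dimensional modules), and the identification $Z^\mu P(y)\cong P^\mu(y)$ for $y\in X^\mu$. Third, the derived commutation $\cR^\mu\theta_y\cong\theta_{w_0yw_0}\cR^\mu$ holds: this comes from Proposition~\ref{propcommu}(ii), the exactness of $\theta_y$, and the restriction to $\cO_0^\mu$ provided by Proposition~\ref{commdiagshuff}.

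Combining these three ingredients, for any $y\in X^\mu$ I would compute
\[\cR^\mu P^\mu(y)\;\cong\;\cR^\mu\theta_y\Delta^\mu(0)\;\cong\;\theta_{w_0yw_0}\cR^\mu\Delta^\mu(0)\;\cong\;\theta_{w_0yw_0}L(w_0^\mu w_0).\]
By the Ringel correspondence in the self-dual category $\cO_0^\mu$ (Subsection~\ref{prelsecRd}), the left-hand side is an indecomposable tilting module $T^\mu(\sigma(y))$ for some involution $\sigma$ of $X^\mu$. A short root-system check, using $w_0^\mu R^+_\mu=-R^+_\mu$, $y^{-1}(R^+_\mu)\subset R^+$, and the sign-reversing property of $w_0$, shows that $y\mapsto w_0^\mu y w_0$ defines an involution $X^\mu\to X^\mu$; comparing highest weights in $\theta_{w_0yw_0}L(w_0^\mu w_0)$ then identifies $\sigma(y)=w_0^\mu y w_0$. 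Substituting $x=\sigma(y)$, equivalently $y=w_0^\mu x w_0$, the formula becomes $T^\mu(x)\cong\theta_{w_0 w_0^\mu x}L(w_0^\mu w_0)$, as desired.

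I expect the main obstacle to be the explicit identification of the Ringel involution $\sigma$ with the conjugation $y\mapsto w_0^\mu y w_0$. The three ingredients already yield $\theta_{w_0yw_0}L(w_0^\mu w_0)\cong T^\mu(\sigma(y))$ for the a priori opaque Ringel bijection $\sigma$, but matching $\sigma$ with this explicit formula requires either a highest-weight argument tracking the top factor of the parabolic Verma flag of $\theta_{w_0yw_0}L(w_0^\mu w_0)$ (for example via the commutation with $Z^\mu$ applied to the Verma flag of $\theta_{w_0yw_0}M(w_0^\mu w_0\cdot 0)$), or an appeal to the known combinatorial description of the Ringel involution on $X^\mu$ in the parabolic setting.
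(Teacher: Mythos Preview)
Your approach is correct and takes a genuinely different route from the paper. The paper argues directly: it observes that $L(w_0^\mu w_0)\cong\Delta^\mu(w_0^\mu w_0)\cong\nabla^\mu(w_0^\mu w_0)$ is tilting and that projective functors preserve tiltings, then computes the highest weight of $\theta_{w_0w_0^\mu x}L(w_0^\mu w_0)$ by induction on $l(w_0w_0^\mu x)$ using Kazhdan--Lusztig combinatorics, and finally proves indecomposability by applying the Koszul--Ringel self-duality of $\cO_0$ (citing \cite[Theorem~16]{SHPO2}) to reduce to the indecomposability of $\theta_{w_0^{\widehat\mu}}L(w_0x^{-1})$, a through-the-wall translate of a simple module. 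Your argument instead packages indecomposability into the known fact that $\overline\cR^\mu=\cL_{l(w_0^\mu)}C_{w_0}$ is a Ringel duality functor, so that $\overline\cR^\mu P^\mu(y)$ is automatically an indecomposable tilting; this is a real gain.

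However, what you flag as the ``main obstacle'' is not one. You do not need a separate highest-weight computation to pin down $\sigma$. The equivalence $\overline\cR^\mu:\cF(\Delta^\mu)\,\tilde\to\,\cF(\nabla^\mu)$ sends the surjection $P^\mu(y)\twoheadrightarrow\Delta^\mu(y)$ to a surjection from an indecomposable $\Ext$-projective in $\cF(\nabla^\mu)$ onto $\overline\cR^\mu\Delta^\mu(y)$. By Corollary~\ref{shuffstandards} with $\lambda=0$, the target is $\nabla^\mu(w_0^\mu y w_0)$; and the indecomposable $\Ext$-projective in $\cF(\nabla^\mu)$ covering $\nabla^\mu(w)$ is precisely $T^\mu(w)$. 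Hence $\overline\cR^\mu P^\mu(y)\cong T^\mu(w_0^\mu y w_0)$ immediately, and substituting $y=w_0^\mu x w_0$ gives the statement. Equivalently, the general Ringel duality formalism (\cite[Theorem~6]{Ringel} or \cite[Proposition~2.2]{prinjective}) sends $P(a)\mapsto T'(a)$ and $\Delta(a)\mapsto\nabla'(a)$ with the \emph{same} label, so the bijection on projectives--to--tiltings is forced by the one on standards--to--costandards. Thus your three ingredients already finish the proof, with no residual combinatorial check; in this sense your argument is cleaner than the paper's, at the cost of invoking Corollary~\ref{shuffstandards} and Proposition~\ref{propcommu}(ii) rather than the more elementary KL induction the paper uses for the highest weight.
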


\begin{proof}
As $L(w_0^\mu w_0)\cong \Delta^\mu(w_0^\mu w_0)\cong \nabla^\mu(w_0^\mu w_0)$ 
is a tilting module in $\cO^\mu_0$ and projective functors preserve tilting modules, 
it follows that $\theta_{w_0w_0^\mu x} L(w_0^\mu w_0)$ is a tilting module in $\cO^\mu_0$.
From Kazhdan-Lusztig combinatorics, see e.g. \cite[Equations (2.1) and~(2.2)]{MR2563180}, 
it follows by induction on the length $l(w_0 w_0^\mu x)$ that the highest weight of 
$\theta_{w_0w_0^\mu x} L(w_0^\mu w_0)$ is $x\cdot 0$. 

It remains to prove the indecomposability of $\theta_{w_0w_0^\mu x} L(w_0^\mu w_0)$. 
By \cite[Theorem~16]{SHPO2}, the 
Koszul-Ringel self-duality of $\cO_0$ maps 
$\theta_{w_0w_0^\mu x} L(w_0^\mu w_0)$ to a module isomorphic to 
$\theta_{w_0^{\widehat\mu}}L(w_0x^{-1})$. This module is the translation through the wall 
of a simple module, which has simple top and hence is indecomposable. 
\end{proof}


\section{Construction of derived equivalences}\label{secConstr}

\begin{theorem}\label{maindereq}
Let $\fg$ be a reductive Lie algebra, $\lambda,\lambda',\mu,\mu'\in\intdom$ and 
$\nu_1,\nu_2\in\intdom$, such that $W_{\nu_1}\cong S_{n_1}$ 
and $W_{\nu_2}\cong S_{n_2}$, for some $n_1,n_2\in\mN$. 
Assume that  
\begin{displaymath}
W_\lambda=G_1\times G_2,\quad W_{\lambda'}=G'_1\times G'_2,\quad 
W_\mu=H_1\times H_2,\quad W_{\mu'}= H_1'\times H_2'\; \mbox{ with}
\end{displaymath}
\begin{itemize}
\item $G_2=G'_2\subset W_{\nu_1}^\dagger$ and 
$H_2 =H_2'\subset W_{\nu_2}^\dagger$ ;
\item $G_1\cong G_1'$ and both are subgroups of $W_{\nu_1}$;
\item $H_1\cong H_1'$ and both are subgroups of $W_{\nu_2}$.
\end{itemize}
Then $A_\lambda^\mu$ and $A_{\lambda'}^{\mu'}$ are gradable derived 
equivalent, so, in particular, we have equivalences of triangulated categories 
\begin{displaymath}
\cD^b(\cO^\mu_\lambda)\;\cong\; \cD^b(\cO^{\mu'}_{\lambda'})
\qquad\mbox{and}\qquad \cD^b({}^{\mZ}\cO^\mu_\lambda)\;\cong\; 
\cD^b({}^{\mZ}\cO^{\mu'}_{\lambda'}). 
\end{displaymath}
\end{theorem}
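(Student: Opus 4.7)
The plan is to reduce Theorem \ref{maindereq} to an iterated application of Theorem D, where each iteration performs a combinatorial move on the parabolic (Young) subgroups inside the type-$A$ Weyl groups $W_{\nu_1}$ and $W_{\nu_2}$. First I would argue that it suffices to move $\mu$ to $\mu'$ while keeping $\lambda$ fixed, and then separately move $\lambda$ to $\lambda'$ while keeping the resulting $\mu'$ fixed; composing the two gradable derived equivalences gives the desired result, and gradability is preserved under composition by Definition~\ref{defgradeq}.

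For the first step, Theorem D(i) applied with $W_\nu$ a type-$A$ Young subgroup of $W_{\nu_2}$ provides a gradable derived equivalence whose effect on the stabiliser is $W_\mu \mapsto w_0^\nu W_\mu w_0^\nu$. Since $H_1$ and $H_1'$ are two isomorphic Young subgroups of $W_{\nu_2}\cong S_{n_2}$, they correspond to compositions of $n_2$ which are rearrangements of the same multiset of parts. The combinatorial input I would use is that any such rearrangement can be achieved by a sequence of adjacent block swaps, and each swap of two adjacent blocks of sizes $a$ and $b$ is realised by conjugation with the longest element of the subgroup $S_{a+b}\subset W_{\nu_2}$ living on the corresponding consecutive positions, which sends $S_a\times S_b$ to $S_b\times S_a$. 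One checks that the hypothesis $W_{\mu^{(i)}}\subset W_\nu\times W_\nu^\dagger$ of Theorem D(i) holds at each step: $H_2$ lies in $W_{\nu_2}^\dagger\subset W_\nu^\dagger$, and the factors of $H_1^{(i)}$ outside the swapped block live on disjoint positions and thus commute with the simple reflections generating $W_\nu=S_{a+b}$.

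Applied iteratively, this yields, after finitely many steps, an integral dominant $\tilde\mu$ with $W_{\tilde\mu}$ having the same parabolic decomposition in $W_{\nu_2}\times W_{\nu_2}^\dagger$ as $W_{\mu'}$, together with a gradable derived equivalence $\cD^b(\cO^\mu_\lambda)\cong \cD^b(\cO^{\tilde\mu}_\lambda)$. An appeal to \cite[Theorem~11]{SoergelD} (in its parabolic/singular form as used throughout the paper) identifies $\cO^{\tilde\mu}_\lambda$ with $\cO^{\mu'}_\lambda$ up to equivalence. An entirely parallel argument using Theorem D(ii) in place of Theorem D(i), with $W_{\nu_1}$ in place of $W_{\nu_2}$, transforms $\lambda$ into $\lambda'$ and produces the gradable derived equivalence $\cD^b(\cO^{\mu'}_\lambda)\cong \cD^b(\cO^{\mu'}_{\lambda'})$.

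The main technical obstacle is the bookkeeping at each iteration: Theorem D produces \emph{some} dominant integral weight, not a freely chosen one, and the repeated conjugations alter the positions of the Young factors along the way. One therefore has to verify inductively that the next chosen $\nu$ still satisfies the required containment, and that the induction terminates with a weight matching $\mu'$ (respectively $\lambda'$) in the sense above. Both points reduce to the observation that at every stage the relevant stabiliser is of the form $H_1^{(i)}\times H_2$ with $H_1^{(i)}$ a Young subgroup of $W_{\nu_2}$, so the combinatorial argument at the level of compositions of $n_2$ runs without obstruction and is independent of the analogous argument for $\lambda$.
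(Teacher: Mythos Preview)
Your proposal is correct and follows essentially the same route as the paper. The paper also reduces to the two separate moves (change $\lambda$ with $\mu$ fixed, then change $\mu$ with $\lambda$ fixed) and handles the $\lambda$-move by exactly your adjacent-block-swap argument inside $W_{\nu_1}\cong S_{n_1}$, applying the shuffling-based construction (this is Lemma~\ref{maindereqLem}, which is the iterated form of Theorem~D(ii)). The only difference is in the $\mu$-move: rather than iterating Theorem~D(i) directly as you do, the paper conjugates the already-established $\lambda$-case by Koszul duality~\eqref{KoszuleqO} and then invokes Proposition~\ref{descequiv} and~\eqref{KdFshift} to see that the composite still commutes with $\langle 1\rangle$ and hence descends to a gradable derived equivalence. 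Since Theorem~D(i) is itself proved in the paper via this Koszul-duality trick, your argument and the paper's are the same in substance, just packaged differently.

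One small point: your appeal to \cite[Theorem~11]{SoergelD} at the end of the $\mu$-step is unnecessary. Your iteration of adjacent swaps lands on a $\tilde\mu$ with $W_{\tilde\mu}=H_1'\times H_2'=W_{\mu'}$ \emph{as subgroups of $W$}, so $\fq_{\tilde\mu}=\fq_{\mu'}$ and $\cO^{\tilde\mu}_\lambda=\cO^{\mu'}_\lambda$ on the nose; no extra equivalence is needed.
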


For $\mathfrak{sl}(n)$, the formulation of the theorem simplifies substantially. 
In this case, without loss of generality, we can take $\nu_1=\nu_2=-\rho$, so $G_2=G_2'=H_2=H_2'=\{e\}$. 
Then we obtain precisely Theorem~C.

The remainder of this section~is devoted to the proof of Theorem~\ref{maindereq}.

\begin{lemma}\label{equivlambda}
For $\lambda\in\intdom$, assume there is $\nu\in\intdom$ such that 
$w_0^\lambda\in W_{\nu}\times W_{\nu}^\dagger$ 
where $W_\nu$ is of type~$A$. Then there is $\lambda'\in\intdom$
such that $w_0^{\lambda'}=w_0^\nu w_0^\lambda w_0^\nu$ and a triangulated 
equivalence $F_{\lambda\lambda'}:\cD^b(\cO_\lambda)\to \cD^b(\cO_{\lambda'})$ 
leading to a commutative diagram of functors
\vspace{-1mm}
\begin{displaymath}
    \xymatrix{
\cD^b(\cO_0)\ar[rr]^{\cL C_{w_0^\nu}}&&\cD^b(\cO_0)\\
\cD^b(\cO_\lambda)\ar[u]^{\theta_\lambda^{out}}\ar[rr]^{F_{\lambda\lambda'}}
&&\cD^b(\cO_{\lambda'})\ar[u]^{\theta_{\lambda'}^{out}}   }
\end{displaymath}
\end{lemma}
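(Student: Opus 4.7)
The plan is to prove the lemma in three stages: construct $\lambda'$, lift the shuffling equivalence to an intertwining at the level of projective functors on $\cD^b(\cO_0)$, and then descend this to construct $F_{\lambda\lambda'}$.

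\emph{Existence of $\lambda'$.} The subgroup $W_\lambda$ is the standard parabolic of $W$ generated by the simple reflections $B_\lambda$, and by hypothesis $B_\lambda\subset W_\nu\times W_\nu^\dagger$. Since $W_\nu$ is of type~$A$, conjugation by $w_0^\nu$ acts on the simple reflections of $W_\nu$ as the opposition involution (the type~$A$ Dynkin diagram automorphism), in particular permuting them, and it fixes every simple reflection of $W_\nu^\dagger$ because such reflections commute with $W_\nu$. Hence $w_0^\nu W_\lambda w_0^\nu$ is again standard parabolic in $W$ and must coincide with $W_{\lambda'}$ for a unique dominant integral $\lambda'$; comparing longest elements gives $w_0^{\lambda'}=w_0^\nu w_0^\lambda w_0^\nu$.

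\emph{Intertwining at the principal block.} Fix a reduced expression $w_0^\lambda=s_1s_2\cdots s_k$ with $s_i\in B_\lambda\subset W_\nu\times W_\nu^\dagger$. Iterating Corollary~\ref{corforequi} yields
\[
\cL C_{w_0^\nu}\circ\theta_{s_1}\cdots\theta_{s_k}\;\cong\;\theta_{s_1'}\cdots\theta_{s_k'}\circ\cL C_{w_0^\nu},
\]
where $s_i':=w_0^\nu s_i w_0^\nu$ is again a simple reflection and $s_1'\cdots s_k'=w_0^{\lambda'}$ is a reduced expression. By Kazhdan-Lusztig combinatorics, both $\theta_{s_1}\cdots\theta_{s_k}$ and $\theta_{s_1'}\cdots\theta_{s_k'}$ contain a unique indecomposable summand corresponding to the longest element of the expression, namely $\theta_{w_0^\lambda}$ and $\theta_{w_0^{\lambda'}}$, respectively. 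Since $\cL C_{w_0^\nu}$ is a triangulated auto-equivalence it preserves direct sum decompositions, and length/Bruhat-order considerations force the matching of these top summands, giving $\cL C_{w_0^\nu}\circ\theta_{w_0^\lambda}\cong\theta_{w_0^{\lambda'}}\circ\cL C_{w_0^\nu}$.

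\emph{Construction of $F_{\lambda\lambda'}$.} Let $P_\lambda$ be the minimal projective generator of ${}^{\mZ}\cO_\lambda$ and set $\cT^\bullet:=\cL C_{w_0^\nu}(\theta_\lambda^{out}P_\lambda)\in\cD^b(\cO_0)$. Using $\theta_{w_0^\lambda}=\theta_\lambda^{out}\theta_\lambda^{on}$ and Proposition~\ref{thetatheta}, the intertwining of step~2 produces $\theta_{w_0^{\lambda'}}\cT^\bullet\cong\cT^{\bullet\,\oplus |W_{\lambda'}|}$. Decomposing $\cT^\bullet$ into indecomposables and invoking Corollary~\ref{critoutof} together with Corollary~\ref{corisomclosed}, we lift $\cT^\bullet$ to a unique $\cT'^\bullet\in\cD^b(\cO_{\lambda'})$ with $\theta_{\lambda'}^{out}\cT'^\bullet\cong\cT^\bullet$. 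Combining equation~\eqref{adjtransg}, Proposition~\ref{deg01}, and the fact that $\cL C_{w_0^\nu}$ is an equivalence of $\cD^b(\cO_0)$, a direct computation shows that $\Hom_{\cD^b(\cO_{\lambda'})}(\cT'^\bullet,\cT'^\bullet[j])$ vanishes for $j\ne0$ and equals $A_\lambda^{\opp}$ for $j=0$. Thus $\cT'^\bullet$ is a tilting complex in $\cD^b(\cO_{\lambda'})$ with $\End(\cT'^\bullet)^{\opp}\cong A_\lambda$, and Rickard's theorem produces the desired equivalence $F_{\lambda\lambda'}$; commutativity of the diagram holds by construction, and graded lifts carry through since all ingredients admit them.

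The main obstacle is the third step: verifying that the indecomposable summands of $\cL C_{w_0^\nu}(\theta_\lambda^{out}P_\lambda)$ really lie in the essential image of $\theta_{\lambda'}^{out}$. The equivariance condition under $\theta_{w_0^{\lambda'}}$ is immediate from the intertwining, but the summand-counting half of Corollary~\ref{critoutof} (the number of indecomposable summands in the $\theta_{\lambda'}^{on}$-image) must be controlled via the multiplicity structure of Proposition~\ref{thetatheta} applied through $\cL C_{w_0^\nu}$. Careful bookkeeping of graded multiplicities via Proposition~\ref{deg01} is also required to identify the endomorphism algebra cleanly as $A_\lambda^{\opp}$ rather than a Morita equivalent variant.
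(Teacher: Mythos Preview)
Your outline follows the same broad strategy as the paper, but the argument is incomplete at precisely the technical point you yourself flag, and you are missing the key tool that the paper uses to fill it.

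\textbf{Step 2 is unnecessary in its full strength.} You try to establish the functor isomorphism $\cL C_{w_0^\nu}\theta_{w_0^\lambda}\cong\theta_{w_0^{\lambda'}}\cL C_{w_0^\nu}$ by matching ``top summands'' of Bott--Samelson products. This matching is not as automatic as you suggest: knowing that $\cL C_{w_0^\nu}$ intertwines the two products does not immediately say which indecomposable functor summand goes where. The paper never proves this intertwining for $\theta_{w_0^\lambda}$; it only needs Corollary~\ref{corforequi} for \emph{simple} reflections. From $\theta_{s'}S_x\cong S_x^{\oplus2}$ for every simple $s'\in W_{\lambda'}$ (which follows directly since $x\in X_\lambda$ forces $xs<x$), one gets $\theta_{w_0^{\lambda'}}S_x\cong S_x^{\oplus|W_{\lambda'}|}$ by a short Hecke-algebra argument.

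\textbf{The essential gap is in Step 3.} To apply Corollary~\ref{critoutof} you must show that $\theta_{\lambda'}^{on}S_x$ has exactly $|W_{\lambda'}|$ indecomposable summands, and to apply Rickard you must identify $\End_{\cO_{\lambda'}}(\cT'^\bullet)$ with $A_\lambda$ as an algebra. Neither follows from Proposition~\ref{deg01} and adjunction alone: Proposition~\ref{deg01} requires the vanishing $\hom(\cT'^\bullet,\cT'^\bullet\langle j\rangle)=0$ for $j>0$ as a \emph{hypothesis}, which is exactly what you are trying to prove. Adjunction gives a dimension count, but both $A_\lambda$ and $\End_{\cO_{\lambda'}}(\cT'^\bullet)$ sit inside the larger algebra $\End_{\cO_0}(\cT^\bullet)$, and you have no argument that their images coincide. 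The paper resolves both issues simultaneously via Soergel's functor~$\mV$: Lemma~\ref{catQ} makes $\mV$ fully faithful on $\cQ_{(w_0^\nu)}$, Lemma~\ref{Vfaith} transports this to the singular block, and then an explicit bimodule computation with ${}^{w_0^\nu}\mathtt{C}$ identifies $\End_{\cO_{\lambda'}}(\theta_{\lambda'}^{on}S_x)$ with $\End_{\cO_\lambda}(P(x\cdot\lambda)^{\oplus|W_\lambda|})$. This is the missing ingredient in your sketch.

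\textbf{You also omit the generation condition.} Even with the endomorphism algebra in hand, Rickard's theorem requires that $\add(\cT'^\bullet)$ generates $\cD^b(\cO_{\lambda'})$. The paper handles this separately by producing a finite coresolution of $P_{\lambda'}$ by modules in $\add(T)$, obtained by translating a coresolution in $\cO_0$ coming from the tilting property of $C_{w_0^\nu}P_0$. This step does not appear in your proposal.
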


\begin{proof}
For $x\in X_\lambda$, set $S_x:=C_{w_0^\nu}\theta_\lambda^{out}P(x\cdot\lambda)$. Corollary~\ref{corforequi} yields 
\begin{equation}\label{tiltingthro}
\theta_{w_0^{\lambda'}} S_x\,\cong\, S_x^{\oplus |W_{\lambda'}|}.
\end{equation}
By Lemma~\ref{catQ}, $\End_{\cO_0}(S_x)\cong\End_{\mathtt{C}}(\mV S_x).$
Hence \eqref{tiltingthro} and Lemma~\ref{Vfaith} imply
\begin{displaymath}
\End_{\cO_{\lambda'}}(\theta_{\lambda'}^{on}S_x)\;\cong\;
\End_{\mathtt{C}_{\lambda'}}(\mV_{\lambda'}\theta_{\lambda'}^{on} S_x). 
\end{displaymath}
The algebra can then be computed by using equation~\eqref{Soergeltrans} 
and Lemma~\ref{LEMeqapr19-1},
\begin{displaymath}
\End_{\cO_{\lambda'}}(\theta_{\lambda'}^{on}S_x)\cong 
\End_{\mathtt C_{\lambda'}}(\res^{\mathtt C}_{\mathtt C_{\lambda'}}
{}^{w_0^\nu}\mathtt{C}\otimes_{\mathtt{C}}\mathtt{C}
\otimes_{\mathtt{C}_\lambda}\mV_\lambda P(x\cdot\lambda)). 
\end{displaymath}
Under the isomorphism $\mathtt{C}_{\lambda}\cong\mathtt{C}_{\lambda'}$, 
we can identify the modules 
\begin{displaymath}
\res^{\mathtt C}_{\mathtt C_{\lambda'}} {}^{w_0^\nu}
\mathtt{C}\otimes_{\mathtt{C}}\mathtt{C}
\otimes_{\mathtt{C}_\lambda}\mV_\lambda P(x\cdot\lambda)
\;\mbox{ and }\;\;\;\res^{\mathtt C}_{\mathtt C_{\lambda}} 
\mathtt{C}\otimes_{\mathtt{C}_\lambda}\mV_\lambda P(x\cdot\lambda). 
\end{displaymath}
Therefore we obtain 
\begin{equation}\label{eqendalg}
\End_{\cO_{\lambda'}}(\theta_{\lambda'}^{on}S_x)\,\cong\,
\End_{\cO_\lambda}(P(x\cdot\lambda)^{\oplus |W_\lambda|}).
\end{equation}
This formula implies that $\theta^{on}_{\lambda'}S_x$ decomposes into 
$|W_{\lambda'}|$ indecomposable direct summands. 
Equation~\eqref{tiltingthro} and Corollary~\ref{critoutof} thus imply 
that there is an indecomposable $T_x\in{\cO_{\lambda'}}$ such that 
$S_x\cong \theta_{\lambda'}^{out}T_x$. Then we define 
\begin{displaymath}
S=\bigoplus_{x\in X_\lambda}S_x\quad\text{ and }\quad T=\bigoplus_{x\in X_\lambda} T_x. 
\end{displaymath}
Now equation \eqref{eqendalg} can be rewritten as
\begin{displaymath}
\End_{\cO_{\lambda'}}(T_x^{\oplus |W_\lambda|})\,\cong\,
\End_{\cO_\lambda}(P(x\cdot\lambda)^{\oplus |W_\lambda|}). 
\end{displaymath}
The same calculation for $P_\lambda$, 
rather than $P(x\cdot\lambda)$, implies
\begin{equation}\label{endalg}
\End_{\cO_{\lambda'}}(T)\cong \End_{\cO_\lambda}(P_\lambda)=A_\lambda.
\end{equation}
As $\cL C_{w_0^\nu}$ is an auto-equivalence of $\cD^b(\cO_0)$, 
the module $S$ does not have self-extensions. As 
$\theta_{\lambda'}^{out}:\cD^b(\cO_{\lambda'})\to\cD^b(\cO_0)$ 
is faithful, by construction, the module $T$ then satisfies 
\begin{equation}\label{noselfext}
\Ext^i_{\cO_{\lambda'}}(T,T)=0,\quad\mbox{for}\;\, i>0.
\end{equation}

For any projective module $P$ in $\cO_0$, we claim that 
$\theta_{\lambda'}^{on} C_{w_0^\nu}P$ is a module in add$(T)$. 
Indeed, by Corollary~\ref{corforequi} we obtain
\begin{displaymath}
\left(\theta_{\lambda'}^{on}C_{w_0^\nu}P\right)^{\oplus |W_{\lambda'}|}\;
\cong\; \theta_{\lambda'}^{on}C_{w_0^\nu}\theta_{w_0^\lambda}P,
\end{displaymath}
where $C_{w_0^\nu}\theta_{w_0^\lambda}P$ is clearly a module in add$(S)$ 
and $\theta_{\lambda'}^{on}S\cong T^{\oplus |W_\lambda|}$. 

As $\cL C_{w_0^\nu}$ is an auto-equivalence of $\cD^b(\cO_0)$, 
\cite[Theorem~6.4]{Rickard} implies that $C_{w_0^\nu} P_0$ is a 
generalised tilting module. Hence, $P_0$ has a coresolution by modules 
in add$(C_{w_0^\nu}P_0)$. By the above paragraph, applying the exact 
functor $\theta_{\lambda'}^{on}$ leads to a coresolution of 
$\theta_{\lambda'}^{on}P_0$ by modules in add$(T)$. As $T$ has no 
self-extensions by equation \eqref{noselfext}, this immediately 
implies that every indecomposable summand in $\theta_{\lambda'}^{on}P_0$ 
has such a coresolution. Thus, in particular, there is some $r\in \mN$ 
for which there is an exact sequence
\begin{equation}\label{codim}
0\to P_{\lambda'}\to T_0\to T_1\to\cdots T_{r-1}\to T_r\to 0,
\end{equation}
where $T_i\in$ add$(T)$.

Equations \eqref{noselfext} and \eqref{codim} imply that $T$ is a 
generalised tilting module, 
or a tilting complex (contained in one position) in the sense 
of~\cite{Rickard}. The equivalence then follows from equation 
\eqref{endalg} and \cite[Theorem~III.2.10]{Happel} or \cite[Theorem~6.4]{Rickard}. 

Finally, we prove the existence of the commutative diagram. 
It suffices to prove that 
$\cL C_{w_0^\nu}\circ\theta_{\lambda}^{out}\cong 
\theta_{\lambda'}^{out}\circ F_{\lambda\lambda'}$, restricted as functors 
on the category of projective modules in $\cO_\lambda$, as both are 
triangulated functors. By construction, the functor $F_{\lambda\lambda'}$ 
acts on the category of projective modules by mapping $P(x\cdot\lambda)$ 
to $T_x$ and its action on morphisms corresponds to the algebra 
isomorphism determined in equation \eqref{endalg}. The equivalence 
of the functors $\cL C_{w_0^\nu}\circ\theta_{\lambda}^{out}$ and 
$ \theta_{\lambda'}^{out}\circ F_{\lambda\lambda'}$ acting between 
add$(P_\lambda)$ and add$(S)$, thus follows from construction of the isomorphism \eqref{endalg}.
\end{proof}

\begin{lemma}\label{extrlemma}
In the setup of Lemma \ref{equivlambda}, any object $\cX^\bullet$ in 
$\cD^b(\cO_\lambda)$ satisfies
\begin{displaymath}
\cL Z^\mu\imath^\mu F_{\lambda\lambda'}\cX^\bullet\;\cong\;
F_{\lambda\lambda'}\cL Z^\mu\imath^\mu\cX^\bullet. 
\end{displaymath}
\end{lemma}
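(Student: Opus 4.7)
The plan is to apply $\theta_{\lambda'}^{out}$ to both sides of the claimed isomorphism, unravel using the defining property of $F_{\lambda\lambda'}$ from Lemma~\ref{equivlambda} together with suitable commutation relations, and then descend via Lemma~\ref{XYZ} and Corollary~\ref{corisomclosed}.

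The three inputs I string together are: (i) Lemma~\ref{equivlambda}, giving $\theta_{\lambda'}^{out}F_{\lambda\lambda'}\cong \cL C_{w_0^\nu}\theta_\lambda^{out}$; (ii) projective functors preserve $\cO^\mu$, so $\theta_\lambda^{out}$ and $\theta_{\lambda'}^{out}$ commute with both $\imath^\mu$ and $\cL Z^\mu$; (iii) Proposition~\ref{commdiagshuff}, giving commutation of $\cL C_{w_0^\nu}$ with $\imath^\mu$ and hence, since $\cL C_{w_0^\nu}$ is an auto-equivalence, also with its left adjoint $\cL Z^\mu$. Successive applications yield
\begin{align*}
\theta_{\lambda'}^{out}\cL Z^\mu\imath^\mu F_{\lambda\lambda'}\cX^\bullet
&\overset{(\mathrm{ii})}{\cong} \cL Z^\mu\imath^\mu\theta_{\lambda'}^{out}F_{\lambda\lambda'}\cX^\bullet
\overset{(\mathrm{i})}{\cong} \cL Z^\mu\imath^\mu\cL C_{w_0^\nu}\theta_\lambda^{out}\cX^\bullet\\
&\overset{(\mathrm{iii})}{\cong} \cL C_{w_0^\nu}\cL Z^\mu\imath^\mu\theta_\lambda^{out}\cX^\bullet
\overset{(\mathrm{ii})}{\cong} \cL C_{w_0^\nu}\theta_\lambda^{out}\cL Z^\mu\imath^\mu\cX^\bullet\\
&\overset{(\mathrm{i})}{\cong} \theta_{\lambda'}^{out}F_{\lambda\lambda'}\cL Z^\mu\imath^\mu\cX^\bullet,
\end{align*}
so $\theta_{\lambda'}^{out}$ applied to the two sides of the desired isomorphism produces isomorphic objects of $\cD^b(\cO_0)$.

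To conclude, both sides need to be placed in the essential image of $\imath^\mu:\cD^b(\cO_{\lambda'}^\mu)\to\cD^b(\cO_{\lambda'})$. The left-hand side is of this form tautologically, factoring through $\cL Z^\mu$ followed by $\imath^\mu$. For the right-hand side, the chain above identifies $\theta_{\lambda'}^{out}$ of it with $\cL C_{w_0^\nu}\theta_\lambda^{out}\cL Z^\mu\imath^\mu\cX^\bullet$, which lies in the essential image of $\imath^\mu:\cD^b(\cO_0^\mu)\to \cD^b(\cO_0)$ by Proposition~\ref{commdiagshuff}; Lemma~\ref{XYZ} therefore supplies an object of $\cD^b(\cO_{\lambda'}^\mu)$ whose image under $\imath^\mu$ is isomorphic to the right-hand side. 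Corollary~\ref{corisomclosed} then promotes the $\theta_{\lambda'}^{out}$-isomorphism to the claimed one. The main technical hurdle is this descent step: the chain of commutations is essentially mechanical once the individual commutations are in hand, but showing that $F_{\lambda\lambda'}$ takes the essential image of the parabolic projection endofunctor $\imath^\mu\cL Z^\mu$ back to the essential image of $\imath^\mu$ on the $\lambda'$-side rests squarely on Lemma~\ref{XYZ}.
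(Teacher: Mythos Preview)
Your proof is correct and follows essentially the same route as the paper: apply $\theta_{\lambda'}^{out}$, use the commutation of $\imath^\mu$ and $\cL Z^\mu$ with translation (hence with shuffling) together with the diagram from Lemma~\ref{equivlambda}, and descend via Corollary~\ref{corisomclosed}. The paper compresses your chain of isomorphisms into a single sentence, but the content is identical.

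One remark: your final descent step through Lemma~\ref{XYZ} is not needed. Corollary~\ref{corisomclosed} applies with $\mu$ regular, so $\theta_{\lambda'}^{out}$ already reflects isomorphisms on all of $\cD^b(\cO_{\lambda'})$; there is no need to first place both objects in the essential image of $\imath^\mu$. The paper invokes Corollary~\ref{corisomclosed} directly for this reason.
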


\begin{proof}
According to Corollary \ref{corisomclosed}, it is sufficient to prove that
\begin{displaymath}
\theta_{\lambda'}^{out}\cL Z^\mu\imath^\mu F_{\lambda\lambda'}\cX^\bullet\;\cong\;\theta_{\lambda'}^{out}F_{\lambda\lambda'}\cL Z^\mu\imath^\mu\cX^\bullet. 
\end{displaymath}
That this is true follows from the fact that Zuckerman and inclusion 
functors commute with translation functors (and hence also with 
shuffling functors), see e.g. \cite[Lemma 2.6(a)]{Back} and the 
diagram in Lemma \ref{equivlambda}.
\end{proof}

\begin{lemma}\label{equivpar}
Consider $\lambda,\nu$ as in Lemma~\ref{equivlambda} and an arbitrary 
$\mu\in\intdom$. There is a an equivalence 
$F_{\lambda\lambda'}^{\mu}:\;\cD^b(\cO^\mu_\lambda)\,\to\,\cD^b(\cO_{\lambda'}^\mu)$ 
of triangulated categories which admits a commutative diagram of functors
\vspace{-1mm}
\begin{displaymath}
\xymatrix{
\cD^b(\cO_\lambda)\ar[rr]^{F_{\lambda\lambda'}}&&\cD^b(\cO_{\lambda'})\\
\cD^b(\cO^\mu_\lambda)\ar[u]^{\imath^\mu}\ar[rr]^{F^\mu_{\lambda\lambda'}}
&&\cD^b(\cO^\mu_{\lambda'})\ar[u]^{\imath^\mu}.   }
\end{displaymath}
\end{lemma}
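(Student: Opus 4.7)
The plan is to restrict the equivalence $F_{\lambda\lambda'}$ of Lemma \ref{equivlambda} to the parabolic setting by exhibiting a tilting complex $T^\mu\in\cD^b(\cO^\mu_{\lambda'})$ with $\End(T^\mu)\cong A^\mu_\lambda$ and then invoking Rickard's theorem. The dictating requirement is $\imath^\mu T^\mu\cong F_{\lambda\lambda'}(\imath^\mu P^\mu_\lambda)$. To see that such a $T^\mu$ exists, I would apply $\theta_{\lambda'}^{out}$ to the right hand side: the commutative diagram of Lemma \ref{equivlambda}, the commutation of $\imath^\mu$ with translation functors, and Proposition \ref{commdiagshuff} combine to give
\begin{displaymath}
\theta_{\lambda'}^{out}F_{\lambda\lambda'}(\imath^\mu P^\mu_\lambda)\;\cong\;\cL C_{w_0^\nu}\theta_\lambda^{out}\imath^\mu P^\mu_\lambda\;\cong\;\imath^\mu\cL C_{w_0^\nu}\theta_\lambda^{out} P^\mu_\lambda.
\end{displaymath}
Lemma \ref{XYZ} then produces $T^\mu\in\cD^b(\cO^\mu_{\lambda'})$ with the desired property, and Corollary \ref{corisomclosed} makes it unique up to isomorphism.

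To verify the tilting properties of $T^\mu$, I note that faithfulness of $\imath^\mu$ together with the equivalence $F_{\lambda\lambda'}$ gives the injection
\begin{displaymath}
\Hom_{\cD^b(\cO^\mu_{\lambda'})}(T^\mu,T^\mu[k])\;\hookrightarrow\;\Hom_{\cD^b(\cO_\lambda)}(\imath^\mu P^\mu_\lambda,\imath^\mu P^\mu_\lambda[k]).
\end{displaymath}
Since $P^\mu_\lambda$ is projective in $\cO^\mu_\lambda$, Corollary \ref{imathend} identifies the right hand side with $A^\mu_\lambda$ in degree zero and with $0$ otherwise. This forces the vanishing of self-extensions of $T^\mu$, and a second application of Corollary \ref{imathend}, now to $T^\mu$ itself, upgrades the injection in degree zero to an isomorphism $\End_{\cD^b(\cO^\mu_{\lambda'})}(T^\mu)\cong A^\mu_\lambda$.

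The main obstacle is generation: showing that $T^\mu$ generates $\cD^b(\cO^\mu_{\lambda'})$ as a thick subcategory. My plan here is to mimic the coresolution argument of Lemma \ref{equivlambda}. Using the displayed isomorphism above and Corollary \ref{corisomclosed}, one has $\theta_{\lambda'}^{out}T^\mu\cong\cL C_{w_0^\nu}\theta_\lambda^{out}P^\mu_\lambda$ in $\cD^b(\cO^\mu_0)$. Combining the identity $\theta_{w_0^\lambda}=\theta_\lambda^{out}\theta_\lambda^{on}$, Corollary \ref{corforequi}, and the parabolic analogue of Proposition \ref{thetatheta} then shows that $\theta_{\lambda'}^{on}\cL C_{w_0^\nu}P\in\add(T^\mu)$ for every projective $P\in\cO^\mu_0$. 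Since $\cL C_{w_0^\nu}$ is an autoequivalence of $\cD^b(\cO^\mu_0)$ by Proposition \ref{commdiagshuff}, the object $\cL C_{w_0^\nu}P^\mu_0$ is a tilting complex there and furnishes a coresolution of $P^\mu_0$ by objects of $\add(\cL C_{w_0^\nu}P^\mu_0)$. Applying the exact functor $\theta_{\lambda'}^{on}$ produces a coresolution, by objects of $\add(T^\mu)$, of $\theta_{\lambda'}^{on}P^\mu_0$, which contains $P^\mu_{\lambda'}$ as a direct summand. Rickard's theorem \cite[Theorem~6.4]{Rickard} now supplies the equivalence $F^\mu_{\lambda\lambda'}$ sending $P^\mu_\lambda$ to $T^\mu$, and the commutativity of the square follows because the triangulated functors $\imath^\mu F^\mu_{\lambda\lambda'}$ and $F_{\lambda\lambda'}\imath^\mu$ agree on the generator $P^\mu_\lambda$, together with its $A^\mu_\lambda$-action, by the defining property of $T^\mu$.
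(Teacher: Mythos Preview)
Your construction of $T^\mu$ and the computation of its endomorphism ring and self-extensions follow the paper's proof verbatim. The generation argument, however, is genuinely different from the paper's. The paper does \emph{not} mimic the coresolution trick of Lemma~\ref{equivlambda}; instead it brings in the Zuckerman functor via Lemma~\ref{extrlemma} and Remark~\ref{Zinongrad}, showing directly that $\cL Z^\mu F_{\lambda\lambda'}P_\lambda^\bullet\in\add(\cT^\bullet)$ and then using that $F_{\lambda\lambda'}P_\lambda^\bullet$ generates $\cD^b(\cO_{\lambda'})$. Your route stays parallel to Lemma~\ref{equivlambda}: generation in $\cD^b(\cO^\mu_0)$ by $\cL C_{w_0^\nu}P^\mu_0$, followed by pushing down through $\theta_{\lambda'}^{on}$. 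Both approaches work; yours is more self-contained (it avoids Lemma~\ref{extrlemma} and the Koszul-dual identity behind Corollary~\ref{Zi}), while the paper's is cleaner in that it never has to analyse what $\theta_{\lambda'}^{on}\cL C_{w_0^\nu}P$ looks like for general parabolic projectives $P$.

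One point of care: your use of the word ``coresolution'' is not quite justified. In Lemma~\ref{equivlambda} the object $C_{w_0^\nu}P_0$ is an honest module (since $P_0$ is projective in $\cO_0$ and $C_{w_0^\nu}$ is right exact), hence a generalised tilting module, and the coresolution property follows. In your parabolic situation $P^\mu_0$ is \emph{not} projective in $\cO_0$, and $\cL C_{w_0^\nu}P^\mu_0$ need not be concentrated in a single degree (compare equation~\eqref{eqInRi1}). What you actually have is that $\cL C_{w_0^\nu}P^\mu_0$ is a tilting \emph{complex} in $\cD^b(\cO^\mu_0)$ by Proposition~\ref{commdiagshuff}, so it generates $\cD^b(\cO^\mu_0)$ as a thick subcategory; applying the triangulated functor $\theta_{\lambda'}^{on}$ then shows $\theta_{\lambda'}^{on}P^\mu_0\in\mathrm{thick}(T^\mu)$, which is all you need. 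Replacing ``coresolution'' by ``thick-subcategory generation'' throughout the last paragraph makes the argument correct.
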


\begin{proof}
Consider the minimal projective generator $P_\lambda^\mu$ of $\cO_\lambda^\mu$ in \eqref{progen}. 
By Lemma~\ref{equivlambda}, there is a complex 
$\cS^\bullet:=F_{\lambda\lambda'}\,\imath^\mu\, P^\mu_\lambda$ in $\cD^b(\cO_{\lambda'})$
with
\begin{equation}\label{EndSA}
\End_{\cD^b(\cO_{\lambda'})}(\cS^\bullet)\cong \End_{\cD^b(\cO_{\lambda})}
(\imath^\mu P^\mu_\lambda)\cong\End_{\cD^b(\cO^\mu_{\lambda})}( P^\mu_\lambda)\cong A^\mu_\lambda, 
\end{equation}
where the latter isomorphism follows from Corollary \ref{imathend}. Lemma~\ref{equivlambda} and Proposition \ref{commdiagshuff} yield
$\theta_{\lambda'}^{out}\cS^\bullet\;\cong\; \imath^\mu \cL 
C_{w_0^\nu}\theta_\lambda^{out}P_\lambda^\mu.$
Lemma \ref{XYZ} thus implies the existence of 
$\cT^\bullet\in \cD^b(\cO_\lambda^\mu)$ such that
\begin{equation}
\label{defTpar}\imath^\mu\cT^\bullet \;\cong \;\cS^\bullet\qquad\mbox{and}\qquad \theta_{\lambda'}^{out}\cT^\bullet\cong\cL C_{w_0^\nu}\theta_\lambda^{out}P_\lambda^\mu.
\end{equation}
The second property in \eqref{defTpar} and the faithfulness of 
$\theta_{\lambda'}^{out}$ give an injection
\begin{displaymath}
\Hom_{\cD^b(\cO^\mu_{\lambda'})}(\cT^\bullet,\cT^\bullet[k])
\hookrightarrow\Hom_{\cD^b(\cO^\mu_0)}(\cL C_{w_0^\nu}
\theta_\lambda^{out}P_\lambda^\mu,\cL C_{w_0^\nu}\theta_\lambda^{out}P_\lambda^\mu[k]).  
\end{displaymath}
The right-hand side is zero, if $k\not=0$, by Proposition \ref{commdiagshuff}. So we find
\begin{equation}
\label{tilteqpar1}
\Hom_{\cD^b(\cO^\mu_{\lambda'})}(\cT^\bullet,\cT^\bullet[k])=0 \qquad \mbox{if}\quad k\not=0.
\end{equation}
The first property in equation \eqref{defTpar} and equation \eqref{tilteqpar1} 
allow us to apply Corollary \ref{imathend} and deduce, using equation \eqref{EndSA}, that
\begin{equation}
\label{tilteqpar2}
\End_{\cD^b(\cO^\mu_{\lambda'})}(\cT^\bullet)\;\cong\;\End_{\cD^b(\cO_{\lambda'})}(\cS^\bullet)\;\cong\;A^\mu_\lambda.
\end{equation}
By equations \eqref{tilteqpar1} and \eqref{tilteqpar2}, the result would 
follow from \cite[Theorem~6.4]{Rickard}, if we could show that add$(\cT^\bullet)$ generates 
$\cD^b(\cO_\lambda^\mu)$ as a triangulated category.

Now we focus on the latter statement. Remark~\ref{Zinongrad} and 
Lemma~\ref{extrlemma} yield
\begin{displaymath}
\bigoplus_{j\in\mN}\cL Z^\mu F_{\lambda\lambda'}
(P_\lambda^\bullet)^{\oplus c_j}[j]\;\,\cong\;\, \cL Z^\mu 
F_{\lambda\lambda'}\imath^\mu \cL Z^\mu P_\lambda^\bullet .
\end{displaymath} 
As, for every $x\in X_\lambda$,
\begin{displaymath}
\cL Z^\mu P(x\cdot\lambda)^\bullet\cong\begin{cases}
P^\mu(x\cdot\lambda)^\bullet,&\mbox{if }x\in X^\mu_\lambda;
\\0,&\mbox{if }x\not\in X^\mu_\lambda;\end{cases}
\end{displaymath}
we find
\begin{displaymath}
\bigoplus_{j\in\mN}\cL Z^\mu F_{\lambda\lambda'}
(P_\lambda^\bullet)^{\oplus c_j}[j]\;\,\cong\;\, \cL Z^\mu 
\imath^\mu \cT^\bullet\;\, \cong\;\, \bigoplus_{j\in\mN}(\cT^\bullet)^{\oplus c_j}[j].
\end{displaymath}
This implies that
$\cL Z^\mu F_{\lambda\lambda'} P_\lambda^\bullet\,\in\, {\rm add}(\cT^\bullet)$.
The fact that $F_{\lambda\lambda'}P_\lambda^\bullet$ generates $\cD^b(\cO_{\lambda'})$ 
as a triangulated category (which is an immediate consequence of Lemma \ref{equivlambda}) hence implies that ${\rm add}(\cT^\bullet)$ generates 
$\cD^b(\cO^\mu_{\lambda'})$ as a triangulated category.
The existence of the commuting diagram then follows by the same arguments as in the 
proof of Lemma~\ref{equivlambda}.
\end{proof}

\begin{lemma}\label{equivpargrad2}
The functor $F_{\lambda\lambda'}^{\mu}$ from Lemma~\ref{equivpar} admits a 
graded lift, yielding an equivalence of triangulated categories
\begin{displaymath}
\widetilde{F}_{\lambda\lambda'}^{\mu}:\;\cD^b({}^{\mZ}\cO^\mu_\lambda)\,
\to\,\cD^b({}^{\mZ}\cO_{\lambda'}^\mu). 
\end{displaymath}
\end{lemma}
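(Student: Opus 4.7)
The plan is to trace the construction in Lemma~\ref{equivpar} through the graded categories, producing a graded tilting complex with a correctly graded endomorphism algebra, and then conclude via Proposition~\ref{liftequiv}. Equivalently, since we already have the ungraded equivalence $F^\mu_{\lambda\lambda'}$, it suffices to exhibit graded lifts of $F^\mu_{\lambda\lambda'}$ and of its inverse; this reduces to showing that the tilting complex $\cT^\bullet$ admits a graded lift whose (graded) endomorphism algebra identifies with $A^\mu_\lambda$. All ingredients used in the construction, namely the translation functors, $\cL C_{w_0^\nu}$, $\imath^\mu$, and $\cL Z^\mu$, admit graded lifts (see Subsection~\ref{prel1}, Proposition~\ref{commdiagshuff}, and Proposition~\ref{RHresult}), so such a lift should exist by formal considerations.

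First I would lift $F_{\lambda\lambda'}$ itself. Setting $\widetilde S_x := \cL C_{w_0^\nu}\theta_\lambda^{out} P(x\cdot\lambda)\langle 0\rangle$ gives a graded lift of $S_x$ in $\cD^b({}^{\mZ}\cO_0)$. Using the graded form of Proposition~\ref{thetatheta} together with a graded version of Corollary~\ref{critoutof}, we obtain graded lifts $\widetilde T_x$ of the indecomposable summands $T_x$ of Lemma~\ref{equivlambda}, unique up to a shift by \cite[Lemma~2.5.3]{BGS}. After summing and fixing these shifts, the algebra isomorphism in equation~\eqref{endalg} can be redone in the graded setting using the graded Soergel functors $\mV$, $\mV_\lambda$ together with Lemma~\ref{LEMeqapr19-1} and the identifications of Subsection~\ref{sscentre}. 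This produces a graded tilting complex $\widetilde T$ with $\End(\widetilde T)\cong A_\lambda$ as graded algebras; Rickard's theorem then yields a graded equivalence $\widetilde F_{\lambda\lambda'}$, which commutes with $\langle 1\rangle$ by construction and hence lifts $F_{\lambda\lambda'}$ via Proposition~\ref{descequiv}.

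Next I would promote this to the parabolic level. Define $\widetilde\cS^\bullet := \widetilde F_{\lambda\lambda'}\imath^\mu P^\mu_\lambda$ in $\cD^b({}^{\mZ}\cO_{\lambda'})$. The graded versions of the commutative diagrams in Lemma~\ref{equivlambda} and Proposition~\ref{commdiagshuff} give the graded analogue of the defining relation $\theta_{\lambda'}^{out}\widetilde\cS^\bullet\cong\imath^\mu\cL C_{w_0^\nu}\theta_\lambda^{out} P^\mu_\lambda$. Applying the graded form of Lemma~\ref{XYZ}, which relies on Corollary~\ref{corisomclosed} whose last statement already covers the graded case, we obtain $\widetilde\cT^\bullet\in\cD^b({}^{\mZ}\cO^\mu_\lambda)$ with $\imath^\mu\widetilde\cT^\bullet\cong\widetilde\cS^\bullet$ and the expected translated image, after possibly adjusting an overall grading shift.

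Finally, the endomorphism computations~\eqref{EndSA}, \eqref{tilteqpar1} and \eqref{tilteqpar2}, together with the generation argument via $\cL Z^\mu$, rely only on Corollary~\ref{imathend} and Remark~\ref{Zinongrad}, whose graded analogues are available via Corollary~\ref{Zi}. These yield $\End_{\cD^b({}^{\mZ}\cO^\mu_{\lambda'})}(\widetilde\cT^\bullet)\cong A^\mu_\lambda$ as graded algebras, vanishing of positive self-extensions in the graded category, and generation of $\cD^b({}^{\mZ}\cO^\mu_{\lambda'})$. The graded Rickard theorem then produces $\widetilde F^\mu_{\lambda\lambda'}$ with the claimed properties. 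The main obstacle is purely bookkeeping: ensuring that the grading shifts introduced when lifting the $T_x$ and $\cT^\bullet$ assemble coherently so that the final endomorphism algebra carries precisely the Koszul grading on $A^\mu_\lambda$; existence of graded lifts of the relevant objects is automatic because every ingredient of the construction is manifestly gradable.
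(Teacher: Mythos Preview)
Your approach is essentially correct but takes a markedly more laborious route than the paper's. You propose to redo the entire construction of Lemmata~\ref{equivlambda} and~\ref{equivpar} in the graded setting, tracking grading shifts through Soergel's functor, the graded versions of Corollary~\ref{critoutof} and Lemma~\ref{XYZ}, and the generation argument, in order to produce a graded tilting complex with graded endomorphism ring $A^\mu_\lambda$; you then appeal to a graded form of Rickard's theorem (or, equivalently, Proposition~\ref{descequiv}). This works, but the bookkeeping you flag as ``the main obstacle'' is genuinely delicate, and the graded Rickard statement is not something the paper states explicitly.

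The paper bypasses all of this. It first observes, in one line, that $F^\mu_{\lambda\lambda'}$ admits a graded lift because every functor used to build it (translation, shuffling, inclusion) does. The crucial idea you are missing is an explicit, \emph{a priori} gradable inverse: one swaps the roles of $\lambda$ and $\lambda'$ to build $F^\mu_{\lambda'\lambda}$ by the same recipe, and then sets $G^\mu_{\lambda'\lambda}:=\dd\, F^\mu_{\lambda'\lambda}\,\dd$. Since $\dd\cL C_{w_0^\nu}\dd$ is inverse to $\cL C_{w_0^\nu}$, the commutative diagrams show that $FG$ and $GF$ are the identity on projectives, hence everywhere. Both $F$ and $G$ are manifestly gradable, so Proposition~\ref{liftequiv} applies directly and gives the graded equivalence with no further computation. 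The payoff of the paper's approach is that it completely avoids verifying the graded endomorphism isomorphism and the graded generation statement; your approach, while valid, spends effort on exactly those points.
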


\begin{proof}
As translation, shuffling and inclusion functors admit graded lifts, it follows easily that so does $F^\mu_{\lambda,\lambda'}$.

We can switch the roles of $\lambda$ and $\lambda'$ and 
construct a functor $F^\mu_{\lambda'\lambda}$. We define the (gradable) 
functor $G^\mu_{\lambda'\lambda}=\dd F_{\lambda'\lambda}\dd$. 
Then $G^\mu_{\lambda'\lambda}$ admits commutative diagrams with 
$\dd\cL C_{w_0^\nu}\dd$, analogous to the ones in 
Lemmata~\ref{equivlambda} and~\ref{equivpar} and is also gradable. 
By construction, $F^\mu_{\lambda\lambda'}G^\mu_{\lambda'\lambda}$
and, respectively, $G^\mu_{\lambda'\lambda}F^\mu_{\lambda\lambda'}$,
are isomorphic to the respective identity functors when
restricted to the categories of projective modules in $\cO^\mu_{\lambda'}$ 
and $\cO^\mu_\lambda$, respectively. 
Hence $G^\mu_{\lambda'\lambda}$ is an inverse to~$F^\mu_{\lambda'\lambda}$. 
The result thus follows from Proposition~\ref{liftequiv}.
\end{proof}

\begin{lemma}\label{maindereqLem}
Consider  $\lambda,\lambda',\mu\in\intdom$ 
and $\nu_1\in\intdom$ such that $W_{\nu_1}\cong S_{n}$, for 
some $n\in\mN$. Assume that $W_\lambda=G_1\times G_2$ and 
$W_{\lambda'}=G'_1\times G'_2$, where 
\begin{itemize}
\item $G_2=G'_2$ is a subgroup of $W_{\nu_1}^\dagger$;
\item $G_1\cong G_1'$ and both are subgroups of $W_{\nu_1}$.
\end{itemize}
Then $A^\mu_\lambda$ and $A^\mu_{\lambda'}$ are gradable derived equivalent. In particular, we have equivalences of 
triangulated categories
\begin{displaymath}
\cD^b(\cO^\mu_\lambda)\;\cong\; \cD^b(\cO^{\mu}_{\lambda'})
\qquad\mbox{and}\qquad \cD^b({}^{\mZ}\cO^\mu_\lambda)\;\cong\; \cD^b({}^{\mZ}\cO^{\mu}_{\lambda'}). 
\end{displaymath}
\end{lemma}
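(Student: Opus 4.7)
The plan is to iterate Lemma~\ref{equivpar} together with its graded enhancement Lemma~\ref{equivpargrad2} a finite number of times. Since $G_1$ and $G_1'$ are isomorphic standard parabolic (Young) subgroups of $W_{\nu_1}\cong S_n$, they correspond to the same partition of~$n$; after relabelling one may write $G_1=S_{p_1}\times\cdots\times S_{p_k}$, acting on the consecutive standard blocks of $W_{\nu_1}$, and $G_1'=S_{p_{\sigma(1)}}\times\cdots\times S_{p_{\sigma(k)}}$ for some permutation $\sigma$ of $\{1,\dots,k\}$. I factor $\sigma$ as a product of adjacent transpositions in $S_k$ and realise each such swap of neighbouring blocks as one application of Lemma~\ref{equivpar}, producing a sequence $\lambda=\lambda_0,\lambda_1,\dots,\lambda_N=\lambda'$ of intermediate dominant weights.

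To execute a single swap of two adjacent blocks of sizes $a$ and $b$, I choose $\tau\in\intdom$ so that $W_\tau\cong S_{a+b}$ is the standard parabolic subgroup of $W_{\nu_1}$ supported on the simple reflections inside and between those two blocks. Conjugation by $w_0^\tau$ then acts as the diagram automorphism of $W_\tau$, swapping $S_a\times S_b\subset W_\tau$ to $S_b\times S_a\subset W_\tau$ and fixing the rest. To invoke Lemma~\ref{equivpar} I must verify its hypothesis $W_{\lambda_j}\subset W_\tau\times W_\tau^\dagger$ at the current stage: the factor $G_2$ lies in $W_{\nu_1}^\dagger\subset W_\tau^\dagger$ since $W_\tau\subset W_{\nu_1}$; the two blocks being swapped lie in $W_\tau$ by construction; and the remaining $S_{p_j}$-factors of the current Young subgroup are generated by simple reflections separated by at least one non-generator from those generating $W_\tau$, so they commute with $W_\tau$ and hence lie in $W_\tau^\dagger$. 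Since $W_\tau$ is of type~$A$, Lemmata~\ref{equivpar} and~\ref{equivpargrad2} then yield a gradable derived equivalence $\cD^b(\cO^\mu_{\lambda_j})\;\tilde\to\;\cD^b(\cO^\mu_{\lambda_{j+1}})$, where $W_{\lambda_{j+1}}=w_0^\tau W_{\lambda_j}w_0^\tau$ is again of the required form $\widetilde{G}_1\times G_2$ with $\widetilde{G}_1$ a standard Young subgroup of~$W_{\nu_1}$.

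Composing the finitely many gradable derived equivalences produced along the way gives the desired equivalence $\cD^b(\cO^\mu_\lambda)\;\tilde\to\;\cD^b(\cO^\mu_{\lambda'})$, and hence also $\cD^b({}^{\mZ}\cO^\mu_\lambda)\;\tilde\to\;\cD^b({}^{\mZ}\cO^\mu_{\lambda'})$ by Definition~\ref{defgradeq}. Transitivity of gradable derived equivalence under composition is immediate from either formulation in that definition, using Propositions~\ref{descequiv} and~\ref{liftequiv} to pass between the graded and ungraded pictures.

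The main step to monitor is the induction on block-swaps: I need to check that after each conjugation the new $\widetilde{G}_1$ is still a standard parabolic of $W_{\nu_1}$ whose blocks occupy consecutive intervals of simple reflections, so that the hypothesis of Lemma~\ref{equivpar} persists at the next stage. This is a purely combinatorial bookkeeping argument inside~$S_n$, but it is the only non-formal ingredient of the proof; once it is in place the statement reduces to a direct composition of the previously established equivalences.
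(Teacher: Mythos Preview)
Your proof is correct and follows essentially the same strategy as the paper: both reduce to a sequence of adjacent block-swaps inside $W_{\nu_1}\cong S_n$, choose for each swap an auxiliary $\tau$ with $W_\tau\cong S_{p_j+p_{j+1}}$, and apply Lemmata~\ref{equivpar} and~\ref{equivpargrad2}. Your write-up is in fact more careful than the paper's in explicitly verifying the containment $W_{\lambda_j}\subset W_\tau\times W_\tau^\dagger$ at each stage, which the paper leaves implicit.
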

 
\begin{proof}
There are $k\in\mN$ and $p_i\in\mN$, for $1\le i\le k$, 
such that $p_1+\dots+p_k=n$ and 
\begin{displaymath} 
G_1=S_{p_1}\times S_{p_2}\times \cdots \times S_{p_k}
\end{displaymath}
as a subgroup of $S_n$. Then $G_2$, as a subgroup of $S_n$, is equal to
\begin{displaymath}
G_2=S_{p_{\sigma(1)}}\times S_{p_{\sigma(2)}}\times \cdots \times S_{p_{\sigma(k)}}, 
\end{displaymath}
for some $\sigma\in S_k$. It suffices to prove the lemma for $\sigma$ 
such that it only interchanges two neighbors $(j,j+1)$ for some $1\le j<k$. 
Then we choose an integral dominant $\nu$ such that $W_\nu$ is equal to 
the subgroup of $W_{\nu_1}\subset W$ defined as
\begin{displaymath}
\stackrel{p_1+p_2+\dots+p_{j-1}}{\overbrace{S_1\times S_1\times 
\cdots \times S_1}}\times S_{p_j+p_{j+1}}\times 
\stackrel{p_{j+2}+\dots+p_k}{\overbrace{S_1\times \cdots S_1}}.
\end{displaymath}
For the case we consider, this leads to 
$w_0^{\lambda'}=w_0^\nu w_0^\lambda w_0^\nu$ and the equivalences 
follow from Lemma~\ref{equivpar}, Lemma~\ref{equivpargrad2} and 
Definition~\ref{defgradeq}.
\end{proof}

\begin{proof}[Proof of Theorem~\ref{maindereq}]
To prove the theorem it suffices to restrict to either $\lambda=\lambda'$ 
or $\mu=\mu'$ as the full statement follows by composition of the two. 
The case $\mu=\mu'$ is precisely Lemma~\ref{maindereqLem}.

The graded equivalence for $\lambda=\lambda'$ follows immediately from 
composing Lemma~\ref{maindereqLem} with the equivalence in 
equation~\eqref{KoszuleqO}. The fact that the equivalence then descends 
to the non-graded categories follows from Proposition~\ref{descequiv} and equation \eqref{KdFshift}.
\end{proof}

Finally, we note how the results in this section lead to a proof of 
Theorem~D. Part (ii) follows immediately from Lemmata~\ref{equivlambda}, 
\ref{equivpar} and~\ref{equivpargrad2}. Part (i) then follows from 
Koszul duality as in the proof of Theorem \ref{maindereq}. The link
with derived twisting functors follows from \cite[Theorem 39]{MOS}.


\section{The classification for type~$A$}\label{secClass}

In this section we prove the following theorem, which implies Theorem~B.

\begin{theorem}\label{class3}
Consider two Lie algebras $\fg,\fg'$ of type~$A$ with fixed Borel 
subalgebras $\fb,\fb'$, classes $\Lambda\in \fh^\ast/\Lambda_{\mathrm{int}}$ and 
$\Lambda'\in(\fh')^\ast/\Lambda_{\mathrm{int}}'$ with the corresponding integral Weyl 
groups $W_{\Lambda},W'_{\Lambda'}$ and two dominant weights 
$\lambda\in \Lambda$, $\lambda'\in\Lambda'$. Then the following statements are equivalent:
\begin{enumerate}[$($I$)$]
\item\label{class3.1} There is a gradable equivalence 
$\cD^b(\cO_\lambda(\fg,\fb))\cong \cD^b(\cO_{\lambda'}(\fg',\fb'))$.
\item\label{class3.2} There is a graded algebra isomorphism 
$\cZ(\cO_\lambda(\fg,\fb))\cong \cZ(\cO_{\lambda'}(\fg',\fb'))$.
\item\label{class3.3} For some decompositions 
\begin{displaymath}
W_{\Lambda}\cong X_1\times X_2\times \dots\times X_k\quad\text{ and }\quad
W'_{\Lambda'}\cong X'_1\times X'_2\times \dots\times X'_m
\end{displaymath}
into products of irreducible Weyl groups, we have $k=m$ and there is a permutation
$\varphi$ on $\{1,2,\dots,k\}$ such that $W_{\Lambda,\lambda}\cap X_i\cong
W_{\Lambda',\lambda'}\cap X'_{\varphi(i)}$ and $X_i\cong X'_{\varphi(i)}$ for all $i=1,2,\dots,k$.
\end{enumerate}
\end{theorem}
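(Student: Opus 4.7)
The plan is to prove the cyclic implications (III) $\Rightarrow$ (I) $\Rightarrow$ (II) $\Rightarrow$ (III). Of these, (I) $\Rightarrow$ (II) is immediate from Lemma~\ref{gradedcentre}, so the substantive work lies in (III) $\Rightarrow$ (I) and (II) $\Rightarrow$ (III).

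For (III) $\Rightarrow$ (I), I would first apply \cite[Theorem~11]{SoergelD} to replace $\cO_\lambda(\fg,\fb)$ and $\cO_{\lambda'}(\fg',\fb')$ by equivalent integral blocks of reductive Lie algebras whose Weyl groups are exactly $W_\Lambda$ and $W_{\Lambda'}$. Writing the first of these as $\mathfrak{sl}(n_1) \times \cdots \times \mathfrak{sl}(n_k)$, so that $W_\Lambda \cong X_1 \times \cdots \times X_k$ with $X_i \cong S_{n_i}$, the block factors as an exterior tensor product of blocks of its simple summands, and similarly for the primed side. The matching $\varphi$ supplied by (III) pairs the simple summands with isomorphic $X_i$ and with isomorphic stabiliser intersections $W_{\Lambda,\lambda} \cap X_i \cong W_{\Lambda',\lambda'} \cap X'_{\varphi(i)}$. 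For each matched pair I would apply Theorem~C with $\mu = \mu' = 0$ (so the condition on $W_\mu$ is vacuous) to obtain a factor-wise gradable derived equivalence, and then tensor over the matching to produce the global gradable derived equivalence.

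For (II) $\Rightarrow$ (III), I would use Subsection~\ref{sscentre} to identify the graded centre of $A_\lambda$ with $\mathtt{C}(W_\Lambda)^{W_{\Lambda,\lambda}}$. Since $W_{\Lambda,\lambda}$ respects the irreducible decomposition $W_\Lambda = X_1 \times \cdots \times X_k$, this centre factors as a graded tensor product $\bigotimes_i \mathtt{C}(X_i)^{W_{\Lambda,\lambda} \cap X_i}$, and each tensor factor has Hilbert-Poincar\'e polynomial the Gaussian multinomial of equation~\eqref{HilPoin}. A graded algebra isomorphism as in (II) forces the total Hilbert-Poincar\'e polynomials on the two sides to agree, and the combinatorial content of (III) would then follow from a uniqueness statement: the multiset of Gaussian multinomial factors is determined by their product.

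The main obstacle is this uniqueness statement. I would proceed by induction on $\sum_i n_i$ using cyclotomic-polynomial divisibility in $\mathbb{Z}[z]$: the largest $n_i$ among factors with non-trivial composition is identified as the largest $n$ for which $\Phi_n$ divides the total polynomial, since $\binom{n}{p_1,\ldots,p_l}_z$ is divisible by $\Phi_n$ exactly once when the composition is non-trivial and contains no $\Phi_d$ with $d > n$. Having identified the factor of largest $n_i$, its composition is read off from the quotient by Hilbert-Poincar\'e comparison, and the induction proceeds on the remaining polynomial. Some care is required to handle trivial factors (where $W_{\Lambda,\lambda} \cap X_i = X_i$): these contribute only $\mathbb{C}$ to the centre and are invisible in the Hilbert-Poincar\'e polynomial, so matching them on both sides requires either an auxiliary invariant from the block structure or a normalisation of the irreducible decomposition that absorbs such factors into the Soergel reduction.
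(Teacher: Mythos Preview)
Your approach coincides with the paper's. The paper also reduces to the integral setting via \cite[Theorem~11]{SoergelD} (together with \cite[Proposition~A.4]{Mathieu} to stay in type~$A$), then runs the same cycle: (I)$\Rightarrow$(II) by Lemma~\ref{gradedcentre}; (II)$\Rightarrow$(III) by identifying the centre with $\mathtt{C}(W_\Lambda)^{W_{\Lambda,\lambda}}$ and applying a separate combinatorial lemma (Lemma~\ref{lemCox}), proved by exactly the cyclotomic-factor argument you sketch; and (III)$\Rightarrow$(I) by Lemma~\ref{maindereqLem}, which is the $\mu=\mu'$ case of Theorem~C applied factorwise. So both the strategy and the key combinatorial mechanism agree.

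Your worry about trivial factors is not idle, and the paper does not address it either. In the proof of Lemma~\ref{lemCox} for simple $W=S_n$, $W'=S_m$ with $m>n$, the claim that the primitive $m$-th root of unity cannot appear on the left-hand side fails precisely when the composition for $X'$ is $(m)$, i.e.\ when $X'=W'$; in that case $\mathtt{C}(W')^{X'}=\mathbb{C}$ and the argument gives no information about $m$. Correspondingly, in the tensor decomposition a factor with $W_{\Lambda,\lambda}\cap X_i=X_i$ contributes only $\mathbb{C}$ to the centre and a copy of $\mathrm{Vec}$ to the block, so it is genuinely invisible to both (I) and (II). Your second suggested fix is the right one: such a factor may be discarded in the Soergel reduction without changing the block, so statement~(III) should be read modulo insertion or deletion of irreducible factors $X_i$ that are entirely contained in the stabiliser. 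With that normalisation, your cyclotomic induction (which is the paper's argument) goes through.
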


Note that, as we only consider type $A$ in this section, we do not need to distinguish between isomorphisms of Coxeter groups and isomorphisms of Coxeter systems. Before proving Theorem \ref{class3} we need the following preparatory lemma.

\begin{lemma}\label{lemCox}
Consider Coxeter groups $W$ and $W'$ of type~$A$ with respective 
Young subgroups $X$ and $X'$. There is a graded algebra isomorphism 
$\mathtt{C}(W)^X\cong \mathtt{C}(W')^{X'}$ if and only if,
for some decompositions 
\begin{displaymath}
W\cong X_1\times X_2\times \dots\times X_k\quad\text{ and }\quad
W'\cong X'_1\times X'_2\times \dots\times X'_m
\end{displaymath}
into products of irreducible Weyl groups, we have $k=m$ and there is a permutation
$\varphi$ on $\{1,2,\dots,k\}$ such that $X\cap X_i\cong
X'\cap X'_{\varphi(i)}$ and $X_i\cong X'_{\varphi(i)}$ for all $i=1,2,\dots,k$.
\end{lemma}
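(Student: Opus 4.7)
My plan is as follows. The backward direction is essentially formal. In type~$A$, any Young subgroup $X$ of a product $W = X_1 \times \cdots \times X_k$ of irreducible Weyl groups is itself a parabolic subgroup generated by simple reflections, hence decomposes as $X = \prod_i (X \cap X_i)$ with each $X \cap X_i$ a Young subgroup of $X_i$. Since the reflection representation of $W$ splits as a direct sum over the irreducible factors, $\mathtt{C}(W)$ factors canonically as the graded tensor product $\bigotimes_i \mathtt{C}(X_i)$, and taking $X$-invariants factor-wise yields $\mathtt{C}(W)^X \cong \bigotimes_{i=1}^k \mathtt{C}(X_i)^{X \cap X_i}$. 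Under the matching hypothesis via $\varphi$, each factor on the left is isomorphic as a graded algebra to the corresponding factor on the right (both are coinvariant algebras of isomorphic symmetric groups relative to conjugate Young subgroups, since Young subgroups of $S_n$ are conjugate as soon as their underlying partitions coincide), and tensoring produces the required isomorphism.

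For the forward direction, a graded algebra isomorphism forces equality of Hilbert--Poincar\'e polynomials. Applying \eqref{HilPoin} to each tensor factor, this takes the form
\begin{equation*}
\prod_{i=1}^k \frac{\prod_{l=1}^{n_i}(1-z^l)}{\prod_s \prod_{l=1}^{p_{i,s}}(1-z^l)} \;=\; \prod_{j=1}^m \frac{\prod_{l=1}^{n'_j}(1-z^l)}{\prod_s \prod_{l=1}^{p'_{j,s}}(1-z^l)},
\end{equation*}
where $X_i \cong S_{n_i}$ are the irreducible factors of $W$ and the $p_{i,s}$ are the parts of the composition describing $X \cap X_i$. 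The plan is to show that this polynomial identity forces the unordered multiset $\{(n_i,\{p_{i,1},\ldots,p_{i,k_i}\})\}$ to be determined uniquely.

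To extract the block data I would use cyclotomic factorization. Writing $1-z^l = \prod_{d\mid l}\Phi_d(z)$, each side decomposes as $\prod_d \Phi_d(z)^{a_d}$ with $a_d = \sum_i(\lfloor n_i/d \rfloor - \sum_s \lfloor p_{i,s}/d \rfloor) \ge 0$. Let $N$ be the largest $n_i$ among blocks with $X \cap X_i$ a proper subgroup of $X_i$; then $N$ is the largest $d$ for which $\Phi_d$ divides the common Hilbert--Poincar\'e polynomial, and $a_N$ counts the number of such maximal blocks. Examining $\Phi_d$-multiplicities for $d$ running downward from $N$ (or, equivalently, inductively removing the contribution of the maximal blocks and recursing on the smaller ones) recovers the partitions for the size-$N$ blocks and then iterates.

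The principal obstacle lies in the combinatorial step of recovering the exact multiset of partitions among blocks of a given size once their count is known. A cleaner structural alternative would invoke uniqueness of tensor factorization among graded Poincar\'e-duality complete intersection $\mC$-algebras: each $\mathtt{C}(S_n)^{X_0}$ with $n\ge 2$ and $X_0$ a proper Young subgroup is tensor-indecomposable as a graded algebra, so any isomorphism of tensor products matches the tensor factors up to permutation. A single-factor comparison then pins down $(n, X_0)$ using, for instance, that $\dim(\mathtt{C}(S_n)^{X_0})_2$ equals the number of factors of $X_0$ minus one, together with the full Hilbert--Poincar\'e polynomial and the top non-zero degree, which equals $2(\binom{n}{2} - \sum_s \binom{p_s}{2})$.
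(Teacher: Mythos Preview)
Your approach via Hilbert--Poincar\'e polynomials and cyclotomic factorisation is the same route the paper takes, and your backward direction is correct and matches the paper's (implicit) argument. The difference is organisational: the paper first treats the case where both $W$ and $W'$ are \emph{irreducible}, and only afterwards invokes the tensor decomposition $\mathtt{C}(W)^X\cong\bigotimes_i\mathtt{C}(X_i)^{X\cap X_i}$ to reduce the general statement to the irreducible one. This ordering sidesteps precisely the ``principal obstacle'' you flag.

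Concretely, in the irreducible case $W\cong S_n$, $W'\cong S_m$ with $m\ge n$, the paper clears denominators in \eqref{HilPoin} to obtain
\[
\prod_{j=1}^{l}\prod_{a=1}^{q_j}(1-z^a)\;=\;\prod_{j=1}^{k}\prod_{b=1}^{p_j}(1-z^b)\,\prod_{s=n+1}^{m}(1-z^s),
\]
and then observes that if $m>n$, the primitive $m$-th root factor $(1-e^{2\pi i/m}z)$ occurs on the right but cannot occur on the left (since each $q_j\le m$ and $\sum q_j=m$). Hence $m=n$, and the same primitive-root trick, applied inductively to the largest remaining part, forces the two compositions to agree up to reordering. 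This is a cleaner implementation of your cyclotomic idea and directly recovers both $n$ and the partition from the polynomial, with no residual ambiguity.

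Your alternative via unique tensor factorisation of graded Poincar\'e-duality algebras would also work, but it imports a structural result that is heavier than what is needed; the paper's reduction to the irreducible case achieves the same effect with the elementary root-of-unity argument you already have in hand.
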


\begin{proof}
First we assume that $W$ and $W'$ are simple. Without loss of 
generality, we assume that $W\cong S_n$ and $W'\cong S_m$ with 
$m\ge n$. Denote the compositions of $n$ and $m$, corresponding
to $X$ and $X'$, by $(p_1,\cdots,p_k)$ and 
$(q_1,\cdots ,q_l)$, respectively. A graded algebra isomorphism implies, in 
particular, a graded vector space isomorphism, so equation 
\eqref{HilPoin} implies that 
\begin{displaymath}
\prod_{j=1}^l\prod_{a=1}^{q_j}(1-z^a)\;=\;\prod_{j=1}^k
\prod_{b=1}^{p_j}(1-z^b)\prod_{s=n+1}^m(1-z^s)
\end{displaymath}
must be equal in $\mC[z]$. If $m>n$, then the irreducible 
factorisation of the right-hand side contains a term 
$(1-\exp(\frac{2\pi i}{m})z)$, with $i^2=-1$, which never occurs in the factorisation 
of the left-hand side. Hence $m=n$, or $W\cong W'$. The same 
argument can now be used if the maximum of $\{p_j,1\le j\le k\}$ 
were larger than the maximum of $\{q_j,1\le j\le l\}$. By similar reasoning and induction 
it then follows that the compositions are the same up to ordering 
and hence we find $X\cong X'$.

Now consider an arbitrary Coxeter group $W$ of type~$A$. It follows 
easily from the definitions that, if we consider 
the decomposition $W\cong W_1\times W_2\times\cdots \times W_k$, 
for some $k$, such that each $W_i$ is irreducible and set 
$X_i=X\cap W_i$, then we have 
\begin{displaymath}
\mathtt{C}(W)^X\cong \mathtt{C}(W_1)^{X_1}\oplus 
\mathtt{C}(W_2)^{X_2}\oplus \cdots \oplus\mathtt{C}(W_k)^{X_k}. 
\end{displaymath}
The general result now follows.
\end{proof}

\begin{proof}[Proof of Theorem~\ref{class3}]
By \cite[Theorem~11]{SoergelD}, we can take reductive Lie 
algebras $\tilde\fg$ and $\tilde\fg'$ for which there are 
Borel subalgebras $\tilde\fb$ and $\tilde\fb'$, Weyl groups 
$W$ and $\tilde W$, and {\em integral} dominant weights 
$\tilde\lambda$ and $\tilde\lambda'$ such that 
\begin{enumerate}[$($a$)$]
\item\label{property.1} 
$\cO_\lambda(\fg,\fb)\cong \cO_{\tilde\lambda}(\tilde\fg,\tilde\fb)$ 
and $\cO_{\lambda'}(\fg',\fb')\cong \cO_{\tilde\lambda'}(\tilde\fg',\tilde\fb')$;
\item\label{property.2} 
$W_{\Lambda}\cong \tilde W$ and $W'_{\Lambda'}\cong \tilde W'$;
\item\label{property.3} 
$W_{\Lambda,\lambda}\to \tilde{W}_{\tilde \lambda}$ and 
$W'_{\Lambda',\lambda'}\to \tilde W'_{\tilde \lambda'}$ 
under the above isomorphisms.
\end{enumerate}
Moreover, by \cite[Proposition~A.4]{Mathieu}, we can 
chose $\tilde\fg$ and $\tilde \fg'$ to be of type~$A$. 
By \eqref{property.1}, the properties in claim~\eqref{class3.1} 
and claim~\eqref{class3.2} are true if and only 
if they are true for $\cO_{\tilde\lambda}(\tilde\fg,\tilde\fb)$ 
and $\cO_{\tilde\lambda'}(\tilde\fg',\tilde\fb')$. 
Furthermore \eqref{property.2} and \eqref{property.3} imply that also the property in
claim~\eqref{class3.3} is true if and only if it holds for $W$ and $\tilde W.$ 
Therefore it suffices to prove the equivalences between the 
three statements restricted to the integral setting. So, henceforth
we can assume $\Lambda=\Lambda_{{\rm int}}$ and $\Lambda'=\Lambda'_{{\rm int}}$.

Claim~\eqref{class3.1} implies claim~\eqref{class3.2} by Lemma \ref{gradedcentre}.
Now assume that claim~\eqref{class3.2} holds. The results repeated in 
Subsection~\ref{sscentre} imply that there must be a graded isomorphism 
between $\mathtt{C}_\lambda(W)$ and $\mathtt{C}_{\lambda'}(W')$.  
Application of Lemma~\ref{lemCox} then proves that 
claim~\eqref{class3.2} implies claim~\eqref{class3.3}.
The fact that  claim~\eqref{class3.3} implies claim~\eqref{class3.1} is
an immediate consequence of Lemma~\ref{maindereqLem}.
\end{proof}


\section{Ringel duality}\label{secRingel}

In this section we study Ringel duality for arbitrary blocks in parabolic category $\cO$ for arbitrary reductive Lie algebras. We will obtain two version of the Ringel duality functor, which generalise the ones in equations \eqref{eqInRi1} and \eqref{eqInRi2}.

\subsection{Ringel duality for parabolic category~$\cO$}

The main result here is:

\begin{theorem}\label{Ringeld}
Consider a reductive Lie algebra $\fg$ and $\lambda,\mu\in\intdom$.
\begin{enumerate}[$($i$)$]
\item\label{Ringeld.1} There are isomorphisms of 
(graded) algebras: $A^\mu_{\widehat\lambda}\;\cong\; R(A^\mu_\lambda)\;\cong\; A^{\widehat\mu}_\lambda$. 
\item\label{Ringeld.2} 
The restriction of the endofunctor $\cL_{l(w_0^\mu)}T_{w_0}$ of $\cO_\lambda$ 
to the full subcategory~$\cO^\mu_\lambda$ yields a Ringel duality functor 
$\underline\cR_\lambda^\mu$ satisfying
\begin{displaymath}
\underline\cR_\lambda^\mu:\,\cO_\lambda^\mu\to\cO_\lambda^{\widehat\mu}
\quad\,\mbox{ and }\;\quad\underline\cR_\lambda^\mu
(\Delta^{\mu}(x\cdot\lambda))\,\cong\, \nabla^{\widehat\mu}
(w_0w_0^\mu xw_0^\lambda \cdot{\lambda}). 
\end{displaymath}
\end{enumerate}
\end{theorem}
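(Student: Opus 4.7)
I would establish part~(ii) by an explicit computation and then deduce part~(i) from it. The key tool for part~(ii) is the BGG resolution of $\Delta^\mu(x\cdot\lambda)$ by ordinary Verma modules, obtained by parabolic induction of the classical BGG resolution for the trivial module of the Levi factor of $\fq_\mu$ (as noted in the remark after Theorem~\ref{thmDeltaout}):
\begin{displaymath}
0\to\Delta(w_0^\mu x\cdot\lambda)\to\cdots\to\bigoplus_{\substack{w\in W_\mu\\ l(w)=k}}\Delta(wx\cdot\lambda)\to\cdots\to\Delta(x\cdot\lambda)\to\Delta^\mu(x\cdot\lambda)\to 0.
\end{displaymath}
Verma modules are $T_{w_0}$-acyclic by \cite{AS}, so applying $T_{w_0}$ to this resolution computes $\cL T_{w_0}\Delta^\mu(x\cdot\lambda)$. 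By the Ringel self-duality~\eqref{eqInRi2} of $\cO_\lambda$ via $T_{w_0}$, we have $T_{w_0}\Delta(y\cdot\lambda)\cong\nabla(w_0 y w_0^\lambda\cdot\lambda)$, where $y\mapsto w_0 y w_0^\lambda$ is the Bruhat antipode on $X_\lambda$. The resulting complex of dual Verma modules is precisely a co-BGG resolution of the parabolic costandard module $\nabla^{\widehat\mu}(w_0w_0^\mu xw_0^\lambda\cdot\lambda)\in\cO^{\widehat\mu}_\lambda$, the change from $\mu$ to $\widehat\mu$ reflecting $w_0 W_\mu w_0 = W_{\widehat\mu}$. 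Hence $\cL T_{w_0}\Delta^\mu(x\cdot\lambda)$ has cohomology concentrated in position $-l(w_0^\mu)$ with the claimed value.

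To conclude part~(ii), I would promote this to the statement that $\underline\cR^\mu_\lambda:=\cL_{l(w_0^\mu)}T_{w_0}|_{\cO^\mu_\lambda}$ is a Ringel duality functor by considering a minimal projective generator $P^\mu_\lambda$, which has a standard filtration. The vanishing of $\cL_j T_{w_0}$ on standard modules for $j\ne l(w_0^\mu)$ together with a d\'evissage along this filtration shows that $\cL_j T_{w_0}P^\mu_\lambda=0$ for $j\ne l(w_0^\mu)$ and equips $\underline\cR^\mu_\lambda P^\mu_\lambda$ with a costandard filtration inside $\cO^{\widehat\mu}_\lambda$. Because $\cL T_{w_0}$ is an auto-equivalence of $\cD^b(\cO_\lambda)$, $\underline\cR^\mu_\lambda P^\mu_\lambda$ has no self-extensions in $\cO^{\widehat\mu}_\lambda$, which combined with its costandard filtration forces it to be a characteristic q.h.\ tilting module; Ringel's characterization from Subsection~\ref{prelsecRd} then identifies $\underline\cR^\mu_\lambda$ as a Ringel duality functor.

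Part~(i) is then essentially immediate. The second isomorphism $R(A^\mu_\lambda)\cong A^{\widehat\mu}_\lambda$ follows directly from part~(ii). The first isomorphism $A^\mu_{\widehat\lambda}\cong A^{\widehat\mu}_\lambda$ is obtained by iterating the Koszul duality $E(A^\mu_\lambda)\cong A^\lambda_{\widehat\mu}$ together with the opposition self-duality $(A^\mu_\lambda)^{\mathrm{opp}}\cong A^\mu_\lambda$ (both recalled in Section~\ref{secprel}), which yields $A^\mu_\lambda\cong A^{\widehat\mu}_{\widehat\lambda}$ and hence the claim after the substitution $\lambda\to\widehat\lambda$. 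The graded refinements throughout follow from graded lifts of the twisting functor \cite{pairing}, of the Verma and BGG resolutions, and of Koszul duality~\eqref{KoszuleqO}.

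The principal obstacle will be the combinatorial identification of the complex of dual Vermas obtained from applying $T_{w_0}$ to the BGG resolution with a co-BGG resolution of $\nabla^{\widehat\mu}(w_0w_0^\mu xw_0^\lambda\cdot\lambda)$ in $\cO^{\widehat\mu}_\lambda$ rather than in $\cO^\mu_\lambda$, tracking the Bruhat structure through the conjugation $w_0 W_\mu w_0 = W_{\widehat\mu}$, and, when $\lambda$ is singular, re-indexing the Verma modules $\Delta(wx\cdot\lambda)$ by their longest representatives in $X_\lambda$ before applying the explicit formula for $T_{w_0}$.
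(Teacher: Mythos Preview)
Your BGG-resolution computation of $\cL T_{w_0}\Delta^\mu(x\cdot\lambda)$ is exactly the paper's (alternative) proof of Lemma~\ref{Rstand}, and your derivation of $A^\mu_{\widehat\lambda}\cong A^{\widehat\mu}_\lambda$ by iterating Koszul duality is a valid and arguably cleaner alternative to the paper's route via \cite[Theorem~11]{SoergelD}.

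The gap is in promoting the computation on standard modules to the conclusion that $\underline\cR^\mu_\lambda$ is a Ringel duality functor. Your stated criterion---that a costandard filtration together with vanishing self-extensions forces $\underline\cR^\mu_\lambda P^\mu_\lambda$ to be the characteristic q.h.\ tilting module---is not correct: a module with costandard filtration is a q.h.\ tilting module precisely when it also admits a \emph{standard} filtration, equivalently when $\Ext^1(-,\nabla^{\widehat\mu})=0$ for all costandards, and this is not implied by vanishing of self-extensions. Moreover, the auto-equivalence $\cL T_{w_0}$ you invoke acts on $\cD^b(\cO_\lambda)$, where $\imath^\mu P^\mu_\lambda$ \emph{does} have nonzero higher self-extensions (by Remark~\ref{Zinongrad} and adjunction one computes $\Ext^i_{\cO_\lambda}(\imath^\mu P^\mu_\lambda,\imath^\mu P^\mu_\lambda)\cong (A^\mu_\lambda)^{\oplus c_i}$), so even the premise fails. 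The paper instead shows, via Lemma~\ref{rightex} and the explicit inverse $\dd\,\cL_{l(w_0^\mu)}T_{w_0}\,\dd$, that $\underline\cR^\mu_\lambda$ restricts to an equivalence from standard-filtered modules in $\cO^\mu_\lambda$ to costandard-filtered modules in $\cO^{\widehat\mu}_\lambda$, and then invokes \cite[Proposition~2.2]{prinjective}. Your route is salvageable: transport instead the vanishing $\Ext^1_{\cO_\lambda}(P^\mu_\lambda,\Delta^\mu)=0$ (which holds since $\cO^\mu_\lambda$ is Serre in $\cO_\lambda$) through $\cL T_{w_0}$ to obtain $\Ext^1_{\cO^{\widehat\mu}_\lambda}(\underline\cR^\mu_\lambda P^\mu_\lambda,\nabla^{\widehat\mu})=0$, which \emph{does} yield a standard filtration; then use Corollary~\ref{imathend} and the auto-equivalence to identify $\End(\underline\cR^\mu_\lambda P^\mu_\lambda)\cong A^\mu_\lambda$ and count indecomposable summands to conclude the tilting module is characteristic.
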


\begin{remark}{\rm
This theorem easily leads to a proof of Theorem A. In particular, 
there are blocks $\cO_\lambda^\mu$ which are not equivalent to 
$\cO_\lambda^{\widehat\mu}$, as demonstrated explicitly in 
Subsection~\ref{example} for completeness. }
\end{remark}

Before proving Theorem~\ref{Ringeld}, we establish some preparatory lemmata.

\begin{lemma}\label{Rstand}
For $x\in X^\mu_\lambda$, consider $\Delta^\mu(x\cdot\lambda)^\bullet$ 
as an object in $\cD^b(\cO_\lambda)$. Then
\begin{equation*}
\label{twisteq}\cL T_{w_0} \Delta^\mu(x\cdot\lambda)^\bullet\,\cong\, 
\nabla^{\widehat \mu}(w_0 w_0^\mu  x w_0^\lambda \cdot\lambda)^\bullet [l(w_0^\mu)].
\end{equation*}
\end{lemma}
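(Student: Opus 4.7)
The proof adapts that of Corollary~\ref{shuffstandards} by replacing the shuffling functor $\cL C_{w_0}$ with the twisting functor $\cL T_{w_0}$ throughout. The strategy is to push the claim to $\cD^b(\cO_0)$ via $\theta_\lambda^{out}$, carry out the key computation there, and then invoke the faithfulness of $\theta_\lambda^{out}$ to conclude.

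For the first step, I would combine the commuting diagram~\eqref{commtwtr} with Theorem~\ref{thmDeltaon} (applied via the factorisation $\theta_{w_0^\lambda}=\theta_\lambda^{out}\theta_\lambda^{on}$ to identify $\theta_\lambda^{out}\Delta^\mu(x\cdot\lambda)$ with $\theta_{w_0^\lambda}\Delta^\mu(x)$ up to an irrelevant grading shift) and use that twisting commutes with projective functors by \cite[Theorem~3.2]{AS}. This reduces the asserted isomorphism, after applying $\theta_\lambda^{out}$ to both sides, to
\begin{equation*}
\theta_\lambda^{out}\,\cL T_{w_0}\,\Delta^\mu(x\cdot\lambda)^\bullet\;\cong\;\theta_{w_0^\lambda}\,\cL T_{w_0}\,\Delta^\mu(x)^\bullet.
\end{equation*}

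The heart of the argument is then the regular-block computation
\begin{equation*}
\cL T_{w_0}\,\Delta^\mu(x)^\bullet\;\cong\;\nabla^{\widehat\mu}(w_0 w_0^\mu x)^\bullet[l(w_0^\mu)]\quad\text{in}\quad\cD^b(\cO_0),
\end{equation*}
which is the twisting analogue of the input \cite[Proposition~4.4(1)]{prinjective} used in Corollary~\ref{shuffstandards}. I would prove it by writing down the BGG-type Verma resolution of $\Delta^\mu(x)$ obtained via parabolic induction from the BGG resolution of the trivial module of the Levi factor of $\fq_\mu$ (see the remark after Theorem~\ref{thmDeltaout}), applying $\cL T_{w_0}$ term-by-term using the Verma formula $\cL T_{w_0}\Delta(y)^\bullet\cong\nabla(w_0 y)^\bullet$ (a direct consequence of $T_{w_0}$ being the Ringel duality functor on $\cO_0$, see \eqref{eqInRi2}, which sends standards to costandards), and identifying the resulting complex of dual Vermas with a single parabolic costandard. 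The shift $[l(w_0^\mu)]$ arises as the length of the resolution. This last identification is the main obstacle; alternatively, the formula can be deduced as the Koszul dual of \cite[Proposition~4.4(1)]{prinjective} via the Koszul duality between shuffling and twisting established in \cite[Sections~6.4--6.5]{MOS}.

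Combining the two displays and reapplying Theorem~\ref{thmDeltaon} to the costandard $\nabla^{\widehat\mu}(w_0 w_0^\mu x)$ (using the duality $\dd$, which commutes with translation functors) yields
\begin{equation*}
\theta_\lambda^{out}\,\cL T_{w_0}\,\Delta^\mu(x\cdot\lambda)^\bullet\;\cong\;\theta_\lambda^{out}\,\nabla^{\widehat\mu}(w_0 w_0^\mu x w_0^\lambda\cdot\lambda)^\bullet[l(w_0^\mu)].
\end{equation*}
Since $\theta_\lambda^{on}\theta_\lambda^{out}\cong\Id^{\oplus|W_\lambda|}$ by the ungraded form of Proposition~\ref{thetatheta}, Krull--Schmidt in $\cD^b(\cO_\lambda)$ upgrades this to the asserted isomorphism of the original objects, completing the proof.
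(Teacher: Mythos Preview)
Your proof is correct and uses essentially the same ingredients as the paper, but with an unnecessary detour through the regular block. The paper gives two arguments. The first is the Koszul-duality route you mention as an alternative: take Corollary~\ref{shuffstandards}, Koszul-dualise via \cite[Section~6.5]{MOS} and Proposition~\ref{RHresult}, and then use Corollary~\ref{corisomclosed} and \eqref{commtwtr} to descend. The second is the direct BGG-resolution argument you outline for your step~3, but carried out \emph{directly in $\cO_\lambda$}: one writes the resolution
\[
0\to C_{l(w_0^\mu)}\to\cdots\to C_0\to \Delta^\mu(x\cdot\lambda)\to 0,\qquad C_j=\bigoplus_{u\in W_\mu,\,l(u)=j}\Delta(ux\cdot\lambda),
\]
notes that Verma modules are $T_{w_0}$-acyclic (\cite[Theorem~2.2]{AS}), applies $T_{w_0}\Delta(ux\cdot\lambda)\cong\nabla(w_0ux\cdot\lambda)$ (\cite[Theorem~2.3]{AS}), and recognises the resulting complex of dual Vermas as a coresolution of $\nabla^{\widehat\mu}(w_0w_0^\mu xw_0^\lambda\cdot\lambda)$.

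The point is that, unlike the shuffling functor in Corollary~\ref{shuffstandards}, the twisting functor $T_{w_0}$ is already an endofunctor of $\cO_\lambda$; there is no need to translate out to $\cO_0$, compute there, and then recover the singular statement via $\theta_\lambda^{on}\theta_\lambda^{out}\cong\Id^{\oplus|W_\lambda|}$ and Krull--Schmidt. Your argument is not wrong, but the translate-out/translate-in steps and the final cancellation are superfluous once you observe that the BGG resolution and the acyclicity of Vermas for $T_{w_0}$ hold in $\cO_\lambda$ just as well as in $\cO_0$.
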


\begin{proof}
Using  \cite[Section~6.5]{MOS} and Proposition~\ref{RHresult}, one can 
consider the Koszul dual of (the graded version of) 
Corollary~\ref{shuffstandards}. Corollary~\ref{corisomclosed} and 
equation~\eqref{commtwtr} then allow us to interpret this as the proposed 
isomorphism. We also provide the sketch of a proof without the use of 
Koszul duality, following the proof 
of \cite[Proposition~4.4(1)]{prinjective}. Applying parabolic 
induction to the classical BGG resolution (see 
\cite[Chapter~6]{Humphreys} or \cite{Lepowsky}) for the Levi subalgebra 
of $\fq_\mu$, yields an exact complex
\begin{equation}\label{eqjun24}
0\to C_{l(w_0^\mu)}\to\cdots \to C_j \to C_{j-1}\to \cdots C_0 \to \Delta^\mu(x\cdot\lambda)\to 0,\;\mbox{ with} 
\end{equation}
\begin{displaymath}
C_j=\bigoplus_{u\in W_\mu, l(u)=j}\Delta(u x\cdot\lambda).
\end{displaymath}
As Verma modules are acyclic for $T_{w_0}$, see \cite[Theorem~2.2]{AS}, 
we can compute $\cL T_{w_0} \Delta^\mu(x\cdot\lambda)^\bullet$ by evaluating  
$T_{w_0}$ at the complex \eqref{eqjun24} (where $\Delta^\mu(x\cdot\lambda)$ is deleted). 
Now \cite[Theorem~2.3]{AS} implies
\begin{displaymath}
T_{w_0} \Delta(u x\cdot\lambda)\cong \nabla(w_0 u x \cdot\lambda).
\end{displaymath}
The complex $D_\bullet =\cL T_{w_0} C_\bullet=T_{w_0} C_\bullet$ is hence of the form
\begin{displaymath}
D_j = \bigoplus_{u\in W_{\widehat\mu}, l(u)=j}
\nabla(u w_0 x\cdot\lambda)= \bigoplus_{u\in W_{\widehat\mu}, 
l(u)=l(w_0^\mu)-j}\nabla(u w_0 w_0^\mu x\cdot\lambda). 
\end{displaymath}
We have $w_0 w_0^\mu x\cdot\lambda=y\cdot\lambda$ with 
$y:=w_0w_0^\mu x w_0^\lambda\in X_\lambda^{\widehat\mu}$.
As the maps in the complex satisfy a dual version of the ones 
in the complex $C_\bullet$, we find
\begin{displaymath}
\cL T_{w_0} \Delta^\mu(x\cdot\lambda)^\bullet\;\cong \;
\nabla^{\widehat\mu}(y \cdot\lambda)^\bullet[l(w_0^\mu)], 
\end{displaymath}
which concludes the proof.
\end{proof}

\begin{lemma}\label{rightex}
The restriction of the endofunctor $\cL_{l(w_0^\mu)} T_{w_0}$ 
of~$\cO_\lambda$ to the full subcategory~$\cO_\lambda^\mu$ is right exact.
\end{lemma}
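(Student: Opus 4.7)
The plan is to reduce right exactness of $\cL_{l(w_0^\mu)}T_{w_0}$ on $\cO_\lambda^\mu$ to the vanishing
\begin{displaymath}
\cL_j T_{w_0}(M)\;=\;0\quad\text{for all }M\in\cO_\lambda^\mu\text{ and all }j<l(w_0^\mu).
\end{displaymath}
Granted this vanishing, right exactness follows at once from the long exact sequence of derived functors associated to any short exact sequence $0\to A\to B\to C\to 0$ in $\cO_\lambda^\mu$, which remains exact in $\cO_\lambda$ since $\cO_\lambda^\mu$ is a Serre subcategory: the connecting map $\cL_{l(w_0^\mu)}T_{w_0}(C)\to \cL_{l(w_0^\mu)-1}T_{w_0}(A)$ has zero target, so $\cL_{l(w_0^\mu)}T_{w_0}(B)\tto \cL_{l(w_0^\mu)}T_{w_0}(C)$.

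First I would extend the vanishing from Lemma~\ref{Rstand} to every module $N$ in $\cO_\lambda^\mu$ carrying a $\Delta^\mu$-filtration. Iterating the long exact sequence of derived functors along such a filtration and using that $\cL_j T_{w_0}(\Delta^\mu(y\cdot\lambda))=0$ for $j\neq l(w_0^\mu)$ (a restatement of Lemma~\ref{Rstand}) shows that $\cL_j T_{w_0}(N)=0$ for $j\neq l(w_0^\mu)$ and, simultaneously, that $\cL_{l(w_0^\mu)}T_{w_0}(N)$ admits a $\nabla^{\widehat\mu}$-filtration. Since $A_\lambda^\mu$ is quasi-hereditary, every projective module in $\cO_\lambda^\mu$ has a $\Delta^\mu$-flag and hence enjoys this property; in particular so does $P^\mu_\lambda$.

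For an arbitrary $M\in\cO_\lambda^\mu$ I would next pick a finite projective resolution $P_\bullet\to M$ in $\cO_\lambda^\mu$ (finiteness comes from the finite global dimension of $A_\lambda^\mu$) and view the associated complex $P_\bullet^\bullet$, with $P_i$ in cohomological position $-i$, as an object of $\cD^b(\cO_\lambda)$ quasi-isomorphic to $M^\bullet$. Since $\cL T_{w_0}$ is a triangulated endofunctor of $\cD^b(\cO_\lambda)$, one has $\cL T_{w_0}(M^\bullet)\cong\cL T_{w_0}(P_\bullet^\bullet)$, and the latter can be built up inductively from the stupid-truncation distinguished triangles
\begin{displaymath}
P_k^\bullet[k]\;\to\;\sigma^{\geq -k}P_\bullet^\bullet\;\to\;\sigma^{\geq -(k-1)}P_\bullet^\bullet\;\to\;,
\end{displaymath}
together with the identification $\cL T_{w_0}(P_k^\bullet[k])\cong\cL_{l(w_0^\mu)}T_{w_0}(P_k)^\bullet[k+l(w_0^\mu)]$ provided by the first paragraph. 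Since the latter object is concentrated in cohomological position $-(k+l(w_0^\mu))\leq -l(w_0^\mu)$, a short induction on $k$ shows that the cohomology of $\cL T_{w_0}(P_\bullet^\bullet)$ sits entirely in cohomological positions $\leq -l(w_0^\mu)$, i.e.\ $\cL_j T_{w_0}(M)=0$ for all $j<l(w_0^\mu)$, which is what was needed.

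The main technical step is the inductive triangle argument in the last paragraph; it is equivalent to the hypercohomology spectral sequence $E_1^{p,q}=\cL^{-q}T_{w_0}(P_{-p})\Rightarrow H^{p+q}(\cL T_{w_0}(M^\bullet))$, which degenerates at once because the first paragraph forces $E_1$ to be supported on the single row $q=-l(w_0^\mu)$. The essential input throughout is Lemma~\ref{Rstand}, which pins down exactly the cohomological position where $\cL T_{w_0}$ places the parabolic standard objects.
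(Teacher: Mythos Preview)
Your proof is correct and follows essentially the same strategy as the paper: first use Lemma~\ref{Rstand} to obtain $\cL_j T_{w_0}(N)=0$ for $j\neq l(w_0^\mu)$ on all $\Delta^\mu$-filtered objects (in particular projectives), then propagate the vanishing $\cL_j T_{w_0}(M)=0$ for $j<l(w_0^\mu)$ to arbitrary $M\in\cO_\lambda^\mu$ via a projective resolution. The only cosmetic difference is that the paper phrases the last step as an induction on projective dimension using short exact sequences, whereas you unfold it as a stupid-truncation/hypercohomology spectral sequence argument; these are equivalent formulations of the same dimension-shifting idea.
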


\begin{proof}
Lemma~\ref{Rstand} implies that $\cL_i T_{w_0} N=0$, if 
$i\not= l(w_0^\mu)$, for any $N\in\Ob(\cO_\lambda^\mu)$ 
with standard flag. So, in particular, for projective objects 
of~$\cO_\lambda^\mu$. The fact that $\cL_i T_{w_0} M=0$ for 
$i<l(w_0^\mu)$ and arbitrary $M\in\cO_\lambda^\mu$ can then 
be derived by induction on the projective dimension of~$M$ in 
$\cO_\lambda^\mu$, by considering short exact sequences.
\end{proof}

\begin{proof}[Proof of Theorem~\ref{Ringeld}]
First we prove part \eqref{Ringeld.2}. By Lemmata \ref{Rstand} and \ref{rightex}, the endofunctor 
$\cL_{l(w_0^\mu)} T_{w_0}$  of~$\cO_\lambda$ restricts to a right 
exact functor 
\begin{displaymath}
\underline\cR_\lambda^\mu\;:\;\cO^\mu_\lambda\;\to\;\cO_\lambda
\end{displaymath}
which maps  
standard modules to modules contained in the subcategory 
$\cO^{\widehat\mu}_\lambda$. Subsequently, all simple modules, being
the tops of standard modules, are mapped to modules in this 
subcategory. As $\cO^{\widehat\mu}_\lambda$ is a full Serre 
subcategory, this leads to the conclusion that we actually obtain a 
right exact functor
\begin{displaymath}
\underline\cR_\lambda^\mu\;:\;\cO^\mu_\lambda\;\to\;\cO_\lambda^{\widehat\mu}. 
\end{displaymath}
By Lemma \ref{Rstand}, this functor restricted to the full 
subcategory of modules in $\cO^\mu_\lambda$ with a standard flag, 
yields an exact functor from this category to the category of modules 
in $\cO_\lambda^{\widehat\mu}$ with costandard flag. This is an 
equivalence as it has as inverse, namely, the restriction of 
$\dd\cL_{l(w_0^\mu)}T_{w_0}\dd$, by Lemma \ref{Rstand}, equation 
\eqref{dertwist} and the fact that $\cO^\mu_\lambda$ is isomorphism 
closed in $\cO_\lambda$. Part (ii) then follows from 
\cite[Proposition~2.2]{prinjective}.

In particular, this implies $R(A_\lambda^\mu)\cong A_{\lambda}^{\widehat\mu}$.
By \cite[Theorem 11]{SoergelD} there is an isomorphism 
$A_\lambda\cong A_{\widehat\lambda}$. Under this isomorphism, the 
set of idempotents corresponding to $X^{\widehat\mu}_{\lambda}$ 
are mapped to the ones corresponding to $X^\mu_{\widehat\lambda}$, 
implying that $A_\lambda^{\widehat\mu}\cong A_{\widehat\lambda}^\mu$. 
This proves the ungraded isomorphisms of algebras in \eqref{Ringeld.1}. 

By the previous conclusions in this proof, \cite[Proposition~2.2(2)]{prinjective} implies that the functor 
$\cL_{l(w_0^\mu)}T_{w_0}$ maps a minimal projective generator of 
$\cO_\lambda^\mu$ to a characteristic q.h. tilting module of 
$\cO_{\lambda'}^\mu$. As the derived functor is an equivalence 
between the derived categories and is gradable, it follows that 
$R(A_\lambda^\mu)\cong A_\lambda^{\widehat\mu}$ holds as graded algebras.
\end{proof}

\begin{remark}{\rm
Moreover, \cite[Proposition 2.2]{prinjective} also implies that 
\begin{displaymath}
\cD^b(\cO_\lambda^\mu)\;\cong\;\cD^b(\cO_\lambda^{\widehat\mu})
\;\cong\;\cD^b(\cO_{\widehat\lambda}^\mu).
\end{displaymath}
When $\fg$ is not of type $A$, this is outside the scope of Theorem \ref{maindereq}. However it can be proved similarly 
as will be done in the next subsection.}
\end{remark}

\begin{remark}{\rm
An anonymous referee brought to our attention an insightful different method for proving 
Theorem~\ref{Ringeld}\eqref{Ringeld.1}. We set $\fq_\mu=\fl\oplus\fw^+$, with 
$\fw^+$ the radical and $\fl$ the Levi subalgebra of the parabolic subalgebra $\fq_\mu$, 
leading to parabolic decomposition $\fg=\fw^-\oplus \fl\oplus\fw^+$. 
Consider $A=U(\fg)$, $B=U(\fw^+)$, $H=U(\fl)$ and $\overline{B}=U(\fw^-)$. 
This gives a triangular decomposition in the sense of \cite[Section~2.1]{GGOR}, when 
taking into account footnote 1.

For a complex Lie algebra $\fa$ such that $\dim(\fa)=d<\infty$, 
the extension groups $\Ext^\bullet_{U(\fa)}(\mC,U(\fa))$ 
can be calculated from the Chevalley-Eilenberg complex 
\begin{displaymath}
0\to U(\fa)\to \fa^\ast\otimes U(\fa)\to\cdots\to \bigwedge^j(\fa^\ast)\otimes 
U(\fa)\to\cdots\to \bigwedge^d(\fa^\ast)\otimes U(\fa)\to 0. 
\end{displaymath}
This can be re-interpreted as the  Koszul resolution of the trivial 
$\fa$-module $\mC$, shifted over $d$ positions;
\begin{displaymath}
0\to \bigwedge^d(\fa)\otimes U(\fa)\to \bigwedge^{d-1}(\fa)\otimes 
U(\fa)\to\cdots\to \bigwedge^{d-j}(\fa)\otimes U(\fa)\to\cdots\to  U(\fa)\to 0, 
\end{displaymath}
yielding $\dim\Ext_{U(\fa)}^i(\mC,U(\fa))=\delta_{i,d}$.
Hence $B$ is Gorenstein and one can apply \cite[Proposition~4.3]{GGOR}, 
which implies that the Ringel dual of $\cO^\mu_\lambda$ is the category 
of {\em right} $A$-modules, which are finitely generated, $H$-semisimple 
(and locally finite), $B$-locally nilpotent and which admit generalised 
central character $\chi_\lambda$. To interpret these as left 
$A=U(\fg)$-modules we can apply an anti-automorphism of $U(\fg)$. 
The principal automorphism corresponding to $X\mapsto -X$ for $X\in \fg$ 
shows that the Ringel dual is equivalent to $\cO^\mu_{\widehat\lambda}$. 
The Chevalley anti-automorphism of $U(\fg)$ gives an equivalence with $\cO_\lambda^{\widehat\mu}$.
}
\end{remark}

\subsection{An alternative approach}

The following result is a reformulation of the results in the 
previous subsection, but we provide an alternative proof, 
stressing the link with our construction of derived equivalences.

\begin{proposition}\label{Ringel2}
We have $R(A_\lambda^\mu)\cong A_{\widehat\lambda}^\mu$. Moreover, 
the Ringel duality functor 
$\overline\cR_\lambda^\mu:\cO_\lambda^\mu\to\cO_{\widehat\lambda}^\mu$ 
satisfies, for every $x\in X_\lambda^\mu$, the following:
\begin{displaymath}
\overline\cR_\lambda^\mu(\Delta^\mu(x\cdot\lambda))\,\cong\, 
\nabla^\mu(w_0^\mu xw_0^\lambda w_0\cdot\widehat{\lambda}). 
\end{displaymath}
\end{proposition}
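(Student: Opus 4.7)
My plan is to parallel the strategy used in Lemmata~\ref{equivlambda}--\ref{equivpar} (the construction of derived equivalences via translating and shuffling), but replacing the shuffling auto-equivalence $\cL C_{w_0^\nu}$ (which requires $W_\nu$ of type~$A$) by the full auto-equivalence $\cL C_{w_0}$. The latter works for arbitrary reductive~$\fg$, and Corollary~\ref{shuffstandards} gives the shuffling analogue of Lemma~\ref{Rstand}, identifying images on standard modules. The outcome is a tilting complex in $\cD^b(\cO_{\widehat\lambda}^\mu)$ whose endomorphism algebra is $A_\lambda^\mu$, which is then recognised as the characteristic q.h.\ tilting module of $\cO_{\widehat\lambda}^\mu$; \cite[Proposition~2.2]{prinjective} identifies the induced functor with the Ringel duality $\overline\cR_\lambda^\mu$.

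More concretely, for each $x\in X^\mu_\lambda$ I would set
\begin{displaymath}
\cS_x^\bullet \;:=\; \cL C_{w_0}\,\theta_\lambda^{out}\,P^\mu(x\cdot\lambda)\,[-l(w_0^\mu)] \;\in\; \cD^b(\cO_0^\mu),
\end{displaymath}
using Proposition~\ref{commdiagshuff} so that $\cL C_{w_0}$ restricts to $\cD^b(\cO_0^\mu)$. Since $P^\mu(x\cdot\lambda)$ admits a standard filtration, iterating the distinguished triangles coming from that filtration and identifying each piece via Corollary~\ref{shuffstandards} shows that $\cS_x^\bullet$ is concentrated in homological degree zero, with underlying module filtered by objects $\theta_{\widehat\lambda}^{out}\nabla^\mu(y\cdot\widehat\lambda)$. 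Lemma~\ref{XYZ} (applied with $\widehat\lambda$ in place of $\lambda$), together with the uniqueness provided by Corollary~\ref{corisomclosed}, lifts $\cS_x^\bullet$ to a module $\cT_x\in\cO_{\widehat\lambda}^\mu$ admitting a costandard filtration.

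To verify that $\cT:=\bigoplus_x \cT_x$ is the characteristic q.h.\ tilting module of $\cO_{\widehat\lambda}^\mu$ with $\End_{\cO_{\widehat\lambda}^\mu}(\cT)^{\rm opp}\cong A^\mu_\lambda$, I would combine: (i) faithfulness of $\theta_{\widehat\lambda}^{out}$; (ii) Corollary~\ref{imathend} applied to $\imath^\mu$; (iii) the fact that $\cL C_{w_0}$ is an auto-equivalence of $\cD^b(\cO_0)$. This yields the endomorphism algebra computation and vanishing of higher Ext groups between summands of $\cT$ by the same chain of isomorphisms as in the proof of Lemma~\ref{equivpar}. That every indecomposable q.h.\ tilting object of $\cO_{\widehat\lambda}^\mu$ appears as a summand of $\cT$ follows from the fact that $\overline\cR^\mu=\cL_{l(w_0^\mu)}C_{w_0}$ is already the Ringel duality for $\cO_0^\mu$ by~\eqref{eqInRi1}, so $\cL C_{w_0}\theta_\lambda^{out}P_\lambda^\mu$ is a tilting module in $\cO_0^\mu$; Proposition~\ref{tiltingtheta} then ensures that each indecomposable q.h.\ tilting of $\cO_{\widehat\lambda}^\mu$ arises as a lift. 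Finally \cite[Proposition~2.2]{prinjective} provides $R(A_\lambda^\mu)\cong A_{\widehat\lambda}^\mu$ and identifies the functor with $\overline\cR_\lambda^\mu$; the formula on standards is then immediate from Corollary~\ref{shuffstandards}.

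The main technical obstacle is the cohomological concentration of $\cS_x^\bullet$ in degree zero: Corollary~\ref{shuffstandards} directly treats only standard modules, and extending it to modules with standard flag requires inductive use of the long exact sequences in cohomology, with successive boundary maps vanishing precisely because adjacent subquotient contributions are themselves concentrated in degree zero. A secondary technical point is the input to Lemma~\ref{XYZ}: one must produce an auxiliary object in $\cD^b(\cO_{\widehat\lambda})$, which can be supplied either by lifting the same standard-filtration computation to the non-parabolic category, or by invoking Theorem~\ref{Ringeld}\eqref{Ringeld.2} through the isomorphism $\cO_\lambda^{\widehat\mu}\cong\cO_{\widehat\lambda}^\mu$ coming from $A_\lambda\cong A_{\widehat\lambda}$.
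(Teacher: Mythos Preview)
Your proposal is essentially the paper's own approach. The paper first proves Lemma~\ref{lemderforR}, which constructs the derived equivalence $F^\mu_\lambda:\cD^b(\cO_\lambda^\mu)\to\cD^b(\cO_{\widehat\lambda}^\mu)$ by repeating the proofs of Lemmata~\ref{equivlambda} and~\ref{equivpar} verbatim, substituting Proposition~\ref{propcommu}\eqref{propcommu.2} (the commutation $C_{w_0}\theta_y\cong\theta_{w_0yw_0}C_{w_0}$, valid for arbitrary~$\fg$) for Corollary~\ref{corforequi}; a short corollary then computes $F^\mu_\lambda(P^\mu(x\cdot\lambda)^\bullet)\cong T^\mu(w_0^\mu xw_0^\lambda w_0\cdot\widehat\lambda)^\bullet[l(w_0^\mu)]$ via Corollary~\ref{shuffstandards} and Proposition~\ref{tiltingtheta}, and the proposition follows.

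The only organisational difference is that the paper keeps the Soergel-functor machinery of Lemma~\ref{equivlambda} (to construct the generalised tilting module in $\cO_{\widehat\lambda}$ and verify the endomorphism algebra), whereas you prefer to establish cohomological concentration of $\cS_x^\bullet$ directly from~\eqref{eqInRi1} and Corollary~\ref{shuffstandards}. Your route is slightly more economical, since once you observe that $\cL_{l(w_0^\mu)}C_{w_0}$ is already the Ringel duality functor on $\cO_0^\mu$, concentration in degree zero is immediate and the inductive long-exact-sequence argument you worry about is unnecessary; the paper's route has the advantage of literally reusing earlier proofs without modification. Both rely on the same substantive inputs: Proposition~\ref{propcommu}\eqref{propcommu.2}, Corollary~\ref{shuffstandards}, Lemma~\ref{XYZ}, Proposition~\ref{tiltingtheta}, and \cite[Proposition~2.2]{prinjective}.
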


\begin{lemma}\label{lemderforR}
For all integral dominant $\lambda,\mu$, there is an equivalence of 
triangulated categories 
$F^\mu_\lambda:\cD^b(\cO_\lambda^\mu)\to \cD^b(\cO_{\widehat\lambda}^\mu)$ 
such that the following diagram commutes:
\begin{displaymath}
\xymatrix{
\cD^b(\cO_0)\ar[rr]^{\cL C_{w_0}}&&\cD^b(\cO_0)\\
\cD^b(\cO_{\lambda})\ar[rr]^{F_\lambda}\ar[u]^{\theta_\lambda^{out}}&&
\cD^b(\cO_{\widehat\lambda})\ar[u]^{\theta_{\widehat\lambda}^{out}} \\
\cD^b(\cO_{\lambda}^\mu)\ar[rr]^{F^\mu_\lambda}\ar[u]^{\imath^\mu}&&
\cD^b(\cO^\mu_{\widehat\lambda})\ar[u]^{\imath^\mu}. \\
}
\end{displaymath}
\end{lemma}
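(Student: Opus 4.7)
The plan is to reproduce the arguments of Lemmata~\ref{equivlambda} and~\ref{equivpar} in the special case $w_0^\nu=w_0$, observing that in this case no restriction on the type of $\fg$ is needed. The only place where type~$A$ entered those earlier proofs was the use of Corollary~\ref{corforequi}; in the fully-shuffled setting this is replaced by Proposition~\ref{propcommu}\eqref{propcommu.2}, which gives $C_{w_0}\theta_y\cong\theta_{w_0yw_0}C_{w_0}$ for all $y\in W$ over an arbitrary reductive~$\fg$.

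For the middle square, I would first construct $F_\lambda$. For $x\in X_\lambda$, set $S_x:=C_{w_0}\theta_\lambda^{out}P(x\cdot\lambda)$. Proposition~\ref{propcommu}\eqref{propcommu.2} applied with $y=w_0^\lambda$, combined with $w_0w_0^\lambda w_0=w_0^{\widehat\lambda}$ and $|W_\lambda|=|W_{\widehat\lambda}|$, yields $\theta_{w_0^{\widehat\lambda}}S_x\cong S_x^{\oplus|W_{\widehat\lambda}|}$. Then I would follow Lemma~\ref{equivlambda} verbatim: using Lemma~\ref{catQ} and Lemma~\ref{Vfaith} together with equation~\eqref{Soergeltrans} and Lemma~\ref{LEMeqapr19-1}, compute $\End_{\cO_{\widehat\lambda}}(\theta_{\widehat\lambda}^{on}S_x)\cong \End_{\cO_\lambda}(P(x\cdot\lambda)^{\oplus|W_\lambda|})$; invoke Corollary~\ref{critoutof} to obtain indecomposable $T_x\in\cO_{\widehat\lambda}$ with $\theta_{\widehat\lambda}^{out}T_x\cong S_x$; set $T:=\bigoplus_{x\in X_\lambda}T_x$ and observe that $\End_{\cO_{\widehat\lambda}}(T)\cong A_\lambda$ and $\Ext_{\cO_{\widehat\lambda}}^i(T,T)=0$ for $i>0$, the latter by faithfulness of $\theta_{\widehat\lambda}^{out}$ on derived categories and that $\cL C_{w_0}$ is an auto-equivalence. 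A coresolution of $P_{\widehat\lambda}$ by modules in $\mathrm{add}(T)$ is produced, exactly as in Lemma~\ref{equivlambda}, using that $C_{w_0}P_0$ is a generalised tilting module together with the exactness of $\theta_{\widehat\lambda}^{on}$. Then \cite[Theorem~6.4]{Rickard} supplies $F_\lambda$, and the top commutative square is built into the construction via $F_\lambda(P(x\cdot\lambda))\cong T_x$ with $\theta_{\widehat\lambda}^{out}T_x\cong S_x=C_{w_0}\theta_\lambda^{out}P(x\cdot\lambda)$.

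For the bottom square, I would repeat the proof of Lemma~\ref{equivpar}. Set $\cS^\bullet:=F_\lambda\imath^\mu P_\lambda^\mu\in\cD^b(\cO_{\widehat\lambda})$. The middle square combined with Proposition~\ref{commdiagshuff} gives $\theta_{\widehat\lambda}^{out}\cS^\bullet\cong\imath^\mu\cL C_{w_0}\theta_\lambda^{out}P_\lambda^\mu$, so Lemma~\ref{XYZ} produces $\cT^\bullet\in\cD^b(\cO_{\widehat\lambda}^\mu)$ satisfying $\imath^\mu\cT^\bullet\cong\cS^\bullet$ and $\theta_{\widehat\lambda}^{out}\cT^\bullet\cong\cL C_{w_0}\theta_\lambda^{out}P_\lambda^\mu$. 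Vanishing of $\Hom_{\cD^b(\cO_{\widehat\lambda}^\mu)}(\cT^\bullet,\cT^\bullet[k])$ for $k\neq 0$ follows from the faithfulness of $\theta_{\widehat\lambda}^{out}$ together with Proposition~\ref{commdiagshuff}, while Corollary~\ref{imathend} yields $\End(\cT^\bullet)\cong A_\lambda^\mu$. Generation of $\cD^b(\cO_{\widehat\lambda}^\mu)$ by $\mathrm{add}(\cT^\bullet)$ follows as in Lemma~\ref{equivpar}, using Lemma~\ref{extrlemma} (whose proof does not invoke any type restriction). Applying \cite[Theorem~6.4]{Rickard} gives the equivalence $F^\mu_\lambda$, and the bottom commutative square is transparent from the construction.

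The main obstacle is simply the bookkeeping of verifying that every place where Corollary~\ref{corforequi} or the type~$A$ hypothesis was invoked in Lemmata~\ref{equivlambda}--\ref{equivpar} can be replaced by Proposition~\ref{propcommu}\eqref{propcommu.2}. The remaining ingredients (endomorphism computations via Soergel's combinatorial functor $\mV$, faithfulness of translation functors out of the wall, and the Rickard tilting criterion) all hold in arbitrary type. The most delicate step is the coresolution argument in the tilting criterion for $F_\lambda$; here one uses the fact that $C_{w_0}P_0$ is a characteristic q.h. tilting module, an expression of the Ringel self-duality of~$\cO_0$, which holds for arbitrary reductive $\fg$.
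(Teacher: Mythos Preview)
Your proposal is correct and follows exactly the approach taken in the paper: the paper's proof consists of a single sentence stating that the argument is identical to Lemmata~\ref{equivlambda} and~\ref{equivpar}, with Proposition~\ref{propcommu}\eqref{propcommu.2} replacing the type~$A$ commutation relation. Your detailed expansion of this is accurate, including the identification of the one place where the type hypothesis was used and its replacement.
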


\begin{proof}
This is proved identically to Lemmata~\ref{equivlambda} and~\ref{equivpar}, 
the only change being that Proposition~\ref{propcommu}\eqref{propcommu.2} is used rather 
than Proposition~\ref{propcommu}\eqref{propcommu.1}.
\end{proof}

\begin{corollary}
For all $x\in X^\mu_\lambda$, we have
$F^\mu_\lambda(P^\mu(x\cdot\lambda)^\bullet)\;\cong 
\; T^\mu(w_0^\mu xw_0^\lambda w_0 \widehat\lambda)^\bullet[l(w_0^\mu)]$,
implying that  $R(A_{\widehat\lambda}^\mu):=\End_{\cO_{\widehat\lambda}^\mu}
(T^\mu_{\widehat\lambda})\cong A_\lambda^\mu$. 
\end{corollary}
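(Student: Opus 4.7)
The strategy is to transfer the question to $\cO_0^\mu$, where $\cL C_{w_0}$ realises the Ringel duality functor $\overline\cR^\mu$ by \eqref{eqInRi1}, and then descend the resulting isomorphism by two applications of Corollary~\ref{corisomclosed}. Combining the two commuting squares of Lemma~\ref{lemderforR} with Proposition~\ref{commdiagshuff} and the commutation of translation with the Zuckerman functor, one obtains
\[
\theta_{\widehat\lambda}^{out}\imath^\mu F^\mu_\lambda(P^\mu(x\cdot\lambda)^\bullet)\;\cong\;\imath^\mu\cL C_{w_0}P^\mu(x)^\bullet,
\]
where the equality $\theta_\lambda^{out}P^\mu(x\cdot\lambda)\cong P^\mu(x)$ follows from $P^\mu=Z^\mu P$ together with \eqref{projout}. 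Equation~\eqref{eqInRi1} then identifies the right-hand side with $\imath^\mu\overline\cR^\mu(P^\mu(x))^\bullet[l(w_0^\mu)]$, an indecomposable tilting module of $\cO_0^\mu$ sitting in cohomological position $-l(w_0^\mu)$.

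To pin this tilting down, I first run the same transfer with $\Delta^\mu(x\cdot\lambda)^\bullet$ replacing $P^\mu(x\cdot\lambda)^\bullet$, now invoking Corollary~\ref{shuffstandards} instead of \eqref{eqInRi1}; descending via Corollary~\ref{corisomclosed} twice gives
\[
F^\mu_\lambda(\Delta^\mu(x\cdot\lambda)^\bullet)\;\cong\;\nabla^\mu(w_0^\mu xw_0^\lambda w_0\cdot\widehat\lambda)^\bullet[l(w_0^\mu)].
\]
Filtering $P^\mu(x\cdot\lambda)$ by its standard flag and tracking cohomology through $F^\mu_\lambda$ via the long exact sequence then shows that $F^\mu_\lambda(P^\mu(x\cdot\lambda)^\bullet)$ is concentrated in position $-l(w_0^\mu)$ as a module $M$ with a costandard flag whose top quotient is $\nabla^\mu(w_0^\mu xw_0^\lambda w_0\cdot\widehat\lambda)$. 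Since $F^\mu_\lambda$ is a derived equivalence and $P^\mu(x\cdot\lambda)$ has no positive Ext to any standard, it follows that $\Ext^i_{\cO^\mu_{\widehat\lambda}}(M,\nabla^\mu(z\cdot\widehat\lambda))=0$ for all $i>0$ and every $z\in X^\mu_{\widehat\lambda}$. By Ringel's criterion $M$ therefore also admits a standard flag, so $M$ is the indecomposable characteristic tilting module $T^\mu(w_0^\mu xw_0^\lambda w_0\cdot\widehat\lambda)$.

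Summing over $x\in X_\lambda^\mu$, the equivalence $F^\mu_\lambda$ sends $P^\mu_\lambda$ to $(T^\mu_{\widehat\lambda})^\bullet[l(w_0^\mu)]$, whence
\[
A^\mu_\lambda\;\cong\;\End_{\cO^\mu_\lambda}(P^\mu_\lambda)\;\cong\;\End_{\cO^\mu_{\widehat\lambda}}(T^\mu_{\widehat\lambda})\;=\;R(A^\mu_{\widehat\lambda})^{\opp},
\]
and combining with the self-opposite isomorphism $A^\mu_{\widehat\lambda}\cong (A^\mu_{\widehat\lambda})^{\opp}$ provided by the duality $\dd$ yields $R(A^\mu_{\widehat\lambda})\cong A^\mu_\lambda$. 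The principal obstacle is establishing rigorously that $F^\mu_\lambda(P^\mu(x\cdot\lambda)^\bullet)$ really is an indecomposable tilting module in a single cohomological position rather than a more complicated complex; once the long-exact-sequence bookkeeping plus the Ext-vanishing/Ringel-criterion check are in place, the highest-weight identification is immediate from the $\Delta$-to-$\nabla$ formula.
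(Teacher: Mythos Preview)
Your overall strategy is sound and shares the transfer-and-descend skeleton with the paper's proof, but the two diverge at the identification step. The paper writes $\imath^\mu P^\mu(x)=\theta_x\Delta^\mu(e)$ and uses Proposition~\ref{propcommu}\eqref{propcommu.2} to commute $\cL C_{w_0}$ past $\theta_x$, obtaining $\theta_{w_0xw_0}\,\cL C_{w_0}\Delta^\mu(e)^\bullet\cong\theta_{w_0xw_0}L(w_0^\mu w_0)^\bullet[l(w_0^\mu)]$; Proposition~\ref{tiltingtheta} then identifies this directly as $T^\mu(w_0^\mu xw_0)^\bullet[l(w_0^\mu)]$, and one descends via Corollary~\ref{corisomclosed}. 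You instead invoke the Ringel self-duality \eqref{eqInRi1} of $\cO_0^\mu$ to see that the image is \emph{some} indecomposable tilting, and then compute $F^\mu_\lambda$ on standards via Corollary~\ref{shuffstandards} to pin it down. Both routes are legitimate; the paper's avoids any highest-weight bookkeeping, while yours makes the Ringel-duality nature of $F^\mu_\lambda$ explicit and never needs Proposition~\ref{tiltingtheta}.

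There is, however, a real gap in your identification. Knowing that $M$ is an indecomposable tilting module with a surjection $M\twoheadrightarrow\nabla^\mu(\phi(x)\cdot\widehat\lambda)$ only yields $\phi(x)\ge w$ in Bruhat order, where $M=T^\mu(w\cdot\widehat\lambda)$: every costandard factor of an indecomposable tilting can sit on top of \emph{some} $\nabla$-filtration (already for $T(s)$ in $\mathfrak{sl}(3)$ both $\nabla(s)$ and $\nabla(e)$ are quotients with $\nabla$-filtered kernel). To conclude $w=\phi(x)$ you must argue that $\phi(x)$ is Bruhat-minimal among the costandard labels of $M$, which amounts to showing that $\phi:y\mapsto w_0^\mu yw_0^\lambda w_0$ reverses Bruhat order on $X^\mu_\lambda$. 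This is true and is essentially the standard fact that a Ringel duality functor reverses the quasi-hereditary order, but it is not ``immediate from the $\Delta$-to-$\nabla$ formula''. A clean fix within your framework: once you have shown that $F^\mu_\lambda[-l(w_0^\mu)]$ restricts to an exact equivalence from $\Delta$-filtered objects in $\cO^\mu_\lambda$ to $\nabla$-filtered objects in $\cO^\mu_{\widehat\lambda}$, cite \cite[Proposition~2.2]{prinjective} to conclude it is a Ringel duality functor, after which $P^\mu(x\cdot\lambda)\mapsto T^\mu(\phi(x)\cdot\widehat\lambda)$ follows formally from the action on standards.
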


\begin{proof}
Lemma \ref{lemderforR}, Proposition \ref{propcommu}\eqref{propcommu.2} 
and Corollary \ref{shuffstandards} imply
\begin{displaymath}
\theta_\lambda^{out}\imath^\mu F^\mu_\lambda 
P^\mu(x\cdot\lambda)^\bullet\cong \cL C_{w_0}\theta_x 
\Delta^\mu(e)^\bullet\cong \theta_{w_0 x w_0}L(w_0^\mu w_0)^\bullet[l(w_0^\mu)]. 
\end{displaymath}
Proposition \ref{tiltingtheta} then implies
\begin{displaymath}
\theta_\lambda^{out}\imath^\mu F^\mu_\lambda P^\mu(x\cdot\lambda)^\bullet\cong 
T^\mu(w_0^\mu x w_0)^\bullet[l(w_0^\mu)]\cong \theta^{out}_{\widehat\lambda}
\imath^\mu T^\mu(w_0^\mu x w_0w_0^{\widehat\lambda}\cdot\widehat\lambda)^\bullet[l(w_0^\mu)]. 
\end{displaymath}
The equivalence thus follows from Corollary~\ref{corisomclosed}.

The isomorphism of the endomorphism algebras then follows from the 
fact that both modules correspond to complexes contained in one 
position exchanged by an equivalence of triangulated categories.
\end{proof}

The proof of Proposition \ref{Ringel2} then follows easily from this corollary.

\subsection{Parabolic and singular Koszul-Ringel duality}

In this subsection we compose the 
Koszul duality \eqref{KoszuleqO} with the Ringel duality.
We also study the link with the category of linear complexes of q.h. tilting modules
$\mathfrak{LT}_\lambda^\mu$, a full subcategory of~$\cD^b({}^{\mZ}\cO^\mu_\lambda)$, see~\cite{pairing}. 
By \cite[Corollary~6]{pairing}, the 
category $\mathfrak{LT}_\lambda^\mu$ is equivalent to gmod-$R(A^\mu_\lambda)^{!}$. 
Hence Theorem~\ref{Ringeld} and equation \eqref{eqeps} implies that there are equivalences of categories
$$
\mathfrak{LP}^{\widehat{\mu}}_\lambda\;\cong\;{}^{\mZ}\cO^\lambda_\mu\;\cong\;\mathfrak{LT}_\lambda^\mu\;\cong\; 
{}^{\mZ}\cO^{\widehat\lambda}_{\widehat\mu}\;\cong\; \mathfrak{LP}^\mu_{\widehat\lambda}.
$$
We now give explicit descriptions of these equivalences, which will be useful
for practical application in e.g. \cite{CM2}.

\begin{lemma}\label{lemKR}
Consider integral dominant $\lambda,\mu$.
\begin{enumerate}[$($i$)$]
\item\label{lemKR.1} The functor $\cL \overline\cR_\lambda^\mu$ 
induces an equivalence of categories 
\begin{displaymath}
\cL\overline{\cR}^\mu_\lambda:\;\,\cD^b({}^{\mZ}\cO_\lambda^\mu)\;\,
\tilde{\to}\,\; \cD^b({}^{\mZ}\cO_{\widehat\lambda}^\mu), 
\end{displaymath}
which restricts to an equivalence of categories 
\begin{displaymath}
\mathfrak{LP}^\mu_{\lambda}\;\,\tilde{\to}\;\,
\mathfrak{LT}^\mu_{\widehat\lambda}\quad\mbox{ with }\quad 
P^\mu(x\cdot\lambda)^\bullet\;\mapsto\;
T^\mu(w_0^\mu x w_0^\lambda w_0 \cdot\widehat\lambda)^\bullet.
\end{displaymath}
\item\label{lemKR.2} 
The functor $\cL \underline\cR_\lambda^\mu$ induces an equivalence of categories 
\begin{displaymath}
\cL\underline{\cR}^\mu_\lambda:\;\,\cD^b({}^{\mZ}\cO_\lambda^\mu)\;\,
\tilde{\to}\,\; \cD^b({}^{\mZ}\cO_{\lambda}^{\widehat\mu}), 
\end{displaymath}
which restricts to an equivalence of categories 
\begin{displaymath}
\mathfrak{LP}^\mu_{\lambda}\;\,\tilde{\to}\;\,
\mathfrak{LT}^{\widehat\mu}_{\lambda}\quad\mbox{ with }\quad 
P^\mu(x\cdot\lambda)^\bullet\;\mapsto\;T^{\widehat\mu}(w_0w_0^\mu x w_0^\lambda \cdot\lambda)^\bullet. 
\end{displaymath}
\end{enumerate}
\end{lemma}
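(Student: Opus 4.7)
The plan is to combine the general properties of derived Ringel duality recalled in Subsection~\ref{prelsecRd} with the explicit action on standard modules (Proposition~\ref{Ringel2} and Theorem~\ref{Ringeld}\eqref{Ringeld.2}), and then verify that everything lifts to the graded setting.

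For part~(i), I would first observe that $\overline\cR_\lambda^\mu$ admits a graded lift: by Lemma~\ref{lemderforR} it is constructed from the derived shuffling functor $\cL C_{w_0}$ and the inclusion $\imath^\mu$, both of which are gradable by Subsection~\ref{prel1} and Proposition~\ref{commdiagshuff}. Since $\overline\cR_\lambda^\mu$ is a Ringel duality functor for $A_\lambda^\mu$ by Proposition~\ref{Ringel2} and Theorem~\ref{Ringeld}\eqref{Ringeld.1}, the general theory of Subsection~\ref{prelsecRd} gives that its left derived functor is an equivalence of ungraded derived categories sending projective modules in cohomological position zero to q.h.~tilting modules in cohomological position zero; Proposition~\ref{liftequiv} then promotes this to the claimed graded equivalence $\cL\overline\cR_\lambda^\mu\colon\cD^b({}^{\mZ}\cO_\lambda^\mu)\,\tilde\to\,\cD^b({}^{\mZ}\cO_{\widehat\lambda}^\mu)$.

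To identify the specific image of $P^\mu(x\cdot\lambda)^\bullet$, I would use that its standard flag has $\Delta^\mu(x\cdot\lambda)$ as top quotient; since $\cL\overline\cR_\lambda^\mu$ restricts to an equivalence between modules with standard flag and modules with costandard flag, Proposition~\ref{Ringel2} places $\nabla^\mu(w_0^\mu x w_0^\lambda w_0\cdot\widehat\lambda)$ as the top quotient in the costandard flag of the image, thereby identifying the highest weight of the image tilting module as $w_0^\mu x w_0^\lambda w_0\cdot\widehat\lambda$. The graded lift is then normalised on standard modules in internal degree zero, and this propagates through the standard filtration to yield $P^\mu(x\cdot\lambda)^\bullet\mapsto T^\mu(w_0^\mu x w_0^\lambda w_0\cdot\widehat\lambda)^\bullet$. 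For the restriction to an equivalence $\mathfrak{LP}^\mu_\lambda\,\tilde\to\,\mathfrak{LT}^\mu_{\widehat\lambda}$, note that a complex in $\mathfrak{LP}^\mu_\lambda$ has summands $P^\mu(y\cdot\lambda)\langle i\rangle$ in cohomological position $-i$; because the graded lift commutes with both $\langle 1\rangle$ and $[1]$, the image is a complex with terms $T^\mu(\cdot)\langle i\rangle$ in position $-i$, hence lies in $\mathfrak{LT}^\mu_{\widehat\lambda}$, and fullness, faithfulness and density on these subcategories are inherited from the ambient derived equivalence together with \cite[Corollary~6]{pairing}.

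Part~(ii) is entirely analogous, using Theorem~\ref{Ringeld}\eqref{Ringeld.2} in place of Proposition~\ref{Ringel2}; here the graded lift of $\underline\cR_\lambda^\mu$ is provided by the graded twisting functors of Subsection~\ref{prel1}, and the tilting label $w_0 w_0^\mu x w_0^\lambda\cdot\lambda$ is read off the explicit action on standards. The main delicate point in both parts is pinning down the internal grading so that $P^\mu(x\cdot\lambda)^\bullet$ lands exactly on $T^\mu(\cdot)^\bullet$ or $T^{\widehat\mu}(\cdot)^\bullet$ with no extra internal shift; this is handled by normalising the graded lift on a single standard module (where canonical graded lifts are available) and then transporting the normalisation through the standard filtration of each projective.
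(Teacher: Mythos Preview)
Your proposal is correct and follows essentially the same route as the paper: both arguments use that $\overline\cR_\lambda^\mu$ and $\underline\cR_\lambda^\mu$ are Ringel duality functors (Theorem~\ref{Ringeld} and Proposition~\ref{Ringel2}) together with \cite[Proposition~2.2]{prinjective} to obtain the ungraded derived equivalence, then invoke Proposition~\ref{liftequiv} for the graded version, and finally read off the tilting labels from the action on standard modules. Your treatment is somewhat more explicit about the gradability of the functors, the normalisation of the graded lift, and the passage from $\mathfrak{LP}$ to $\mathfrak{LT}$, but these are elaborations of the same skeleton rather than a different argument.
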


\begin{proof}
The equivalences of derived categories (in the ungraded sense) 
follow from \cite[Proposition 2.2(2)]{prinjective} and 
Theorem~\ref{Ringeld}. The graded version then follows from 
application of Proposition~\ref{liftequiv}. Alternatively,
the (graded) equivalence can be proved as in 
Section~\ref{secConstr} by using Proposition~\ref{propcommu}\eqref{propcommu.2} 
rather than Corollary~\ref{corforequi}. It is a general feature that the Ringel duality functor maps 
indecomposable projective modules to indecomposable q.h. 
tilting modules, see e.g. \cite[Proposition~2.1(2)]{prinjective}. 
The explicit description then follows from the action on standard modules in Theorem~\ref{Ringeld}.
\end{proof}

We compose functors to obtain the contravariant Koszul-Ringel 
duality functor
\begin{displaymath}
\Phi_\lambda^\mu\,=\,\cK^{\widehat\mu}_\lambda \circ 
\left(\cL\underline{\cR}_\lambda^{\widehat\mu}\right)^{-1}\,:\, 
\;\cD^b({}^{\mZ}\cO_\lambda^\mu)\;\,\tilde{\to}\;\, \cD^b({}^{\mZ}\cO_\mu^\lambda), 
\end{displaymath}
yielding an equivalence of triangulated categories.

\begin{corollary}\label{tiltingdual}
The contravariant functor $\Phi_\lambda^\mu$ above restricts 
to an equivalence 
$\Phi_\lambda^\mu: \mathfrak{LT}_\lambda^\mu\to {}^{\mZ}\cO^{\lambda}_{{\mu}}$ 
which satisfies, for any $x\in X^\mu_\lambda$, the following:
\begin{enumerate}
\item\label{tiltingdual.1} 
$\Phi^\mu_\lambda(T^\mu(x\cdot\lambda)^\bullet)\cong 
L(w_0^\lambda x^{-1}w_0^\mu\cdot\mu)$.
\item\label{tiltingdual.2} The complex $\cT^\bullet_L$ in 
$\mathfrak{LT}^\mu_\lambda$, which is isomorphic to $L(x\cdot\lambda)^\bullet$ 
in~$\cD^b({}^{\mZ}\cO^\mu_\lambda)$, satisfies 
$\Phi^\mu_\lambda(\cT^\bullet_L)\cong T^\lambda(w_0^\lambda x^{-1}w_0^\mu\cdot\mu)$.
\item\label{tiltingdual.3} The complex $\cT^\bullet_\nabla$ 
in $\mathfrak{LT}^\mu_\lambda$, isomorphic to 
$\nabla^\mu(x\cdot\lambda)^\bullet$ in~$\cD^b({}^{\mZ}\cO^\mu_\lambda)$, satisfies $\Phi^\mu_\lambda(\cT^\bullet_\nabla)\cong\Delta^{\lambda}(w_0^\lambda x^{-1}w_0^\mu\cdot\mu)$.
\item\label{tiltingdual.4} The complex $\cT^\bullet_\Delta$ in 
$\mathfrak{LT}^\mu_\lambda$, isomorphic to 
$\Delta^\mu(x\cdot\lambda)^\bullet$ in~$\cD^b({}^{\mZ}\cO^\mu_\lambda)$,
satisfies 
$\Phi^\mu_\lambda(\cT^\bullet_\Delta)\cong\nabla^{\lambda}(w_0^\lambda x^{-1}w_0^\mu\cdot\mu)$.
\end{enumerate} 
\end{corollary}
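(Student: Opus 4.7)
The plan is to first establish the restriction of $\Phi^\mu_\lambda$ to the claimed subcategories and then compute its action on the distinguished objects in each of parts~\eqref{tiltingdual.1}--\eqref{tiltingdual.4}, reducing each identification to an explicit Weyl-group manipulation.

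To set up the restriction, I would apply Lemma~\ref{lemKR}\eqref{lemKR.2} with $\mu$ replaced by $\widehat\mu$, using $\widehat{\widehat\mu}=\mu$: this yields an equivalence $\cL\underline\cR^{\widehat\mu}_\lambda:\mathfrak{LP}^{\widehat\mu}_\lambda\xrightarrow{\sim}\mathfrak{LT}^\mu_\lambda$ acting on objects by $P^{\widehat\mu}(z\cdot\lambda)^\bullet\mapsto T^\mu(w_0w_0^{\widehat\mu}zw_0^\lambda\cdot\lambda)^\bullet$. Inverting this restriction (so that an indecomposable tilting $T^\mu(x\cdot\lambda)^\bullet$ is sent to $P^{\widehat\mu}(w_0^{\widehat\mu}w_0xw_0^\lambda\cdot\lambda)^\bullet$) and composing with $\cK^{\widehat\mu}_\lambda|_{\mathfrak{LP}^{\widehat\mu}_\lambda}\cong\epsilon^{\widehat\mu}_\lambda$, as identified in equation~\eqref{eqeps} and the preceding discussion, produces the asserted equivalence $\Phi^\mu_\lambda:\mathfrak{LT}^\mu_\lambda\xrightarrow{\sim}{}^{\mZ}\cO^\lambda_\mu$. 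Part~\eqref{tiltingdual.1} then follows by applying Corollary~\ref{corcor} with the roles of $\lambda$ and $\mu$ interchanged, which yields $\epsilon^{\widehat\mu}_\lambda(P^{\widehat\mu}(z\cdot\lambda)^\bullet)\cong L(z^{-1}w_0\cdot\mu)$; substituting $z=w_0^{\widehat\mu}w_0xw_0^\lambda$ and invoking the identity $w_0w_0^{\widehat\mu}w_0=w_0^\mu$ (a consequence of $\widehat\mu=-w_0\mu$, so that conjugation by $w_0$ intertwines $W_\mu$ with $W_{\widehat\mu}$) gives $z^{-1}w_0=w_0^\lambda x^{-1}w_0^\mu$, as required.

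For parts~\eqref{tiltingdual.2}--\eqref{tiltingdual.4}, the contravariant triangulated character of $\Phi^\mu_\lambda$ reduces the problem to evaluating it on the underlying modules $L(x\cdot\lambda)$, $\nabla^\mu(x\cdot\lambda)$ and $\Delta^\mu(x\cdot\lambda)$, viewed as complexes concentrated in degree~$0$. I would combine the explicit action of $(\cL\underline\cR^{\widehat\mu}_\lambda)^{-1}$ on (co)standards (read off by inversion from Theorem~\ref{Ringeld}\eqref{Ringeld.2} applied with $\mu$ replaced by $\widehat\mu$) with the behaviour of $\cK^{\widehat\mu}_\lambda$ on the resulting classes of objects, as prescribed by~\cite[Theorem~3.11.1]{BGS}, and then invoke the same Weyl-group simplification as in part~\eqref{tiltingdual.1} to convert the indices into $w_0^\lambda x^{-1}w_0^\mu$. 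The contravariance of $\Phi^\mu_\lambda$, combined with the $\dd$-twist built into $\cK=\dd\cK_A$, then accounts for the interchanges $\Delta\leftrightarrow\nabla$ in~\eqref{tiltingdual.3}/\eqref{tiltingdual.4} and $T\leftrightarrow L$ in~\eqref{tiltingdual.1}/\eqref{tiltingdual.2}.

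The main obstacle is the delicate Weyl-group bookkeeping needed to verify that all four composed maps produce the same indexing bijection $x\mapsto w_0^\lambda x^{-1}w_0^\mu$. A related subtlety concerns the exact behaviour of $\cK^{\widehat\mu}_\lambda$ on (co)standards: while the relation $\cK(P^\bullet)\cong L$ is transparent from Corollary~\ref{corcor}, the analogous identifications for $\cK$ applied to $\Delta^\bullet$, $\nabla^\bullet$ and $L^\bullet$ require careful normalization of grading shifts so as to be compatible with equation~\eqref{KdFshift} and with the Koszul indexing bijection $z\mapsto z^{-1}w_0$; once these are in place the four claims become instances of a single underlying duality.
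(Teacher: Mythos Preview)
Your argument for part~\eqref{tiltingdual.1} is essentially the paper's: Lemma~\ref{lemKR}\eqref{lemKR.2} (with $\mu\rightsquigarrow\widehat\mu$) followed by the Koszul property $\cK^{\widehat\mu}_\lambda(P^{\widehat\mu}(z\cdot\lambda)^\bullet)\cong L(z^{-1}w_0\cdot\mu)^\bullet$, and your Weyl-group bookkeeping is correct. Your direct computation for part~\eqref{tiltingdual.3} also works, and the paper in fact mentions exactly this as an alternative proof.

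There is, however, a genuine gap in your treatment of parts~\eqref{tiltingdual.2} and~\eqref{tiltingdual.4}. ``Inversion from Theorem~\ref{Ringeld}\eqref{Ringeld.2}'' only tells you what $(\cL\underline\cR^{\widehat\mu}_\lambda)^{-1}$ does to \emph{costandard} modules $\nabla^\mu$, since $\underline\cR^{\widehat\mu}_\lambda$ sends $\Delta^{\widehat\mu}\mapsto\nabla^\mu$; it gives no direct information about its action on $\Delta^\mu$ or on simple modules. For $\Delta^\mu$ one can salvage the argument by inserting the relation $(\cL T_{w_0})^{-1}\cong\dd\,\cL T_{w_0}\,\dd$ and the self-duality $\dd\Delta^\mu=\nabla^\mu$, but you do not do this. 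For the simple module $L(x\cdot\lambda)$ in part~\eqref{tiltingdual.2} the situation is worse: the inverse Ringel duality functor applied to a simple module is a genuine complex with no tractable closed form, and \cite[Theorem~3.11.1]{BGS} does not describe $\cK^{\widehat\mu}_\lambda$ on such complexes. Your final sentence about ``contravariance together with the $\dd$-twist'' accounting for the $T\leftrightarrow L$ interchange is not an argument; it would need something like $(\Phi^\mu_\lambda)^{-1}\cong\Phi^\lambda_\mu$, which is plausible but requires proof.

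The paper circumvents all of this: once part~\eqref{tiltingdual.1} is established, parts~\eqref{tiltingdual.2}--\eqref{tiltingdual.4} follow in one stroke from \cite[Theorem~9]{pairing}, a structural result about the category $\mathfrak{LT}$ of linear tilting complexes which identifies, under any such equivalence $\mathfrak{LT}^\mu_\lambda\simeq{}^{\mZ}\cO^\lambda_\mu$, the linear tilting resolutions of $L$, $\nabla$, $\Delta$ with the modules $T$, $\Delta$, $\nabla$ respectively. This avoids having to compute $(\cL\underline\cR)^{-1}$ on anything other than tilting modules.
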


\begin{proof}
The first property follows from combination of 
Lemma~\ref{lemKR}\eqref{lemKR.2} and the well-known property 
$\cK^{\widehat\mu}_\lambda(P(x\cdot\lambda)^\bullet)\cong L(x^{-1}w_0\cdot\mu)^\bullet$, 
see e.g. \cite[Theorem~3.11.1]{BGS}. The other three properties then follow 
from the first one and \cite[Theorem~9]{pairing}. Property \eqref{tiltingdual.3} 
also follows from the combination of Theorem~\ref{Ringeld}\eqref{Ringeld.2} 
with the property $\cK^\mu_\lambda(\Delta^\mu(x\cdot\lambda)^\bullet)\cong \Delta^\lambda(x^{-1}w_0 \cdot\widehat\mu)^\bullet,$ see \cite[Theorem~3.11.1]{BGS}.
\end{proof}

The following is a standard property for categories of linear complexes.
\begin{lemma}
Consider $\cM^\bullet\in \Ob{\mathfrak{LT}^\mu_\lambda}$. 
Then $T^\mu(x\cdot\lambda)$ is a submodule of~$\cM^k$ if and only if 
$L(w_0^\lambda x^{-1} w_0^\mu\cdot\mu)\langle k\rangle$ is a subquotient 
of~$\Phi^\lambda_\mu(\cM^\bullet)$.
\end{lemma}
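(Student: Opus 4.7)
The statement is a multiplicity assertion, so the plan is to deduce it from a Jordan–H\"older correspondence under the equivalence of abelian categories $\Phi^\mu_\lambda\colon \mathfrak{LT}^\mu_\lambda\,\tilde\to\, {}^{\mZ}\cO^\lambda_\mu$ of Corollary~\ref{tiltingdual}. The first step is to identify the simple objects of $\mathfrak{LT}^\mu_\lambda$: since this abelian category is equivalent to a graded module category, its simples are the $\Phi^\mu_\lambda$-preimages of the graded simples $L(\nu)\langle k\rangle$. By Corollary~\ref{tiltingdual}(\ref{tiltingdual.1}) we have $\Phi^\mu_\lambda(T^\mu(x\cdot\lambda)^\bullet)\cong L(w_0^\lambda x^{-1}w_0^\mu\cdot\mu)$; combining this with the shift identity~\eqref{KdFshift} and the defining linearity convention of $\mathfrak{LT}^\mu_\lambda$, one verifies that placing the indecomposable tilting $T^\mu(x\cdot\lambda)$ in position~$k$ of a linear complex corresponds on the module side precisely to the simple $L(w_0^\lambda x^{-1}w_0^\mu\cdot\mu)\langle k\rangle$.

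With the simples matched, I would construct an explicit Jordan–H\"older filtration of $\cM^\bullet$ in $\mathfrak{LT}^\mu_\lambda$ via brutal truncations: the subcomplexes $\sigma_{\ge k}\cM^\bullet$ remain linear complexes of tilting modules, hence lie in $\mathfrak{LT}^\mu_\lambda$, and the chain $\cdots\supseteq \sigma_{\ge k}\cM^\bullet\supseteq \sigma_{\ge k+1}\cM^\bullet\supseteq \cdots$ has successive quotients equal to the stalk complexes $\cM^k$ placed in position~$k$. Refining by the Krull–Schmidt decomposition of each $\cM^k$ into indecomposable tilting summands then produces a genuine composition series of $\cM^\bullet$ in $\mathfrak{LT}^\mu_\lambda$ whose factors record exactly the indecomposable tilting summands of every $\cM^k$ together with their positions.

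Applying the exact equivalence $\Phi^\mu_\lambda$ transports this to a composition series of $\Phi^\mu_\lambda(\cM^\bullet)$ in ${}^{\mZ}\cO^\lambda_\mu$ with identical multiplicities, in which each copy of $T^\mu(x\cdot\lambda)$ appearing in $\cM^k$ is paired with a copy of $L(w_0^\lambda x^{-1}w_0^\mu\cdot\mu)\langle k\rangle$; the Jordan–H\"older theorem then closes the argument in both directions. The interpretation of ``submodule of $\cM^k$'' causes no trouble, since $T^\mu(x\cdot\lambda)$ is indecomposable and $\cM^k$ decomposes, by linearity, into indecomposable tiltings of prescribed graded form, so any embedding of the appropriate graded form is automatically a direct summand by Krull–Schmidt. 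The one delicate point in the whole argument is the convention-dependent bookkeeping of positions against grading shifts in the first paragraph, which reduces to a single application of~\eqref{KdFshift}.
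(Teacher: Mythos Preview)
Your argument is correct and is essentially the same as the paper's, only more explicit: the paper verifies the statement for stalk complexes $T^\mu(x\cdot\lambda)^\bullet[k]\langle k\rangle$ and then says the general case follows by induction on distinguished triangles whose cone stays in $\mathfrak{LT}^\mu_\lambda$, which is exactly your brutal-truncation filtration rephrased. Your framing as a Jordan--H\"older correspondence under the abelian equivalence $\Phi^\mu_\lambda$ is a cleaner way to package the same induction; the only thing to watch is that $\Phi^\mu_\lambda$ is contravariant, so filtrations get reversed, but composition multiplicities are of course unaffected.
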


\begin{proof}
The property is clearly true for complexes contained in one position as,
by construction,
\begin{displaymath}
\Phi^\mu_\lambda(T^\mu(x\cdot\lambda)^\bullet[k]\langle k\rangle)
\cong L(w_0^\lambda x^{-1}w_0^\mu\cdot\mu)\langle k\rangle.
\end{displaymath}
The full statement then follows by induction, using distinguished 
triangles in $\cD^b({}^{\mZ}\cO_\lambda^\mu)$ such that the cone 
belongs to $\mathfrak{LT}^\mu_\lambda$.
\end{proof}

\subsection{A block which is not Ringel self-dual}\label{example} 
Consider $\fg=\mathfrak{sl}(4)$, with $W=S_4$ with simple reflections $s_1,s_2,s_3$ and choose $\lambda,\mu\in\intdom$ such
that $w_0^\lambda=s_3$ and $w_0^\mu=s_1$. The $\Ext^1$-quiver of $\cO^\mu_\lambda$ and $\cO^{\widehat\mu}_\lambda$ can be 
obtained from Proposition~\ref{propquiv} and 
\cite[Appendix~A]{Stroppel2}. As we have a duality preserving isomorphism classes of simple modules,
we neglect the directions on the edges. We order the simple modules according to the Bruhat order. The $\Ext^1$-quiver of $\cO^\mu_\lambda$ is given by{\scriptsize
\begin{displaymath}
\xymatrix{
& L(s_3\cdot\lambda)\ar@{-}[d]\\
& L(s_2s_3\cdot\lambda)\ar@{-}[dl]\ar@{-}[dr]\\
L(s_2s_1s_3\cdot\lambda)\ar@{-}[dr]&&L(s_3s_2s_3\cdot\lambda)\ar@{-}[dl]\\ 
& L(s_3s_2s_1s_3\cdot\lambda)}
\end{displaymath}
}

We have $w_0^{\widehat\mu}=s_3$ and similarly we obtain the $\Ext^1$-quiver of $\cO^{\widehat\mu}_\lambda$,{\scriptsize
\begin{displaymath}
\xymatrix{
& L(s_2s_3\cdot\lambda)\ar@{-}[dl]\ar@{-}[dr]\\
L(s_2s_1s_3\cdot\lambda)\ar@{-}[dr]&&L(s_1s_2s_3\cdot\lambda)\ar@{-}[dl]\\ 
& L(s_1s_2s_1s_3\cdot\lambda)\ar@{-}[d]\\
& L(s_2s_1s_3s_2s_3\cdot\lambda)}
\end{displaymath}}

This implies that $\cO_\lambda^\mu$ 
contains precisely one simple module which has a first extension 
with only one other simple module. Its projective cover is a 
non-simple standard module, so is not injective. Also
$\cO^{\widehat\mu}_\lambda$ has precisely one simple module 
which has a first extension with only one other simple module. 
This module is a simple standard module, which thus has an 
injective projective cover by \cite{Irving85}. This implies 
that $\cO^\mu_\lambda$ is not equivalent to $ \cO^{\widehat\mu}_\lambda$.

\noindent
{\bf Acknowledgment.}
KC is a Postdoctoral Fellow of the Research Foundation - Flanders (FWO).
VM is partially supported by the Swedish Research Council,
Knut and Alice Wallenbergs Stiftelse and the Royal Swedish Academy of Sciences.

\noindent
KC: Department of Mathematical Analysis, Ghent University, Krijgslaan 281, 9000 Gent, Belgium;
E-mail: {\tt Kevin.Coulembier@Sydney.edu.au} 
\vspace{2mm}

\noindent
VM: Department of Mathematics, University of Uppsala, Box 480, SE-75106, Uppsala, Sweden;
E-mail: {\tt  mazor@math.uu.se}
\date{}

\end{document}